\documentclass[11pt]{article}
\usepackage[margin=0.86in]{geometry}
\usepackage{amsmath,amssymb,amsthm}
\usepackage{array,booktabs,longtable}
\usepackage{framed}
\renewenvironment{frame}{\begin{framed}}{\end{framed}}
\usepackage[colorlinks=true,linkcolor=blue,citecolor=green,urlcolor=black,
            hypertexnames=false]{hyperref}
\usepackage{microtype}
\usepackage{tikz}
\usetikzlibrary{shapes.geometric}
\newtheorem{theorem}{Theorem}[section]
\newtheorem{proposition}[theorem]{Proposition}
\newtheorem{lemma}[theorem]{Lemma}
\newtheorem{corollary}[theorem]{Corollary}
\newtheorem{remark}[theorem]{Remark}
\newtheorem{assumption}{Assumption}
\theoremstyle{definition}
\newtheorem{definition}{Definition}
\newtheorem{heuristic}{Heuristic}
\newtheorem{example}{Example}
\theoremstyle{plain}

\newcommand{\Hc}{\mathcal H}
\newcommand{\E}{\mathbb E}
\newcommand{\Pp}{\mathbb P}
\newcommand{\Eenv}{\mathbb E_{\rm env}}
\newcommand{\Penv}{\mathbb P_{\rm env}}
\newcommand{\Ew}{\mathbb E^{\omega}}
\newcommand{\Pw}{\mathbb P^{\omega}}
\newcommand{\one}{\mathbf 1}
\newcommand{\dd}{\,d}
\DeclareMathOperator{\Var}{Var}

\title{Quenched First-Passage Asymptotics for Branching Random Walk on a Hamming Cube}
\author{Jose Blanchet\thanks{Department of Management Science and Engineering,
Stanford University. Email: \texttt{jose.blanchet@stanford.edu}}
\and
Zhenyuan Zhang\thanks{Laboratory for Information and Decision Systems, Massachusetts Institute of Technology. Email:
\texttt{zzyzzy@mit.edu}}}
\date{{\today}}

\begin{document}
\maketitle

\begin{abstract}
We study continuous-time supercritical branching random walks in space-inhomogeneous random environment on the Hamming cube $\{0,1,\dots,b-1\}^d$, where the reproduction laws at each site are i.i.d. sampled. This serves as an idealized model for RNA sequence evolution with mutation--selection balance. The reproduction and mutation events are decoupled, and we assume that the reproduction law has essential supremum strictly below the deterministic mutation rate. Our main results provide tight asymptotics of the first-passage times for the model, uniformly in the origin--target distance $1\le m\le d$ as $d\to\infty$, conditional upon survival. We prove both quenched tightness in environment probability and annealed tightness around a deterministic center. Moreover, we identify the leading order of the deterministic center and develop an expansion in the sparse-distance regime $m=o(d)$. Our proof technique derives quantitative approximations of the model using an independent-visit variant, where revisits still resample the environment. As an application, we show that on a macroscopic scale, increasing the reproduction law in convex order decreases the first-passage times, and increasing the mutation rate increases the first-passage times for the sparse regime.
\end{abstract}

\tableofcontents

\section{Introduction}

RNA viruses generate genetic variation through mutations
during replication.  Classic quasispecies theory describes the resulting clouds of genotypes through the interplay between mutation and selection~\cite{Eigen1971}.  This diversity provides variants that
can differ in replication ability, host adaptation, or resistance to
antiviral drugs~\cite{DomingoSheldonPerales2012}.  In this paper, we develop an idealized yet fundamental probabilistic model on when a
growing population first produces a prescribed mutant of concern.

We model genotypes as sequences of length $d$ over an alphabet of
$b\ge2$ symbols, with $b=4$ for RNA. Typically $d$ is large, and our analytical results concern the regime $d\to\infty$. The genotype sequences form a Hamming cube $\{0,1,\dots,b-1\}^d$, where 
two vertices are neighbours if they differ in exactly one coordinate.
Each particle mutates at total rate one, choosing a neighbour uniformly,\footnote{For $b=4$, the mutation walk coincides with the classic 
Jukes--Cantor model~\cite{JukesCantor1969}.  See also~\cite{Durrett2008} for background on
probabilistic models of sequence evolution.}
and, at a genotype with net growth rate $\lambda$, splits at rate
$\max(\lambda,0)$ and dies at rate $\max(-\lambda,0)$.\footnote{Here, at each genotype, mutation, replication, and death use
independent exponential clocks.  A mutation changes the particle's
genotype without branching, whereas a split produces two particles with
the parent's genotype.  In other words, mutations are modeled as separate events,
rather than copying errors at replication.}  This defines a branching random walk (BRW) on the Hamming cube~\cite{BZ}.

To incorporate selection, we allow the net growth rate to vary with the
genotype, in contrast to the common rate in~\cite{BZ}.  We sample the
vertex rates independently from a given probability distribution and keep them fixed
throughout the evolution process. Consequently, all particles carrying the same genotype share the net growth rate. The resulting process is called a branching random walk in random environment (BRWRE) on the Hamming cube. This i.i.d.~sampling is the classical house-of-cards idealization of a
fitness landscape~\cite{Kingman1978,KauffmanLevin1987}: it neglects fitness
correlations even between genotypes differing at a single site.\footnote{An experiment on tobacco etch virus found weak correlations between the
fitness values of neighboring genotypes in landscapes formed by combinations
of five mutations.  The measured landscapes were described as intermediate
between the house-of-cards model and a model retaining
correlations~\cite{CerveraLalicElena2016}.  We use the uncorrelated
landscape as a tractable approximation.} Probabilities conditional on the landscape
are called \emph{quenched}; averaging over it gives the \emph{annealed} law.

Let $m$ denote the Hamming distance from the initial
genotype to the target, the number of coordinates in which they differ.
In this paper, we attempt to understand the following two fundamental questions, whose answers depend on $m$:
\begin{itemize}
    \item On what time scale does the target genotype first appear? How large are the fluctuations of this first-appearance time?
    \item Can increasing mutation, while keeping the net growth rates
    fixed, delay its first appearance? If yes, by how much?
\end{itemize}
 These questions naturally connect to \textit{first-passage times} (FPT) of BRWRE on the hypercube, which by definition refers to the first time any descendant reaches
the prescribed target. 
The second question, in particular, is motivated by antiviral mutagenesis. Molnupiravir, for example, increases errors during viral RNA replication
to suppress viral growth~\cite{GordonEtAl2021}.  The possibility that
mutagenic treatment also promotes the emergence of unwanted viral variants
has motivated studies of its evolutionary
safety~\cite{LobinskaPilpelNowak2023,LobinskaEtAl2024}. The first appearance of a prescribed mutant has also been studied using multitype branching processes \cite{NicholsonAntal2019,NicholsonCheekAntal2023}.

We address these questions by studying asymptotics of FPT of BRWRE on the hypercube uniformly in $m$ as $d\to\infty$.  
Here, the net growth rate distribution is fixed as $d$ grows and we assume that  {its essential supremum is strictly below} the mutation rate, which is also fixed as $d\to\infty$.\footnote{The difference from \cite{NicholsonAntal2019,NicholsonCheekAntal2023} is that their asymptotic results concern increasingly rare mutations on a fixed genotype structure; ours concern increasing size of the sequence space with a fixed total mutation rate.}  This assumption mirrors the slow-branching regime 
in~\cite{BZ}, where the FPT remains of order $d$ even for targets at
bounded positive Hamming distances.

\paragraph{Main contribution.}
We briefly summarize our main results as follows. Section~\ref{sec:main-results} contains their precise statements.
\begin{itemize}

\item We show that the first-passage time is tight under the
quenched law conditional on survival, in environment probability, around a deterministic center $\widehat\theta_{d,m}$, uniformly for distances $1\leq m\leq d$ (Theorem~\ref{thm:quenched-all-distance}); the same center gives annealed tightness conditional on survival.
In other words, for typical environments, fluctuations under the law conditional on the environment and survival remain on a bounded time scale. 

\item We characterize the deterministic center $\widehat\theta_{d,m}$ (Theorem~\ref{thm:sparse-expansion}). Specifically, we find an explicit function $f(d,m)$ such that uniformly in $m$, $|\widehat\theta_{d,m}-f(d,m)|=O(\log d)$.  Moreover, in the sparse-distance
regime $m=o(d)$, we derive the first-order expansion 
$\widehat\theta_{d,m}=c_1d+c_2m+o(m)+O(\log d)$ for some $c_1,c_2>0$. 

\item We study comparative statics of the deterministic center $\widehat\theta_{d,m}$ against the net growth and mutation rates. Clearly, a higher net growth rate decreases the unconditioned first-passage time by a coupling argument. Nevertheless, we show that a mean-preserving spread of the net growth-rate law also 
decreases $\widehat\theta_{d,m}$ 
(Corollary~\ref{cor:rate-convex-order}).  For sparse distances $m=o(d)$, increasing the mutation rate between two fixed values in the same supercritical interval specified by \eqref{eq:mutation-supercritical}, while keeping the net growth rates fixed, delays first passage by order $d$, with each process conditioned on its own survival (Corollary~\ref{thm:mutation-rate}).

\item 
Hinging on the connection between branching random walks and the Fisher--KPP equation, we discuss applications to the Fisher–KPP equation on the Hamming cube, which describes diffusion and random local growth limited by saturation. In the absence of deaths, we show that the nonlinear solution and its linearization reach any fixed level in $(0,1)$ at times differing by only $O_{\mathbb P_{\mathrm{env}}}(1)$, uniformly over the prescribed targets at all Hamming distances (Corollary~\ref{cor:kpp-arrival}).
\end{itemize}

\paragraph{Related literature.}
\label{sec:literature}
Branching random walks on Hamming cubes (with homogeneous environments), as well as their coalescing variants, have been studied in many different aspects. Examples include first-passage times \cite{BZ}, partial cover times~\cite{BalelliMilisicWainrib2016}, and cover times \cite{ZhangCover2026}. 
For the mean-field setting, the survey \cite{KoenigSurvey2021} studies branching random walks in static random environments and
their connection to the parabolic Anderson equation, which describes
their quenched first moments.
For the parabolic Anderson equation on the binary hypercube, see
\cite{AvenaGunHesse2020}.

First-passage times serve as a prototypical example for analyzing extremes of spatial branching processes, such as on Euclidean spaces~\cite{ZhangMohantyBlanchetCai2024,BlanchetCaiMohantyZhang2026,BlanchetZhangFPT2026}. In these works, precise asymptotics of the first-passage times are established. Their approach is based on the analysis of 
the genealogy near the one-dimensional extreme, which does not apply to the hypercube.

For BRW in space-inhomogeneous random environments on $\mathbb Z$,
\cite{KriechbaumSpace2024} proves annealed tightness of
first-passage times and the maximum  {after environment-dependent centering} under suitable assumptions, based on tilted passage
times across ordered levels and a modified second-moment argument.
Related works include subsequential tightness~\cite{KriechbaumSubseq2021}, functional CLT \cite{CernyDrewitz2020}, and almost-sure quenched tightness  {around quenched medians} for branching Brownian motion \cite{CernyDrewitzOswald2025} and for branching random walk on
$\mathbb Z$ \cite{CernyDalessi2026}.  {These tightness results use environment-dependent centers. On the Hamming cube, we obtain tightness around a deterministic center (see Remark~\ref{rem:fluctuation}).} For time-inhomogeneous environments, we refer to \cite{KriechbaumTime2025,MalleinMilos2019,Fang2012}.   For earlier studies on non-extremal behavior such as limiting shapes and growth rates of related models, including multi-dimensional settings, see \cite{CometsPopov2007,CometsPopovShape2007,CometsYoshida2011}.

\paragraph{Paper outline.}
Section~\ref{sec:main-results} defines the model and states the main results.
Section~\ref{sec:first-moment-analysis} develops the first-moment estimates and
deterministic centering.  Sections~\ref{sec:bridge-fractional-second-moment}  and \ref{sec:overlap-transfer} prepare tools for proving the tightness result.  Section~\ref{sec:main-results-proofs} collects the final proofs of main results and their applications.  Appendix~\ref{sec:notations} contains a table of frequently used notation.  Appendices~\ref{sec:linearprimer}
and~\ref{sec:auxiliary} collect elementary tools of use.

\paragraph{Notation conventions.}
We use $c,C\in(0,\infty)$ to denote deterministic constants whose values
may vary from line to line, where the lower case $c$ is understood as a small number and vice versa.  The notation
$a_d=O(b_d)$ means $|a_d|\le C|b_d|$ for all large $d$, while
$a_d=o(b_d)$ means $a_d/b_d\to0$ whenever the ratio is defined.  For
nonnegative quantities, $a_d\asymp b_d$ means
$cb_d\le a_d\le Cb_d$.  Each estimate inherits the uniformity stated in its
surrounding sentence or display.  We also write $a_d\ll b_d$, or
$b_d\gg a_d$, for $a_d\le Cb_d$, and we write $a_d\sim b_d$ for $a_d/b_d\to1$, or equivalently
$a_d=(1+o(1))b_d$; subscripts record permitted dependence
of the implicit constant.  

For sets in a Euclidean space, $K\Subset U$
means that $K$ is compact and contained in the interior of $U$.
For vertices, $x\sim y$ denotes adjacency.
The notation $\mathcal L(W)$ denotes the probability law
of a random variable $W$.  The symbol $\one_A$ is the indicator of an event
$A$; without a subscript, $\one$ denotes the all-ones vector of the dimension
dictated by context.
For $a\in\mathbb R$, put $(a)_+=\max(a,0)$.  We use
$\mathbb N=\{1,2,\ldots\}$ and $\mathbb N_0=\{0,1,2,\ldots\}$. For locally integrable functions on $[0,\infty)$, write
$(f*g)(t)=\int_0^tf(s)g(t-s)\dd s$ and let $h^{*N}$ denote the
$N$-fold convolution of $h$.

\section{Model setup and main results}
\label{sec:main-results}

\subsection{Model and assumptions}
\label{sec:model}

Fix an integer $b\ge2$ and consider the Hamming cube
$$
  \Hc_d^{(b)}=\{0,\ldots,b-1\}^d.
$$
For $x,y\in\Hc_d^{(b)}$, their Hamming distance is
$d_{\mathrm H}(x,y)=\#\{1\le j\le d:x_j\ne y_j\}$.
We write $x\sim y$ when $d_{\mathrm H}(x,y)=1$.
The continuous-time simple random walk (we call it \textit{mutation walk}) jumps at rate one and, at each jump, chooses one of its
$(b-1)d$ Hamming neighbours uniformly. The walk's one-step matrix and continuous-time generator are
\begin{equation*}
 P(x,y)=\bigl((b-1)d\bigr)^{-1}\one_{\{x\sim y\}},\qquad P-I,
\end{equation*}
respectively.

We study continuous-time branching random walks on the Hamming cube, where particles perform simple random walks at rate one and independently split or die at rates determined by their sites. The net growth rates depend on the specific sites on the Hamming cube and are sampled i.i.d.~from a given distribution, which we call the environment. 
We impose the following upper-bound and supercriticality assumptions on the environment.

\begin{assumption}
\label{ass:environment}
The environment is a family
$\omega=(\Lambda_v)_{v\in\Hc_d^{(b)}}$ of i.i.d. random variables with a fixed probability law
$P=\mathcal L(\Lambda_{\mathbf0})$ on $\mathbb R$ satisfying\footnote{We use $P(d\lambda)$ for the rate law; $P(x,y)$ and $P^n$ continue
to denote the one-step mutation kernel and its powers.}
\begin{equation}\label{eq:Ass1}
 \lambda_+:=\operatorname*{ess\,sup}\Lambda_{\mathbf0}<1\qquad\text{and}
 \qquad \int\frac{P(d\lambda)}{1-\lambda}>1.
\end{equation}
\end{assumption}

Note that we allow for negative rates and a rate $\Lambda_v=\lambda<0$ indicates particles at $v$ die at rate $|\lambda|$. Intuitively, the first condition in \eqref{eq:Ass1} implies that every family of particles initiated at a site produces on average a finite number of \(\int_0^\infty e^{-(1-\lambda)t}\,dt=\frac1{1-\lambda}\) many mutants, and such number is uniformly bounded from above. The second condition in \eqref{eq:Ass1} is the supercriticality condition for the independent-visit model defined below.

We write $\Penv$ and $\Eenv$ for probability and expectation under the environment law in Assumption~\ref{ass:environment}. Denote the mean splitting rate by
\begin{equation}
 \bar\lambda=\Eenv[(\Lambda_{\mathbf0})_+]
 =\int(\lambda)_+\,P(d\lambda)>0.
 \label{eq:atom-summary}
\end{equation}

\begin{definition}
The \emph{branching random walk in a static random environment} (BRWRE)
starts with one particle at $x$ in an environment $\omega$ sampled under
Assumption~\ref{ass:environment} and kept fixed throughout the evolution.
Each particle moves as the mutation walk and, while at $v$, is replaced by
two particles at rate $(\Lambda_v)_+$ and dies at rate $(-\Lambda_v)_+$.
At each split, conditional on
$\omega$ and the split location, the two descendant systems evolve
independently.  We also call this the \emph{static model}.
\end{definition}

We now introduce the following proxy process for the static model, called the independent-visit model. The key (and only) difference is that, for the static model, every particle at $v$ uses the
same rate, including on later visits; for the independent-visit model, an independent net growth rate is sampled every time a particle mutates, including returns to previous vertices.

\begin{definition}
The \emph{independent-visit branching random walk} starts with one particle
at $x$ whose signed rate is sampled from $P$.  Each particle moves as
the mutation walk.  At each mutation, it draws a new rate from $P$
independently of the past.  
A particle with current rate $i$ is replaced by two particles at rate
$(i)_+$ and dies at rate $(-i)_+$.  Both children inherit that rate and retain it until their
respective next mutations.  Conditional on the location and rate at a
split, the two descendant systems evolve independently.
\end{definition}

Finally, we define our key statistics, the first-passage time. We consider a starting vertex $x\in \Hc_d^{(b)}$ aiming to hit the target 
$\mathbf 0=(0,\ldots,0)$. 
For the BRWRE, write $N_t(y)$ for
the number of particles at $y$ at time $t$; in particular,
$N_t(\mathbf 0)$ is the number at the target.  
The \textit{first-passage time} from $x$ is defined by  $$\tau_x^\omega=\inf\{t\ge0:N_t(\mathbf 0)>0\}.$$
For a BRWRE starting at $x$, we denote the quenched law and
expectation by $\Pw_x$ and $\Ew_x$.

\subsection{Heuristics}
\label{sec:heuristics}
Let $(X_t)_{t\ge0}$ denote the mutation walk defined above, and let
$\Pp_x$ and $\E_x$ denote probability and expectation for that walk
started from $x$. 
The (quenched) first-moment of the particle count is thus given by 
\begin{equation*}
 K_t^\omega(x,y)
 =\Ew_x[N_t(y)]
 =\E_x\left[
   \exp\left(\int_0^t\Lambda_{X_s}\dd s\right)
   \one_{\{X_t=y\}}
 \right].
\end{equation*}
Write
$$
 \mathbf{x}_k=\bigl(\underbrace{1,\ldots,1}_{k},0,\ldots,0\bigr),
 \qquad 0\le k\le d.
$$
Let $\widehat K_{d,m}(t)$ denote the expected number of particles at
$\mathbf0$ at time $t$ in the independent-visit model started from
$\mathbf{x}_m$. By the symmetry of the Hamming cube, this fully characterizes the FPT.

In this section, we derive heuristically formulas and observations that motivate the statements of the main results, including:
\begin{itemize}
    \item Why the independent-visit model serves as a good proxy for the static model;
    \item Why the FPT for the independent-visit model stays concentrated;
    \item How to identify the leading linear term for the FPT in $d$;
    \item How to compute the change in FPT after a small perturbation of $m$ where $m=o(d)$.
\end{itemize}

\paragraph{Static and independent-visit models.}
Recall that in the static model, every visit to a vertex uses the same rate.  In the
independent-visit model, each mutation samples a fresh rate, even on a
return. 
There are two intuitive reasons on why the independent-visit model serves as a reasonable proxy for the static model:
\begin{itemize}
    \item The paths that 
contribute most to this expected population have few repeated vertices. This is because detours waste time, and each detour may select up to an order $d$ of neighbors which makes revisits rare.
\item Two independently chosen such paths typically have few intersections apart
from their prescribed common endpoints. This is because in a high-dimensional Hamming cube, there are many paths from the origin to the target with similar lengths.
\end{itemize}
 This suggests that the expected particle counts of the static model can be well-approximated by the independent-visit proxy, up to bounded multiplicative factors. Near the predicted arrival time, these factors should lead to only a bounded-in-probability time shift, conditional on survival.
Returns and overlaps may still occur, but their weighted contribution can be controlled quantitatively.

\begin{frame}
\begin{heuristic}
\label{heur:static-independent}
Conditional on survival, the independent-visit mean should predict the
FPT in the static model up to a bounded-in-probability additive error.
\end{heuristic}
\end{frame}

\paragraph{Concentration of the passage time near a deterministic center.}
On the Hamming cube in high dimensions, different routes from the origin to the target can sample largely different parts of the environment. Since first-passage intuitively describes the earliest arrival among many branching families, we expect that the limited overlap between those families yields enough independence between their earliest arrival times. Thus, the FPT comprises of the minimum of quantities described by many independent noises, and hence it is reasonable that the FPT is concentrated in a window of bounded size.\footnote{ By
contrast, a nearest-neighbour path to a distant target on {$\mathbb{Z}$}
must cross every intervening site; the common environment can then
produce a diffusive shift of the front~\cite{CernyDrewitz2020}; see Remark \ref{rem:fluctuation}.}

Next, we propose a candidate for the {deterministic centering} of the FPT. 
For the independent-visit model, define the unit-window mean and its level-one crossing time by
\begin{equation}
 \widehat M_t(\mathbf{x}_m)
 =\int_t^{t+1}\widehat K_{d,m}(s)\,\dd s,
 \qquad
 \widehat\theta_{d,m}
 =\inf\{t\ge0:\widehat M_t(\mathbf{x}_m)\ge1\}.\label{eq:thetahat definition}
\end{equation}
Thus, $\widehat\theta_{d,m}$ is deterministic once the rate law $P$ is fixed. Intuitively, $\widehat\theta_{d,m}$ is the threshold at which the unit-window averaged expected particle count approaches one. 
Near this level-one crossing, exponential
growth of the mean number of particles intuitively suggests, for fixed $A>0$ and sufficiently large $d$,
$$
 \widehat M_{\widehat\theta_{d,m}-A}(\mathbf{x}_m)\ll e^{-cA},
 \qquad
 \widehat M_{\widehat\theta_{d,m}+A}(\mathbf{x}_m)\gg e^{cA}.
$$
Therefore, a bounded multiplicative error in the mean can be
compensated by a bounded change in time, leading to a reasonable candidate $\widehat\theta_{d,m}$.

\begin{frame}
\begin{heuristic}
\label{heur:passage-concentration}
Conditional on survival, the FPT is concentrated in a window of bounded size near the deterministic center 
$\widehat\theta_{d,m}$.
\end{heuristic}
\end{frame}

\paragraph{Identifying the linear order of $\widehat\theta_{d,m}$.}
To locate the leading order in $d$ of the deterministic center $\widehat\theta_{d,m}$, we compute the independent-visit mean particle count $\widehat K_{d,m}(t)$. The idea is to look at the number of mutations and identify a range for its main contribution. 
Let $(S_n)_{n\ge0}$ be the embedded discrete jump process of $X$, and write
$p_{d,m}(n)=\Pp_{\mathbf{x}_m}(S_n=\mathbf0)$.
Along an ancestral path, $n$ mutations divide $[0,t]$ into $n+1$
holding intervals.  On an
interval of length $r$, the no-mutation weight $e^{-r}$ times the
net growth weight $e^{\lambda r}$, averaged over the rate distribution $P$, is given by
$$
 h(r)=\Eenv\bigl[e^{-(1-\Lambda_{\mathbf 0})r}\bigr]
 =\int e^{-(1-\lambda)r}\,P(d\lambda).
$$
Integrating the product over durations summing to $t$ gives
$h^{*(n+1)}(t)$. Accounting for the discrete paths ending at the target then gives
the exact identity
\begin{equation}
\widehat K_{d,m}(t)
 =\sum_{n\ge0}p_{d,m}(n)h^{*(n+1)}(t).                   \label{eq:freshK}
\end{equation}
To evaluate $\widehat K_{d,m}(t)$ at macroscopic scales, write $\theta=m/d$, $u=t/d$,
and $\nu=n/d>\theta$, with $u,\nu$ bounded above and away from zero. We separately deal with the two factors in \eqref{eq:freshK}.

\begin{itemize}
\item \emph{The spatial factor $p_{d,m}(n)$.}
For a power series $F$, let $[z^n]F(z)$ denote the coefficient of $z^n$.
The rate-one mutation-walk transition probability is given
in~\cite[Lemma~6]{BZ}:
\begin{equation}
 \Pp_{\mathbf{x}_m}(X_t=\mathbf 0)
 =e^{-t}A_b\Bigl(\frac{t}{d}\Bigr)^m
 C_b\Bigl(\frac{t}{d}\Bigr)^{d-m},
 \label{eq:mutation-transition}
\end{equation}
where for $z\ge0$, we set
\begin{equation}
 A_b(z)=\frac{e^z-e^{-\frac{z}{b-1}}}{b},\qquad
 C_b(z)=\frac{e^z+(b-1)e^{-\frac{z}{b-1}}}{b}.          \label{eq:AC}
\end{equation}
On the other hand, conditioning on the Poisson number of mutations by
time $t$ gives
\begin{equation}
 \Pp_{\mathbf{x}_m}(X_t=\mathbf 0)
 =e^{-t}\sum_{n\ge0}\frac{t^n}{n!}p_{d,m}(n).
 \label{eq:jump-count-conditioning}
\end{equation}
Comparing \eqref{eq:mutation-transition} and \eqref{eq:jump-count-conditioning}
and writing the generating-function
variable as $z$ yields
\[
 \sum_{n\ge0}\frac{(dz)^n}{n!}p_{d,m}(n)
 =A_b(z)^mC_b(z)^{d-m}.
\]
Coefficient extraction gives the exact identity
\begin{equation}
 p_{d,m}(n)=n!d^{-n}[z^n]A_b(z)^mC_b(z)^{d-m}.
                                                        \label{eq:exact-pdm}
\end{equation}

Stirling's formula then gives, for every $z>0$,
\[
 \frac1d\log p_{d,m}(n)
 \le\nu\log\nu-\nu+\theta\log A_b(z)+(1-\theta)\log C_b(z)
       -\nu\log z+O\!\left(\frac{\log d}{d}\right).
\]
Minimizing over $z$ gives the function $\Psi_b$, defined for
$\theta\in[0,1]$ and $\nu\ge0$ by\footnote{The convention is $0\log0=0$, and a term with zero coefficient is omitted
before evaluating a logarithm. The formula gives $\Psi_b(\theta,\nu)=-\infty$ when
$0\le\nu<\theta$, so its feasible count range is $\nu\ge\theta$.}
\begin{equation}
 \Psi_b(\theta,\nu)
 =\nu\log\nu-\nu+
   \inf_{z>0}\bigl(
      \theta\log A_b(z)+(1-\theta)\log C_b(z)-\nu\log z
   \bigr),                                              \label{eq:Psi}
\end{equation}
so we expect that roughly\footnote{Here, we restrict to admissible mutation counts, meaning
$p_{d,m}(n)>0$; for $b=2$, this requires $n\equiv m\pmod2$.}
\begin{equation}
   p_{d,m}(n)\approx e^{d\Psi_b(\theta,\nu)}.
   \label{eq:heuristic-pdm}
\end{equation}

\item \emph{The holding-time factor $h^{*(n+1)}(t)$.}
Define the Laplace transform
\begin{equation}
 H(\kappa)=\int_0^\infty e^{-\kappa r}h(r)\dd r
 =\int\frac{P(d\lambda)}{\kappa+1-\lambda},
 \qquad \kappa>\lambda_+-1.                               \label{eq:H}
\end{equation}
Laplace tilting gives the exact identity
\begin{align}
     h^{*(n+1)}(t)
 =e^{\kappa t}H(\kappa)^{n+1}
  \left(\frac{e^{-\kappa\,\cdot}h}{H(\kappa)}\right)^{*(n+1)}(t),
 \qquad \kappa>\lambda_+-1.\label{eq:exact-hstar}
\end{align}
The last factor is a probability density. 
{
Choosing a proper tilt shows that the leading exponent is therefore $\kappa u+\nu\log H(\kappa)$.\footnote{If an interior tilt satisfies
$
 -{H'(\kappa)}/{H(\kappa)}={t}/{(n+1)},
$
then the corresponding sum has mean $t$.  A local limit estimate then gives a density of order $d^{-1/2}$ at $t$, which does not affect the exponent.}
}
Minimizing over $\kappa$ gives the function $\chi$, defined for
$\nu\ge0$ and $u>0$ by
\begin{equation}
 \chi(\nu,u)
 =\inf_{\kappa>\lambda_+-1}\bigl(\kappa u+\nu\log H(\kappa)\bigr),\label{eq:chi}
\end{equation}
and we expect that
\begin{equation}
    h^{*(n+1)}(t)\approx e^{d\chi(\nu,u)}.
    \label{eq:heuristic-convolution}
\end{equation}
\end{itemize}

Finally, combining \eqref{eq:heuristic-pdm} and \eqref{eq:heuristic-convolution}
and summing the nonnegative contributions in \eqref{eq:freshK}
selects the largest combined exponent.  For $\theta\in[0,1]$ and $u>0$,
we therefore maximize over the mutation-count density $\nu$:\footnote{The deterministic functions $\Psi_b$, $\chi$, and $\widehat F$ depend only on $b$ and the
rate law in Assumption~\ref{ass:environment}; none is quenched or random.}
\begin{equation}
 \widehat F(\theta,u)
 =\sup_{\nu\ge0}\bigl(\Psi_b(\theta,\nu)+\chi(\nu,u)\bigr).       \label{eq:F}
\end{equation}
In other words,
\begin{equation}
    \widehat K_{d,m}(t)\approx e^{d\widehat F(\theta,u)}.
\end{equation}
  When $\widehat F<0$, this mean is exponentially
small; when $\widehat F>0$, it is exponentially large.  Its positive
zero therefore gives the leading passage-time density. 
Therefore, for every $\theta\in[0,1]$, we let $u_*(\theta)$ denote the unique positive zero
of $u\mapsto\widehat F(\theta,u)$.\footnote{Its existence, uniqueness, and uniform
stability are established in Lemma~\ref{lem:global-root-model-only} in
Section~\ref{sec:root-curve}.}  

\begin{frame}
\begin{heuristic}
\label{heur:variational-profile}
We have $\widehat\theta_{d,m}\approx du_*(m/d)+o(d)$.
\end{heuristic}
\end{frame}

Heuristic \ref{heur:variational-profile} also reveals an important philosophy we will follow for the proofs: when dealing with moment-type quantities like $\widehat K_{d,m}(t)$, the typical approach is to separate out a group of $n$ with major contribution, whose complement contributes only of a lower exponential order. Then, on this compact set of major $n$'s, estimates arising from saddle analysis and local limit theorems become more convenient to work with. In the following, we call the collection of such major $n$ the \textit{retained mutation counts}, and its complement the \textit{omitted mutation counts}.

\paragraph{The sparse-distance slope.}
We have derived the macroscopic location of $\widehat\theta_{d,m}$ (i.e., on the order of $d$), and it remains to understand the microscopic perturbations (i.e., on orders smaller than $d$). We demonstrate this using sparse distances $m=o(d)$ as $d\to\infty$. Put
$$
 G(\theta,u,\nu)=\Psi_b(\theta,\nu)+\chi(\nu,u).
$$
 For this heuristic only, assume for simplicity that
at $(0,u_*(0))$ there is a unique triple $(\nu,z,\kappa)$ attaining
the supremum in \eqref{eq:F} and the infima in \eqref{eq:Psi}
and \eqref{eq:chi}.
Stationarity in $z$ and $\nu$ in
\eqref{eq:Psi}, \eqref{eq:chi}, and \eqref{eq:F}, using $C_b'=A_b$, gives
\[
 \nu=z\frac{C_b'(z)}{C_b(z)}=z\frac{A_b(z)}{C_b(z)},\qquad
 0=\log\nu-\log z+\log H(\kappa),\qquad z=\nu H(\kappa).
\]
Thus, $A_b(z)/C_b(z)=1/H(\kappa)$.  Since $z$ and $\kappa$ minimize
their respective objectives in \eqref{eq:Psi} and \eqref{eq:chi},
\begin{equation}
 \partial_\theta G=\log\frac{A_b(z)}{C_b(z)}=-\log H(\kappa),
 \qquad \partial_uG=\kappa.
 \label{eq:heuristic-root-derivatives}
\end{equation}
To find the sparse-distance slope, formally set $u=u_*(0)+r\theta$
for small $\theta>0$.  Using stationarity in $\nu$, the chain rule and
\eqref{eq:heuristic-root-derivatives} give
\[
 \widehat F\bigl(\theta,u_*(0)+r\theta\bigr)
 =\theta\bigl(\kappa r-\log H(\kappa)\bigr)+o(\theta).
\]
Since $\kappa>0$, keeping the first-order exponent at zero gives the slope 
$r=\log H(\kappa)/\kappa$.  
\begin{frame}
\begin{heuristic}
\label{heur:sparse-slope}
For $m=o(d)$, we expect $\widehat\theta_{d,m}\approx u_*(0)d+(\log H(\kappa)/\kappa)m$.
\end{heuristic}
\end{frame}

\subsection{Statements of main results}

In this section, we present our main results. We start with a quantitative description of the deterministic center $\widehat\theta_{d,m}$, followed by the quenched and annealed tightness of the FPT.

\begin{definition}
\label{def:maximizing-saddle}
A \emph{maximizing saddle}\footnote{The term saddle refers to this max--min optimization: maximize over $\nu$
after minimizing over $(z,\kappa)$. } at $(\theta,u)$ is a triple $(\nu,z,\kappa)$
such that $\nu$ maximizes $G(\theta,u,\cdot)$, $z$ minimizes the spatial
objective in \eqref{eq:Psi}, and $\kappa$ minimizes the clock objective
in \eqref{eq:chi}, at this value of $\nu$.\footnote{A clock minimum at the
boundary is interpreted through its limiting value.} 
\end{definition}

Let $\mathcal S_*$ be the set of maximizing saddles at
$(0,u_*(0))$, and let
$$
 \mathcal K_*=\{\kappa:(\nu,z,\kappa)\in\mathcal S_*\}.
$$

\begin{theorem}
\label{thm:sparse-expansion}
Under Assumption~\ref{ass:environment}, there is a deterministic
$C_{\rm ctr}<\infty$ such that, for all sufficiently large $d$,
\begin{equation}
 \sup_{1\le m\le d}
 \left|\widehat\theta_{d,m}
      -d u_*\!\left(\frac md\right)\right|
 \le C_{\rm ctr}\log d.                              \label{eq:all-distance-center-log}
\end{equation}
In addition,
\begin{equation}
u_*(\theta)=u_*(0)+r_+\theta+o(\theta)
 \quad(\theta\downarrow0),
 \qquad\text{where }~~~
 r_+=\min_{\kappa\in\mathcal K_*}\frac{\log H(\kappa)}{\kappa}>0.                            \label{eq:sparseright}
\end{equation}
Consequently, if $1\le m_d\le d$ is a deterministic integer sequence with
$m_d=o(d)$, then
\begin{equation}
 \widehat\theta_{d,m_d}=u_*(0)d+r_+m_d+o(m_d)+O(\log d).
 \label{eq:sparse-center-expansion}
\end{equation}
\end{theorem}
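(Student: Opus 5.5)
The proof splits into three parts: a uniform two-sided bound on $\widehat K_{d,m}(t)$ with only polynomial prefactors, a translation of that bound into the location of $\widehat\theta_{d,m}$, and a one-sided envelope argument for the right derivative of $u_*$ at $0$.

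\textbf{Part 1: uniform estimate of the mean.} Starting from the exact identity \eqref{eq:freshK}, I would show that uniformly in $1\le m\le d$ and in $t/d$ in a compact subset of $(0,\infty)$,
\[
 d^{-C}e^{d\widehat F(m/d,t/d)}\le \widehat K_{d,m}(t)\le d^{C}e^{d\widehat F(m/d,t/d)} .
\]
\begin{itemize}
\item \emph{Upper bound.} Bound $p_{d,m}(n)$ via \eqref{eq:exact-pdm} and the Cauchy/Chernoff inequality $[z^n]F\le F(z)z^{-n}$, then apply Stirling. Bound $h^{*(n+1)}(t)$ via \eqref{eq:exact-hstar}: choose the tilt $\kappa$ optimally, and control the tilted density at $t$ by $O(d)$ using a bounded-density argument. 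Sum over $n\le Cd$; the large-$n$ tail is exponentially negligible because $\Psi_b+\chi\to-\infty$ as $\nu\to\infty$.
\item \emph{Lower bound.} Keep a single admissible $n$ near a maximizer $\nu$. A local central limit theorem for the tilted sum gives a density of order $d^{-1/2}$, and a saddle-point lower bound on the coefficient extraction loses only a polynomial factor. Boundary saddles (a clock minimum at the boundary, or $\theta$ near $0$ or $1$) need a separate crude polynomial lower bound, obtained by restricting to a small window of holding times.
\end{itemize}

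\textbf{Part 2: locating $\widehat\theta_{d,m}$.} I would use the root-curve properties stated earlier (Lemma~\ref{lem:global-root-model-only}): $u_*(\theta)$ is the unique zero, and there is uniform transversality $\partial_u\widehat F\ge c>0$ near the root. The latter follows because $\partial_u G=\kappa$ with $\kappa$ bounded away from $0$ at the root. Hence $\widehat F(\theta,u_*(\theta)\pm s)\gtrless \pm cs$ for small $s$, and by monotonicity also beyond that range.
\begin{itemize}
\item Take $t=du_*(m/d)+C\log d$. Then $d\widehat F\ge cC\log d$, which beats the $d^{C}$ prefactor, so $\widehat M_t\ge1$.
\item Take $t=du_*(m/d)-C\log d$. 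Then $\widehat M_t<1$; the unit window adds only a factor $e^{O(1)}$, since $h\le1$ gives a bounded growth rate.
\item Times with $t/d$ outside a compact set are excluded by crude bounds.
\end{itemize}
This yields \eqref{eq:all-distance-center-log}.

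\textbf{Part 3: the sparse slope \eqref{eq:sparseright}.} This is a Danskin-type envelope argument for $\widehat F(\theta,u)=\sup_\nu\inf_{z,\kappa}(\cdots)$.
\begin{itemize}
\item \emph{Derivative formula.} Show that the one-sided directional derivative along $(\theta,u)=(0,u_*(0))+\theta(1,r)$ is
\[
 \max_{(\nu,z,\kappa)\in\mathcal S_*}\bigl(\kappa r-\log H(\kappa)\bigr)+o(1).
\]
Here the maximum over maximizers $\nu$ is the standard sup-envelope. The inner infima are attained at unique $(z,\kappa)$ for each $\nu$ by strict convexity, so their derivatives are the partial derivatives \eqref{eq:heuristic-root-derivatives}. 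Justifying this needs compactness of $\mathcal S_*$ and upper semicontinuity of the maximizer set.
\item \emph{Sign analysis.} The derivative is positive iff $r>\log H(\kappa)/\kappa$ for some $\kappa\in\mathcal K_*$, that is, iff $r>\min_{\kappa\in\mathcal K_*}\log H(\kappa)/\kappa=r_+$. Combined with the monotonicity of $\widehat F$ in $u$, this pins $u_*(\theta)=u_*(0)+r_+\theta+o(\theta)$.
\item \emph{Positivity.} At the root we need $H(\kappa)>1$. This holds because $A_b(z)/C_b(z)<1$ and $A_b(z)/C_b(z)=1/H(\kappa)$.
\end{itemize}
Finally, \eqref{eq:sparse-center-expansion} follows by combining \eqref{eq:all-distance-center-log} with \eqref{eq:sparseright}, since $d\cdot o(m/d)=o(m)$.

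\textbf{Main obstacle.} I expect the hardest step to be the uniformity of the polynomial-prefactor lower bound in Part 1, over all $1\le m\le d$ including the regime $\theta\to1$. The difficulty is degenerate saddles: boundary clock minima when $P$ has an atom at $\lambda_+$, and the parity constraint for $b=2$. My first attempt would be to handle these by perturbing $n$ by $O(1)$ and using monotonicity in $t$.
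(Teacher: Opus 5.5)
Your three-part plan follows the paper's route. Part~1 is Proposition~\ref{prop:global-oneobject}, Part~2 is Lemma~\ref{lem:independent-center-location}, and Part~3 is the Danskin argument of Lemma~\ref{lem:sparsederivative}, including $A_b(z)/C_b(z)=1/H(\kappa)$ at $\theta=0$ to get $H(\kappa)>1$. However, two intermediate claims are wrong as stated, and the first is exactly the step you call the main obstacle.

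\textbf{The lower bound in Part~1 should not be claimed globally.} Do not try to prove $\widehat K_{d,m}(t)\ge d^{-C}e^{d\widehat F}$ for $t/d$ in arbitrary compact sets, because it can fail there.
\begin{itemize}
\item A boundary clock minimum at $\kappa_\partial=\lambda_+-1$ requires $H_\partial<\infty$. This is impossible when $P$ has an atom at $\lambda_+$, so you have that case backwards.
\item In the boundary regime, the $n+1$ tilted holding times must exceed their mean by order $d$. So $\widetilde f_{n+1,\kappa_\partial}(t)$ is governed by the tail of $e^{(1-\lambda_+)r}h(r)=\int e^{-(\lambda_+-\lambda)r}P(d\lambda)$.
\item For laws with very thin mass near $\lambda_+$, this tail decays faster than any power. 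No restriction to a window of holding times can then recover a polynomial prefactor.
\end{itemize}
This is why the paper states the two-sided bound only on compact subsets of the regular region $\mathcal R$. That is all you need, since the lower bound enters only at times $du_*(m/d)+O(\log d)$. Lemma~\ref{lem:variational-regularity} places every point with $\widehat F\ge0$ in $\mathcal R$. Together with Lemma~\ref{lem:global-root-model-only}, it keeps the near-maximizing counts near the root graph with $\nu-\theta$ bounded below. It also keeps the spatial tilts in a compact subset of $(0,\infty)$ and the clock tilts interior ($\kappa\ge\kappa_{\rm rt}>0$ at the maximizing saddles), uniformly in $\theta\in[0,1]$. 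Apart from the $O(1)$ parity shift for $b=2$, your obstacle disappears.

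\textbf{The function $\widehat F(\theta,\cdot)$ is not monotone.} For instance, $\widehat F(0,u)=\kappa_\partial u+O(u^2)$ decreases for small $u$. So ``by monotonicity also beyond that range'' cannot rule out an earlier crossing. Since $\widehat\theta_{d,m}$ is a first crossing time, every $t<du_*(m/d)-C\log d$ must be controlled. Use instead the sign structure that follows from uniqueness of the zero.
\begin{itemize}
\item $\widehat F<0$ on $(0,u_*(\theta))$, with the uniform bound \eqref{eqtag:R1} on $\varepsilon\le u\le u_*(\theta)-\eta$. Combined with the global upper bound, this handles intermediate times.
\item For $t\le\varepsilon d$, use $\widehat K_{d,m}(s)\le e^{-(1-\lambda_+)s}A_b(s/d)^mC_b(s/d)^{d-m}$. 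Its integral over $[0,2\varepsilon d]$ is $O(d^{-1})$ only because $m\ge1$.
\end{itemize}
In Part~3, the strict local increase of $\widehat F(\theta,\cdot)$ through its zero, which does hold, is enough to convert the sign of $\widehat F(\theta,u_*(0)+r\theta)$ into the location of $u_*(\theta)$.
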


Denote the survival event and its quenched probability by
\begin{equation}
 \mathsf S=\left\{\sum_vN_t(v)>0\text{ for every }t\ge0\right\},
 \qquad s^\omega(x)=\Pw_x(\mathsf S).
 \label{eq:survival-probability}
\end{equation}
 {When $s^\omega(x)=0$, we define
$\Pw_x(\cdot\mid\mathsf S):=\Pw_x(\cdot)$ by convention.}
With environment probability tending to one,
$s^\omega(x)>0$ at every vertex, by Lemma~\ref{lem:uniform-survival}.

\begin{theorem}
\label{thm:quenched-all-distance}
Under Assumption~\ref{ass:environment}, uniformly over
$1\le m\le d$,
\begin{equation*}
 \tau_{\mathbf{x}_m}^{\omega}
 =\widehat\theta_{d,m}
  +O_{\mathbb P_{\mathbf{x}_m}^{\omega}(\cdot\mid\mathsf S)}(1)
 \quad\text{in $\Penv$-probability}.
\end{equation*}
More precisely, for every $\varepsilon>0$,
\begin{equation}
 \lim_{A\to\infty}\limsup_{d\to\infty}
 \Penv\!\left(
  \sup_{1\le m\le d}
  \Pw_{\mathbf{x}_m}\!\left(
   \bigl|\tau_{\mathbf{x}_m}^{\omega}-\widehat\theta_{d,m}\bigr|>A
   \,\middle|\,\mathsf S\right)>\varepsilon
 \right)=0.                                           \label{eq:all-distance-window}
\end{equation}
The same center gives uniform annealed tightness conditional on survival:
\begin{equation}
 \lim_{A\to\infty}\limsup_{d\to\infty}
 \sup_{1\le m\le d}
 \frac{\Eenv\!\left[
  \Pw_{\mathbf{x}_m}\!\left(
   \bigl|\tau_{\mathbf{x}_m}^{\omega}-\widehat\theta_{d,m}\bigr|>A,
   \,\mathsf S\right)\right]}{\Eenv[s^\omega(\mathbf{x}_m)]}=0.
                                                        \label{eq:all-distance-annealed}
\end{equation}
\end{theorem}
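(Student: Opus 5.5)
The plan is to sandwich the static model between first- and second-moment bounds expressed through the independent-visit mean $\widehat K_{d,m}$, and then use the exponential growth of $\widehat M_t$ near $\widehat\theta_{d,m}$ to turn bounded multiplicative errors into bounded time shifts. Concretely, I will first establish two deterministic facts about the center. The first is a growth estimate $\widehat M_{\widehat\theta_{d,m}-A}(\mathbf x_m)\le Ce^{-cA}$ and $\widehat M_{\widehat\theta_{d,m}+A}(\mathbf x_m)\ge ce^{cA}$, uniformly in $1\le m\le d$ and $A\le d^{1/2}$ say. This follows from \eqref{eq:freshK} and \eqref{eq:exact-hstar}: on retained mutation counts, the tilt $\kappa$ at the saddle is bounded below by a positive constant uniformly in $\theta\in[0,1]$, since $\partial_u\widehat F=\kappa>0$ at the root. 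The second fact is the log-concavity-type comparison $\widehat M_{t+s}\ge c\,e^{cs}\widehat M_t$ for $s\ge0$, obtained from the same local-limit analysis.

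\textbf{Upper tail of $\tau$ (no early arrival).} By a union bound over unit windows and a renewal argument, $\Pw_{\mathbf x_m}(\tau\le t)\le C\sup_{s\le t}\int_s^{s+1}K^\omega_r(\mathbf x_m,\mathbf 0)\,dr$ up to a bounded factor, since a particle at $\mathbf 0$ stays there for an $\mathrm{Exp}(1)$ time. I therefore need the first-moment comparison $\Eenv[K^\omega_t(\mathbf x_m,\mathbf 0)]\le C\widehat K_{d,m}(t+C)$. I will prove this by expanding over discrete paths: the annealed weight of a path equals the independent-visit weight times a factor $\Eenv[e^{\Lambda(r_1+\dots)}]/\prod\Eenv[e^{\Lambda r_i}]$ on revisited vertices. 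Since $\lambda_+<1$, this factor is at most $e^{\lambda_+\cdot(\text{revisit time})}$, and the walk's revisit probability per step is $O(1/d)$, so summing over excursions gives a bounded factor. Markov's inequality in $\Penv$ then gives $\Penv(\sup_m\Pw(\tau<\widehat\theta-A)>\varepsilon)\le \varepsilon^{-1}\sum_m\cdots$. The sum over $m$ is handled by symmetry: it suffices to control each $m$, but the supremum over $d$ values of $m$ costs a factor $d$. I will absorb this by a finer bound $Ce^{-cA}$ improved to $e^{-cA}d^{-2}$ after shifting $A$ by $O(\log d)$, and then recovering bounded $A$ via the growth estimate together with monotonicity in $m$ (via a coupling: from $\mathbf x_m$, the process must pass through distance $m-1$). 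I expect this uniformity in $m$ to require care.

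\textbf{Lower tail (arrival by $\widehat\theta+A$ given survival).} This is the main obstacle. I will use a two-stage argument. First, conditional on survival, the population grows: by Lemma~\ref{lem:uniform-survival} and a local argument, with quenched probability at least $1-\varepsilon$ there are at least $L$ particles by time $A/2$, spread over distinct vertices near $\mathbf x_m$, each starting an essentially independent subtree. Second, for each such particle I will apply a Paley--Zygmund (fractional or truncated second-moment) bound to the count $Z=\int_{t}^{t+1}N_s(\mathbf 0)\,ds$ with $t=\widehat\theta_{d,m}+A/2$. This needs $\Ew[Z^2]\le C(\Ew Z)^2+C\,\Ew Z$ on a good environment event, together with $\Ew Z\ge c\,\widehat M_t$ with high $\Penv$-probability. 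The second-moment estimate decomposes over pairs of ancestral paths joined at a branch point. The overlap of two independent near-optimal paths on the cube is $O(1)$ with exponential tails, so the environment correlation along shared vertices contributes a bounded factor. I will control the diagonal contributions near the endpoints $\mathbf x_m$ and $\mathbf 0$ by truncating on a good event, namely that no vertex within distance $O(1)$ of the endpoints has atypically small rate influence, which holds with $\Penv$-probability $1-o(1)$. The quenched lower bound $\Ew Z\ge c\widehat M_t$ will come from a quenched second moment in the environment, $\Var_{\rm env}(K^\omega)\le C(\Eenv K^\omega)^2$, by the same overlap counting. This gives a per-particle success probability $\ge c>0$, and then $L$ near-independent trials give $1-(1-c)^L\le\varepsilon$.

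The annealed statement \eqref{eq:all-distance-annealed} follows by integrating the quenched bounds. The contributions off the good event have $\Penv$-probability $o(1)$, while $\Eenv[s^\omega]$ is bounded below, so these are negligible. The uniformity over $m$ in the second-moment part is the step I expect to fail first, especially near $m=d$ and near $m=1$, where endpoint overlaps are most significant. For these cases I would try treating a bounded neighbourhood of $\mathbf 0$ separately by conditioning on the environment there.
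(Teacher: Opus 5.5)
Your overall architecture matches the paper's: first-moment occupation controls early hits, a Paley--Zygmund bound amplified over independent surviving families controls late hits, and revisit counting compares static and independent-visit weights. However, the step that makes everything uniform is missing.

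\emph{Uniformity needs concentration conditional on the endpoint rates.} Annealed first moments plus Markov give $\Eenv[\Pw_{\mathbf x_m}(\tau^\omega_{\mathbf x_m}<\widehat\theta_{d,m}-A)]\ll e^{-cA}$ for each $m$. This does not survive a union bound over $d$ values of $m$. Nor does it survive one over the $O(d^{M+1})$ vertices near some $\mathbf x_m$ where your trial particles may land. Your repair does not work. A $C\log d$ shift only controls times before $\widehat\theta_{d,m}-C\log d$. There is also no monotonicity in $m$ in a fixed environment: passing through distance $m-1$ just replaces one start by a supremum over many starts. Likewise, $\Var_{\rm env}(K^\omega)\le C(\Eenv K^\omega)^2$ gives a good event of positive, not high, probability. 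Moreover, the relative fluctuation of $K_t^\omega(y,\mathbf0)$ really stays of order one, because of the endpoint rates $\Lambda_y,\Lambda_{\mathbf0}$. The missing idea is to condition on these two rates. One then shows that $K_t^\omega(y,\mathbf0)/\widehat K_t(y,\mathbf0)$ concentrates around its conditional mean, which is $\asymp_L1$ when $\Lambda_y,\Lambda_{\mathbf0}\ge-L$, with $2R$-th central moments $O(d^{-R(1-1/s)})$. The tools are a Doob martingale that resamples one rate at a time, and the bridge estimate $\sum_{z\notin\{y,\mathbf0\}}\mathbb Q_{y,\mathbf0,t}(z\in\operatorname{Supp}\Gamma)^2=O(d^{-1})$ (Lemma~\ref{lem:abstract-environment-concentration}). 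Polynomially small deviation probabilities do beat the union bound (Proposition~\ref{prop:global-center}). The event $\{\Lambda_{\mathbf0}<-L\}$ is excluded at small cost. This also repairs your renewal step at the target, since particles at $\mathbf0$ can die.

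\emph{The Paley--Zygmund step needs a substitute for a quenched second moment.} You need $\Ew[Z^2]\le C(\Ew Z)^2+C\Ew Z$ on one environment event for all these starts, and nothing in the proposal produces it. Overlap of two independent near-optimal paths controls environmental correlation, not the genealogy. At a middle-range split, the continuation kernel from an atypical split vertex can be as large as $e^{Cd}$, while such vertices carry bridge weight $e^{-cd}$. So the square is not controlled by crude bounds, and early, middle and late splits each need their own argument uniformly in $m$. The paper never proves a quenched second moment. Instead it does three things:
\begin{enumerate}
\item It bounds the environment average of $\Ew_y[N_t(\mathbf0)^{1+q}]$, conditional on endpoint rates, for small $q$ (Theorem~\ref{thm:globalfractionaldiagram} and Proposition~\ref{prop:quenched-fractional-diagram}). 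Small $q$ is exactly what makes $e^{-cd+qCd}$ small.
\item It applies fractional Paley--Zygmund to bound below the conditional mean of $p_t^\omega(y)=\Pw_y(N_t(\mathbf0)>0)$.
\item It concentrates $p_t^\omega(y)$ itself via $|p_t^\omega(y)-p_t^{\omega^{(z)}}(y)|\le\widehat K_t(y,\mathbf0)(H_z^\omega+H_z^{\omega^{(z)}})$, obtained by killing particles on entering $z$ (Proposition~\ref{prop:quenched-barrier}).
\end{enumerate}

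\emph{Smaller corrections.} The revisit ratio $h(r_1+r_2)/(h(r_1)h(r_2))$ is not bounded by $e^{\lambda_+r_2}$ when $P$ has a substantial negative part; only $p_\varepsilon^{-1}e^{\varepsilon r_2}$ holds, so holding times must be integrated under the tilted clock law. Passing to the survival-conditioned law cannot divide by $s^\omega(\mathbf x_m)$ uniformly in $m$. Trial particles also cannot sit at vertices with very negative rates. This is why the paper conditions on the first mutation and uses marked particles at rates bounded below (Lemma~\ref{lem:survival-preliminary-family}).
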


\begin{remark}\label{rem:fluctuation}
Although $\widehat\theta_{d,m}$ is constructed from the independent-visit
{first moment $\widehat M_t(\mathbf x_m)$}, Theorem~\ref{thm:quenched-all-distance} shows that it also centers
the first-passage time in the static environment conditional on survival.
Moreover, the environment-dependent location of this conditional law fluctuates only
on the order-one scale about a deterministic center.  This is 
different from the one-dimensional static random-environment models, where the
medium displaces the front on the diffusive $\sqrt n$ scale about its
deterministic linear order and an environment-dependent center is required
to recover order-one tightness; see~\cite{CernyDrewitz2020,KriechbaumSpace2024}. 
\end{remark}

\subsection{Applications}

\subsubsection{Comparative statics}

As a first application of our main results, we investigate comparative statics: how does the FPT change if we modify the underlying parameters? Here, we consider the deterministic mutation rate and the branching rate distribution. 

\paragraph{Mutation rate.}
We study how the FPT asymptotics change when we change the mutation
rate to $\varrho>0$ (which is a constant not depending on $d$) while keeping the branching rate law $\Lambda_v$ fixed.
Corollary~\ref{thm:mutation-rate}
shows that, in the sparse-distance regime, an increase in $\varrho$ leads to an increase in the FPT of order $d$. This parallels 
\cite[Corollary~3]{BZ} 
for the homogeneous BRW model (i.e., with constant branching rate). Since our main results assume a unit mutation rate, we need to rescale in order to apply these results. For
rates satisfying
\begin{equation}
 \varrho>\lambda_+,\qquad
 \varrho\int\frac{P(d\lambda)}{\varrho-\lambda}>1,
 \label{eq:mutation-supercritical}
\end{equation}
let $\tau_x^{\omega,\varrho}$ denote the corresponding first-passage time
and $u_*^{[\varrho]}$ the root $u_*$ for the rate law
$\mathcal L(\Lambda_{\mathbf0}/\varrho)$.  Dividing
$u_*^{[\varrho]}$ by $\varrho$ gives the leading passage-time coefficient (at order $d$) 
in physical units.

\begin{corollary}
\label{thm:mutation-rate}
Under Assumption~\ref{ass:environment}, let $I$ be a compact interval of
mutation rates satisfying \eqref{eq:mutation-supercritical}.
For any fixed $\varrho_1<\varrho_2$ in $I$, there is
$\varepsilon=\varepsilon(\varrho_1,\varrho_2)\in(0,1]$ such that
$$
 \frac{u_*^{[\varrho_2]}(\theta)}{\varrho_2}
 >\frac{u_*^{[\varrho_1]}(\theta)}{\varrho_1},
 \qquad 0\le\theta\le\varepsilon.
$$
Thus, at these distances the larger mutation rate has a larger leading
passage-time coefficient (at order $d$).
\end{corollary}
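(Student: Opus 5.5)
The plan is to prove the strict inequality first at $\theta=0$, and then extend it to a small interval $[0,\varepsilon]$ by continuity. The extension is the easy part. The root curve $\theta\mapsto u_*(\theta)$ exists, is unique and is uniformly stable by Lemma~\ref{lem:global-root-model-only}. Applied to the rescaled law $\mathcal L(\Lambda_{\mathbf0}/\varrho)$, uniformly over $\varrho$ in the compact interval $I$, this makes $(\theta,\varrho)\mapsto u_*^{[\varrho]}(\theta)$ jointly continuous on $[0,1]\times I$. Here we use that the condition \eqref{eq:mutation-supercritical} is exactly Assumption~\ref{ass:environment} for the rescaled law, with constants uniform on $I$. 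So once $g(\varrho):=u_*^{[\varrho]}(0)/\varrho$ is shown to satisfy $g(\varrho_2)>g(\varrho_1)$, continuity in $\theta$ at both fixed rates yields some $\varepsilon>0$ on which the strict inequality persists, as the corollary claims.

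For $\theta=0$ I would work in physical time. Let $H_\varrho(\kappa)=\int P(d\lambda)/(\kappa+1-\lambda/\varrho)$ denote the transform \eqref{eq:H} for the rescaled law. A time change by $\varrho$ shows that $v=u/\varrho$ is the physical time density. The unscaled-count form $\kappa u+\nu\log H_\varrho(\kappa)$ becomes $\kappa' v+\nu\log H_\varrho(\kappa'/\varrho)$ with $\kappa'=\varrho\kappa$. Hence $g(\varrho)$ is the unique positive zero in $v$ of
\[
 \Phi_\varrho(v)=\sup_{\nu\ge0}\Bigl(\Psi_b(0,\nu)+\inf_{\kappa'}\bigl(\kappa' v+\nu\log \textstyle\int \frac{\varrho\,P(d\lambda)}{\kappa'+\varrho-\lambda}\bigr)\Bigr).
\]
Since $\partial_v\Phi_\varrho=\kappa'>0$ at the root, by \eqref{eq:heuristic-root-derivatives} transported to physical units, it suffices to show that $\Phi_{\varrho_2}(v)<\Phi_{\varrho_1}(v)$ at $v=g(\varrho_1)$. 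The root then moves strictly to the right.

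To get this comparison I would pass to the variables that make the mutation-rate dependence explicit. Write $\nu$ as the number of mutations per coordinate, and rewrite $\log H_\varrho$ as $\log\int P(d\lambda)\,(1+(\kappa'-\lambda)/\varrho)^{-1}$. Then I would differentiate in $\varrho$ along the segment $[\varrho_1,\varrho_2]$ using the envelope theorem at maximizing saddles (Definition~\ref{def:maximizing-saddle}). Where the saddle is not unique, I would take one-sided (Danskin) derivatives, as in the definition of $\mathcal K_*$. This gives
\[
 \partial_\varrho\Phi_\varrho(v)=\nu\int \frac{(\kappa'-\lambda)\,P(d\lambda)}{(\kappa'+\varrho-\lambda)^2}\Big/ \int\frac{\varrho P(d\lambda)}{\kappa'+\varrho-\lambda}\cdot\frac1\varrho .
\]
We need this to be negative. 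The competing effect is that the target is hit only after $\nu d$ mutations with $\nu>0$ at the saddle. Larger $\varrho$ forces these mutations to be spent at rate $\varrho$ away from the growing site, and the $\Psi_b(0,\nu)<0$ penalty is paid in a shorter physical time budget. Concretely, I would use the $\nu$- and $z$-stationarity relations $z=\nu H$ and $A_b(z)/C_b(z)=1/H$ to eliminate $\nu$. I would then try to show that the combination $\kappa' v+\nu\log H_\varrho$, evaluated at the saddle, is strictly decreasing in $\varrho$. This is the analogue of the homogeneous computation in \cite[Corollary~3]{BZ}, with the single rate $\lambda$ replaced by the mixture $P$.

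The main obstacle is this sign determination. The integrand $(\kappa'-\lambda)/(\kappa'+\varrho-\lambda)^2$ has no fixed sign, because the saddle exponent $\kappa'$ can lie inside the support of $P$. So a pointwise argument fails, and the stationarity equations must be genuinely used. If the envelope computation does not close, my fallback is a direct coupling or convexity argument at $\theta=0$. I would compare the two rescaled kernels $h_\varrho$, show that $\varrho\mapsto\varrho^{-1}\log$ of the leading exponent is monotone, and integrate over $[\varrho_1,\varrho_2]$. Working with the integrated difference rather than the derivative also avoids needing differentiability of $g$.
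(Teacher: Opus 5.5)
Your overall architecture matches the paper's. You reduce to strict increase of $v_0(\varrho)=u_*^{[\varrho]}(0)/\varrho$, show $\widehat F_{\varrho_2}(0,v_0(\varrho_1))<0$ via a Danskin derivative in $\varrho$, and move the zero right using $\partial_u=\beta>0$. You then extend to $[0,\varepsilon]$ by continuity of $\theta\mapsto u_*^{[\varrho]}(\theta)$ at the two fixed rates; Lemma~\ref{lem:global-root-model-only} gives this for each scaled law, so your joint-continuity claim over $I$ is unnecessary. The genuine gap is that the one substantive step is never established: the strict negativity of $\partial_\varrho\Phi_\varrho$ at the maximizing saddles. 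You correctly note that $(\kappa'-\lambda)/(\kappa'+\varrho-\lambda)^2$ has no fixed sign, but then stop. The fallback of comparing the kernels $h_\varrho$ is not an argument, so as written the proposal does not prove the corollary.

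The missing idea is a two-line inequality. Since $\kappa'-\lambda=(\kappa'+\varrho-\lambda)-\varrho$, your numerator integral equals $M_{1,\varrho}(\kappa')-\varrho M_{2,\varrho}(\kappa')$ in the notation of \eqref{eq:mutation-Mj}. The envelope derivative therefore has the sign of $1/\varrho-M_{2,\varrho}/M_{1,\varrho}$; the stray factor $1/\varrho$ in your display is harmless. Cauchy--Schwarz gives $M_{2,\varrho}\ge M_{1,\varrho}^2$. The stationarity relations you listed but did not exploit, specialised to $\theta=0$ where $C_b'=A_b$, read $\nu=zA_b(z)/C_b(z)$ and $z=\varrho\nu M_{1,\varrho}(\kappa')$. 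Hence $\varrho M_{1,\varrho}(\kappa')=C_b(z)/A_b(z)>1$, and so $\varrho M_{2,\varrho}\ge M_{1,\varrho}\cdot\varrho M_{1,\varrho}>M_{1,\varrho}$. The derivative is thus strictly negative at every maximizing saddle.

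The paper packages this slightly differently. The substitution $\kappa=\beta+\varrho-1$ gives the exact identity \eqref{eq:mutation-affine-tilt}, so $\partial_\varrho G_\varrho=\nu/\varrho-u$ with no envelope step in the clock variable. Clock stationarity $u=\nu M_{2,\varrho}/M_{1,\varrho}$ then gives $u-\nu/\varrho\ge\nu\bigl(M_{1,\varrho}-1/\varrho\bigr)>0$. In the homogeneous case of \cite{BZ}, $M_2=M_1^2$ and only $\varrho M_1>1$ is needed; the spread of $P$ enters through Jensen with the favourable sign.

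Once the sign is in hand, note that differentiating along the whole segment $[\varrho_1,\varrho_2]$ at the fixed time $g(\varrho_1)$ needs the sign and the Danskin hypotheses at intermediate rates, where that time is no longer the root. The paper avoids this by working locally. Near each base rate it confines nearby maximizing counts to a fixed compact set on which $\partial_uG_\varrho>0$ and $\nu/\varrho-u<0$ uniformly, obtains strict increase of $v_0$ on a neighbourhood, and covers the compact rate interval by finitely many such neighbourhoods.
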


\paragraph{Reproduction rate.}
{By a standard coupling argument, stochastically larger net growth-rate laws give
stochastically smaller unconditioned first-passage times under the annealed law.}
Thus, the nontrivial comparative statics is with respect to the convex order. We write $P_1\le_{\rm cx}P_2$ if the two laws have the same finite mean
and $\int f(\lambda)P_1(d\lambda)\le\int f(\lambda)P_2(d\lambda)$
for every convex function $f$.  In the following, we use subscripts $1,2$ for the
deterministic quantities associated with the two reproduction rate distributions $P_1,P_2$.

\begin{corollary}
\label{cor:rate-convex-order}
Suppose that the mutation walk is fixed with unit rate. Let $P_1,P_2$ be two fixed rate
laws satisfying Assumption~\ref{ass:environment}.  If
$P_1\le_{\rm cx}P_2$, then
\[
 \widehat\theta_{2,d,m}\le\widehat\theta_{1,d,m}
 \quad(1\le m\le d),\qquad
 u_{*,2}(\theta)\le u_{*,1}(\theta)
 \quad(0\le\theta\le1).
\]
If $P_1\ne P_2$, the root inequality is strict.  Thus, for deterministic
integers $1\le m_d\le d$, the leading passage-time
coefficient $u_{*,i}(m_d/d)$ is strictly smaller for $i=2$, conditional on survival
under each law.
\end{corollary}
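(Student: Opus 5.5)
The plan is to prove everything at the level of the kernel $h$, using the exact series \eqref{eq:freshK}. Let $f(\lambda)=e^{-(1-\lambda)r}$. For each fixed $r\ge0$ this is convex in $\lambda$, so $P_1\le_{\rm cx}P_2$ gives
\[
 h_1(r)=\int e^{-(1-\lambda)r}P_1(d\lambda)\le\int e^{-(1-\lambda)r}P_2(d\lambda)=h_2(r).
\]
Both $h_i$ are nonnegative, and convolution is monotone on nonnegative functions. Hence $h_1^{*(n+1)}(t)\le h_2^{*(n+1)}(t)$ for all $n,t$.

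The spatial weights $p_{d,m}(n)$ do not depend on the rate law. So \eqref{eq:freshK} yields $\widehat K_{1,d,m}(t)\le\widehat K_{2,d,m}(t)$ for every $t$. Integrating over unit windows gives $\widehat M_{1,t}\le\widehat M_{2,t}$. By the definition \eqref{eq:thetahat definition} as a first crossing of level one, we get $\widehat\theta_{2,d,m}\le\widehat\theta_{1,d,m}$. This proves the first inequality.

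For the roots, the same convexity gives $H_1(\kappa)\le H_2(\kappa)$ for every $\kappa>\lambda_{+,2}-1$, since $\lambda\mapsto 1/(\kappa+1-\lambda)$ is convex on the support. Two points need care.

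\emph{Domains.} Convex order implies $\lambda_{+,1}\le\lambda_{+,2}$ (use $f(\lambda)=(\lambda-c)_+$). So the $\chi_2$ infimum runs over a smaller set. The remedy is to note that $\chi_1$ restricted to $\kappa>\lambda_{+,2}-1$ is at most $\chi_2$ pointwise, and that the full $\chi_1$ is not larger than this restriction. The direction we need is $\chi_1\le\chi_2$. Since $\log H_1\le\log H_2$ on the common domain, the infimum of $\kappa u+\nu\log H_1$ over the larger domain is at most its infimum over the smaller one, which is at most $\chi_2$. Hence $\chi_1(\nu,u)\le\chi_2(\nu,u)$.

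\emph{Conclusion for the roots.} Since $\Psi_b$ is common to both laws, $\widehat F_1\le\widehat F_2$. By Lemma~\ref{lem:global-root-model-only}, $u\mapsto\widehat F_i(\theta,u)$ has a unique positive zero and changes sign from negative to positive there. Evaluating at $u=u_{*,1}(\theta)$ gives $\widehat F_2(\theta,u_{*,1})\ge0$, hence $u_{*,2}(\theta)\le u_{*,1}(\theta)$. Alternatively, one can divide the first inequality by $d$ and apply Theorem~\ref{thm:sparse-expansion} along $m/d\to\theta$, using continuity of $u_*$ from Lemma~\ref{lem:global-root-model-only}.

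Strictness is the main obstacle. Write $g(\lambda)=1/(\kappa+1-\lambda)$. If $P_1\ne P_2$ with equal means, then $\int g\,dP_1<\int g\,dP_2$ for every admissible $\kappa$. The reason is that $g$ is strictly convex: for laws with equal means the gap can be written through the integrated difference of $(\lambda-c)_+$ moments, $\int g\,d(P_2-P_1)=\int g''(c)\,\bigl(\pi_2(c)-\pi_1(c)\bigr)\,dc$, where $\pi_i(c)=\int(\lambda-c)_+P_i(d\lambda)$, $\pi_2\ge\pi_1$, and the inequality is strict on an open set of $c$ when $P_1\ne P_2$.

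Thus $H_1<H_2$ pointwise. Then at the maximizing saddle $(\nu,z,\kappa)$ for law $2$ at $(\theta,u_{*,1}(\theta))$, we have $\chi_1(\nu,u)\le\kappa u+\nu\log H_1(\kappa)<\kappa u+\nu\log H_2(\kappa)=\chi_2(\nu,u)$, using $\nu>0$. If $\kappa$ lies at the boundary, I would use the limiting interpretation from Definition~\ref{def:maximizing-saddle}, where strictness persists because $H$ stays bounded below by a positive gap near the boundary. Here I expect to need a short argument, or to choose a nearby interior $\kappa$ with near-optimal value and a uniform positive gap $\log H_2-\log H_1\ge c>0$ on compacts. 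This yields $\widehat F_2(\theta,u_{*,1})>\widehat F_1(\theta,u_{*,1})=0$, so strict sign change gives $u_{*,2}<u_{*,1}$.

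Finally, the statement about leading passage-time coefficients conditional on survival follows by combining this with Theorem~\ref{thm:quenched-all-distance}: $\tau$ is $\widehat\theta+O(1)$ under each law's conditional law, and Theorem~\ref{thm:sparse-expansion} gives $\widehat\theta_{i,d,m_d}=du_{*,i}(m_d/d)+O(\log d)$.
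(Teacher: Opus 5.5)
Your non-strict arguments are correct and follow the paper's route: $h_1\le h_2$ pointwise, monotonicity of convolution in \eqref{eq:freshK} giving $\widehat\theta_{2,d,m}\le\widehat\theta_{1,d,m}$, then $\chi_1\le\chi_2$ (your treatment of the two clock domains is right), and finally the sign structure from Lemma~\ref{lem:global-root-model-only}.

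The strictness step, however, fails as written. You take $(\nu,z,\kappa)$ to be a maximizing saddle \emph{for law~2} at $u=u_{*,1}(\theta)$ and obtain $\chi_1(\nu,u)<\chi_2(\nu,u)$. Write $G_i(\theta,u,\nu)=\Psi_b(\theta,\nu)+\chi_i(\nu,u)$. Your inequality gives only $\widehat F_2(\theta,u)=G_2(\theta,u,\nu)>G_1(\theta,u,\nu)$, while $G_1(\theta,u,\nu)\le\widehat F_1(\theta,u)=0$. The two inequalities point the wrong way, so they do not exclude $\widehat F_2(\theta,u_{*,1}(\theta))=0$.

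What is needed is a mixed pair, which is not a maximizing saddle of either law:
\begin{itemize}
\item $\nu_1$, a maximizing count \emph{for law~1} at $u=u_{*,1}(\theta)$; it is positive by Lemma~\ref{lem:variational-regularity}, since $\widehat F_1=0$ there;
\item $\kappa_2$, the law-2 clock minimizer at that $\nu_1$.
\end{itemize}
Your chain $\chi_1(\nu_1,u)\le\kappa_2u+\nu_1\log H_1(\kappa_2)<\kappa_2u+\nu_1\log H_2(\kappa_2)=\chi_2(\nu_1,u)$ then yields $0=G_1(\theta,u,\nu_1)<G_2(\theta,u,\nu_1)\le\widehat F_2(\theta,u)$. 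This is exactly the paper's argument.

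With this correct choice, the boundary case is no longer incidental. Regularity of $(\theta,u_{*,1}(\theta))$ for law~2 controls only law-2 maximizing counts, not $\nu_1$. So $\kappa_2$ may a priori equal $\lambda_{+,2}-1$ with $H_2(\kappa_2)<\infty$. You must then show $H_1(\kappa_2)<H_2(\kappa_2)$ at that endpoint, taking law~1's value by continuity if $\lambda_{+,1}=\lambda_{+,2}$.

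The paper does this by proving $h_2(r)>h_1(r)$ for every $r>0$, via $e^{r\lambda}=r^2\int_{-\infty}^{\lambda}e^{rs}(\lambda-s)\,ds$. It then writes $H_2(\kappa_2)-H_1(\kappa_2)=\int_0^\infty e^{-\kappa_2r}(h_2-h_1)(r)\,dr>0$, which holds at the endpoint as well.

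Your second-derivative representation also works once you add one observation. Since $g_\kappa''(c)=2(\kappa+1-c)^{-3}$ increases as $\kappa$ decreases, the gap $\int g_\kappa''(\pi_2-\pi_1)\,dc$ increases as $\kappa\downarrow\lambda_{+,2}-1$. By monotone convergence, its endpoint value is at least its positive value at any interior $\kappa$. State this rather than appealing to a "limiting interpretation."

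Relatedly, $\lambda\mapsto(\kappa+1-\lambda)^{-1}$ is not convex on all of $\mathbb R$. Invoking convex order for it therefore requires a convex extension past $\lambda_{+,2}$. Obtaining $H_1\le H_2$ as the Laplace transform of $h_1\le h_2$ avoids this.
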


\subsubsection{Fisher--KPP level times}
\label{sec:kpp}

Throughout this section, assume in addition that $\Lambda_v\ge0$, i.e., there are no deaths.  
On $\Hc_d^{(b)}$, the Fisher--KPP equation combines mutation with
logistic growth at rate $\Lambda_x$:
$$
 \partial_t w^\omega(t,x)
 =(P-I)w^\omega(t,x)
   +\Lambda_xw^\omega(t,x)\bigl(1-w^\omega(t,x)\bigr),
 \qquad w^\omega(0,x)=\one_{\{x=\mathbf0\}}.
$$
Its linearization at zero is the parabolic Anderson equation:
$$
 \partial_t v^\omega(t,x)
 =(P-I)v^\omega(t,x)+\Lambda_xv^\omega(t,x),
 \qquad v^\omega(0,x)=\one_{\{x=\mathbf0\}}.
$$
See~\cite[Section~2.3]{KoenigSurvey2021} for its branching interpretation
and~\cite{AvenaGunHesse2020} for the model on the binary hypercube.
First-event conditioning and the first-moment formula give
\[
 w^\omega(t,x)=\Pw_x\bigl(N_t(\mathbf0)>0\bigr),\qquad
 v^\omega(t,x)=K_t^\omega(x,\mathbf0).
\]
This is the usual branching representation of Fisher--KPP; see, for
example, \cite[Proposition~2.1]{DrewitzSchmitz2022}
for its random-medium form.  The branching representation gives
$w^\omega\le v^\omega$ in every environment.  
For a level $0<\alpha<1$ and either solution $f$, write
\[
 \mathcal T_\alpha[f](x)=\inf\bigl\{t\ge0:f(t,x)\ge\alpha\bigr\}.
\]
These times are finite whenever $\sum_v\Lambda_v>0$, by
Proposition~\ref{prop:finite-state-tools} below. 

\begin{corollary}
\label{cor:kpp-arrival}
Under Assumption~\ref{ass:environment} and $P([0,\lambda_+])=1$, for every fixed
$\alpha\in(0,1)$,
\[
 \lim_{A\to\infty}\limsup_{d\to\infty}
 \Penv\!\left(
  \max_{1\le m\le d}\max_{f\in\{w^\omega,v^\omega\}}
  \left|\mathcal T_\alpha[f](\mathbf x_m)
       -\widehat\theta_{d,m}\right|>A
 \right)=0.
\]
In addition,
$$
 \lim_{A\to\infty}\limsup_{d\to\infty}
 \Penv\!\left(
 \begin{gathered}
  \sum_v\Lambda_v>0,\quad
  \mathcal T_\alpha[w^\omega](\mathbf x_m)
       >\mathcal T_\alpha[v^\omega](\mathbf x_m)+A\\
  \text{for some }1\le m\le d
 \end{gathered}
 \right)=0.
$$
\end{corollary}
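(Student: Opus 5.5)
The plan is to reduce both claims to Theorem~\ref{thm:quenched-all-distance}, using the branching representation $w^\omega(t,x)=\Pw_x(N_t(\mathbf0)>0)=\Pw_x(\tau^\omega_x\le t)$. The second identity holds because the target is fixed: once some particle sits at $\mathbf0$, the events $\{N_t(\mathbf0)>0\}$ and $\{\tau_x\le t\}$ differ only through particles that have left $\mathbf0$. Since we only have $\{N_t(\mathbf0)>0\}\subseteq\{\tau\le t\}$, I will use monotonicity in one direction and a short extra argument in the other.

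\textbf{Step 1: no deaths, so survival is essentially certain.} With $\Lambda\ge0$ there are no deaths, so survival is automatic and $\mathsf S$ has $\Pw$-probability one whenever the process is nondegenerate. Theorem~\ref{thm:quenched-all-distance} then gives, uniformly in $m$ and with $\Penv$-probability at least $1-\varepsilon$, that $\Pw_{\mathbf x_m}(|\tau-\widehat\theta_{d,m}|>A)\le\varepsilon$.

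\textbf{Step 2: the lower bound for $\mathcal T_\alpha$.} Taking $\varepsilon<\alpha$, the inclusion above gives $w\le \Pw(\tau\le t)<\alpha$ for $t<\widehat\theta-A$. Since $v\ge w$, the lower bound for $v$ is obtained separately. Let $t=\widehat\theta-A'$. Lemma-type first-moment comparisons from Sections~\ref{sec:first-moment-analysis}--\ref{sec:overlap-transfer} show that the static mean is within a bounded factor of the independent-visit mean, in environment probability and uniformly in $m$. Combining this with $\widehat M_{\widehat\theta-A'}\ll e^{-cA'}$ gives $v^\omega(t,\mathbf x_m)\le\int_t^{t+1}K\,ds\cdot C\le Ce^{-cA'}<\alpha$. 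Here $K$ is nondecreasing-ish in the window up to a factor $e^{\lambda_+}$ on unit windows, since $K_{s}\ge e^{-(1-\lambda_-)(s-t)}K_t$ roughly. I expect this mean comparison, uniformly over $m$ and with the supremum inside the environment probability, to be the main technical point. If only an expectation bound is available, I would upgrade it via Markov's inequality combined with a union bound over $m\le d$, paid for by taking $A'$ of order a constant plus slack absorbed in the $e^{-cA'}$ decay.

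\textbf{Step 3: the upper bound for $\mathcal T_\alpha$.} For $w$, I will show that once $\tau$ has occurred by time $\widehat\theta+A$, the population at $\mathbf0$ stays positive at a fixed later time with high probability. With $\Lambda\ge0$, a particle at $\mathbf0$ stays there for an Exp$(1)$ time, so $\Pw(N_{t+1}(\mathbf0)>0)\ge c\,\Pw(\tau\le t)$ only gives a constant factor. I will instead rerun Theorem~\ref{thm:quenched-all-distance} from time $\widehat\theta+A$: by then, with high probability, there are many particles, growing without deaths, at distinct sites within a few steps of the target. Alternatively, I could use that the second-moment argument behind the theorem actually lower-bounds $\Pw(N_{t}(\mathbf0)>0)$, which is the natural form of such proofs. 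Thus $w(\widehat\theta+A,\mathbf x_m)\ge1-\varepsilon>\alpha$, and hence $v\ge w$ gives the same for $v$. Together with Step 2, this proves the first display.

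\textbf{Step 4: the second display.} It follows from the first by the triangle inequality: $\mathcal T_\alpha[w]-\mathcal T_\alpha[v]\le 2A$ off the exceptional event. The condition $\sum_v\Lambda_v>0$ only ensures the times are finite.
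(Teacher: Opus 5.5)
Your architecture matches the paper's: first moments plus the no-death inequality $K^\omega_{s+h}\ge e^{-h}K^\omega_s$ for early crossings, a positive fixed-time hitting probability plus amplification for late crossings, and the triangle inequality for the second display. The $w^\omega$ half of Step~2 via Theorem~\ref{thm:quenched-all-distance} is fine. However, the two substantive steps are not carried out.

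\emph{Step~2.} You bound $v^\omega$ only at the single time $\widehat\theta_{d,m}-A'$. Since $t\mapsto K^\omega_t(\mathbf x_m,\mathbf0)$ is not monotone, ruling out an early crossing requires $\sup_{0\le s\le\widehat\theta_{d,m}-A}v^\omega(s,\mathbf x_m)<\alpha$ for all $m$ on one environment event. The static/independent-visit comparison you appeal to (Lemma~\ref{lem:abstract-environment-concentration}(i)) is pointwise in $t$ and holds only for $(m/d,t/d)$ in a compact subset of $\mathcal R$. It therefore says nothing about times of order $\varepsilon d$ or where $\widehat F<0$. Your fallback also fails: Markov's inequality plus a union bound over $m\le d$ costs a factor $d$, which $e^{-cA'}$ with $A'$ fixed cannot absorb. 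That route would need the high-moment bound $\Eenv[|X^\omega-B^{\rm end}|^{2R}]\ll d^{-R(1-1/s)}$ instead. The fix is to integrate first, $\sup_{s\le\widehat\theta_{d,m}-A}v^\omega(s,\mathbf x_m)\le e\int_0^{\widehat\theta_{d,m}-A+1}K^\omega_r(\mathbf x_m,\mathbf0)\,\dd r$, and then invoke Proposition~\ref{prop:global-center}. That proposition already combines the pathwise heat-kernel bound for $s\le\varepsilon d$, the annealed exponential bound of Lemma~\ref{lem:global-onebridge} where $\widehat F<0$, and the concentrated weighted occupation integral near the center. It gives $\ll e^{-c(A-1)}+d^{-1}$ uniformly in $m$.

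\emph{Step~3.} The persistence idea fails, as you note, and the claim that many particles sit within a few steps of the target at time $\widehat\theta+A$ is unsupported. More importantly, Theorem~\ref{thm:quenched-all-distance} controls $\tau$, and its bootstrap launches trials at the \emph{stopping} times of Lemma~\ref{lem:survival-preliminary-family}(ii). The successful families therefore occupy the target at different absolute times, which gives nothing about $N_t(\mathbf0)>0$ at a common $t$. Proposition~\ref{prop:quenched-barrier} does give a fixed-time bound, but only a constant $p_0>0$ (take $L=0$), not $1-\varepsilon$, and $\alpha$ may be close to one. The missing idea is a fixed-time amplification. Since $s^\omega\equiv1$, Lemma~\ref{lem:survival-preliminary-family}(i) gives, with quenched probability at least $1-\eta$, $Q$ particles at the deterministic time $T$ within distance $M$ of $\mathbf x_m$. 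Their descendant systems are conditionally independent given the history up to $T$, and by \eqref{eq:quenched-hit} each occupies $\mathbf0$ at the same elapsed time $\widehat\theta_{d,m}+a_M$ with probability at least $p_0$. Hence $w^\omega(\widehat\theta_{d,m}+T+a_M,\mathbf x_m)\ge1-\eta-(1-p_0)^Q>\alpha$ once $2\eta<1-\alpha$ and $(1-p_0)^Q<\eta$. This is the late bound for $w^\omega$, and therefore for $v^\omega\ge w^\omega$.
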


In other words, the nonlinear delay remains bounded
in probability, not merely negligible on the order-$d$ scale. 
In one-dimensional random media, \cite[Theorem~1.5]{DrewitzSchmitz2022}
gives an $O(\log t)$ bound on the distance between nonlinear and linear
spatial fronts; a related bound for branching random walks appears
in~\cite[Theorem~2.4]{CernyDrewitz2020}.  The difference from our result is that on the Hamming cube, we compare times to reach
a fixed level at prescribed vertices as $d\to\infty$, rather than
spatial front positions as $t\to\infty$.

\subsection{Proof strategy and technical highlights}\label{sec:proofstrategy}

In this section, we start from a high-level concise outline of the proofs of main theorems, which underlines the key technical challenges. Then, we illustrate these challenges and their underlying intuition in further detail.

\subsubsection{Proof outline}

\paragraph{Theorem~\ref{thm:sparse-expansion}: locating the deterministic center $\widehat\theta_{d,m}$.}
Two steps from the heuristics in Section~\ref{sec:heuristics} remain to be made rigorous. 
\begin{itemize}
    \item {First, Heuristic~\ref{heur:variational-profile} identifies only the exponential rate, and hence the linear order in $d$ of the FPT. To make the exponential approximation more precise, we need \underline{local limit estimates on discrete bridges}, which identify the convolution structures within \eqref{eq:exact-pdm} and \eqref{eq:exact-hstar} with a sum of independent random variables after a proper tilting. This gives}
{
\[
 \log\widehat K_{d,m}(t)
 =d\widehat F\left(\frac{m}{d},\frac{t}{d}\right)+O(\log d)
 \quad\text{near }t=du_*\left(\frac{m}{d}\right).
\]
}
Since the exponent increases at a rate bounded away from zero there,
this locates $\widehat\theta_{d,m}$ to within $O(\log d)$
(Proposition~\ref{prop:global-oneobject} and
Lemma~\ref{lem:independent-center-location}), which gives \eqref{eq:all-distance-center-log}.

\item 
Second, {the calculation leading to Heuristic~\ref{heur:sparse-slope}
assumes} a unique maximizing saddle, but the sparse-distance calculation must allow \underline{several maximizing
saddles}; comparing their first-order responses gives the minimum
defining $r_+$ in \eqref{eq:sparseright}
(Lemma~\ref{lem:sparsederivative}).
\end{itemize}

\paragraph{Theorem~\ref{thm:quenched-all-distance}: quenched and annealed tightness of the FPT.}
There are three steps.
\begin{enumerate}
\item \emph{Compare the static and independent-visit models.}
{Heuristic~\ref{heur:static-independent}} indicates that the independent-visit model serves as a technically tractable proxy for the static model we are interested in.\footnote{Recall that each repeated visit resamples a reproduction rate for the independent-visit model but not for the static model.}  
We \underline{control their difference quantitatively} by counting revisits of the paths and their holding
times.  {Roughly speaking, this gives the conditional-mean comparison
\[
 \Eenv[K_t^\omega(\mathbf x_m,\mathbf0)
       \mid\Lambda_{\mathbf x_m},\Lambda_{\mathbf0}]
 \asymp_L\widehat K_{d,m}(t)  \qquad\text{if }\Lambda_{\mathbf x_m},\Lambda_{\mathbf0}\ge-L,
\]
and bounds the fluctuations of $K_t^\omega(\mathbf x_m,\mathbf0)
       \mid\Lambda_{\mathbf x_m},\Lambda_{\mathbf0}$ about this mean
(Lemma~\ref{lem:abstract-environment-concentration}).}

\item \emph{Obtain a positive hitting probability via fractional Paley--Zygmund.}
{ The goal is to use the fractional Paley--Zygmund inequality to give a lower bound on the probability of the first-passage event. 
For a small fixed $q\in(0,1)$, we need an upper bound on the $(1+q)$-moment
of the target count.
For the independent-visit model, we write $\mathbf K_r^P(x;(v,i))P(di)$ for
the expected number of particles at $v$ at time $r$ with current rate in
$di$, starting from one particle at $x$ with a fresh rate of law $P$.
Write $\mathbf k_s((v,i),\mathbf0)$ for the expected number of particles
at $\mathbf0$ at time $s$, starting from one particle at $v$ with initial
rate $i$.
Consider a time $t$ near the deterministic center $\widehat\theta_{d,m}$. For $r\in[0,t]$, define the fractional
branching sum by\footnote{The precise form of \eqref{eq:B} is motivated by an upper bound of fractional moments of particle counts; see \eqref{eq:moment-connection}. }
}
\begin{align}
     \widehat{\mathcal B}_{t,r}^{(1+q)}(x)
 =\sum_v\int \mathbf K_r^P(x;(v,i))\,i_+\,
       \mathbf k_{t-r}((v,i),\mathbf0)^{1+q}\,P(di).\label{eq:B}
\end{align}
{Theorem~\ref{thm:globalfractionaldiagram} gives the \underline{upper bound for the fractional branching sum}
$$
 \int_0^t\widehat{\mathcal B}_{t,r}^{(1+q)}(\mathbf x_m)\dd r
 \ll\widehat K_{d,m}(t)+\widehat K_{d,m}(t)^{1+q}.
$$
{\underline{Transferring this bound to the static model and averaging over the
environment} gives an annealed $(1+q)$-moment estimate.  Fractional Paley--Zygmund
then bounds the environmental mean of {\color{black}the probability of having a particle at the target at time $t$}
away from zero.  We then prove an annealed concentration of that probability directly by showing
that resampling one rate can matter only through a successful ancestral
path that visits the resampled vertex
(Propositions~\ref{prop:quenched-fractional-diagram}
and~\ref{prop:quenched-barrier}).}
}

\item \emph{Bootstrap and control early hits.}
Conditional on survival, we obtain many independent descendants 
within a bounded $O(1)$ time and $O(1)$ distance from the origin 
(Lemma~\ref{lem:survival-preliminary-family}).
Their positive success probabilities then \underline{lift the positive hitting probability from fractional Paley--Zygmund} to a probability close enough to one, leading to an upper bound of the FPT.
For the lower bound, we apply the first moment method and control the expectation of the total occupation before the deterministic center
(Proposition~\ref{prop:global-center}).
\end{enumerate}

\subsubsection{Main technical challenges}

{The estimates below concern $1\le m\le d$ and
$t=du_*(m/d)+O(\log d)$; implicit constants may depend on the fixed constant here.}

\paragraph{Discrete bridges and local limit theorems.}
Although our process is continuous-time, we need to condition on the number of mutations $n$ in \eqref{eq:freshK} and analyze the discrete random walk bridges, for two purposes.
\begin{itemize}
\item It gives matching estimates for the two factors in \eqref{eq:freshK}.
With tilts chosen to match $n$ and $t$, their normalized forms satisfy\footnote{{These estimates hold uniformly for retained mutation counts. The exponential-rate calculation in Heuristic~\ref{heur:variational-profile} identifies these counts; Proposition~\ref{prop:global-oneobject} proves that the omitted counts contribute an exponentially small fraction of the first moment.}}
\[
 \frac{(dz)^np_{d,m}(n)}{n!A_b(z)^mC_b(z)^{d-m}}\asymp d^{-1/2},
 \qquad
 \frac{e^{-\kappa t}h^{*(n+1)}(t)}{H(\kappa)^{n+1}}\asymp d^{-1/2},
\]
which follow from local limit theorems 
(Lemmas~\ref{lem:spatialLCLT} and~\ref{lem:clockLCLT}).

\item It decouples the time and the geometry of the mutation processes.
Let $(S_0,\ldots,S_n)$ denote an $n$-step bridge, which is a mutation path 
conditioned on its starting and ending vertices.
When quantifying the difference between the static and independent-visit models,
it is essential to control repeated visits to vertices. For discrete-time
random walks on the Hamming cube this is easier, because conditioned on
the total number of steps, the bridge path law does not depend on the
random environment. 
For example, we establish
\begin{align}
    \Pp\bigl(S_{k+1}=w\mid S_0,\ldots,S_k,S_n=\mathbf0\bigr)
 \ll\frac1{n-k}.\label{eq:bridge1}
\end{align}
for $n\asymp d$ 
(Lemma~\ref{lem:edgespread}). This is crucial for controlling intersection probabilities of discrete paths of order $O(d^{-1})$.
\end{itemize}

\paragraph{Several maximizing saddles.}
As indicated in the paragraph leading to Heuristic \ref{heur:variational-profile}, the main idea of the saddle analysis is to identify the {retained mutation counts}, depending on the distance $m$ and time $t$, through a rough and macroscopic-level first-moment computation.\footnote{However, note that this does not reveal the geometry of the mutations even if conditioned on the environment. The computation is performed for the independent-visit model, where all routes on the hypercube {with the same mutation count} contribute the same weight for the first moment.}

Near $u=u_*(\theta)$, we retain the admissible mutation counts $n$ whose
exponent $G(\theta,u,n/d)$ differs from $\widehat F(\theta,u)$ by at most
a sufficiently small fixed amount. {The other counts contribute at most
$e^{-cd}\widehat K_{d,m}(t)$
(Proposition~\ref{prop:global-oneobject}).} With $\nu=n/d$, these counts and
their tilts satisfy, for fixed $c,C>0$,
\[
 \nu-\theta\ge c,\qquad \nu\le C,\qquad c\le z,\kappa\le C.
\]
Thus, $n-m\ge cd$, and both tilts are bounded above and away from zero
(Lemmas~\ref{lem:variational-regularity} and~\ref{lem:global-root-model-only}).\footnote{As explained above, these bounds keep the local limit estimates uniform:
the tilted probabilities and densities at their means are comparable to
$d^{-1/2}$, with constants independent of the retained count and $m$.}
As the exponent approaches its maximum and $u$ approaches $u_*(\theta)$,
these parameters approach the set of maximizing saddles along that curve,
which may not be necessarily a single saddle (Lemma~\ref{lem:root-active-stability}).

{For the sparse-distance expansion at $m=o(d)$, we apply Danskin's theorem
\cite[Theorem~4.1]{BonnansShapiro1998}, which intuitively identifies the first-order response of
a maximum as the largest first-order response among its current maximizers.}
Combining this with the derivatives in
\eqref{eq:heuristic-root-derivatives} gives, for fixed $r$,
\[
 \widehat F\bigl(\theta,u_*(0)+r\theta\bigr)
 =\theta\max_{\kappa\in\mathcal K_*}
       \bigl(r\kappa-\log H(\kappa)\bigr)+o(\theta).
\]
Consequently, the first saddle to recover exponent zero determines the new root:
\[
 \max_{\kappa\in\mathcal K_*}
       \bigl(r_+\kappa-\log H(\kappa)\bigr)=0
 \quad\Longleftrightarrow\quad
 r_+=\min_{\kappa\in\mathcal K_*}\frac{\log H(\kappa)}{\kappa},
\]
which is the goal of 
(Lemma~\ref{lem:sparsederivative}).

\paragraph{Quantitative approximation of the static model using the independent-visit model.}
The two models differ if a vertex is visited repeatedly.
{Consider one
particle at rate $\lambda$ and remove each descendant when it mutates.
The expected number remaining at time $r$ is
$e^{\lambda r}e^{-r}=e^{-(1-\lambda)r}$, accounting for branching,
death, and the absence of mutation along the ancestral path.
Averaging over the rate gives
}
\[
 h(r)=\Eenv\bigl[e^{-(1-\Lambda_{\mathbf0})r}\bigr].
\]
In other words, $h(r)$ is the expected size of the family that has not yet mutated,
averaged over the rate at its starting vertex. 
Two visits to $v$ with holding times $r_1,r_2$ use the same rate in
the static model, so their environmental mean is
$$
 \Eenv\!\left[e^{-(1-\Lambda_v)r_1}e^{-(1-\Lambda_v)r_2}\right]
 =h(r_1+r_2).
$$
In the independent-visit model, the rates are independent, giving
$h(r_1)h(r_2)$ instead. 
With the normalization
$$
 Y_v(r)=\frac{e^{-(1-\Lambda_v)r}}{h(r)},
 \qquad \Eenv[Y_v(r)]=1,
$$
their ratio admits the probabilistic representation
$$
  \frac{h(r_1+r_2)}{h(r_1)h(r_2)}=\Eenv\bigl[Y_v(r_1)Y_v(r_2)\bigr].
$$
To bound alike ratios, fix $\varepsilon>0$ and put
$p_\varepsilon=\Penv(\Lambda_{\mathbf0}\ge\lambda_+-\varepsilon)>0$.
By definition,
\begin{align}
    h(r)\ge p_\varepsilon e^{-(1-\lambda_++\varepsilon)r},
 \qquad Y_v(r)\le p_\varepsilon^{-1}e^{\varepsilon r},
 \qquad r\ge0.\label{eq:Hh}
\end{align}
If $v$ is visited $k$ times, we bound all but the first factor and use
$\Eenv[Y_v(r_1)]=1$ to obtain the trivial bound
$$
 \Eenv\!\left[\prod_{\ell=1}^kY_v(r_\ell)\right]
 =\frac{h(r_1+\cdots+r_k)}{\prod_{\ell=1}^kh(r_\ell)}
 \Eenv[Y_v(r_1)]\le p_\varepsilon^{-(k-1)}
       \exp\!\left(\varepsilon\sum_{\ell=2}^kr_\ell\right).
$$
Thus, intuitively, each repeated visit of duration $r$ after the first contributes
at most $p_\varepsilon^{-1}e^{\varepsilon r}$.

In general, for a path $\pi=(v_0,\ldots,v_n)$ with holding times $(r_0,\ldots,r_n)$,
the independence of the environment yields a total ratio of
\[
 \Eenv\!\left[\prod_{j=0}^nY_{v_j}(r_j)\right]
 =\prod_{v:\,v\in\pi}
   \frac{h\!\left(\sum_{j:v_j=v}r_j\right)}
        {\prod_{j:v_j=v}h(r_j)}.
\]
Denote the local times by $L_v(\pi)=\#\{j:v_j=v\}$, with the corresponding holding times
$r_{v,1},\ldots,r_{v,L_v(\pi)}$ in order.
{For notational convenience, let $\mathbb Q_{d,m,t}$ denote
the joint law of a path and its holding times obtained by normalizing
their first-moment weights in \eqref{eq:freshK} by $\widehat K_{d,m}(t)$. 
More precisely, the count $n$ has probability
$p_{d,m}(n)h^{*(n+1)}(t)/\widehat K_{d,m}(t)$.
Given $n$, the path is an ordinary $n$-step mutation bridge from
$\mathbf x_m$ to $\mathbf0$, independent of the holding times, whose
density on $r_0+\cdots+r_n=t$ is proportional to $\prod_{j=0}^nh(r_j)$.}
\begingroup
Our next goal is to show that
\begin{equation}
 \frac{\Eenv[K_t^\omega(\mathbf x_m,\mathbf0)]}{\widehat K_{d,m}(t)}
 =\E_{\mathbb Q_{d,m,t}}\!\left[
      \Eenv\!\left[\prod_{j=0}^nY_{v_j}(r_j)\right]\right]\ll1.
 \label{eq:ts1}
\end{equation}
For a retained mutation count $n$, the holding times have density
$f_\kappa(r)=e^{-\kappa r}h(r)/H(\kappa)$, conditioned on their sum
being $t$, with $-H'(\kappa)/H(\kappa)=t/(n+1)$.
Before conditioning, integration of one holding ratio gives\footnote{Here we have omitted conditioning on the sum of holding times to equal to $t$, but it affects only a constant factor due to local limit theorems.}
\begin{align}
\int_0^\infty f_\kappa(r)Y_v(r)\,dr
&=\int_0^\infty
\frac{e^{-\kappa r}h(r)}{H(\kappa)}
\frac{e^{-(1-\Lambda_v)r}}{h(r)}\,dr=\frac1{H(\kappa)}
\int_0^\infty e^{-(\kappa+1-\Lambda_v)r}\,dr=\frac1{H(\kappa)(\kappa+1-\Lambda_v)}.\label{eq:1.5}
\end{align}
This has environmental mean one and is uniformly bounded for the
retained tilts.  Thus, each revisit costs at most a bounded constant factor
after integration. 
\endgroup 
This motivates us to to control exponential moments of $\sum_v(L_v(\pi)-1)_+$. The key argument consists of the following two observations.
\begin{itemize}
\item \emph{A prescribed next neighbour is rarely hit.}
{For $0\le k<n/2$,\footnote{To estimate the second half $n/2<k\leq n$, we time-reverse the random walk. } the bridge estimate \eqref{eq:bridge1} gives}
\begin{align}
    \Pp\bigl(S_{k+1}\in\{S_0,\ldots,S_k\}
       \mid S_0,\ldots,S_k,S_n=\mathbf0\bigr)
 \ll\frac1d\sum_{j=0}^k\one_{\{S_j\sim S_k\}};\label{eq:return}
\end{align}
see (Lemma~\ref{lem:edgespread}). 
Thus, the return probability is controlled by the number of earlier
occurrences $S_j$ that are adjacent to the current vertex $S_k$.

\item \emph{Neighbourhood occupation is small.}
Two distinct vertices of the Hamming cube $\Hc_d^{(b)}$ have at most
$\max\{2,b-2\}$ common neighbours.  From a vertex distinct from $v$, a move
into the neighbour set of $v$ therefore has probability $O(d^{-1})$. Over $O(d)$ steps, this gives
$$
 \E_{\mathbf x_m}\!\left[
 \exp\!\left(a\sum_{j=0}^n\one_{\{S_j\sim v\}}\right)
 \,\middle|\,S_n=\mathbf0\right]\ll_a1
 \qquad\text{for every fixed }a>0;
$$
see Lemma~\ref{lem:bridgestar}.
\end{itemize}
{
Conditional on $S_k=u$ and $S_n=\mathbf0$, the prefix
$(S_0,\ldots,S_k)$ is a $k$-step bridge from $\mathbf x_m$ to $u$.
The neighbourhood bound, uniformly in $u$, therefore gives
$$
 \E_{\mathbf x_m}\!\left[
 \exp\!\left(a\sum_{j=0}^k\one_{\{S_j\sim S_k\}}\right)
 \,\middle|\,S_n=\mathbf0\right]\ll_a1.
$$
Combining this with \eqref{eq:return} controls repeats within
each half; the same argument controls visits between the two halves.
Thus, for fixed $a>0$,}
$$
 \E_{\mathbf x_m}\!\left[
 \exp\!\left(a\sum_v(L_v(\pi)-1)_+\right)
 \,\middle|\,S_n=\mathbf0\right]\ll_a1,
 \qquad \pi=(S_0,\ldots,S_n);
$$
see Proposition~\ref{prop:collisionmgf}.
{Together with the clock integration argument above, this proves \eqref{eq:ts1}.}

{Write $\widehat K_t(x,y)=\widehat K_{d,d_{\mathrm H}(x,y)}(t)$.}
Once \eqref{eq:ts1} is established, we need to show that for $y$ close to $\mathbf{x}_m$,
\begin{align*}
    \frac{K_t^\omega(y,\mathbf0)}{\widehat K_{t}(y,\mathbf0)}\qquad\text{is concentrated near}\qquad\frac{{\Eenv[K_t^\omega(y,\mathbf0)\mid\Lambda_y,\Lambda_{\mathbf0}]}}{\widehat K_{t}(y,\mathbf0)}.
\end{align*}
{
To see this concentration, we apply a martingale resampling argument.
Fix the endpoint rates $\Lambda_y,\Lambda_{\mathbf0}$ and replace one
other rate $\Lambda_z$ by an independent copy.  In this way, only paths visiting $z$
have their weights changed. Revealing the rates one at a time yields a Doob martingale, and a standard martingale inequality bounds the
total fluctuation by combining the squared sizes of these changes.
  A key step is 
$$
 \sum_{z\notin\{y,\mathbf0\}}\Pp(L_z(\pi)>0)^2
 =\E\!\left[
   \sum_{z\notin\{y,\mathbf0\}}
   \one_{\{L_z(\pi)>0,\ L_z(\pi')>0\}}
 \right]\ll d^{-1},
$$
where $\pi,\pi'$ be independent paths sampled from the normalized
independent-visit weights from $y$ to $\mathbf0$. 
To see the $d^{-1}$ intuitively, note that for the retained
counts $n\asymp d$, \eqref{eq:bridge1} bounds the probability of choosing
any prescribed first neighbour by $O(d^{-1})$.\footnote{Two independent bridges
therefore choose the same first neighbour with probability $O(d^{-1})$;
time reversal gives the same explanation near the target.  Farther
from the endpoints, their positions spread out, and the sum over all
possible meetings remains $O(d^{-1})$
(Proposition~\ref{prop:range}).}    Finally, the holding-time integration
in \eqref{eq:1.5} and the repeated-visit bound just proved control the
weights of visiting paths, leading to the desired concentration (Lemma~\ref{lem:abstract-environment-concentration}).}

As a consequence, near the deterministic center, the conditional mean satisfies
\begin{align}
    \frac{\Eenv[K_t^\omega(y,\mathbf0)\mid\Lambda_y,\Lambda_{\mathbf0}]}
{\widehat K_t(y,\mathbf0)}
\asymp_L1
\qquad\text{if }\Lambda_y,\Lambda_{\mathbf0}\ge-L.
\label{eq:ts2}
\end{align}
Moreover, the upper bound part does not rely on $\Lambda_y,\Lambda_{\mathbf0}\ge-L$.
% Consequently, with high environment probability,
% \[
% K_t^\omega(y,\mathbf0)\asymp_L\widehat K_t(y,\mathbf0).
% \]

\paragraph{The fractional branching sum for the independent-visit model.}
Recall that we need to control \eqref{eq:B} to apply fractional Paley--Zygmund. For a split at time $r$, call the ancestral path before
the split the \emph{trunk} and each descendant path an \emph{arm}; see Figure \ref{fig:sketch-shared-rate} below.
The bound in Theorem~\ref{thm:globalfractionaldiagram} takes the form
\begin{align}
    \widehat{\mathcal B}_{t,r}^{(1+q)}(\mathbf x_m)
 \ll_q
 \widehat K_{d,m}(t)^{1+q}e^{-c_qr}
 +\widehat K_{d,m}(t)e^{-c_q(t-r)}
 \qquad(0\le r\le t).\label{eq:B2}
\end{align}
Conditioning on the location $v$ and current rate $i$ at the split gives
$$
 \sum_v\int\mathbf K_r^P(\mathbf x_m;(v,i))
        \mathbf k_{t-r}((v,i),\mathbf0)P(di)=\widehat K_{d,m}(t).
$$
% Dividing these weights by $\widehat K_{d,m}(t)$ gives a probability
% distribution on $(v,i)$.  This is the distribution used below when
% we speak of typical or exceptional splits.

The main difficulty is to establish the bound \eqref{eq:B2} uniformly over $1\le m\le d$. 
We apply the decomposition technique of the branching-time integral in
\cite[Appendix~C.1.2, proof of Proposition~24]{BZ}.
For a sufficiently small fixed $\delta>0$, we treat three ranges separately.
\begin{itemize}
\item \emph{Early splits: $0\le r\le\delta t$.} Our goal is to control the quantity
\[\frac{\widehat{\mathcal B}_{t,r}^{(1+q)}(\mathbf x_m)}
     {\widehat K_{d,m}(t)^{1+q}}
=
\sum_v\int
\mathbf K_r^P(\mathbf x_m;(v,i))\,i_+
\left(
\frac{\mathbf k_{t-r}((v,i),\mathbf0)}
     {\widehat K_{d,m}(t)}
\right)^{1+q}P(di),\]
for $r$ small. Recall that $\widehat K_{d,m}(t)$ and $\mathbf k_{t-r}((v,i),\mathbf0)$ counts the respective first moments, where compared to the former, the count for $\mathbf k_{t-r}((v,i),\mathbf0)$ has $r$ less time and a new distance of $d_{\mathrm H}(v,\mathbf0)$ instead of $m$. Next, we {restrict the reference counts to the retained set} and for each such $n$, pick $z_n$ and $\kappa_n$ as the optimizers in \eqref{eq:Psi} and \eqref{eq:chi}.
{Lemma~\ref{lem:global-retained-arm} makes the calculation
in Heuristic~\ref{heur:variational-profile} precise and retains the
dependence on the initial rate $i$:
\[
\frac{\mathbf k_{t-r}((v,i),\mathbf0)}
     {\widehat K_{d,m}(t)}
\ll
\sum_{n\text{ retained}}
\frac{\pi_n e^{-\kappa_n r}}{\kappa_n+1-i}
\left(\frac{A_b(z_n)}{C_b(z_n)}\right)^{d_{\mathrm H}(v,\mathbf0)-m}
+\text{lower-order contribution},
\]
where using Jensen's inequality with the probability weights
\[
\pi_n=\frac{p_{d,m}(n)h^{*(n+1)}(t)}
           {\widehat K_{d,m}(t)},
\]
we obtain
\[
\begin{aligned}
&\frac{\widehat{\mathcal B}_{t,r}^{(1+q)}(\mathbf x_m)}
     {\widehat K_{d,m}(t)^{1+q}}\\
&\ll
\sum_{n\text{ retained}}\pi_n
\Bigg(
e^{-(1+q)\kappa_n r}
\sum_v\int
\mathbf K_r^P(\mathbf x_m;(v,i))
\frac{i_+}{(\kappa_n+1-i)^{1+q}}
\left(\frac{A_b(z_n)}{C_b(z_n)}\right)^{(1+q)(d_{\mathrm H}(v,\mathbf0)-m)}P(di)
\Bigg)\\
&\qquad+\text{lower-order contribution}.
\end{aligned}
\]
Theorem~\ref{thm:globalfractionaldiagram} then bounds the parenthesized
expression by $\ll_q e^{-c_qr}$, uniformly in $n$, through delicate saddle analysis. 
Intuitively, the loss
from having $r$ less time overwhelms the gain from getting closer to
the target.}

\item \emph{Middle splits: $\delta t\le r\le(1-\delta)t$.} Consider $t=\widehat\theta_{d,m}+O(\log d)$. 
In this case, $r,t-r\asymp d$. 
We first group $v$ by their distances from both endpoints, so that
{the expected target count is
constant within each group.}
Depending on $r$, we may classify the groups as typical or exceptional, where at a given time \(r\), a typical $v$ has about as many coordinates matching the origin and the target as one would expect from a path with the given mutation count and conditioned to reach the target at time \(t\);\footnote{This is similar to Heuristic~\ref{heur:variational-profile} and will be 
made precise in Lemma~\ref{lem:setvalued-fractional-gaps}. } the exceptional groups have an exponentially small contribution.

\begin{itemize}
\item {\emph{Typical groups.}}
One can show that for a typical group $G$, the expected number of particles located in \(G\) at time \(r\) satisfies $\sum_{w\in G}\widehat K_r(\mathbf{x}_m,w)\geq e^{cd}$ for some small $c>0$.
We claim that 
\begin{align}
   \underbrace{\left(\sum_{w\in G}\widehat K_r(\mathbf{x}_m,w)\right)}_{\text{expected number arriving in group }G}\times
\underbrace{\widehat K_{t-r}(v,\mathbf0)}_{\text{expected target descendants from one particle}}
\le \widehat K_t(\mathbf{x}_m,\mathbf0).\label{eq:see}
\end{align}
Indeed, note that both $\mathbf K_r^P(\mathbf x_m;(v,i))$ and
$\mathbf k_{t-r}((v,i),\mathbf0)$ increase with $i$, so
$$
 \widehat K_r(\mathbf x_m,v)\widehat K_{t-r}(v,\mathbf0)
 \le\int\mathbf K_r^P(\mathbf x_m;(v,i))
         \mathbf k_{t-r}((v,i),\mathbf0)P(di).
$$
Summing the right-hand side over $v$ gives $\widehat K_t(\mathbf{x}_m,\mathbf0)$;
keeping only the chosen group proves \eqref{eq:see}.
By \eqref{eq:see}, {this expected target count} is $\widehat K_{t-r}(v,\mathbf0)\ll d^Ce^{-cd}$.
By \eqref{eq:Hh},
$$
 \mathbf k_{t-r}((v,i),\mathbf0)
 \le p_\varepsilon^{-1}e^{\varepsilon(t-r)}
      \widehat K_{t-r}(v,\mathbf0),
 \qquad i\le\lambda_+.
$$
Choosing $\varepsilon$ sufficiently small preserves the exponential decay, i.e., $\mathbf k_{t-r}((v,i),\mathbf0)
 \le e^{-cd}$.

\item {\emph{Exceptional groups.}} 
We apply the trivial bound 
 $\mathbf k_{t-r}((v,i),\mathbf0)\leq e^{\lambda_+(t-r)}\le e^{Cd}$.
\end{itemize}

Applying the above bounds on $\mathbf k_{t-r}((v,i),\mathbf0)^q$ gives
$$
 \frac{\widehat{\mathcal B}_{t,r}^{(1+q)}(\mathbf x_m)}
      {\widehat K_{d,m}(t)}
 \ll
 \underbrace{e^{-cqd}}_{\text{typical splits}}
 +\underbrace{e^{-cd+qCd}}_{\text{exceptional splits}}.
$$
The constants are uniform in $m$, so picking $q$ small enough 
makes both terms decay uniformly in $m$.

\item \emph{Late splits: $(1-\delta)t\le r\le t$.}
Put $s=t-r$.  The population growth is bounded by $e^{\lambda_+s}$, while  the mutation-walk probability of being at a
specified vertex is bounded by $e^{-s+Cs^2/d}$.  Since $\lambda_+<1$, for $\delta$ small enough,
$$
 \sup_{v,i}\mathbf k_s((v,i),\mathbf0)
 \le\exp\!\left(-(1-\lambda_+)s+C\frac{s^2}{d}\right)
 \le e^{-(1-\lambda_+)s/2}.
$$
 Bounding the extra $q$-th power of $\mathbf k_s((v,i),\mathbf0)$ by the supremum
and the splitting rate by $\lambda_+$ yields 
$\widehat{\mathcal B}_{t,t-s}^{(1+q)}(\mathbf x_m)
\ll\widehat K_{d,m}(t)e^{-q(1-\lambda_+)s/2}$, uniformly in $m$. Note that here we use the crucial assumption $\lambda_+<1$.
\end{itemize}

\paragraph{Averaging the branching bound over the static environment.}
{The preceding first-moment comparison \eqref{eq:ts2} concerns starts
close to each $\mathbf x_m$.} However, this does not suffice to directly
transfer the fractional branching sum \eqref{eq:B} between static
and independent-visit models because it would require uniform
estimates over all $v\in\Hc_d^{(b)}$.
{
Our goal is then to prove
\begin{equation}
 \Eenv\!\left[
 \int_0^t\sum_v K_r^\omega(x,v)(\Lambda_v)_+
       K_{t-r}^\omega(v,\mathbf0)^{1+q}\,\dd r
 \,\middle|\,\Lambda_x,\Lambda_{\mathbf0}\right]
 \ll \widehat K_t(x,\mathbf0)
       +\widehat K_t(x,\mathbf0)^{1+q},
 \label{eq:sketch-static-branching-bound}
\end{equation}
uniformly in the conditioned endpoint rates, for the starts and
times under consideration.
}
The late-split estimate from the previous paragraph holds for every
static environment and gives a contribution
$\ll K_t^\omega(x,\mathbf0)$ after integration in $r$.
{Its conditional environmental mean is therefore
$\ll\widehat K_t(x,\mathbf0)$ by \eqref{eq:ts2}.}
For the remaining ranges, we need transfer estimates from the independent-visit model to the static model by comparing them. The arguments consist of two parts.
\begin{enumerate}
\item \emph{Replace the fractional power by one or two arms.}
For fixed $(r,v)$, {write $s=t-r$.}
The static integrand (analogue of the fractional branching sum
\eqref{eq:B}) equals
\[
\begin{aligned}
&K_r^\omega(x,v)(\Lambda_v)_+
                  K_s^\omega(v,\mathbf0)^{1+q}=(\bar\lambda\,\widehat K_r(x,v)
                     \widehat K_s(v,\mathbf0)^{1+q})\,
\underbrace{\frac{K_r^\omega(x,v)}{\widehat K_r(x,v)}
 \frac{(\Lambda_v)_+}{\bar\lambda}
 \underbrace{\left(
 \frac{K_s^\omega(v,\mathbf0)}{\widehat K_s(v,\mathbf0)}
 \right)^{1+q}}_{=:a^{1+q}}}_{\text{correction term}}.
\end{aligned}
\]
The factor $\bar\lambda\,\widehat K_r(x,v)
\widehat K_s(v,\mathbf0)^{1+q}$ is our reference weight.
Positive association and Jensen's inequality
(Lemma~\ref{lem:scalar-type-domination}) give
\[
 \bar\lambda\sum_v\widehat K_r(x,v)
       \widehat K_{t-r}(v,\mathbf0)^{1+q}
 \le\widehat{\mathcal B}_{t,r}^{(1+q)}(x).
\]
Thus, the preceding bound \eqref{eq:B2} controls the total reference
weight; the remaining task is to bound
{the conditional environmental mean of the correction,
averaged with these weights.}

Using $a^{1+q}\le a+a^2$, we reduce this task to analyzing correction
terms introduced by configurations with a trunk with one or two
arms ($a$ and $a^2$, respectively). In either case, we call the
resulting configuration a \emph{diagram}
(Figure~\ref{fig:sketch-shared-rate}).
{For either diagram, we need to extend the preceding estimates
for an ordinary bridge.}

\item {\emph{Control repeated uses of the same rate.}
Consider the one-arm and two-arm diagrams in
Figure~\ref{fig:sketch-shared-rate}. Choose $(r,v)$ using the normalized reference weights, then sample
the trunk and arms independently under their bridge laws.
For each visit of duration $u$ at $z$, replacing the independent-visit
factor $h(u)$ by the static factor $e^{-(1-\Lambda_z)u}$ introduces
$Y_z(u)$. The correction weight of the sampled diagram is therefore
\[
 \frac{(\Lambda_v)_+}{\bar\lambda}
 \prod_{\text{visits }(z,u)\text{ in the diagram}}Y_z(u),
\]
where the product includes every holding occurrence, including
repeated visits to the same vertex.
For fixed $(r,v)$, averaging over the sampled trunk and arms gives
\[
 \underbrace{\frac{K_r^\omega(x,v)}
                  {\widehat K_r(x,v)}}_{\text{trunk}}
 \underbrace{\frac{(\Lambda_v)_+}
                  {\bar\lambda}}_{\text{splitting factor}}
 \underbrace{\left(
 \frac{K_{t-r}^\omega(v,\mathbf0)}
      {\widehat K_{t-r}(v,\mathbf0)}
 \right)^j}_{\text{one or two arms}},
 \qquad j\in\{1,2\}.
\]

Bound the splitting factor by $\lambda_+/\bar\lambda$.
As in the repeated-visit estimate following \eqref{eq:Hh},
we bound all but one holding ratio at each vertex and use
$\Eenv[Y_z(u)]=1$ for the remaining ratio.
At the conditioned endpoints $x,\mathbf0$, we bound the first
ratio as well. 
The holding-time bounds then have two forms:
\begin{itemize}
\item \emph{Arms.}
For retained counts, the clock-integration argument in \eqref{eq:1.5} gives
a constant cost per bounded ratio. The omitted counts have an
exponentially small contribution.

\item \emph{Trunk.}
The trunk may have too few mutations for those clock estimates.
Use instead
\[
 Y_z(u)\le p_\varepsilon^{-1}e^{\varepsilon u}.
\]
The trunk holding times sum to $r$, so these exponential factors
multiply to at most $e^{\varepsilon r}$.
\end{itemize}
Each repeated occurrence thus costs a constant factor, with an
additional factor $e^{\varepsilon r}$.
Proposition~\ref{prop:collisionmgf} bounds exponential moments of
the repetition count, and
Proposition~\ref{prop:fractional-short-trunk} gives an exponential
tail for $r$.
For sufficiently small $\varepsilon$, Hölder's inequality combines
these bounds to control the correction averaged over the diagram
and the environment, conditional on the endpoint rates
(Lemma~\ref{lem:abstract-environment-concentration}(ii)).
Together with the reference-weight bound and the late-split estimate,
this proves \eqref{eq:sketch-static-branching-bound}.
}
\end{enumerate}

% Keep the existing Figure~\ref{fig:sketch-shared-rate} unchanged.

\begin{figure}[htbp]
\centering
\begin{tikzpicture}[
 x=1cm,y=1cm,font=\small,line width=.6pt,
 endpoint/.style={circle,fill=black,inner sep=0pt,minimum size=3pt},
 splitting/.style={circle,draw,fill=black!20,inner sep=0pt,minimum size=6pt}]
 \node at (2.3,1.35) {One arm: $a$};
 \draw (0,0)--node[above] {trunk, $r$} (2.1,0)
              --node[above] {arm, $t-r$} (4.6,0);
 \node[endpoint,label=below:{$x$}] at (0,0) {};
 \node[splitting,label=below:{$v$}] at (2.1,0) {};
 \node[endpoint,label=below:{$\mathbf0$}] at (4.6,0) {};
 \begin{scope}[xshift=7cm]
  \node at (2.3,1.35) {Two arms: $a^2$};
  \draw (0,0)--node[above] {trunk, $r$} (2.1,0);
  \draw (2.1,0)--node[above,sloped] {arm, $t-r$} (4.6,.65);
  \draw (2.1,0)--node[below,sloped] {arm, $t-r$} (4.6,-.65);
  \node[endpoint,label=below:{$x$}] at (0,0) {};
  \node[splitting,label=below:{$v$}] at (2.1,0) {};
  \node[endpoint,label=right:{$\mathbf0$}] at (4.6,.65) {};
  \node[endpoint,label=right:{$\mathbf0$}] at (4.6,-.65) {};
 \end{scope}
\end{tikzpicture}
\caption{Both diagrams contain one trunk and one splitting factor
$(\Lambda_v)_+/\bar\lambda$ at $v$.  The two arms on the right end at
the same target vertex $\mathbf0$.}
\label{fig:sketch-shared-rate}
\end{figure}

\paragraph{From moments to first-passage times.}
At a split in a fixed environment, let $Z,Z'$ be the target-hitting counts
from the two i.i.d.\ descendant families.
Subadditivity and Jensen's inequality give
\[
0\le\E\bigl[(Z+Z')^{1+q}-Z^{1+q}-(Z')^{1+q}\bigr]
\le2\E[Z]\E[Z^q]\le2(\E[Z])^{1+q}.
\]
Fix $L\ge0$, and suppose
$\Lambda_y,\Lambda_{\mathbf0}\ge-L$.
{First-event conditioning gives}
\begin{align}
\Ew_y[N_t(\mathbf0)^{1+q}]
&\le K_t^\omega(y,\mathbf0)
+2\int_0^t\sum_v K_r^\omega(y,v)(\Lambda_v)_+
K_{t-r}^\omega(v,\mathbf0)^{1+q}\,\dd r.
\label{eq:moment-connection}
\end{align}
Averaging over the environment conditional on the endpoint rates,
\eqref{eq:sketch-static-branching-bound} and
\eqref{eq:ts2} give
\[
\Eenv[\Ew_y[N_t(\mathbf0)^{1+q}]
       \mid\Lambda_y,\Lambda_{\mathbf0}]
\ll \widehat K_t(y,\mathbf0)
   +\widehat K_t(y,\mathbf0)^{1+q},
\]
whereas \eqref{eq:ts2} also implies
\[
\Eenv[K_t^\omega(y,\mathbf0)
       \mid\Lambda_y,\Lambda_{\mathbf0}]
\gg_L\widehat K_t(y,\mathbf0).
\]Fractional Paley--Zygmund now leads to
{
\[
\Eenv[\Pw_y(N_t(\mathbf0)>0)
       \mid\Lambda_y,\Lambda_{\mathbf0}]
\gg_L\frac{\widehat K_t(y,\mathbf0)}
              {1+\widehat K_t(y,\mathbf0)}.
\]
At a suitable fixed time after the deterministic center,
$\widehat K_t(y,\mathbf0)\ge1$.
Proposition~\ref{prop:quenched-barrier} concentrates the success
probability about its conditional environmental mean.
}
Thus, for some $c_L>0$,
{with high environment probability,}
\[
\Pw_y(\tau_y^\omega\le t)
\ge\Pw_y(N_t(\mathbf0)>0)\ge c_L.
\]

Then, we apply a bootstrapping argument to lift the lower bound $c_L$
to a lower bound close to one. 
Conditional on the survival event $\mathsf S$, we run the static
process for a finite time, until we find $Q$ particles at rates at
least $-L$ within bounded time and distance, with failure probability
$\eta>0$ (Lemma~\ref{lem:survival-preliminary-family}, with $L$ chosen large enough).
We keep particles whose descendants survive forever.
Conditional on the history of these surviving lineages, their
descendant systems are independent, each conditioned on its own
survival. Then, we evolve these particles independently and require
one of them to hit the target by the desired time. 
The total failure probability is then of the form
\[
\Pw_{\mathbf x_m}\!\left(
\tau_{\mathbf x_m}^\omega>\widehat\theta_{d,m}+A
\,\middle|\,\mathsf S\right)
\le\eta+(1-c_L)^Q.
\]
Choosing $Q$ large and $\eta$ small leads to the upper bound of the FPT.
The lower bound follows from the first-moment method by counting the
total occupation times before $\widehat\theta_{d,m}-A$.

\section{First-moment analysis and deterministic centering}
\label{sec:first-moment-analysis}

\subsection{First-moment estimates for the independent-visit model}
\label{sec:freshprofile}

Section~\ref{sec:heuristics} derived the independent-visit representation
\eqref{eq:freshK} and the spatial coefficient \eqref{eq:exact-pdm}.
In arbitrary-endpoint notation, \eqref{eq:freshK} becomes
\[
 \widehat K_t(x,y)=\sum_{n\ge0}P^n(x,y)h^{*(n+1)}(t).
\]
By spatial transitivity, the independent-visit kernel depends only on the
distance between its endpoints:
\begin{equation}
 \widehat K_t(x,y)=\widehat K_{d,d_{\mathrm H}(x,y)}(t),
 \qquad
 \widehat M_t(y):=\int_t^{t+1}\widehat K_s(y,\mathbf 0)\dd s.           \label{eq:fresh-unit-window}
\end{equation}

\subsubsection{Local limit estimates}
In this section, we estimate the spatial and holding-time factors in each fixed-count term of
\eqref{eq:freshK} separately.  Lemma~\ref{lem:spatialLCLT} treats the spatial factor
$p_{d,m}(n)$ through \eqref{eq:tilted-pdm}, and
Lemma~\ref{lem:clockLCLT} treats the holding-time factor
$h^{*(n+1)}(t)$ after the $\kappa$-tilt.  Proposition~\ref{prop:global-oneobject}
combines the two estimates into the fixed-count exponent $G$ and sums over $n$ to
obtain $\widehat F$.

For $z>0$, define integer-valued tilted coordinate variables by
\[
 \Pp_z(X^A=k)=\frac{\bigl([w^k]A_b(w)\bigr)z^k}{A_b(z)},\qquad
 \Pp_z(X^C=k)=\frac{\bigl([w^k]C_b(w)\bigr)z^k}{C_b(z)}.
\]
Under the product law of these variables, write $\E_z$ and
$\Var_z$ for expectation and variance. Their means are
\[
 \E_z[X^A]=\frac{zA_b'(z)}{A_b(z)},\qquad
 \E_z[X^C]=\frac{zC_b'(z)}{C_b(z)}.
\]
For $\theta\in[0,1]$ and $z>0$, define
\begin{equation}
 J_\theta(z)=\theta\frac{zA_b'(z)}{A_b(z)}
 +(1-\theta)\frac{zC_b'(z)}{C_b(z)}.                    \label{eq:Jtheta}
\end{equation}
For $0\le\ell\le d$, let $\Sigma_{d,\ell,z}$ denote the sum of
$\ell$ independent copies of $X^A$ and $d-\ell$ independent copies of
$X^C$, all with tilt $z$. Then
\[
 \frac1d\E_z[\Sigma_{d,\ell,z}]=J_{\ell/d}(z).
\]
Recall \eqref{eq:exact-pdm}. 
Expressing the coefficient under this tilted law gives, for every $z>0$,
\begin{equation}
 p_{d,m}(n)
 =\frac{n!}{(dz)^n}A_b(z)^mC_b(z)^{d-m}
   \Pp_z(\Sigma_{d,m,z}=n).                             \label{eq:tilted-pdm}
\end{equation}
Thus, choosing $z=z_n$ through
$J_{m/d}(z_n)=n/d$ makes $\E_{z_n}[\Sigma_{d,m,z_n}]=n$ and turns
coefficient extraction into a central local-limit problem.

\begin{lemma}\label{lem:spatialLCLT}
Let $K_z\Subset(0,\infty)$ be compact and fix $C_0<\infty$.  Uniformly for
$m/d\in[0,1]$, $z\in K_z$, and admissible integers $j$ satisfying
$$
 \bigl|j-\E_z[\Sigma_{d,m,z}]\bigr|\le C_0\log d,
$$
one has
\begin{equation}
 \Pp_z(\Sigma_{d,m,z}=j)
 \sim_{K_z,C_0}(1+\one_{\{b=2\}})
   \frac{1}{\sqrt{2\pi\Var_z(\Sigma_{d,m,z})}}
   \exp\left(
      -\frac{\bigl(j-\E_z[\Sigma_{d,m,z}]\bigr)^2}
             {2\Var_z(\Sigma_{d,m,z})}
   \right),                                             \label{eq:uniformspatialLCLT}
\end{equation}
where, in the binary case, $j\equiv m\pmod2$.  Moreover,
\begin{equation}
 \sup_j\Pp_z(\Sigma_{d,m,z}=j)\ll_{K_z}d^{-\frac12}.     \label{eq:spatialconcentration}
\end{equation}
Both conclusions remain uniform after changing $m$ and $j$ by
$O(\log d)$, provided the appropriate lattice is respected.
\end{lemma}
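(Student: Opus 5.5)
The plan is a classical Fourier-inversion local CLT for sums of independent, non-identical, integer-valued variables. Two features need care: uniformity over the mixture proportion $m/d\in[0,1]$, and the lattice structure in the binary case.

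\emph{Step 1: moment control.} First compute the distributions. Since $A_b(w)=\frac1b\sum_k \frac{w^k}{k!}\bigl(1-(-\frac1{b-1})^k\bigr)$ and $C_b(w)=\frac1b\sum_k\frac{w^k}{k!}\bigl(1+(b-1)(-\frac1{b-1})^k\bigr)$, both $X^A$ and $X^C$ have coefficients bounded by $1/k!$. Hence for $z\in K_z$ they have uniformly bounded exponential moments, so all cumulants are bounded uniformly on $K_z$. Both also have variances bounded below on $K_z$: each is non-degenerate and the variances depend continuously on $z$. Consequently
\[
\Var_z(\Sigma_{d,m,z})=m\Var_z(X^A)+(d-m)\Var_z(X^C)\asymp d
\]
uniformly in $m$ and $z$, and third absolute moments are $O(d)$.

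\emph{Step 2: lattice structure.} For $b\ge3$, $C_b$ has nonzero coefficients at $k=0,2,3$, and $A_b$ at $k=1,2,\ldots$. Hence each variable has span $1$, with $|\varphi(s)|\le 1-c\,s^2$ for $s\in[-\pi,\pi]$, uniformly on $K_z$.

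For $b=2$, $A_2=\sinh$ and $C_2=\cosh$, so $X^A$ is odd and $X^C$ is even. Then $\Sigma_{d,m,z}\equiv m\pmod 2$ and the characteristic function has modulus one at $s=\pm\pi$. Write $X^A=1+2Y^A$ and $X^C=2Y^C$ with $Y^A,Y^C$ aperiodic of span one. This reduces the problem to a span-one LCLT for $\sum Y$, and the Jacobian factor $2$ produces $(1+\one_{\{b=2\}})$. Equivalently, invert over $[-\pi,\pi]$ and pick up equal contributions from neighbourhoods of $0$ and $\pm\pi$.

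\emph{Step 3: inversion and uniformity.} Write
\[
\Pp_z(\Sigma=j)=\frac1{2\pi}\int_{-\pi}^{\pi}e^{-isj}\,\varphi_A(s)^m\varphi_C(s)^{d-m}\,ds .
\]
Split the integral at $|s|\le d^{-1/2}\log d$.

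On the inner range, expand the logarithm of the characteristic function to third order. The cumulant bounds give a Gaussian approximation with relative error $O(d^{-1/2}\log^3 d)$. The range $|j-\E_z\Sigma|\le C_0\log d$ keeps the Gaussian factor in \eqref{eq:uniformspatialLCLT} bounded below, so the additive error becomes relative; this gives \eqref{eq:uniformspatialLCLT}.

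On the outer range (after removing neighbourhoods of $\pm\pi$ when $b=2$), use $|\varphi_A|,|\varphi_C|\le 1-cs^2$, or $\le 1-c$ away from $0$. The key point is to bound $|\varphi_A^m\varphi_C^{d-m}|\le(1-cs^2)^d$ uniformly in $m$; this works because both factors satisfy the same bound with the same $c$. This gives $e^{-c\log^2 d}$, which is negligible.

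The sup bound \eqref{eq:spatialconcentration} follows from the same estimate integrated over all $s$, giving $\int(1-cs^2)^d\,ds\ll d^{-1/2}$.

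Shifting $m$ or $j$ by $O(\log d)$ changes the mean by $O(\log d)$ and the variance by a relative $O(\log d/d)$. Both are absorbed by enlarging $C_0$, so uniformity persists.

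The main obstacle is the uniform aperiodicity bound in Step 2, in particular the binary parity reduction, and making the constant $c$ in $|\varphi|\le 1-cs^2$ uniform over compact $z$ for both variables simultaneously. I would handle this by continuity and compactness of $(z,s)\mapsto|\varphi(s)|$ on $K_z\times\{\delta\le|s|\le\pi\}$ (respectively away from $\pm\pi$ for $b=2$), together with the Taylor bound near $0$.
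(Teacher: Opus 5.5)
Your proof is correct, but it takes a different route from the paper.

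\textbf{The paper's route.} The paper does no Fourier analysis itself. It checks that each summand, after the binary rescaling $(X^A-1)/2$, $X^C/2$, has a uniformly positive ``Bernoulli part'' $\sum_k\min\{\Pp_z(X=k),\Pp_z(X=k+1)\}\ge c$. It also checks uniformly bounded third moments and variances bounded away from zero. It then cites the quantitative lattice local limit theorem of Giuliano--Weber (Corollary~1.11) for non-identically distributed summands, which gives an additive error $O(\sqrt{\log d}/d)$. The global bound \eqref{eq:spatialconcentration} is proved by a separate probabilistic argument: extract independent fair Bernoulli variables from the summands, use a Chernoff bound to keep at least $cd$ of them, and bound the largest atom of a binomial.

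\textbf{Your route.} You replace both steps with a single estimate, $|\varphi_A(s)|,|\varphi_C(s)|\le 1-cs^2$ on $[-\pi,\pi]$, holding for both coordinate types with one constant. This gives $|\varphi_A^m\varphi_C^{d-m}|\le e^{-cds^2}$ uniformly in $m$. From it you get the local expansion, via a third-order cumulant expansion on $|s|\le d^{-1/2}\log d$. You also get the sup bound directly, since $\int e^{-cds^2}\,ds\ll d^{-1/2}$.

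\textbf{Comparison.} Both arguments rest on the same uniformly positive adjacent masses. For example, $\Pp_z(X^A=1),\Pp_z(X^A=2)\ge c$ when $b>2$ is the paper's Bernoulli part, and it also gives your bound through $1-|\varphi(s)|^2\ge \Pp(X-X'=1)(1-\cos s)$. Your binary reduction $X^A=1+2Y^A$, $X^C=2Y^C$ is the same as the paper's. Your version is self-contained and treats the local and global bounds together. Its additive error is $O(d^{-1})$ if you integrate the cubic term against the Gaussian, or $O(d^{-1}\log^3 d)$ with your crude bound. The paper's version is shorter because it cites a ready-made theorem for inhomogeneous lattice sums, and it avoids characteristic functions entirely in the sup bound.
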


\begin{proof}
We apply the quantitative lattice local limit theorem
in~\cite[Corollary~1.11]{GiulianoWeber2017}.
\begin{itemize}
\item For $b>2$, the $A$ law has
uniformly positive masses at $1,2$, and the $C$ law at $2,3$, for $z\in K_z$.
\item For $b=2$, apply the theorem to $(X^A-1)/2$ and $X^C/2$ instead.  Both
transformed laws have uniformly positive masses at $0,1$, and their sum is
$(\Sigma_{d,m,z}-m)/2$.
\end{itemize}
Thus all the summands, after this binary rescaling,
are integer-valued and have a uniformly positive Bernoulli part: for each
such variable $X$,
\[
 \sum_{k\in\mathbb Z}
 \min\bigl\{\Pp_z(X=k),\Pp_z(X=k+1)\bigr\}\ge c>0.
\]
This remains true when all coordinates have the same type.

The original moment generating functions are
$A_b(ze^t)/A_b(z)$ and $C_b(ze^t)/C_b(z)$.  They are uniformly bounded
for $z\in K_z$ and $t$ in a fixed neighbourhood of zero.  Consequently,
the third absolute moments of the span-one summands are uniformly bounded,
and their variances are bounded above and away from zero.  In the cited
corollary, take the moment function $|x|^3$ and a fixed positive Bernoulli
extraction parameter smaller than the bound just obtained.  The total
variance and total extraction parameter are both of order $d$, and the
third-moment ratio is $O(d^{-1/2})$.  The corollary therefore gives an
additive error $O(\sqrt{\log d}/d)$ in its Gaussian approximation.
Its central-window condition holds because the squared standardized
displacement is $O(\log^2d/d)$.

Undoing the rescaling gives, uniformly in the window of the lemma,
\[
 \Pp_z(\Sigma_{d,m,z}=j)
 =(1+\one_{\{b=2\}})
 \frac{\exp\!\left(
 -\dfrac{\bigl(j-\E_z[\Sigma_{d,m,z}]\bigr)^2}
 {2\Var_z(\Sigma_{d,m,z})}\right)}
 {\sqrt{2\pi\Var_z(\Sigma_{d,m,z})}}
 +O\!\left(\frac{\sqrt{\log d}}d\right).
\]
For $b=2$, the variance of the transformed sum is
$\Var_z(\Sigma_{d,m,z})/4$, which gives the factor two; the original sum
belongs to $m+2\mathbb Z$.  Since its variance is of order $d$, the
Gaussian term in the stated window is comparable to $d^{-1/2}$.
This proves \eqref{eq:uniformspatialLCLT}.

For the global bound \eqref{eq:spatialconcentration}, use the same uniformly positive adjacent masses
to extract from each span-one summand a translated fair Bernoulli variable
with a fixed positive probability.  The extraction indicators are
independent, so a Chernoff bound shows that at least $cd$ Bernoulli
variables are extracted except on an event of probability $e^{-c'd}$.
Conditional on the indicators and the remaining variables, their sum is
a translate of a fair binomial variable.  With at least $cd$ trials its
largest point mass is $O(d^{-1/2})$; the exceptional event has
exponentially small probability.  This proves \eqref{eq:spatialconcentration}.

All bounds are uniform in the numbers of $A$ and $C$ summands.  Changing $m$ by
$O(\log d)$ changes the mean by
$O(\log d)(\E_z[X^A]-\E_z[X^C])=O(\log d)$, uniformly on $K_z$.
Thus the stated changes of $m$ and $j$ remain within the same order of
central window, with the appropriate parity in the binary case.
\end{proof}

For $\kappa>\lambda_+-1$, put
$f_\kappa(r)=e^{-\kappa r}h(r)/H(\kappa)$.
Let $R$ have density $f_\kappa$.
Let $\widetilde f_{N,\kappa}$ denote the density of the sum of $N$
independent variables with density $f_\kappa$.

\begin{lemma}\label{lem:clockLCLT}
Let $K_\kappa\Subset(\lambda_+-1,\infty)$ be compact and fix
$C_0<\infty$ and $0<c_N<C_N<\infty$. Uniformly for
$\kappa\in K_\kappa$, $c_Nd\le N\le C_Nd$, and $|a|\le C_0\log d$,
\begin{align}
 \widetilde f_{N,\kappa}(N\mu_\kappa+a)
 &\sim_{K_\kappa,C_0,c_N,C_N}\frac1{\sqrt{2\pi N\sigma_\kappa^2}}
   \exp\left(-\frac{a^2}{2N\sigma_\kappa^2}\right),
                                                        \label{eq:uniformclockLCLT}\\
 \sup_x\widetilde f_{N,\kappa}(x)
 &\ll_{K_\kappa}N^{-\frac12},                            \label{eq:clockconcentration}
\end{align}
where $\mu_\kappa=-H'(\kappa)/H(\kappa)$ and
$\sigma_\kappa^2=\Var_{f_\kappa}(R)$. In particular, the density at its
mean is comparable to $N^{-\frac12}$, and the ratio of the global upper bound
in \eqref{eq:clockconcentration} to this mean density is bounded.
\end{lemma}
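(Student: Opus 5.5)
We prove Lemma~\ref{lem:clockLCLT} with a uniform density local limit theorem for i.i.d.\ sums whose summand law varies in a compact family indexed by $\kappa\in K_\kappa$. It is natural to work through characteristic functions and Fourier inversion. The law $f_\kappa$ is a mixture of exponentials: writing
\[
f_\kappa(r)=H(\kappa)^{-1}\int e^{-(\kappa+1-\lambda)r}P(d\lambda),
\]
each component has rate $\kappa+1-\lambda\ge\kappa+1-\lambda_+$, which is bounded below by a positive constant uniformly on $K_\kappa$. Hence the characteristic function is explicit:
\[
\phi_\kappa(\xi)=H(\kappa)^{-1}\int\frac{P(d\lambda)}{\kappa+1-\lambda-i\xi}.
\]

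\textbf{Step 1: uniform moment bounds.} The moment generating function of $f_\kappa$ is finite on a fixed neighbourhood of zero, uniformly in $\kappa\in K_\kappa$, since shifting $\kappa$ by a small amount stays inside $(\lambda_+-1,\infty)$. Consequently $\mu_\kappa=-H'(\kappa)/H(\kappa)$ and $\sigma_\kappa^2=(\log H)''(\kappa)$ are continuous in $\kappa$, and the third absolute moments are uniformly bounded. The variance is bounded away from zero because $f_\kappa$ is a nondegenerate continuous density; continuity in $\kappa$ on a compact set then makes this bound uniform.

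\textbf{Step 2: decay of $\phi_\kappa$.} We have
\[
|\phi_\kappa(\xi)|\le H(\kappa)^{-1}\int\bigl((\kappa+1-\lambda)^2+\xi^2\bigr)^{-1/2}P(d\lambda)\le C(1+|\xi|)^{-1},
\]
uniformly on $K_\kappa$. Thus $|\phi_\kappa|^2$ is integrable with a uniform bound. For the strict inequality away from the origin, $f_\kappa$ is nonlattice, so $|\phi_\kappa(\xi)|<1$ for $\xi\ne0$. Continuity in $(\kappa,\xi)$ and the decay above then give, for every $\delta>0$,
\[
\sup_{\kappa\in K_\kappa,\ |\xi|\ge\delta}|\phi_\kappa(\xi)|\le1-c_\delta<1.
\]

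\textbf{Step 3: Fourier inversion.} Since $|\phi_\kappa|^N$ is integrable for $N\ge2$,
\[
\widetilde f_{N,\kappa}(N\mu_\kappa+a)=\frac1{2\pi}\int e^{-i\xi(N\mu_\kappa+a)}\phi_\kappa(\xi)^N\,d\xi .
\]
We split the integral into three ranges.
\begin{itemize}
\item On $|\xi|\le\delta$, a uniform third-order Taylor expansion of $\log\phi_\kappa$ (valid by Step~1) gives the Gaussian term with relative error $O(N^{-1/2}(1+|\xi|^3\sqrt N))$ after substituting $\xi=\eta/\sqrt N$. Since $a^2/N=O(\log^2d/d)\to0$, the Gaussian factor is $\asymp1$, and the main term equals $(2\pi N\sigma_\kappa^2)^{-1/2}\exp(-a^2/(2N\sigma_\kappa^2))(1+o(1))$.
\item On $|\xi|\ge\delta$, we bound by $(1-c_\delta)^{N-2}\int|\phi_\kappa|^2\,d\xi$, which is exponentially small in $d$ because $N\ge c_Nd$.
\end{itemize}
This proves \eqref{eq:uniformclockLCLT}. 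The global bound \eqref{eq:clockconcentration} follows from the same inversion with the trivial bound $|e^{-i\xi x}|\le1$: the small-$\xi$ region contributes $O(N^{-1/2})$ via $|\phi_\kappa(\xi)|\le e^{-c\xi^2}$ for $|\xi|\le\delta$, and the rest is exponentially small. This needs $N\ge2$; the case $N=1$ is trivial because $f_\kappa\le(\kappa+1-\lambda_+)/H(\kappa)$ is bounded. The final ratio statement is immediate from the two displays.

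\textbf{Main obstacle.} The delicate point is uniformity in $\kappa$ when $P$ is arbitrary, for instance purely atomic. The uniform non-lattice bound in Step~2 and the variance lower bound both rely on compactness of $K_\kappa$ and on joint continuity of $\phi_\kappa(\xi)$ in $(\kappa,\xi)$. That joint continuity follows by dominated convergence, because the integrand is bounded by $(\kappa+1-\lambda_+)^{-1}$.
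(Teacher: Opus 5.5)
Your proposal is correct, but it reaches the lemma by a different route from the paper. The paper does no Fourier analysis itself. It notes that $\|f_\kappa\|_\infty=H(\kappa)^{-1}$ and that, on $K_\kappa$, the standardized variable $(R-\mu_\kappa)/\sigma_\kappa$ has a uniformly bounded density and a uniformly bounded third absolute moment. It then invokes the quantitative density local limit theorem of Bobkov--G\"otze, whose constants depend only on those two quantities. This gives at once a global additive error $O(N^{-1})$ against the Gaussian density, and both displays follow.

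You instead run the classical inversion argument by hand, using the explicit transform $\phi_\kappa(\xi)=H(\kappa)^{-1}\int P(d\lambda)/(\kappa+1-\lambda-i\xi)$. The uniform $|\xi|^{-1}$ decay gives integrability of $|\phi_\kappa|^2$. Joint continuity plus compactness gives $\sup_{\kappa\in K_\kappa,\,|\xi|\ge\delta}|\phi_\kappa(\xi)|<1$.

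Comparing the two:
\begin{itemize}
\item Your argument is elementary and self-contained, and it makes the mechanism of uniformity in $\kappa$ explicit through the mixture-of-exponentials structure.
\item The paper's argument is shorter. It would apply to any family with uniformly bounded densities and third moments, with no continuity or compactness argument, because the bounded-density hypothesis supplies the non-lattice control quantitatively inside the cited theorem.
\item You did not actually need the explicit decay for integrability: by Plancherel, $\int|\phi_\kappa|^2=2\pi\int f_\kappa^2\le 2\pi H(\kappa)^{-1}$.
\end{itemize}

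Two small slips do not affect the argument.
\begin{itemize}
\item On $|\xi|\le\delta$, after substituting $\xi=\eta/\sqrt N$, the integrand error should be $O(|\eta|^3N^{-1/2})e^{-c\eta^2}$. This integrates to an absolute error $O(N^{-1})$, hence a relative error $O(N^{-1/2})$ in the window $|a|\le C_0\log d$.
\item For $N=1$ the correct bound is $\|f_\kappa\|_\infty=f_\kappa(0)=H(\kappa)^{-1}$. Your bound $(\kappa+1-\lambda_+)/H(\kappa)$ is false whenever $\kappa+1-\lambda_+<1$.
\end{itemize}
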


\begin{proof}
The density $f_\kappa$ is a mixture of exponential densities:
\[
 f_\kappa(r)
 =\int\frac{P(d\lambda)}
 {(\kappa+1-\lambda)H(\kappa)}
 (\kappa+1-\lambda)e^{-(\kappa+1-\lambda)r}.
\]
On $K_\kappa$, all component rates are bounded away from zero, so the
mixture has a uniform exponential moment.  Its variance is positive
for every $\kappa$ and continuous in $\kappa$ by dominated convergence.
Compactness therefore bounds the variance away from zero.  Thus
\begin{equation}
 \sup_{\kappa\in K_\kappa}\E_{f_\kappa}\bigl[e^{cR}\bigr]\le C,
 \qquad c\le\Var_{f_\kappa}(R)\le C.                       \label{eq:clockuniformmoments}
\end{equation}
Also $\|f_\kappa\|_\infty=H(\kappa)^{-1}$ is uniformly bounded.
It follows that $(R-\mu_\kappa)/\sigma_\kappa$ has both a uniformly
bounded density and a uniformly bounded third absolute moment.
The density local limit bound in~\cite[Theorem~1]{BobkovGotze2025}
therefore applies uniformly in $\kappa$.  After undoing the normalization,
it gives the global additive estimate
\[
 \sup_{x\in\mathbb R}
 \left|\widetilde f_{N,\kappa}(x)
 -\frac{1}{\sqrt{2\pi N\sigma_\kappa^2}}
 \exp\!\left(-\frac{(x-N\mu_\kappa)^2}{2N\sigma_\kappa^2}\right)
 \right|\ll_{K_\kappa}N^{-1}.
\]
When $|x-N\mu_\kappa|\le C_0\log d$ and $N\asymp d$, the Gaussian
term is comparable to $N^{-1/2}$, so the relative error is
$O(N^{-1/2})$.  This proves \eqref{eq:uniformclockLCLT} and the lower
bound at the mean.  Taking the supremum in the additive estimate proves
\eqref{eq:clockconcentration}.
\end{proof}

\subsubsection{Estimates on the first moments and related quantities}
Recall \eqref{eq:H}. 
Write $\kappa_\partial=\lambda_+-1<0$ and
$H_\partial=\lim_{\kappa\downarrow\kappa_\partial}H(\kappa)$,
allowing $H_\partial=\infty$.  When $H_\partial<\infty$, the
clock objective is also evaluated at $\kappa_\partial$ by continuity.
An infimum attained at this added endpoint is called a boundary clock
minimum.  To apply the local-limit estimates uniformly, we need the nearly
maximizing count densities and their tilts to stay bounded and away from
the boundaries of their domains.  The next lemma verifies this near the
zero set of $\widehat F$ and identifies the regular region used in
Proposition~\ref{prop:global-oneobject}.

\begin{lemma}\label{lem:variational-regularity}
The function $G$ is continuous on
$\{(\theta,u,\nu):0\le\theta\le1,\ u>0,\ \nu\ge\theta\}$,
with its limiting values at the feasible boundary.  The function
$\widehat F$ is continuous on $[0,1]\times(0,\infty)$.
On every compact positive-time parameter set, its maximizing count
sets are nonempty and contained in a common bounded interval.
No count $\nu=\theta>0$ is maximizing.

For $\nu>0$, the clock minimum has a unique location
$\kappa^c(u,\nu)\in[\kappa_\partial,\infty)$, with the endpoint allowed
only when $H_\partial<\infty$.  The map $\kappa^c$ is continuous.
Define the regular region
\[
 \mathcal R=\left\{(\theta,u):
 \text{every maximizing count }\nu\text{ satisfies }\nu>\theta,\quad
 \kappa^c(u,\nu)>\kappa_\partial
 \right\}.
\]
This region is relatively open.  On every compact
$\mathcal K\subset\mathcal R$, there are
$\varepsilon_{\mathcal K},\delta_{\mathcal K}>0$,
$0<\nu_-<\nu_+<\infty$, and compact tilt intervals
$Z_{\mathcal K}\Subset(0,\infty)$ and
$K_{\mathcal K}\Subset(\kappa_\partial,\infty)$ such that
\begin{equation}
 G(\theta,u,\nu)\ge\widehat F(\theta,u)-2\varepsilon_{\mathcal K}
 \ \Longrightarrow\
 \nu_-\le\nu\le\nu_+,\qquad
 \nu-\theta\ge\delta_{\mathcal K},\quad
 z(\theta,\nu)\in Z_{\mathcal K},\quad
 \kappa^c(u,\nu)\in K_{\mathcal K}.                 \label{eq:UO14}
\end{equation}
Every point with $\widehat F(\theta,u)\ge0$ belongs to $\mathcal R$,
and every maximizing clock tilt at such a point is positive.
\end{lemma}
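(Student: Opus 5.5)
The plan is to treat the spatial part $\Psi_b$ and the clock part $\chi$ separately, using convex duality for each, and only then combine them into $G=\Psi_b+\chi$.

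\emph{Spatial factor.} The function $z\mapsto\theta\log A_b(z)+(1-\theta)\log C_b(z)-\nu\log z$ becomes convex after the substitution $z=e^s$, since it is a log-moment generating function minus a linear term. Its derivative in $s$ is $J_\theta(e^s)-\nu$. The map $J_\theta$ is strictly increasing, because its derivative is a variance, which is positive. Its range runs from $\theta$ as $z\downarrow0$ (the minimal total degree is $\theta d$) to $\infty$. So for $\nu>\theta$ there is a unique minimizer $z(\theta,\nu)$. It depends continuously, indeed analytically, on $(\theta,\nu)$ by the implicit function theorem. Moreover, $z\to0$ as $\nu\downarrow\theta$ and $z\to\infty$ as $\nu\to\infty$, and $\Psi_b$ has the continuous limiting value at $\nu=\theta$. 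From $\Psi_b(\theta,\nu)=\nu\log\nu-\nu+\ldots$ and the growth of $A_b$ and $C_b$ one gets $\Psi_b(\theta,\nu)\le-\nu\log\nu+C\nu$ for large $\nu$: the infimum at $z\approx\nu$ gives $\nu\log\nu-\nu+\nu-\nu\log\nu$, while large counts cost $\log(\nu!/d^\nu)$.

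\emph{Clock factor.} Here $\kappa\mapsto\kappa u+\nu\log H(\kappa)$ is strictly convex, because $\log H$ is strictly convex as the log of a Laplace transform of a nondegenerate mixture. Its derivative $u-\nu\mu_\kappa$ is increasing. This yields a unique minimizer $\kappa^c(u,\nu)$ in $[\kappa_\partial,\infty)$, and the endpoint occurs exactly when $u\ge\nu\mu_{\kappa_\partial}$ with $H_\partial<\infty$. Continuity of $\kappa^c$ follows from uniqueness plus compactness, via the standard argmin continuity. As $\nu\to\infty$ with $u$ bounded, $\chi(\nu,u)\le\nu\log H(\kappa)+\kappa u$ is at most linear in $\nu$. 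Together with the $-\nu\log\nu$ term this shows that maximizing counts stay bounded uniformly on compacts, so $\widehat F$ is a sup over a compact set and is continuous, with nonempty maximizer sets.

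\emph{Excluding the boundary $\nu=\theta>0$.} Compute the one-sided derivative $\partial_\nu\Psi_b=\log\nu-\log z(\theta,\nu)\to+\infty$ as $\nu\downarrow\theta$, since $z\to0$. Meanwhile $\partial_\nu\chi=\log H(\kappa^c)$ stays bounded near a positive $\nu$, given $u>0$. Hence moving inward strictly increases $G$.

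\emph{Openness of $\mathcal R$ and the quantitative bounds \eqref{eq:UO14}.} I would argue by contradiction and compactness. Suppose, along points of $\mathcal K$ and with $\varepsilon\to0$, there were near-maximizers violating the bounds. Extract a limit using continuity of $G$. The limit is a maximizing count at a point of $\mathcal K$ with $\nu=\theta$ or $\kappa^c=\kappa_\partial$, which contradicts membership in $\mathcal R$. The bounds on $z$ and $\kappa$ then follow from the continuity of $z(\cdot,\cdot)$ and $\kappa^c$ on the resulting compact set. Openness is the same argument with $\varepsilon=0$.

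\emph{Points with $\widehat F\ge0$.} This is the main obstacle. At a maximizing $\nu$, stationarity gives $\log\nu-\log z+\log H(\kappa)=0$. One must show that $\kappa^c>0$, which in particular excludes the boundary tilt. My plan is to use the envelope identity $G=\kappa u+(\text{terms})$, with $\partial_uG=\kappa$. Suppose $\kappa\le0$ at a maximizer with $G\ge0$. Then decreasing $u$ would not decrease $\widehat F$, and by monotonicity in $u$ one gets $\widehat F(\theta,u')\ge0$ for all small $u'$. But as $u\downarrow0$, $\widehat F\to-\infty$ (equivalently $\widehat F<0$ for small $u$): with little time, $\chi\le\kappa u+\nu\log H(\kappa)$ with $\kappa$ large makes $H(\kappa)\approx1/\kappa$ tiny, so no count achieves exponent zero. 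This gives a contradiction.

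To make the contradiction precise, use that $\widehat F$ is convex in $u$. It is a sup of functions that are concave in $u$, so instead I would argue directly: $\chi(\nu,\cdot)$ is concave and nondecreasing in $u$ with slope $\kappa^c$. If $\kappa^c\le0$, then $\chi(\nu,u)=\chi(\nu,\infty)$ is attained near $\kappa\le0$, giving $\chi\le\nu\log H(0)=\nu\log\int P(d\lambda)/(1-\lambda)$. One must then check $\Psi_b+\chi<0$ in that case, using $\Psi_b\le0$ together with the saddle relation. I expect this sign verification, and its interplay with the supercriticality assumption $H(0)>1$, to be the delicate step. The boundary case $\kappa^c=\kappa_\partial<0$ is subsumed by this argument.
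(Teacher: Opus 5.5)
There is a genuine gap in the final assertion, which you leave open yourself, and neither of your two routes closes it. The monotonicity route fails because $\chi(\nu,\cdot)$ is concave with slope $\kappa^c(u,\nu)$, and this slope \emph{increases} as $u$ decreases, since the tilt solves $\mu_\kappa=u/\nu$. So from a maximizer with $\kappa^c\le0$ you can push $\widehat F\ge0$ down in $u$ only until the tilt reaches zero, not to all small $u$. Your claim that $\chi(\nu,\cdot)$ is nondecreasing is false precisely in the regime $\kappa^c<0$ you want to exclude.

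The direct route also fails. $\Psi_b\le0$ and $\chi(\nu,u)\le\nu\log H(0)$ only give $G\le\nu\log H(0)$, and this is positive because $H(0)>1$, so supercriticality works against you here.

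The missing idea is to evaluate $G$ exactly at an interior maximizing count. Put $\ell_\theta(z)=\theta\log A_b(z)+(1-\theta)\log C_b(z)$. Count stationarity $z=\nu H(\kappa^c)$ cancels the $\nu\log$ terms; it holds also at a boundary clock minimum, where $\partial_\nu\chi=\log H_\partial$. Spatial stationarity $\nu=z\ell_\theta'(z)$ then yields
\[
 \widehat F(\theta,u)=\kappa^c u-\bigl(z\ell_\theta'(z)-\ell_\theta(z)\bigr).
\]
The bracket is strictly positive for every $z>0$, because $zA_b'/A_b>\log A_b$ and $zC_b'/C_b>\log C_b$. The second inequality follows from $\log C_b(z)=\int_0^zA_b/C_b$ with $A_b/C_b$ increasing. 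Hence $\widehat F\ge0$ forces $\kappa^c>0>\kappa_\partial$, which gives both membership in $\mathcal R$ and positivity of the tilt, with no use of $H(0)>1$. You must also exclude the count $\nu=0$ at $\theta=0$; this is immediate from $G(0,u,0)=\kappa_\partial u<0$.

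Separately, your boundedness step rests on a false bound: $\Psi_b(\theta,\nu)\le-\nu\log\nu+C\nu$ does not hold. Evaluating the infimum at $z=\nu$ and using $A_b,C_b\le e^z$ gives $\Psi_b\le0$. In fact $\Psi_b(\theta,\nu)\to-\log b$ as $\nu\to\infty$, since the spatial factor is a transition probability of a mixing walk. The superlinear decay comes from the clock: $H(\kappa)\le(\kappa-\kappa_\partial)^{-1}$ at $\kappa=\kappa_\partial+\nu/u$ gives $\chi(\nu,u)\le\kappa_\partial u+\nu(1+\log(u/\nu))$. Your linear clock bound would also suffice if you fixed $\kappa$ with $H(\kappa)<1$, but as written your two bounds do not combine to give $G\to-\infty$.

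Finally, argmin continuity for $\nu>0$ does not give continuity of $G$ at the corner $\theta=\nu\downarrow0$, nor of $\chi$ as $\nu\downarrow0$, where the clock minimizer runs to $\kappa_\partial$ and possibly $H_\partial=\infty$. These need estimates such as $\chi(\nu,u)=\kappa_\partial u+O(\nu(1+|\log\nu|))$ and $|\Psi_b|\le C\nu(1+|\log\nu|)$ near the corner.
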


\begin{proof}
For $\nu>\theta$, the spatial minimizer is unique: its mean map
$J_\theta$ increases from $\theta$ to infinity, and
\[
 zJ_\theta'(z)=\theta\Var_z(X^A)+(1-\theta)\Var_z(X^C)>0.
\]
For $\nu>0$, the clock objective has strictly positive second derivative
$\nu\operatorname{Var}_{f_\kappa}(R)$.  Its derivative is
$u-\nu\mu_\kappa$, where $\mu_\kappa=-H'(\kappa)/H(\kappa)$ strictly
decreases to zero.  Write
$\mu_\partial=\lim_{\kappa\downarrow\kappa_\partial}\mu_\kappa
\in(0,\infty]$.
\begin{itemize}
\item If $u/\nu<\mu_\partial$, the minimum is the unique
solution of $\mu_\kappa=u/\nu$.
\item If $u/\nu\ge\mu_\partial<\infty$, it is $\kappa_\partial$.
\end{itemize}
Finiteness of $\mu_\partial$ implies
$H_\partial<\infty$: otherwise the exponential mixture defining
$f_\kappa$ puts asymptotically all its mixing mass on arbitrarily small
rates, forcing $\mu_\kappa\to\infty$.  These descriptions prove
continuity of $\kappa^c$ and of $\chi$ for $\nu,u>0$.  They also give
\[
 \kappa^c(u,\nu)\le\kappa_\partial+\nu/u,
\]
because $\mu_\kappa\le(\kappa-\kappa_\partial)^{-1}$.
When the clock minimum is at $\kappa_\partial$, the derivative of
$\chi$ with respect to $\nu$ is $\log H_\partial$.
Where both minima are interior, these positive variances and the
implicit function theorem give smooth dependence of the two tilts.
Differentiating their optimized values, the tilt derivatives cancel by
stationarity, giving
\[
 \partial_\theta G=\log\frac{A_b(z)}{C_b(z)},\qquad
 \partial_uG=\kappa^c(u,\nu).
\]
The same time derivative holds at a boundary clock minimum by its
uniqueness and continuity.  Derivatives in $\theta$ at $0,1$ are
understood from the feasible side.

At $\nu=0$, Lemma~\ref{lem:smallnuexpansions} gives
$\chi(\nu,u)=\kappa_\partial u+
O(\nu(1+|\log\nu|))$, uniformly on compact time intervals.
The spatial objective has its usual continuous extension at
$\nu=\theta$, and at the corner
\begin{equation}
 \bigl|\Psi_b(\theta,\nu)\bigr|\le C\nu\bigl(1+|\log\nu|\bigr),
 \qquad 0\le\theta\le\nu\downarrow0.                 \label{eq:UO13a}
\end{equation}
To verify this bound, put $r=\nu-\theta$.  The expansions of $A_b,C_b$
give $r\asymp\theta z+z^2$ for $b>2$, and $r\asymp z^2$ for $b=2$,
uniformly near this corner.  Thus $cr\le z\le C\sqrt r$; integrating
$\partial_\nu\Psi_b=\log(\nu/z)$ from the geodesic boundary $\nu=\theta$ proves
\eqref{eq:UO13a}.  In particular,
\begin{equation}
 G_\partial(\theta,u)
 =\theta\log\theta-\theta-\theta\log(b-1)+\chi(\theta,u)
 \quad(\theta>0),\qquad G_\partial(0,u)=\kappa_\partial u,
                                                               \label{eq:UO7}
\end{equation}
and $G_\partial(\theta,u)\to\kappa_\partial u$ as $\theta\downarrow0$.

The elementary bounds $A_b(z),C_b(z)\le e^z$ and
$H(\kappa)\le(\kappa-\kappa_\partial)^{-1}$ imply
\begin{equation}
 G(\theta,u,\nu)\le
 \kappa_\partial u+\nu\left(1+\log\frac u\nu\right).
                                                               \label{eq:UO12}
\end{equation}
This tends uniformly to $-\infty$ as $\nu\to\infty$ on compact time
intervals.  We may therefore restrict the maximum to $[\theta,M]$
for a fixed sufficiently large $M$.  This is a continuous compact-valued
feasible correspondence.  Berge's maximum theorem
\cite{FeinbergKasyanovVoorneveld2014} gives continuity of $\widehat F$,
attainment, and upper semicontinuity of the maximizing count sets.
For its usual product-domain formulation, extend $G$ continuously by
replacing an infeasible count $\nu<\theta$ with $\theta$.
For $\theta>0$, $\chi$ is locally Lipschitz in $\nu$ near
$\theta$, whereas
$\partial_\nu\Psi_b=\log(\nu/z)\to+\infty$ as
$\nu\downarrow\theta$.  Hence
\begin{equation}
 \widehat F(\theta,u)>G_\partial(\theta,u),\qquad\theta>0.
                                                               \label{eq:UO9}
\end{equation}

We will also use the following compactness consequence.  On a compact
set of feasible triples $(\theta,u,\nu)$, the continuous deficit
$\widehat F-G$ has a strictly positive minimum outside any open
neighbourhood of the maximizing triples, whenever this complement is
nonempty.  Indeed, the deficit has no zero on that compact complement.
At a regular point the maximizing counts and their clock tilts stay
away from the boundaries of their domains.  Upper
semicontinuity preserves this separation nearby, proving that
$\mathcal R$ is open.  Applying the deficit observation on a compact
subset of $\mathcal R$ proves \eqref{eq:UO14}; the spatial tilt bounds
follow from the continuity of the inverse of $J_\theta$.

Finally suppose $\widehat F(\theta,u)\ge0$.  A maximizing count cannot
be zero, since the only feasible zero count has $\theta=0$ and value
$\kappa_\partial u<0$.  By \eqref{eq:UO9}, it also cannot equal
$\theta>0$.  Put $\ell_\theta(z)=
\theta\log A_b(z)+(1-\theta)\log C_b(z)$.
The envelope identity, valid also for a boundary clock minimum, is
\begin{equation}
 \partial_\nu G=\log\left(\frac{\nu H(\kappa^c)}z\right).
                                                               \label{eq:UO8}
\end{equation}
At a maximizing count this vanishes, while
$\nu=z\ell_\theta'(z)$.  Consequently
\[
 \widehat F(\theta,u)=
 \kappa^c u-\bigl(z\ell_\theta'(z)-\ell_\theta(z)\bigr).
\]
Lemma~\ref{lem:positive-log-derivatives} makes the parenthesis strictly
positive.  Thus $\kappa^c>0$, proving the final assertion.
\end{proof}

The regular region $\mathcal R$ need not contain every positive-time
point.  A finite boundary mean allows the clock infimum
to occur at $\kappa_\partial$, outside the range of the preceding
clock local-limit estimates.  All points of the zero graph, and hence a fixed neighbourhood
of that graph, lie in $\mathcal R$.  We retain a global upper estimate
outside this region and state the two-sided estimates only where their
saddles are regular.

\begin{definition}
\label{def:retained-counts}
Fix $J\Subset(0,\infty)$ and $\varepsilon>0$.  For $0\le m\le d$ and
$t>0$, the \emph{retained mutation counts} are
\begin{equation*}
 \mathcal I_d^{J,\varepsilon}(m,t)
 =\left\{n\ge0:\ p_{d,m}(n)>0,\ \frac nd\in J,\quad
 G\left(\frac md,\frac td,\frac nd\right)
 \ge \widehat F\left(\frac md,\frac td\right)-\varepsilon\right\}.
\end{equation*}
In other words, these are the possible mutation counts in the chosen density range whose
exponential rates are within $\varepsilon$ of the maximum.
\end{definition}

The following proposition bounds $\widehat K_{d,m}(t)$ and shows
that the retained counts contribute all but an exponentially small
fraction of this sum.
Its final assertion controls the distant count tails even when each
summand is multiplied by $e^{an}$ for fixed $a>0$.
\begin{proposition}
\label{prop:global-oneobject}
Under Assumption~\ref{ass:environment}, every compact
$\mathcal K\subset[0,1]\times(0,\infty)$ satisfies the global upper bound
\begin{equation}
 \widehat K_{d,m}(t)\ll_{\mathcal K}
 d^{3/2}e^{d\widehat F(m/d,t/d)},\qquad \left(\frac md,\frac td\right)\in \mathcal{K}.                    \label{eq:global-oneobject-upper}
\end{equation}
If in addition $\mathcal K\subset\mathcal R$, there are
$0<\nu_-<\nu_+<\infty$ and $\varepsilon_{\mathcal K}>0$ with the
following properties.  All estimates below are uniform, for all sufficiently
large $d$, over $0\le m\le d$ and $t>0$ such that
$(m/d,t/d)\in\mathcal K$. 

\begin{enumerate}
\item The independent-visit kernel \eqref{eq:freshK} satisfies
\begin{equation}
 d^{-\frac12}
 e^{d\widehat F\left(\frac md,\frac td\right)}
 \ll_{\mathcal K} \widehat K_{d,m}(t)
 \ll_{\mathcal K}d^{\frac12}
 e^{d\widehat F\left(\frac md,\frac td\right)}.       \label{eq:UO1}
\end{equation}
\item Put $J_{\mathcal K}=[\nu_-,\nu_+]$.  For every
$0<\varepsilon\le\varepsilon_{\mathcal K}$ there is
$c_{\mathcal K,\varepsilon}>0$ such that
\begin{equation}
 \sum_{n\notin\mathcal I_d^{J_{\mathcal K},\varepsilon}(m,t)}
 p_{d,m}(n)h^{*(n+1)}(t)
 \le e^{-c_{\mathcal K,\varepsilon}d}\widehat K_{d,m}(t).
                                                        \label{eq:UO2}
\end{equation}
\item For every fixed $a>0$, there are
$J_{\mathcal K,a}\Subset(0,\infty)$ and
$c_{\mathcal K,a}>0$ such that
\begin{equation}
 \sum_{n:\,\frac nd\notin J_{\mathcal K,a}}
 e^{an}p_{d,m}(n)h^{*(n+1)}(t)
 \le e^{-c_{\mathcal K,a}d}\widehat K_{d,m}(t).
                                                        \label{eq:UO3}
\end{equation}
\end{enumerate}
\end{proposition}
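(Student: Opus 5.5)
We prove Proposition~\ref{prop:global-oneobject} termwise in the count $n$, starting from the representation \eqref{eq:freshK}. For each admissible $n$ we bound $p_{d,m}(n)$ through the tilted identity \eqref{eq:tilted-pdm} and $h^{*(n+1)}(t)$ through the exact tilting \eqref{eq:exact-hstar}. We then sum over $n$, splitting into a bounded density window and its tails.

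\emph{Global upper bound.} By \eqref{eq:tilted-pdm}, with $z$ the spatial minimizer for $\nu=n/d$, the trivial bound $\Pp_z(\Sigma=n)\le1$ and Stirling's formula $n!\le C\sqrt n\,n^ne^{-n}$ give
\[
 p_{d,m}(n)\ll\sqrt{n}\,e^{d\Psi_b(m/d,n/d)}.
\]
For the clock factor, take $\kappa$ near the clock minimizer in \eqref{eq:exact-hstar}. At a boundary minimum, use $\kappa\downarrow\kappa_\partial$ and the limit. Bounding the tilted density by $\|f_\kappa\|_\infty^{\,}$-type bounds, or by $1$ after one convolution, gives $h^{*(n+1)}(t)\ll H(\kappa)e^{d\chi(\nu,u)}$. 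We need care at $\nu=0$ or $n$ small, where the convolution density may exceed $1$. There we use $h^{*(n+1)}(t)\le \int$ bounds through $h(r)\le e^{\kappa_\partial r}$ directly.

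Each term is therefore $\ll d^{1/2}e^{dG}\le d^{1/2}e^{d\widehat F}$. For counts $n\le Md$, summing $O(d)$ terms costs one more factor of $d$, giving $d^{3/2}$. For $n>Md$, \eqref{eq:UO12} makes $G$ decay linearly in $\nu$, so the tail is geometric and negligible. This proves \eqref{eq:global-oneobject-upper}. The same tail argument with $e^{an}$ inserted proves \eqref{eq:UO3}: \eqref{eq:UO12} gives $G+a\nu\to-\infty$ as $\nu\to\infty$. Near $\nu\downarrow0$ with $\theta$ small, \eqref{eq:UO13a} and the small-$\nu$ clock expansion give $G+a\nu\le\kappa_\partial u+o(1)$. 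This is strictly below $\widehat F$, because $\widehat F\ge G$ at a regular count and $\kappa_\partial u<\widehat F$ by \eqref{eq:UO9}-type separation on $\mathcal K$, using compactness. So $J_{\mathcal K,a}=[\eta,M]$ works for small $\eta$ and large $M$.

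\emph{Two-sided bound on $\mathcal R$.} Take $\nu_\pm$, $Z_{\mathcal K}$ and $K_{\mathcal K}$ from Lemma~\ref{lem:variational-regularity} \eqref{eq:UO14}. For retained $n$, choose $z=z_n$ with $J_{m/d}(z_n)=n/d$, and $\kappa=\kappa_n$ interior with $(n+1)\mu_\kappa=t$. The $+1$ shifts the mean by $O(1)$, which we absorb by evaluating at $\kappa^c(u,(n+1)/d)$, a difference of $O(1)$ in the exponent.

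Lemmas~\ref{lem:spatialLCLT} and~\ref{lem:clockLCLT} then give each factor exactly, up to a factor $\asymp d^{-1/2}$. Stirling contributes $\sqrt n\asymp d^{1/2}$. Hence each retained term is $\asymp d^{-1/2}e^{dG(\theta,u,n/d)}$, where the $O(1)$ discrepancies between $n/d$ and the continuous objective are controlled by Lipschitz bounds on the compact tilt sets.

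The lower bound in \eqref{eq:UO1} follows by choosing $n$ within $O(1)$ of a maximizer $\nu d$, respecting the parity for $b=2$. At such $n$, $G\ge\widehat F-O(1/d)$ by Lipschitz continuity of $G$ in $\nu$ near the maximizer, where the derivative \eqref{eq:UO8} is bounded.

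The upper bound in \eqref{eq:UO1} requires a sharper count: we sum only the retained terms and recover $d^{1/2}$ rather than $d^{3/2}$. Each term is $\ll d^{-1/2}e^{d\widehat F}$, and there are $O(d)$ of them, which gives $d^{1/2}$. The remaining terms are handled by \eqref{eq:UO2}.

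\emph{Exponential gap \eqref{eq:UO2}.} Counts in $J_{\mathcal K}$ that are not retained have $G\le\widehat F-\varepsilon$. By the crude termwise bound, their total is $\ll d^{3/2}e^{d(\widehat F-\varepsilon)}$. Counts outside $J_{\mathcal K}$ are handled by \eqref{eq:UO3} with $a\to0$, or by its proof. By \eqref{eq:UO14} those counts have $G\le\widehat F-2\varepsilon_{\mathcal K}$; combined with the tails, this total is $\ll d^{3/2}e^{d(\widehat F-\varepsilon)}$. Comparing with the lower bound in \eqref{eq:UO1} gives the ratio $e^{-cd}$.

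\emph{Main obstacle.} The delicate step is the crude termwise upper bound on the clock factor uniformly over all $n$, including the boundary clock minimum $\kappa_\partial$ and small $n$. There no local limit theorem is available, and $h^{*(n+1)}$ must be bounded by $e^{\kappa t}H(\kappa)^{n+1}$ times a density bound. I would handle this with the sup bound on the tilted convolution density, $\le\|f_\kappa\|_\infty=1/H(\kappa)$, valid for $n\ge0$. This avoids any local limit input and costs only a bounded factor when $\kappa$ stays away from $\kappa_\partial$. When $H_\partial<\infty$, I would take $\kappa\downarrow\kappa_\partial$ and pass to the limit.
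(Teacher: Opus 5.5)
Your proposal is correct and follows essentially the same route as the paper: crude termwise bounds from coefficient extraction, Stirling and Laplace tilting, a factorial tail for large counts, local limit estimates on the near-maximizing counts confined by \eqref{eq:UO14} for the two-sided bound (with rounding to a maximizer for the lower bound), and the resulting exponential gaps, divided by the lower bound in \eqref{eq:UO1}, for \eqref{eq:UO2}--\eqref{eq:UO3}. The step you flag as the main obstacle is in fact immediate and needs no special cases. Since $\|\widetilde f_{n+1,\kappa}\|_\infty\le\|f_\kappa\|_\infty=H(\kappa)^{-1}$, one gets $h^{*(n+1)}(t)\le e^{\kappa t}H(\kappa)^{n}$ for every $\kappa>\kappa_\partial$ and every $n\ge0$, and infimizing over $\kappa$ gives exactly $e^{d\chi(n/d,t/d)}$, with no $H(\kappa)$ prefactor and no separate treatment of small $n$ or of a boundary minimum. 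By contrast, the fallback $h(r)\le e^{\kappa_\partial r}$ for small counts that you mention should be dropped: it replaces $\chi(\nu,u)$ by the larger $\kappa_\partial u+\nu(1+\log(u/\nu))$, an excess of order $\nu\log(1/\nu)$, which overshoots $e^{d\widehat F}$ exponentially at points with $\widehat F(\theta,u)=\kappa_\partial u$ that the global bound must also cover.
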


\begin{remark}
The bounds in \eqref{eq:UO1} differ by a factor of $d$, or $\log d$
after taking logarithms.  Near the root, the clock tilts are bounded
away from zero by Lemma~\ref{lem:global-root-model-only}, so this gives
the $O(\log d)$ error in $\widehat\theta_{d,m}$ in
\eqref{eq:all-distance-center-log}.
\end{remark}

\begin{proof}
Write $\theta=m/d$, $u=t/d$, and $\nu=n/d$, and let $u_-,u_+$ bound
the second coordinate of $\mathcal K$.  We first prove the global
upper bound, then work on a compact subset of $\mathcal R$.

\emph{Step 1: a global upper bound and proof of \eqref{eq:global-oneobject-upper}.}
For every $\kappa>\kappa_\partial$ and $n\ge0$,
convolution with a probability density cannot increase the supremum norm.
Since $\|f_\kappa\|_\infty=H(\kappa)^{-1}$, exponential tilting gives
\[
 h^{*(n+1)}(du)
 =e^{\kappa du}H(\kappa)^{n+1}\widetilde f_{n+1,\kappa}(du)
 \le e^{\kappa du}H(\kappa)^n.
\]
Taking the infimum over $\kappa$ gives
$h^{*(n+1)}(du)\le e^{d\chi(n/d,u)}$.
Since $A_b(w)^mC_b(w)^{d-m}$ has nonnegative coefficients,
\eqref{eq:exact-pdm} gives, for every $z>0$ and $n\ge0$,
\[
 p_{d,m}(n)\le\frac{n!}{(dz)^n}A_b(z)^mC_b(z)^{d-m}.
\]
Stirling's bound with prefactor $C\sqrt{n+1}$, followed by the infimum
over $z>0$, gives
$p_{d,m}(n)\le C\sqrt{n+1}\,e^{d\Psi_b(\theta,n/d)}$,
with the convention $0\log0=0$.
Thus, on every fixed interval $0\le n/d\le R$,
\begin{equation}
 p_{d,m}(n)h^{*(n+1)}(du)
 \le C_R\sqrt d\,e^{dG(\theta,u,n/d)}.               \label{eq:UO16}
\end{equation}
To control the large-count tail, use $p_{d,m}(n)\le1$ and
$h(r)\le e^{\kappa_\partial r}$.  The latter bound gives
\[
 h^{*(n+1)}(du)
 \le\bigl(e^{\kappa_\partial\,\cdot}\bigr)^{*(n+1)}(du)
 =e^{\kappa_\partial du}\frac{(du)^n}{n!},
\]
and hence
\begin{equation}
 p_{d,m}(n)h^{*(n+1)}(du)
 \le e^{\kappa_\partial du}\frac{(du)^n}{n!}.         \label{eq:UO17}
\end{equation}
Continuity bounds $\widehat F$ below on $\mathcal K$.  For sufficiently
large fixed $R$, the tail in \eqref{eq:UO17} is exponentially smaller
than $e^{d\widehat F}$.  Indeed, for every fixed $a\ge0$,
\[
 \sum_{n>Rd}e^{an}\frac{t^n}{n!}
 \le e^{-Rd}\sum_{n\ge0}\frac{(e^{a+1}t)^n}{n!}
 =\exp\left(-Rd+e^{a+1}t\right).
\]
Take $a=0$ and use $t\le u_+d$.  Summing
\eqref{eq:UO16} over $O(d)$ remaining counts proves
\eqref{eq:global-oneobject-upper}.

\emph{Step 2: estimates in compact intervals.}
Now assume $\mathcal K\subset\mathcal R$.  Lemma~\ref{lem:variational-regularity}
supplies \eqref{eq:UO14}.  In particular the spatial saddle
$J_\theta(z_n)=n/d$ stays in a compact subset of $(0,\infty)$.
The exact clock equation
$\mu_{\kappa_n}=t/(n+1)$ also has a solution in a common compact
subset of $(\kappa_\partial,\infty)$: it is a uniformly small perturbation
of the interior solution with mean $u/\nu$.
Exponential tilting, Stirling's formula, and
Lemmas~\ref{lem:spatialLCLT} and \ref{lem:clockLCLT} therefore give,
uniformly for admissible retained counts,
\begin{equation}
 p_{d,m}(n)h^{*(n+1)}(du)
 \asymp d^{-1/2}e^{dG(\theta,u,\nu)}.                 \label{eq:UO15}
\end{equation}
Indeed, the spatial local probability contributes $d^{-1/2}$ and the
Stirling prefactor contributes $d^{1/2}$, while the clock density
contributes $d^{-1/2}$.  Replacing $(n+1)/d$ by $n/d$ in the clock
objective changes the exponent by a bounded amount, since $\log H$
is bounded on the retained compact interval.

\emph{Step 3: summation and discarded counts.}
A maximizing count lies in \eqref{eq:UO14}; choose an admissible integer
within two of its multiple by $d$.  The derivatives of $G$ are bounded
on a fixed neighbourhood of the retained set, so this rounding loses
only a bounded amount in the exponent.  Keeping this summand in
\eqref{eq:UO15} proves the lower bound in \eqref{eq:UO1}.
The $O(d)$ counts with $G\ge\widehat F-2\varepsilon_{\mathcal K}$
give the upper bound $Cd^{1/2}e^{d\widehat F}$.
Every other feasible count in a fixed bounded interval satisfies
$G(\theta,u,n/d)<\widehat F(\theta,u)-2\varepsilon_{\mathcal K}$.
By \eqref{eq:UO16}, the total contribution of these counts is at most
$Cd^{3/2}e^{d(\widehat F-2\varepsilon_{\mathcal K})}$.
The large-count factorial tail has a fixed gap by Step 1.
This proves the upper bound in \eqref{eq:UO1}.

Take $J_{\mathcal K}=[\nu_-,\nu_+]$ from \eqref{eq:UO14}.
Every feasible count outside this interval has
$G<\widehat F-2\varepsilon_{\mathcal K}$, and every count inside it
but outside $\mathcal I_d^{J_{\mathcal K},\varepsilon}(m,t)$ has
$G<\widehat F-\varepsilon$.  Combining \eqref{eq:UO16} on a bounded
count interval with the tail estimate from Step 1 and the lower bound
in \eqref{eq:UO1} proves \eqref{eq:UO2}, absorbing polynomial factors
into the exponential.

\emph{Step 4: distant count tails with exponential weights.}
For fixed $a>0$, choose
$0<r<\min\{\nu_-,\varepsilon_{\mathcal K}/a\}$.
By \eqref{eq:UO14}, every feasible count $n<rd$ has
$G<\widehat F-2\varepsilon_{\mathcal K}$.  Hence \eqref{eq:UO16} gives
\[
 \sum_{n<rd}e^{an}p_{d,m}(n)h^{*(n+1)}(t)
 \ll_{\mathcal K,a}d^{3/2}e^{d(\widehat F(\theta,u)-\varepsilon_{\mathcal K})}.
\]
The estimate in Step 1 for the tail weighted by $e^{an}$, together with
\eqref{eq:UO17}, gives the
same exponential gap for $n>Rd$ when $R$ is sufficiently large,
depending on $a$.  Divide both tails by the lower bound in
\eqref{eq:UO1} and absorb the polynomial factor into the gap.
This proves \eqref{eq:UO3} with $J_{\mathcal K,a}=[r,R]$.
\end{proof}

\begin{proposition}
\label{prop:global-oneobject-local}
Under Assumption~\ref{ass:environment}, let
$\mathcal K\subset\mathcal R$ be compact
(endpoints $\theta=0,1$ are allowed).  There are
$0<\nu_-<\nu_+<\infty$, $\varepsilon_{\mathcal K}>0$, and compact intervals
$$
 Z_{\mathcal K}\Subset(0,\infty),\qquad
 K_{\mathcal K}\Subset(\lambda_+-1,\infty),
$$
with the following properties.  All estimates are uniform, for all
sufficiently large $d$,
over $0\le m\le d$ and $t>0$ such that
$(m/d,t/d)\in\mathcal K$.  Put $J_{\mathcal K}=[\nu_-,\nu_+]$.
The interval $J_{\mathcal K}$ and the level
$\varepsilon_{\mathcal K}$ may be chosen to be the same as in
Proposition~\ref{prop:global-oneobject}.

If
$n\in\mathcal I_d^{J_{\mathcal K},\varepsilon_{\mathcal K}}(m,t)$,
then the equations
\begin{equation}
 J_{\frac{m}{d}}(z_n)=\frac nd,
 \qquad
 -\frac{H'(\kappa_n)}{H(\kappa_n)}=\frac{t}{n+1}
                                                        \label{eq:UO4}
\end{equation}
have unique solutions with
$z_n\in Z_{\mathcal K}$ and $\kappa_n\in K_{\mathcal K}$.
With the variables defined immediately before \eqref{eq:uniformspatialLCLT},
\begin{align}
 \sup_{0\le\ell\le d}\sup_{j\in\mathbb Z}
 \Pp_{z_n}(\Sigma_{d,\ell,z_n}=j)
 &\ll_{\mathcal K}d^{-\frac12},
 &
 \Pp_{z_n}(\Sigma_{d,m,z_n}=n)
 &\gg_{\mathcal K}d^{-\frac12}.                 \label{eq:UO5}
\end{align}
The lower bound is understood on the admissible lattice (and membership
in the retained set includes $p_{d,m}(n)>0$).

Fix $B<\infty$ and let $N$ be an integer with $|N-(n+1)|\le B$.
Uniformly for $|\Delta|\le B$,
\begin{equation}
 \widetilde f_{N,\kappa_n}(t+\Delta)
 \asymp_{\mathcal K,B}d^{-\frac12},
 \qquad
 \sup_x\widetilde f_{N,\kappa_n}(x)
 \ll_{\mathcal K,B}d^{-\frac12}.  \label{eq:UO6}
\end{equation}
\end{proposition}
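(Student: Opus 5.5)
The plan is to read off almost everything from Lemma~\ref{lem:variational-regularity}, the local limit Lemmas~\ref{lem:spatialLCLT} and~\ref{lem:clockLCLT}, and Step~2 of the proof of Proposition~\ref{prop:global-oneobject}, choosing the constants as there.

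\textbf{Parameters and saddles.} Take $\nu_\pm$, $\varepsilon_{\mathcal K}$, $Z_{\mathcal K}$ and $K_{\mathcal K}$ from \eqref{eq:UO14}; this gives the claimed agreement with Proposition~\ref{prop:global-oneobject}. Shrink $Z_{\mathcal K}$ and $K_{\mathcal K}$ to slightly larger compact sets if needed.

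For a retained $n$ with $p_{d,m}(n)>0$, we have $G\ge\widehat F-\varepsilon_{\mathcal K}\ge\widehat F-2\varepsilon_{\mathcal K}$. Hence \eqref{eq:UO14} gives $n/d\in[\nu_-,\nu_+]$ and $n/d-m/d\ge\delta_{\mathcal K}$.

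\textbf{Spatial tilt.} The map $J_\theta$ is a strictly increasing bijection from $(0,\infty)$ onto $(\theta,\infty)$, since $zJ_\theta'>0$. So $J_{m/d}(z_n)=n/d$ has a unique solution $z_n=z(m/d,n/d)$. This solution lies in $Z_{\mathcal K}$ by \eqref{eq:UO14}.

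\textbf{Clock tilt.} The map $\mu_\kappa=-H'/H$ strictly decreases. The equation $\mu_{\kappa_n}=t/(n+1)$ differs from the one defining $\kappa^c(u,\nu)\in K_{\mathcal K}$ only through $(n+1)/d$ versus $n/d$, an $O(1/d)$ change in the right side. On $K_{\mathcal K}$, $\mu'_\kappa=-\Var_{f_\kappa}(R)$ is bounded away from zero by \eqref{eq:clockuniformmoments}. The inverse function is therefore uniformly Lipschitz, and $\kappa_n$ exists, is unique, and lies in a slightly enlarged compact subset of $(\lambda_+-1,\infty)$ for large $d$.

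\textbf{Spatial bounds \eqref{eq:UO5}.} The supremum bound follows from \eqref{eq:spatialconcentration} with compact set $Z_{\mathcal K}$. It is uniform in the number $\ell$ of $A$-summands, as stated in Lemma~\ref{lem:spatialLCLT}.

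For the lower bound, $\E_{z_n}[\Sigma_{d,m,z_n}]=dJ_{m/d}(z_n)=n$ exactly. So $j=n$ lies in the central window, on the admissible lattice since $p_{d,m}(n)>0$, and \eqref{eq:uniformspatialLCLT} gives a Gaussian value of order $\Var^{-1/2}\asymp d^{-1/2}$. Here the variance is of order $d$ because the summand variances are bounded above and below on $Z_{\mathcal K}$.

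\textbf{Clock bounds \eqref{eq:UO6}.} Since $n\asymp d$, we have $N\asymp d$ and Lemma~\ref{lem:clockLCLT} applies with $N$ in a range $[c_Nd,C_Nd]$. Write $t+\Delta=N\mu_{\kappa_n}+a$. Because $(n+1)\mu_{\kappa_n}=t$, this gives $a=\Delta-(N-n-1)\mu_{\kappa_n}$. Then $|a|\le B(1+\sup_{K_{\mathcal K}}\mu)=O(1)\le C_0\log d$.

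Equation \eqref{eq:uniformclockLCLT} then yields a density $\asymp N^{-1/2}\asymp d^{-1/2}$, and \eqref{eq:clockconcentration} gives the supremum bound.

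\textbf{Main obstacle.} The only delicate point is the uniform existence of $\kappa_n$ in a compact subset away from $\kappa_\partial$. This relies on regularity, namely $\kappa^c>\kappa_\partial$ on $\mathcal R$, made uniform by compactness in \eqref{eq:UO14}. It also needs the perturbation argument above, with the variance lower bound controlling $1/\mu'$. The endpoints $\theta=0,1$ cause no issue, since all local limit statements are uniform in the number of $A$-summands.
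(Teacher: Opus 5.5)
Your proposal is correct and follows essentially the same route as the paper: the constants and compact tilt sets come from \eqref{eq:UO14}, the exact clock tilt is an $O(d^{-1})$ perturbation of $\kappa^c$ (as in Step~2 of Proposition~\ref{prop:global-oneobject}), and Lemmas~\ref{lem:spatialLCLT} and~\ref{lem:clockLCLT} are applied at the exact spatial mean and at the bounded clock displacement $|\Delta|+|N-(n+1)|\mu_{\kappa_n}$. Where you write ``shrink'' you mean enlarge $K_{\mathcal K}$ slightly, and the variance lower bound should then be taken on that enlarged compact set; this is harmless.
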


\begin{proof}
\begingroup
Lemma~\ref{lem:variational-regularity} and Step 2 of
Proposition~\ref{prop:global-oneobject} give the retained interval and
level, and the unique exact tilts in the stated compact intervals.
Lemma~\ref{lem:spatialLCLT} gives the uniform spatial upper bound in
\eqref{eq:UO5}.  At $\ell=m$ the tilted mean is
$dJ_{m/d}(z_n)=n$, so its lower bound applies on the admissible lattice.
For the clocks, $N\asymp d$ and
$$
 |t+\Delta-N\mu_{\kappa_n}|
 \le |\Delta|+|n+1-N|\mu_{\kappa_n}\ll_{\mathcal K,B}1.
$$
Lemma~\ref{lem:clockLCLT} at this bounded displacement from the mean,
together with \eqref{eq:clockconcentration}, proves \eqref{eq:UO6}.
\endgroup
\end{proof}

We will need to compare first-moment kernels at nearby observation times.
Applying the preceding polynomial bounds separately at the two times
would lose an unnecessary power of the dimension.  The next proposition
compares the ratio directly: bounded time shifts change the kernel by
bounded factors, and logarithmic shifts by at most polynomial factors.
We use these comparisons to control the kernel \eqref{eq:freshK} within a logarithmic
distance of $du_*(m/d)$ in Corollary~\ref{cor:polynomial-root-band}
{and to bound the occupation integral before the center in
Proposition~\ref{prop:global-center}.}

\begin{proposition}
\label{prop:global-oneobject-shifts}
Under Assumption~\ref{ass:environment}, let
$\mathcal K\subset\mathcal R$ be compact
(endpoints $\theta=0,1$ are allowed).  There exists a compact interval
$K_{\mathcal K}\Subset(\lambda_+-1,\infty)$ such that the following
estimates hold.  Write
$\underline\kappa_{\mathcal K}=\min K_{\mathcal K}$ and
$\overline\kappa_{\mathcal K}=\max K_{\mathcal K}$.  For every fixed
$B<\infty$, uniformly for all sufficiently large $d$, over
$0\le m\le d$ and $t>0$ with $(m/d,t/d)\in\mathcal K$, and for
$0\le s\le B\log d$,
\begin{equation}
 e^{\underline\kappa_{\mathcal K}s}
 \ll_{\mathcal K,B}\frac{\widehat K_{d,m}(t+s)}{\widehat K_{d,m}(t)}
 \ll_{\mathcal K,B}e^{\overline\kappa_{\mathcal K}s},
                                                               \label{eq:UO6a}
\end{equation}
whereas, uniformly for $-B\log d\le s\le0$,
\begin{equation}
 e^{\overline\kappa_{\mathcal K}s}
 \ll_{\mathcal K,B}\frac{\widehat K_{d,m}(t+s)}{\widehat K_{d,m}(t)}
 \ll_{\mathcal K,B}e^{\underline\kappa_{\mathcal K}s}.
                                                               \label{eq:UO6b}
\end{equation}
For large $d$, positivity of $t+s$ is automatic.  In particular, for
$|s|\le B$ the ratio is bounded above and away from zero.  The same
assertions hold with $\widehat K_{d,m}(t)$ replaced by
$\int_t^{t+1}\widehat K_{d,m}(r)\dd r$.
\end{proposition}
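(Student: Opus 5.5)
The plan is to compare $\widehat K_{d,m}(t+s)$ and $\widehat K_{d,m}(t)$ termwise in the representation \eqref{eq:freshK}. The spatial factor $p_{d,m}(n)$ does not depend on time, so only the clock factor $h^{*(n+1)}$ moves. The omitted counts are exponentially negligible at both times. For retained counts, the tilted local limit estimate gives the ratio $h^{*(n+1)}(t+s)/h^{*(n+1)}(t)$ directly.

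First, reduce to retained counts. Because $(m/d,t/d)\in\mathcal K$ and $|s|\le B\log d$, the point $(m/d,(t+s)/d)$ lies in a slightly enlarged compact set $\mathcal K'\subset\mathcal R$ for large $d$; this uses that $\mathcal R$ is relatively open (Lemma~\ref{lem:variational-regularity}). Apply Proposition~\ref{prop:global-oneobject} on $\mathcal K'$. By \eqref{eq:UO2}, at both times $t$ and $t+s$ the counts outside $\mathcal I_d^{J,\varepsilon}$ carry at most an $e^{-cd}$ fraction of the respective kernel. Continuity of $G$ and $\widehat F$ shows that the retained sets at $t$ and $t+s$ are nested after shrinking $\varepsilon$, since $(t+s)/d-t/d=O(\log d/d)$. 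So it suffices to compare summand by summand on a common retained set $\mathcal I$. The $e^{-cd}$ errors are then absorbed, because the claimed ratios are at least polynomial in $d$.

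Second, compare a fixed retained summand. By \eqref{eq:exact-hstar}, with the exact tilt $\kappa_n$ from \eqref{eq:UO4} at time $t$,
\[
 \frac{h^{*(n+1)}(t+s)}{h^{*(n+1)}(t)}=e^{\kappa_n s}\,
 \frac{\widetilde f_{n+1,\kappa_n}(t+s)}{\widetilde f_{n+1,\kappa_n}(t)} .
\]
The denominator is $\asymp d^{-1/2}$ by \eqref{eq:UO6}. For the numerator, $|s|\le B\log d$ lies inside the window $|a|\le C_0\log d$ of Lemma~\ref{lem:clockLCLT}. That lemma gives a Gaussian factor $\exp(-s^2/(2(n+1)\sigma^2))=1+o(1)$, so the numerator is also $\asymp d^{-1/2}$. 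Hence each retained ratio is $\asymp e^{\kappa_n s}$, with constants uniform in $n$, $m$ and $t$.

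Third, conclude. With $\kappa_n\in K_{\mathcal K}$, we have $e^{\underline\kappa s}\le e^{\kappa_n s}\le e^{\overline\kappa s}$ for $s\ge0$, and the reverse for $s\le0$. Summing over $\mathcal I$ preserves these bounds, which gives \eqref{eq:UO6a} and \eqref{eq:UO6b}. Here $K_{\mathcal K}$ is the tilt interval of Proposition~\ref{prop:global-oneobject-local}, enlarged to cover $\mathcal K'$ if needed. For the unit-window integral, integrate the pointwise comparison over $r\in[t,t+1]$, applied at base times $r$ with shift $s$; the bounds are uniform because $(m/d,r/d)$ stays in $\mathcal K'$.

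The main obstacle is uniformity in the first step. We must ensure that the retained-count description at time $t$ still captures all but an exponentially small fraction at time $t+s$. The subtlety is that the exact tilt $\kappa_n$ is defined at $t$ and not at $t+s$. I would handle this by working on $\mathcal K'$ and taking the retained set at level $\varepsilon/2$ for one time inside the level-$\varepsilon$ set for the other. This inclusion is justified by uniform continuity of $G$ and $\widehat F$ on compacts, since these shifts change them by $O(\log d/d)$.
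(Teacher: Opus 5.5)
Your proof is correct. Its core is the same as the paper's. Both enlarge $\mathcal K$ inside $\mathcal R$ and restrict to retained counts. Both use the exact identity $h^{*(n+1)}(t+s)/h^{*(n+1)}(t)=e^{\kappa_n s}\,\widetilde f_{n+1,\kappa_n}(t+s)/\widetilde f_{n+1,\kappa_n}(t)$, with the tilt fixed at the base time. Both then let Lemma~\ref{lem:clockLCLT} make the density ratio $1+o(1)$ for $|s|\le B\log d$.

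The two arguments differ only in how they handle the mismatch between the retained sets at $t$ and at $t+s$.

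\emph{Your route.} You make the comparison two-sided on a single set. Uniform continuity of $G$ and $\widehat F$ on the enlarged compact gives $\mathcal I_d^{J,\varepsilon/2}(m,t+s)\subseteq\mathcal I_d^{J,\varepsilon}(m,t)$. Hence the level-$\varepsilon$ set at $t$ carries all but an $e^{-cd}$ fraction of both kernels, and the mediant inequality finishes the argument.

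\emph{The paper's route.} It never relates the two sets. It proves only the lower bound $\widehat K_{d,m}(t+s)\ge(1-o(1))\min_{\kappa\in K_{\mathcal K}}e^{\kappa s}\,\widehat K_{d,m}(t)$ by summing over the counts retained at $t$. This needs only positivity, since dropping the other counts at $t+s$ costs nothing in a lower bound. It then gets the upper bound by applying the same lower bound with base time $t+s$ and displacement $-s$, and taking reciprocals. As the paper remarks, the retained sets at the two times need not agree.

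\emph{Comparison.} The paper's symmetry trick is shorter and avoids the nesting and continuity step entirely. Your route costs that extra step but gives a direct termwise two-sided comparison.

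One small point: the $e^{-cd}$ errors are relative to each kernel, so they are absorbed as factors $1+O(e^{-cd})$. You do not need the ratios to be polynomial in $d$ for that.
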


\begin{proof}
Enlarge $\mathcal K$ by a fixed compact neighbourhood inside
$\mathcal R$; this contains
$(m/d,(t+s+r)/d)$ for every $|s|\le B\log d$, $0\le r\le1$,
and all sufficiently large $d$.
Take $K_{\mathcal K}$ from
Proposition~\ref{prop:global-oneobject-local} on this enlarged set.
Restrict the denominator to a retained set at a level strictly smaller than
$\varepsilon_{\mathcal K}$.  By \eqref{eq:UO2}, this set carries at least a
$1-e^{-cd}$ fraction of $\widehat K_{d,m}(t)$.  For each retained $n$, tilt
at the exact $\kappa_n$ from \eqref{eq:UO4}.  Since $t$ is the tilted mean,
\begin{equation}
 \frac{h^{*(n+1)}(t+s)}{h^{*(n+1)}(t)}
 =e^{\kappa_ns}
   \frac{\widetilde f_{n+1,\kappa_n}(t+s)}
        {\widetilde f_{n+1,\kappa_n}(t)}.               \label{eq:UO19}
\end{equation}
Lemma~\ref{lem:clockLCLT} gives, uniformly for $(m/d,t/d)\in\mathcal K$,
retained $n$, and $|s|\le B\log d$,
$$
 \frac{\widetilde f_{n+1,\kappa_n}(t+s)}
      {\widetilde f_{n+1,\kappa_n}(t)}
 \sim_{\mathcal K,B}\exp\!\left(-\frac{s^2}{2(n+1)\sigma_{\kappa_n}^2}\right)
 \sim_{\mathcal K,B}1.
$$
By positivity, summing only the counts retained at $t$ gives
\[
 \frac{\widehat K_{d,m}(t+s)}{\widehat K_{d,m}(t)}
 \ge (1-o(1))\min_{\kappa\in K_{\mathcal K}}e^{\kappa s}.
\]
Apply this same lower bound with base time $t+s$ and displacement $-s$.
Taking reciprocals gives
\[
 \frac{\widehat K_{d,m}(t+s)}{\widehat K_{d,m}(t)}
 \le (1+o(1))\max_{\kappa\in K_{\mathcal K}}e^{\kappa s}.
\]
Both base times belong to the enlarged set, so the errors are uniform;
the retained count sets at the two times need not agree.
These inequalities prove \eqref{eq:UO6a}--\eqref{eq:UO6b}.
Apply them uniformly with base time $t+r$, $0\le r\le1$, and integrate
to obtain the unit-window version.
\end{proof}

\subsection{Locating the deterministic first-moment centering}
\label{sec:root-curve}

To turn the first-moment estimates into a time center, we need a unique
zero of $u\mapsto\widehat F(\theta,u)$ and uniform linear bounds on either side of it.  The next
lemma proves both uniformly in the distance density, allowing multiple
maximizing saddles.  For the homogeneous counterpart at small distance
densities, see~\cite[Proposition~1(i), Appendix~B.1]{BZ}.
We start from the following observation: if every function attaining
the maximum has a positive time derivative at a zero, then their
maximum increases through that zero.

\paragraph{Zeros of a compact maximum.}
Positive time derivatives at all maximizing counts make $\widehat F(\theta,\cdot)$
increase through zero.  We use this fact here to locate the zero $u_*(\theta)$ and
in Lemma~\ref{lem:sparsederivative} below to differentiate it.
Lemma~\ref{lem:mutation-profile-monotonicity} below uses the same fact to
compare the zeros at different mutation rates.
Suppose locally that $F(p,t)=\max_{\nu\in I}g(p,t,\nu)$ for a real
parameter $p$, where $I$ is compact and
$g,\partial_pg,\partial_tg$ are jointly continuous.
Suppose $F(p_0,t_0)=0$ and $\partial_tg(p_0,t_0,\nu)>0$ at every
count maximizing $g(p_0,t_0,\cdot)$.  Upper semicontinuity of the maximizers lets us restrict
the maximum to a fixed compact neighbourhood of these maximizing counts on which,
after shrinking the parameter neighbourhood,
$0<c\le\partial_tg\le C$ and $|\partial_pg|\le L$.
Integrating these derivative bounds before taking maxima gives
\begin{equation}
 c(t'-t)\le F(p,t')-F(p,t)\le C(t'-t)\quad(t'\ge t),
 \qquad \bigl|F(p,t)-F(p',t)\bigr|\le L|p-p'|.
 \label{eq:compact-max-local-bounds}
\end{equation}
Continuity preserves the opposite signs at two fixed times on either
side of $t_0$ when $p$ is close to $p_0$.  The strict time bound therefore
gives a unique local zero $t=r(p)$.  Comparing two roots gives
\begin{equation}
 c\bigl|r(p)-r(p')\bigr|\le \Bigl|F\bigl(p,r(p')\bigr)-F\bigl(p',r(p')\bigr)\Bigr|\le L|p-p'|.
 \label{eq:compact-max-root-Lipschitz}
\end{equation}
Thus, $r$ is locally Lipschitz.

Write $D_H$ for the Hadamard directional derivative: the direction may
vary convergently as the positive step size tends to zero.
For a direction $(\alpha,\beta)$ in $(p,t)$, Danskin's theorem in the Hadamard form of
\cite[Theorem~4.1]{BonnansShapiro1998}, applied to $-g$, gives
\begin{equation}
 D_HF(p_0,t_0)(\alpha,\beta)
 =\max_{\substack{\nu\in I\\g(p_0,t_0,\nu)=0}}
    \left(\alpha\partial_pg+\beta\partial_tg\right),
 \label{eq:compact-max-Hadamard}
\end{equation}
with the derivatives evaluated at $(p_0,t_0,\nu)$.
The Lipschitz bound makes the root difference quotients bounded.
Take any subsequential limit $w$ along parameter directions converging
to $\alpha$.  Danskin's formula and $F(p,r(p))=0$ give
$\max_{\nu}(\alpha\partial_pg+w\partial_tg)=0$ over the same maximizing counts.
Since every $\partial_tg>0$, this maximum is zero exactly when $w$
is the smallest of the ratios $-\alpha\partial_pg/\partial_tg$.  Hence,
\begin{equation}
 D_Hr(p_0)(\alpha)
 =\min_{\substack{\nu\in I\\g(p_0,t_0,\nu)=0}}
       \frac{-\alpha\partial_pg}{\partial_tg}.
 \label{eq:compact-max-root-Hadamard}
\end{equation}
Every subsequential limit agrees.  In particular, the right
derivative is the minimum of $-\partial_pg/\partial_tg$, and the
conventional left derivative is its maximum.  At a boundary value of
$p$, use feasible directions; the same proof applies if $g$ has a smooth
extension to a parameter neighbourhood.

For an originally moving count domain, this argument applies once all
nearby maximizers lie in a fixed compact subset of its smooth interior.
One may take a finite union of small closed intervals around the
maximizing counts, each lying in that smooth interior.

\begin{lemma}
\label{lem:global-root-model-only}
Under Assumption~\ref{ass:environment}, there are constants
$0<u_{\min}<u_{\max}<\infty$ and
$c_{\rm rt},\eta_{\rm rt},\kappa_{\rm rt}>0$ such that, for every
$\theta\in[0,1]$, the root $u_*(\theta)$ is unique,
$u_{\min}\le u_*(\theta)\le u_{\max}$, and $u_*$ is Lipschitz.  For
$0\le v\le\eta_{\rm rt}$,
\begin{equation}
 \widehat F\bigl(\theta,u_*(\theta)-v\bigr)\le-c_{\rm rt}v,
 \qquad
 \widehat F\bigl(\theta,u_*(\theta)+v\bigr)\ge c_{\rm rt}v.       \label{eqtag:R0}
\end{equation}
Every maximizing saddle with
$|u-u_*(\theta)|\le\eta_{\rm rt}$ has
$\kappa\ge\kappa_{\rm rt}$.  In addition, for every $\varepsilon>0$,
\begin{equation}
 \sup_{\substack{\theta\in[0,1],\ \varepsilon\le u\le
                         u_*(\theta)-\varepsilon}}
 \widehat F(\theta,u)<0,                               \label{eqtag:R1}
\end{equation}
where an empty supremum is ignored.
\end{lemma}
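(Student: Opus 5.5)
We reduce everything to the ``zeros of a compact maximum'' mechanism above, applied in the time variable $u$ with parameter $p=\theta$. The plan has three steps: establish signs of $\widehat F$ at small and large times, show strict increase through any zero, and then globalize by compactness of $[0,1]$.

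\emph{Step 1: signs at the ends.} For small $u$, the bound \eqref{eq:UO12} together with $\kappa_\partial<0$ should give $\widehat F(\theta,u)<0$ uniformly in $\theta$. Indeed, $\sup_{\nu}\nu(1+\log(u/\nu))=u$, so $\widehat F(\theta,u)\le(\kappa_\partial+1)u=\lambda_+u$. This crude bound is not negative, so instead I would use the feasibility constraint $\nu\ge\theta$ together with \eqref{eq:UO13a}/\eqref{eq:UO7}. As $u\downarrow0$, the clock factor $e^{\kappa u}H(\kappa)^\nu$ is optimized with $\kappa\to\infty$, giving $\chi(\nu,u)\approx\nu\log(eu/\nu)$. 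The spatial cost $\Psi_b\le\nu\log\nu-\nu+\ldots$ should then force $G\to$ something $\le\kappa_\partial u$ plus a negative spatial term. I expect that for $u\le u_{\min}$ one gets $\widehat F<0$ directly from $p_{d,m}(n)\le1$ and Poisson tail reasoning: the mean count is at most $\Pp(\text{walk makes}\ge m\text{ jumps})\cdot e^{\lambda_+t}$, and for $m=0$ it is at most $e^{-(1-\lambda_+)t}$ times the return probability.

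For large $u$, I would take $\nu=\theta+Cu$-type counts and use the supercriticality $H(0)=\int P(d\lambda)/(1-\lambda)>1$. With $\kappa=0$ one has $\chi\le\nu\log H(0)$, and the lower direction follows from the heuristic that at $\kappa$ slightly positive, $\chi(\nu,u)\ge\ldots$. More simply, I would choose $\nu$ near the stationary value $u/\mu_\kappa$ at fixed small $\kappa>0$ with $H(\kappa)>1$. Then $G\ge\kappa u+\nu\log H(\kappa)+\Psi_b(\theta,\nu)$. Since $\Psi_b(\theta,\nu)\to0$ as $\nu/1\to\infty$ is false in general, I would instead use $\Psi_b(\theta,\nu)\ge-C$ for bounded $\nu\ge\theta+1$ (equidistributed walk: $p_{d,m}(n)\approx b^{-d}$, so $\Psi_b\ge-\log b$). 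Then $G\ge\nu\log H(\kappa)-\log b\to\infty$ as $u\to\infty$, uniformly in $\theta$. Continuity of $\widehat F$ (Lemma~\ref{lem:variational-regularity}) then gives at least one root in $[u_{\min},u_{\max}]$ for each $\theta$.

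\emph{Step 2: strict increase through zeros.} At any $(\theta,u)$ with $\widehat F(\theta,u)=0$, Lemma~\ref{lem:variational-regularity} places the point in $\mathcal R$ and gives every maximizing clock tilt $\kappa^c>0$. Since $\partial_uG=\kappa^c$, the compact-maximum argument with \eqref{eq:compact-max-local-bounds} shows $\widehat F(\theta,\cdot)$ crosses zero with slope at least $c>0$ locally. This step needs joint continuity of $\partial_uG,\partial_\theta G$ near the maximizers; it holds in $\mathcal R$ away from $\nu=\theta$, and at $\theta=0,1$ via the feasible-side extension. Uniqueness follows: if there were two roots $u_1<u_2$, then near $u_1$ we have $\widehat F>0$ to the right. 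At the first zero after $u_1$ (which exists by continuity, or else $u_2$ is such), $\widehat F$ would have to cross from positive to zero, i.e.\ be non-increasing there, contradicting the positive slope. Hence the root is unique, with $\widehat F<0$ before it and $>0$ after it. The local Lipschitz property of the root is \eqref{eq:compact-max-root-Lipschitz}.

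\emph{Step 3: uniformity.} The zero graph $\{(\theta,u_*(\theta))\}$ is compact, being closed by continuity of $\widehat F$ and uniqueness. Covering it by finitely many neighbourhoods on which \eqref{eq:compact-max-local-bounds} holds with constants $c,C,L$ yields a global Lipschitz constant, $\eta_{\rm rt}$, and $c_{\rm rt}$ in \eqref{eqtag:R0}. For the tilt bound, upper semicontinuity of maximizing saddles, combined with continuity of $\kappa^c$ and $\kappa^c>0$ on the compact zero graph, gives $\kappa\ge\kappa_{\rm rt}$ on an $\eta_{\rm rt}$-band after shrinking. Finally, \eqref{eqtag:R1} follows because $\widehat F$ is continuous and strictly negative on the compact set $\{\varepsilon\le u\le u_*(\theta)-\varepsilon\}$, which is compact since $u_*$ is continuous. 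I expect the main obstacle to be Step 1, namely the sign of $\widehat F$ at small $u$ when $\theta$ is small and the count $\nu$ ranges down to $\theta$, where \eqref{eq:UO13a} must be combined carefully with the small-$\nu$ clock expansion.
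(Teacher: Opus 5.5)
Your Steps 2 and 3 follow the paper's proof. These are strict increase through every zero via the positive maximizing clock tilts from Lemma~\ref{lem:variational-regularity}, uniqueness, and the compactness argument giving $c_{\rm rt},\eta_{\rm rt},\kappa_{\rm rt}$, the global Lipschitz bound, and \eqref{eqtag:R1}.

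Your large-$u$ sign is also fine once two details are fixed:
\begin{itemize}
\item $\nu=u/\mu_\kappa$ must hold exactly, so that $\kappa$ is the clock minimizer. Otherwise you only get an upper bound on $\chi$.
\item You need $\Psi_b\ge-C$ for all $\nu\ge\theta+1$, not only bounded $\nu$. This holds because $\Psi_b\to-\log b$ uniformly as $\nu\to\infty$.
\end{itemize}
With these, choosing a fixed $\kappa>0$ with $H(\kappa)>1$ is a harmless variant of the paper's choice $H(r)=1$.

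The genuine gap is the small-$u$ sign, which you leave open. It is indispensable. Without $\widehat F(\theta,u)<0$ for $0<u\le u_{\min}$, uniformly in $\theta$, you cannot guarantee a positive zero exists. Strict increase through zeros forbids any zero if $\widehat F>0$ near $u=0$. You also cannot get $u_{\min}>0$, negativity before the root, or \eqref{eqtag:R1}.

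Your fallback sketches do not close this. The bound $\Pp(\text{at least }m\text{ jumps})\,e^{\lambda_+t}$ has exponent $\lambda_+u>0$ whenever $0<\theta<u$, since the Poisson tail is then not small. The $m=0$ remark concerns the finite-$d$ kernel. Turning it into a bound on the variational $\widehat F$ would need a lower bound $\widehat K_{d,m}(t)\ge e^{d\widehat F-o(d)}$, which the paper only has on $\mathcal R$.

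The missing idea is a one-line repair of your own computation. The estimate \eqref{eq:UO12} replaces $\ell_\theta(z)=\theta\log A_b(z)+(1-\theta)\log C_b(z)$ by the crude bound $z$, and that is exactly what produces $\lambda_+u$. Keep $\ell_\theta$ instead:
\begin{itemize}
\item Evaluate the spatial infimum in \eqref{eq:Psi} at $z=u$.
\item Bound the clock using $H(\kappa)\le(\kappa+1-\lambda_+)^{-1}$, minimizing at $\kappa-\kappa_\partial=\nu/u$.
\end{itemize}
This gives $\Psi_b(\theta,\nu)\le\nu\log\nu-\nu+\ell_\theta(u)-\nu\log u$ and $\chi(\nu,u)\le\nu(1+\log(u/\nu))-(1-\lambda_+)u$. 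All $\nu$-dependent terms cancel identically, so
\[
 \widehat F(\theta,u)\le\ell_\theta(u)-(1-\lambda_+)u\le\log C_b(u)-(1-\lambda_+)u .
\]
Here $\log C_b(u)=O(u^2)$ because $C_b(0)=1$ and $C_b'(0)=A_b(0)=0$, reflecting that there is no one-step return. The right side is therefore negative for small $u$, uniformly in $\theta$. It needs no analysis of counts near $\nu=\theta$ and no small-$\nu$ clock expansion. This is the variational counterpart of the heat-kernel bound $\widehat K_{d,m}(t)\le e^{-(1-\lambda_+)t}A_b(t/d)^mC_b(t/d)^{d-m}$ that your $m=0$ remark was reaching for.
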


\begin{proof}
Write $\ell_\theta(z)=\theta\log A_b(z)+(1-\theta)\log C_b(z)$.
We first locate all positive zeros.  Since
$H(\kappa)\le(\kappa+1-\lambda_+)^{-1}$, minimizing this upper bound
in the clock objective gives
\begin{equation}
 \chi(\nu,u)
 \le\nu\left(1+\log\frac u\nu\right)-(1-\lambda_+)u,
 \label{eq:clock-homogeneous-sandwich}
\end{equation}
with the first term interpreted as zero when $\nu=0$.
Evaluate the spatial infimum at $z=u$ and use
\eqref{eq:clock-homogeneous-sandwich}.  The count terms cancel, yielding
\begin{equation}
 \widehat F(\theta,u)
 \le\ell_\theta(u)-(1-\lambda_+)u.
 \label{eq:global-profile-sandwich}
\end{equation}
Since $A_b<C_b$, $C_b(0)=1$, and $C_b'(0)=0$, this upper bound is
negative for all sufficiently small positive $u$, uniformly in $\theta$.

For the large-time lower bound, $H(0)>1$, $H(\kappa)\to0$ as
$\kappa\to\infty$, and strict decrease give a unique $r>0$ such that
$H(r)=1$.  Here
$0<\mu_r=-H'(r)<\infty$.  At $\nu=u/\mu_r$, the clock objective is
minimized at $r$, so $\chi(u/\mu_r,u)=ru$.
The formulas in \eqref{eq:AC} give, uniformly in $\theta\in[0,1]$,
$$
 \ell_\theta(z)=z-\log b+O\!\left(e^{-\frac b{b-1}z}\right),
 \qquad
 J_\theta(z)=z+O\!\left(ze^{-\frac b{b-1}z}\right)
 \quad(z\to\infty).
$$
Consequently, the spatial minimizer at count $\nu$ satisfies
$z=\nu+o(1)$ uniformly in $\theta$ as $\nu\to\infty$.
Substitution in \eqref{eq:Psi} gives
$\Psi_b(\theta,\nu)=-\log b+o(1)$, uniformly in $\theta$.
For all sufficiently large $u$, the count $u/\mu_r$ is feasible, and
$$
 \widehat F(\theta,u)
 \ge ru+\Psi_b\!\left(\theta,\frac u{\mu_r}\right)
 =ru-\log b+o(1).
$$
This is uniformly positive for all sufficiently large $u$.
Thus, there are $0<u_{\min}<u_{\max}<\infty$ such that
\begin{equation}
 \widehat F(\theta,u)<0\quad(0<u\le u_{\min}),\qquad
 \widehat F(\theta,u)>0\quad(u\ge u_{\max}).
 \label{eq:global-root-end-signs}
\end{equation}
Continuity from Lemma~\ref{lem:variational-regularity} gives a positive
zero for each $\theta$, and every positive zero lies in
$[u_{\min},u_{\max}]$.

At every zero, Lemma~\ref{lem:variational-regularity} confines the
maximizing counts to a compact subset of $\nu>\theta$, with interior
clock minima and positive clock tilts.  Choose a fixed compact
neighbourhood of these counts, small enough that
$\partial_uG=\kappa^c>0$ throughout it.  Upper semicontinuity puts all
nearby maximizers in this neighbourhood.  The inner minimizers are
smooth there, and
\begin{equation}
 \partial_\theta G=\log\frac{A_b(z)}{C_b(z)},
 \qquad \partial_uG=\kappa.                            \label{eq:Genvelope}
\end{equation}
The positive margin $\nu-\theta$ preserves feasibility as $\theta$
varies.  At $\theta=0,1$, the implicit function theorem extends the
spatial minimizer smoothly beyond the endpoint, since $J_\theta'(z)>0$.
Thus, \eqref{eq:compact-max-local-bounds} and
\eqref{eq:compact-max-root-Lipschitz} apply with $p=\theta$ and $t=u$:
$\widehat F(\theta,\cdot)$ increases strictly in a neighbourhood of
every zero, and the local root is Lipschitz in $\theta$.

For fixed $\theta$, the zero set is compact by
\eqref{eq:global-root-end-signs}.  Each zero is isolated, so there are
only finitely many.  Two consecutive zeros are impossible: $\widehat F(\theta,\cdot)$
would be positive immediately after the first and negative immediately
before the second, forcing another zero between them by continuity.
The positive zero is therefore unique.  Its local Lipschitz dependence
now defines the continuous function $\theta\mapsto u_*(\theta)$; the endpoint signs imply
$\widehat F(\theta,u)<0$ for $0<u<u_*(\theta)$.

The root graph over $[0,1]$ is compact.  A finite cover by the preceding
neighbourhoods gives uniform positive time slopes on
$|u-u_*(\theta)|\le\eta_{\rm rt}$ for some $\eta_{\rm rt}>0$,
proving \eqref{eqtag:R0}.  The same cover confines the maximizing
counts and inner tilts to compact subsets of the interiors of their domains and gives
$\kappa\ge\kappa_{\rm rt}>0$ throughout this neighbourhood.  A finite subdivision
of $[0,1]$ subordinate to the cover turns the local root Lipschitz bounds
into a global one.  Finally, the set in \eqref{eqtag:R1} is compact and
$\widehat F$ is strictly negative on it, so its maximum is negative.
\end{proof}

We now show that the first-moment center $\widehat\theta_{d,m}$ differs
from $d u_*(m/d)$ by $O(\log d)$.  This identifies the critical time band used in the
quenched estimates and proves the first assertion of
Theorem~\ref{thm:sparse-expansion}. Recall \eqref{eq:thetahat definition}.

\begin{lemma}
\label{lem:independent-center-location}
For all sufficiently large $d$,
$$
 \sup_{1\le m\le d}
 \left|\widehat\theta_{d,m}
 -d u_*\!\left(\frac md\right)\right|
 \ll\log d.
$$
Moreover,
\begin{equation}
 \widehat M_{\widehat\theta_{d,m}}(\mathbf{x}_m)=1
 \qquad(1\le m\le d).                                \label{eq:independent-center-equality}
\end{equation}
\end{lemma}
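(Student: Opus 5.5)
The plan is to convert the two-sided bounds \eqref{eq:UO1} for $\widehat K_{d,m}$ into bounds on the unit-window mean $\widehat M_t(\mathbf x_m)$. This will be done on a compact band around the root graph, and the linear sign control \eqref{eqtag:R0} will then locate the level-one crossing. Set $\theta=m/d$ and let
\[
 \mathcal K=\{(\theta,u):\theta\in[0,1],\ |u-u_*(\theta)|\le\eta\},
 \qquad 0<\eta\le\eta_{\rm rt},
\]
with $\eta$ small. Lemma~\ref{lem:global-root-model-only} says the root graph is compact and lies in $\mathcal R$, and $\mathcal R$ is relatively open by Lemma~\ref{lem:variational-regularity}. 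Hence for $\eta$ small $\mathcal K\subset\mathcal R$ is compact, and Propositions~\ref{prop:global-oneobject} and~\ref{prop:global-oneobject-shifts} apply uniformly in $1\le m\le d$.

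\emph{Bounds near the root.} For $t=du_*(\theta)+s$ with $|s|\le\eta d$, combine \eqref{eq:UO1} with \eqref{eqtag:R0}. This gives $\widehat K_{d,m}(t)\ll d^{1/2}e^{-c_{\rm rt}|s|}$ for $s\le0$ and $\widehat K_{d,m}(t)\gg d^{-1/2}e^{c_{\rm rt}s}$ for $s\ge0$. The unit-window version follows by integrating over $[t,t+1]$, or directly from the last assertion of Proposition~\ref{prop:global-oneobject-shifts}. Choose $s=\pm C\log d$ with $C c_{\rm rt}>1$. Then $\widehat M_{du_*-C\log d}<1$ and $\widehat M_{du_*+C\log d-1}>1$ for large $d$, uniformly in $m$.

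\emph{Ruling out early crossings.} Next I must show that $\widehat M_t<1$ for every $0\le t\le du_*(\theta)-C\log d$, so that the infimum defining $\widehat\theta_{d,m}$ is not reached earlier. I split this range into three pieces.
\begin{itemize}
\item For $t\in[du_*-\eta d,\,du_*-C\log d]$, the previous step applies, since the decay $e^{-c_{\rm rt}|s|}$ beats $d^{1/2}$.
\item For $t/d\in[\varepsilon,u_*(\theta)-\eta]$, use the global bound \eqref{eq:global-oneobject-upper} on a compact set together with \eqref{eqtag:R1}. This gives $\widehat M_t\ll d^{3/2}e^{-cd}<1$.
\item For small times $t\le\varepsilon d$, which need not lie in a compact positive-time set, I bound crudely. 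Use $h(r)\le e^{\kappa_\partial r}$ and $p_{d,m}(n)\le\Pp(\text{$n$ steps reach distance }m)$, so that $\widehat K_{d,m}(t)\le \E[e^{-(1-\lambda_+)t}\one_{\{\text{Poisson}(t)\ge m\}}]$-type bounds hold. More simply, $\widehat K_{d,m}(t)\le e^{-(1-\lambda_+)t}\sum_n (t^n/n!)p_{d,m}(n)=e^{\lambda_+ t}\Pp_{\mathbf x_m}(X_t=\mathbf0)$. By \eqref{eq:mutation-transition} this is at most $e^{\lambda_+t-t}(C_b(t/d))^d\le e^{-(1-\lambda_+)t+Ct^2/d}$. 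This is exponentially small once $t\ge$ a constant, and for $t\le 1$ the probability of reaching distance $m\ge1$ is $O(t/d)<1$. Together these cover $t\le\varepsilon d$ when $\varepsilon$ is small, exactly as in the late-split estimate of Section~\ref{sec:proofstrategy}.
\end{itemize}

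\emph{Conclusion and equality.} The first two steps give $du_*-C\log d\le\widehat\theta_{d,m}\le du_*+C\log d$. For \eqref{eq:independent-center-equality}, note that $t\mapsto\widehat M_t$ is continuous, since $\widehat K_{d,m}$ is locally integrable. We have $\widehat M_0<1$ by the small-time bound, and $\widehat M_t\ge1$ somewhere, so the infimum is attained at a point where $\widehat M=1$. The main obstacle is the small-time regime, which falls outside the compact-set estimates. The crude walk bound above is what I would try first; the delicate point is checking $\widehat M_0<1$ uniformly when $m=1$ and $t\in[0,2]$, where $\Pp_{\mathbf x_1}(X_s=\mathbf0)\le s/((b-1)d)+O(s^2/d)$ suffices.
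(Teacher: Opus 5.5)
Your proposal is correct and follows the paper's proof essentially step for step. The paper likewise combines \eqref{eq:UO1} with \eqref{eqtag:R0} near the root, uses \eqref{eq:global-oneobject-upper} together with \eqref{eqtag:R1} on the intermediate range, and handles small times via $h(r)\le e^{-(1-\lambda_+)r}$ and the exact heat kernel, before deriving the equality from continuity of $t\mapsto\widehat M_t(\mathbf x_m)$. Your small-time bullet is somewhat muddled; its clean form is the paper's: $A_b(u)\le Cu$ and $\log C_b(u)\le Cu^2$ give $\int_0^{2\varepsilon d}\widehat K_{d,m}(s)\,ds\ll d^{-1}$ uniformly in $m\ge1$, and this disposes of the whole range at once.
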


\begin{proof}
Choose
$$
 0<\eta<\frac14\min\{u_{\min},\eta_{\rm rt}\}.
$$
The root estimates \eqref{eqtag:R0} and the polynomial kernel bounds
\eqref{eq:UO1}, applied on a fixed compact neighbourhood of the root graph
and integrated over one time unit, give constants $B,c>0$ such that,
uniformly in $1\le m\le d$ and $1\le a\le\eta d$,
\begin{equation}
 \widehat M_{d u_*\!\left(\frac{m}{d}\right)-a}(\mathbf{x}_m)\le d^B e^{-ca},
 \qquad
 \widehat M_{d u_*\!\left(\frac{m}{d}\right)+a}(\mathbf{x}_m)\ge d^{-B}e^{ca}.
 \label{eq:deterministic-center-sign-estimates}
\end{equation}

The local estimates do not yet rule out an earlier crossing of $\widehat M_{t}(\mathbf{x}_m)$ at one.  To do so,
we first bound the kernel at small times.
Since $h(r)\le e^{-(1-\lambda_+)r}$, \eqref{eq:freshK} and the exact spatial
generating function give
$$
 \widehat K_{d,m}(s)
 \le e^{-(1-\lambda_+)s}\sum_{n\ge0}p_{d,m}(n)\frac{s^n}{n!}
 =e^{-(1-\lambda_+)s}A_b\!\left(\frac{s}{d}\right)^mC_b\!\left(\frac{s}{d}\right)^{d-m}.
$$
For small $u$, $A_b(u)\le Cu$, $A_b(u)<C_b(u)$, and
$\log C_b(u)\le Cu^2$.  Choose $\varepsilon>0$ so small that
$$
 2\varepsilon<u_{\min}-\eta,
 \qquad C(2\varepsilon)\le \frac{1-\lambda_+}{2}.
$$
Then
\begin{equation*}
 \sup_{1\le m\le d}
 \int_0^{2\varepsilon d}\widehat K_{d,m}(s)\,\dd s
 \ll\frac1d\int_0^\infty s e^{-(1-\lambda_+)s/2}\,\dd s
 \ll d^{-1}.
\end{equation*}
For all sufficiently large $d$, this is less than one.  Hence,
$\widehat M_t(\mathbf{x}_m)<1$ for
$0\le t\le2\varepsilon d-1$.

For the remaining times before $du_*(m/d)-\eta d$, decrease
$\varepsilon$, if necessary, so that $\varepsilon<\eta$.
By \eqref{eqtag:R1} there is $c_\eta>0$ such that
$\widehat F\le-c_\eta$ on
$$
 \left\{(\vartheta,u):\vartheta\in[0,1],\
          \frac{\varepsilon}{2}\le u\le u_*(\vartheta)-\frac{\eta}{2}\right\}.
$$
For all sufficiently large $d$, the global upper bound
\eqref{eq:global-oneobject-upper} therefore gives
\begin{equation*}
 \sup_{\substack{1\le m\le d\\
       \varepsilon d\le t\le d u_*\!\left(\frac{m}{d}\right)-\eta d}}
 \widehat M_t(\mathbf{x}_m)
 \ll d^{3/2}e^{-c_\eta d}=o(1).
\end{equation*}
Choose a deterministic $C_{\rm ctr}>(B+2)/c$.  The first inequality in
\eqref{eq:deterministic-center-sign-estimates} covers the interval from
$d u_*(m/d)-\eta d$ to
$d u_*(m/d)-C_{\rm ctr}\log d$.  The three intervals overlap, so
$\widehat M_t(\mathbf{x}_m)<1$ before the latter time.  After increasing
$C_{\rm ctr}$ if necessary, the second inequality in
\eqref{eq:deterministic-center-sign-estimates} gives
$\widehat M_{d u_*(m/d)+C_{\rm ctr}\log d}(\mathbf{x}_m)>1$.
This proves the location bound in \eqref{eq:all-distance-center-log}.
The initial estimate gives $\widehat M_0(\mathbf{x}_m)<1$, while the
bound to the right of the root makes the superlevel set nonempty.
Continuity shows that its infimum is attained, and the value there is
exactly one.  This proves
\eqref{eq:independent-center-equality}.
\end{proof}

\section{Estimates for the independent-visit model}
\label{sec:bridge-fractional-second-moment}

This section supplies the independent-visit branching estimates used in the transfer to the static environment. Section~\ref{sec:split-control} bounds the fractional branching sum by treating early, interior, and late splits separately. Section~\ref{sec:initial-branch} then normalizes the nonterminal branching weights and controls the trunk duration and mutation count under the resulting probability law. These controls ensure that the continuation parameters remain in the regular region with exponentially high probability.

\subsection{Fractional branching estimates}
\label{sec:split-control}

Consider a genealogy in the independent-visit comparison system, from the
source to the target, and a possible branching event along it:
\[
 \mathbf x_m\xrightarrow[\text{time }r]{\text{trunk}}(v,i)
 \xrightarrow[\text{time }t-r]{\text{continuation}}\mathbf0.
\]
At the split, both children inherit the trunk's current signed rate $i$;
their later visits receive independent labels. The fractional branching sum
$\widehat{\mathcal B}_{t,r}^{(1+q)}$, defined in
\eqref{eq:fractional-branching-sum}, weights each split by its
trunk first-moment weight with the current rate $i$ retained, branching rate,
and the $(1+q)$-power of one child's expected
target count at time $t$.  We prove \eqref{eqtag:FD2}, which bounds this sum
by two terms that decay exponentially from the initial and terminal times.

We treat early, interior, and late splits separately.  To keep the
short-trunk window fixed while choosing the moment exponent, we first
fix a small auxiliary exponent $q_*>0$ and prove a short-trunk bound on
$0\le r\le\delta_I(q_*)d$.  H\"older interpolation with the semigroup
identity at $q=0$ makes this same time window available for every
$0<q\le q_*$.  The final exponent $q$ is chosen only after this window
and the terminal threshold $\delta_T>0$ have been fixed.
Thus, a short trunk or continuation may last a small fixed fraction of
$d$, not only bounded time.
\begin{center}
\begin{tabular}{p{.35\textwidth}|p{.57\textwidth}}
$0\le r\le\delta_I(q_*)d$
 & Retained counts and an explicit upper bound on the growth of a short trunk
   (Lemmas~\ref{lem:global-retained-arm},
   \ref{lem:setvalued-initial-arm-gap}, and
   \ref{lem:marked-short-time-tail}), followed by interpolation
   (Step~1).\\
$\delta_I(q_*)d\le r\le t-\delta_Td$
 & Concentration near the mean coordinate proportions associated with retained
   counts, and the entropy bound at those proportions
   (Lemma~\ref{lem:setvalued-fractional-gaps}; Step~2).\\
$t-\delta_Td\le r\le t$
 & Direct short-time continuation bound (Step~3).
\end{tabular}
\end{center}
The step numbers refer to the proof of
Theorem~\ref{thm:globalfractionaldiagram}.  For bounded trunk durations,
the polynomial prefactors in exponential-rate estimates are too large;
the explicit upper bound on trunk growth controls
this range of trunk durations; the uniform negative bound on the
continuation exponent applies throughout the middle interval.  For splits
with $t-\delta_Td\le r\le t$, we use a separate direct argument because
the fractional power is applied only to the continuation.

The same three ranges appear in the second-moment proof
in~\cite[Proposition~24, Appendix~C.1.2]{BZ}.
The integration variable there is the continuation duration $t-r$, so
the two endpoint ranges occur in the opposite order.  Here the
{square of the expected target count} is replaced by its $(1+q)$-power for small
$q>0$, the split weight retains the current branching rate, and
the estimates cover every distance density rather than the restricted
distance range of the homogeneous result.
Related branching-process and overlap arguments appear in Martinsson's
study of unoriented first-passage percolation on the
hypercube~\cite{MartinssonUnoriented}.

We first describe the complete, unsplit first-moment kernel.
Write $\theta=m/d$ and $u=t/d$.  A count density $\nu$ represents
$n\approx\nu d$ mutations over the full time $ud$.  At a count with
interior spatial and clock minimizers, the tilts are determined by
\[
 J_\theta(z)=\nu,\qquad
 -\frac{H'(\kappa)}{H(\kappa)}=\frac{u}{\nu}.
\]
For the prescribed endpoint distance, the first equation sets the tilted
count density to $\nu$; the second sets the mean holding time per mutation
to $u/\nu$.  We then maximize $G(\theta,u,\nu)$
over $\nu$.  Together, these choices give a maximizing saddle as defined in
Definition~\ref{def:maximizing-saddle}.  We call it a \emph{full saddle}
when distinguishing the complete source-to-target path from its parts
before and after a split.  This deterministic saddle describes the paths
that dominate the first moment.
Several count densities may maximize $G$, although both interior tilts
are unique for a fixed $\nu$.  The root neighbourhood used here lies
in the regular region of Lemma~\ref{lem:variational-regularity}; away
from that region the clock infimum may occur at $\lambda_+-1$.
For the one-coordinate bridges conditioned on their prescribed initial
and terminal values, at a fraction $\alpha\in(0,1)$ of the bridge duration, the mean coordinate
proportions defined below give a trunk count density $\alpha\nu$ with
inner minimizers $\alpha z,\kappa$.

\begin{samepage}
The notation introduced below has the following roles:
\begin{center}
\begin{tabular}{p{.23\textwidth}p{.69\textwidth}}
$\mathcal R_*$ & all full saddles at the critical time $u_*(\theta)$;\\
$\mathbf r=(r_C,r_y,r_x,r_o)$ & coordinate proportions of the split vertex;\\
$\eta(\mathbf r),\xi(\mathbf r)$ & trunk and continuation distances divided by $d$;\\
$a_n$ & ratio $A_b(z_n)/C_b(z_n)$, where $z_n$ is the spatial tilt for retained count $n$;\\
$R_c$ & row sum after conjugating the mutation operator by a radial weight;\\
$\widehat{\mathcal B}_{t,r}^{(1+q)}$ & the fractional branching sum to be bounded.
\end{tabular}
\end{center}
\end{samepage}

\subsubsection{Saddle estimates at the critical time}

Decrease the fixed positive root-neighbourhood width $\eta_{\rm rt}$,
if necessary, so that the following compact set lies in the regular
region of Lemma~\ref{lem:variational-regularity}, and put
$$
 \mathcal K_{\rm rt}
 =\Bigl\{(\theta,u):\theta\in[0,1],\ \bigl|u-u_*(\theta)\bigr|\le\eta_{\rm rt}\Bigr\}.
$$

For $\nu>\theta$, let $z_{\theta,\nu}$ denote the unique spatial
minimizer, and let $\kappa_{u,\nu}$ denote the clock minimizer,
allowing the boundary value $\lambda_+-1$ when its infimum is attained
there.  On the maximizing sets considered in this regular neighbourhood,
both minimizers are interior.  Define
\begin{equation}
\begin{split}
 \mathcal A(\theta,u)
 &=\bigl\{(\nu,z_{\theta,\nu},\kappa_{u,\nu}):\nu>\theta,
              G(\theta,u,\nu)=\widehat F(\theta,u)\bigr\},\\
 \mathcal R_*
 &=\Bigl\{(\theta,\nu,z,\kappa):
              (\nu,z,\kappa)\in\mathcal A\bigl(\theta,u_*(\theta)\bigr)\Bigr\}.
\end{split}
\label{eq:root-active-graph}
\end{equation}
At $u=u_*(\theta)$ the exponential rate $\widehat F(\theta,u)$ is zero.  The set $\mathcal R_*$ collects
all maximizing saddles at these critical times, for all
$\theta$.  There may be more than one for a given $\theta$.

The estimates below are first proved at these critical saddles, but we
will apply them to retained counts at times within a logarithmic distance
of $d u_*(m/d)$.  The next lemma places the parameters of every sufficiently
near-maximizing count close to the set of critical saddles.  This lets us
use inequalities proved on that set, such as the gap in
Lemma~\ref{lem:setvalued-initial-arm-gap}, for the retained counts, provided
their exponential rates are sufficiently close to the maximum.
The final assertion accounts for the terminal holding interval in the
exact clock tilt.

\begin{lemma}
\label{lem:root-active-stability}
The set $\mathcal R_*$ is compact and lies in fixed compact subsets of
the interiors of the count, spatial-tilt, and clock-tilt domains.  For
every open neighbourhood $U$ of $\mathcal R_*$ there are
$\eta_U,\varepsilon_U>0$ such that, for
$\theta\in[0,1]$, $u>0$, and $\nu>\theta$,
\begin{equation}
 \bigl|u-u_*(\theta)\bigr|\le\eta_U,\qquad
 G(\theta,u,\nu)\ge\widehat F(\theta,u)-\varepsilon_U               
 \label{eq:root-active-stability-hyp}
\end{equation}
imply
\begin{equation}
 (\theta,\nu,z_{\theta,\nu},\kappa_{u,\nu})\in U.      
 \label{eq:root-active-stability-conclusion}
\end{equation}
The conclusion remains true with the finite-$d$ clock tilt defined using
$t/(n+1)$, with $\theta=m/d$, $u=t/d$, and $\nu=n/d$, after $d$ is
sufficiently large.

\end{lemma}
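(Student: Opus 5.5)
The argument is a compactness and upper-semicontinuity argument built on Lemma~\ref{lem:variational-regularity} and Lemma~\ref{lem:global-root-model-only}, followed by a perturbation step for the finite-$d$ tilt.

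\emph{Compactness of $\mathcal R_*$.} By Lemma~\ref{lem:global-root-model-only}, $u_*$ is Lipschitz on $[0,1]$ with values in $[u_{\min},u_{\max}]$. Hence the root graph $\Gamma=\{(\theta,u_*(\theta))\}$ is compact and lies in $\mathcal K_{\rm rt}\subset\mathcal R$. Apply \eqref{eq:UO14} of Lemma~\ref{lem:variational-regularity} on the compact set $\mathcal K_{\rm rt}$. Every maximizing count at a point of $\Gamma$ then satisfies $\nu\in[\nu_-,\nu_+]$ and $\nu-\theta\ge\delta$, with $z_{\theta,\nu}\in Z_{\mathcal K}$ and $\kappa_{u,\nu}\in K_{\mathcal K}$; this gives the fixed compact interior subsets. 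For closedness, take $(\theta_k,\nu_k,z_k,\kappa_k)\in\mathcal R_*$ and pass to a convergent subsequence of $(\theta_k,\nu_k)$. Continuity of $u_*$, $G$ and $\widehat F$ passes $G=\widehat F$ to the limit. Continuity of $(\theta,\nu)\mapsto z_{\theta,\nu}$ (inverse of $J_\theta$, which has $J_\theta'>0$) and of $\kappa^c$ shows the limit lies in $\mathcal R_*$.

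\emph{Stability.} Argue by contradiction. Suppose $U$ is open and there are $(\theta_k,u_k,\nu_k)$ with $|u_k-u_*(\theta_k)|\le 1/k$ and $G(\theta_k,u_k,\nu_k)\ge\widehat F(\theta_k,u_k)-1/k$, but whose associated quadruples lie outside $U$. For $k$ large, $(\theta_k,u_k)\in\mathcal K_{\rm rt}$, and \eqref{eq:UO14} (with $1/k\le2\varepsilon_{\mathcal K}$) traps $\nu_k$ in $[\nu_-,\nu_+]$ with $\nu_k-\theta_k\ge\delta$. Extract a subsequence with $(\theta_k,u_k,\nu_k)\to(\theta,u_*(\theta),\nu)$. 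Joint continuity of $G$ and $\widehat F$ gives $G(\theta,u_*(\theta),\nu)=\widehat F(\theta,u_*(\theta))$ with $\nu>\theta$, so $\nu$ is a maximizing count. Continuity of the two minimizers then gives convergence of the quadruples to a point of $\mathcal R_*\subset U$. Since $U$ is open, this contradicts the quadruples lying outside $U$.

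\emph{Finite-$d$ tilt.} This is the only step needing care. The exact clock tilt $\kappa_n$ solves $\mu_{\kappa_n}=t/(n+1)=(u/\nu)\cdot n/(n+1)$, which differs from $u/\nu$ by $O(1/d)$ uniformly on the compact ranges above. The map $\mu$ is a smooth, strictly decreasing bijection near $K_{\mathcal K}$, with $\mu'=-\Var_{f_\kappa}(R)$ bounded away from zero by \eqref{eq:clockuniformmoments}. Hence $|\kappa_n-\kappa_{u,\nu}|=O(1/d)$ uniformly. To absorb this, first choose a smaller open $U'$ whose closed $\rho$-neighbourhood in the $\kappa$-coordinate, for some $\rho>0$, lies in $U$; this is possible because $\mathcal R_*$ is compact. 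Apply the previous step to $U'$, and take $d$ large enough that $O(1/d)<\rho$.
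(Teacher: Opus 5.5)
Your proof is correct and follows essentially the same route as the paper. Like the paper, you confine near-maximizers via \eqref{eq:UO14} on $\mathcal K_{\rm rt}$ and obtain closedness of $\mathcal R_*$ and the stability implication by sequential compactness and contradiction, using continuity of $G$, $\widehat F$, $u_*$ and the inner minimizers; you then absorb the uniform $O(d^{-1})$ Lipschitz perturbation of the clock tilt by first working with a smaller neighbourhood of $\mathcal R_*$. Your explicit $\rho$-margin for that smaller neighbourhood is a slightly more careful rendering of the paper's ``closure lies in $U$'' step.
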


\begin{proof}
On a compact neighbourhood of the root graph contained in the regular
region, Lemma~\ref{lem:variational-regularity} and the retained-count
confinement in Proposition~\ref{prop:global-oneobject} place every
near-maximizing count and its two inner minimizers in fixed compact
interior sets.  By continuity, the set $\mathcal R_*$ defined in
\eqref{eq:root-active-graph} is closed, which proves compactness.

Suppose the implication from \eqref{eq:root-active-stability-hyp} to
\eqref{eq:root-active-stability-conclusion} failed.  There would be
sequences with $u_j-u_*(\theta_j)\to0$ and
$$
 \widehat F(\theta_j,u_j)-G(\theta_j,u_j,\nu_j)\to0
$$
whose associated tuples stay outside $U$.  Compactness gives a convergent
subsequence, and continuity places its limit in
$\mathcal R_*$, a contradiction.  The finite-$d$ tilt differs from
$\kappa_{u,n/d}$ by $O(d^{-1})$, uniformly, because it replaces
$u/(n/d)$ by $u/(n/d+d^{-1})$ in the equation
$-H'/H=\cdot$.  On the compact tilt interval,
$$
 \frac{\dd}{\dd\kappa}\left(-\frac{H'(\kappa)}{H(\kappa)}\right)
 =-\operatorname{Var}_{f_\kappa}(R)
$$
is bounded away from zero in absolute value, so the inverse map is
uniformly Lipschitz.  Apply the preceding compactness argument first to
a smaller neighbourhood whose closure lies in $U$; the $O(d^{-1})$
perturbation then remains in $U$ for all sufficiently large $d$.
\end{proof}

For a split vertex $v$, $r_C$ is the proportion, among all $d$
coordinates, on which the start and target agree but $v$ differs.
Among initially mismatching coordinates, $r_y$ records those at
the target value, $r_x$ those at the starting value, and $r_o$
those at a third value.  All four proportions are normalized by $d$.
The last class is absent when $b=2$.

\begin{figure}[htbp]
\centering
$\mathbf x_m\xrightarrow{\text{trunk}}v
 \xrightarrow{\text{continuation}}\mathbf0$
\medskip

\begin{tabular}{c|ccc|cc}
proportion & start & at $v$ & target & contributes to $\eta$ & contributes to $\xi$\\
\hline
$r_C$ & $0$ & nonzero & $0$ & yes & yes\\
$r_y$ & nonzero & $0$ & $0$ & yes & no\\
$r_x$ & nonzero & starting value & $0$ & no & yes\\
$r_o$ & nonzero & third value & $0$ & yes & yes
\end{tabular}
\caption{Coordinate classes at a split.  An initially matching
coordinate still equal to $0$ contributes to neither distance.}
\label{fig:midpoint-coordinate-classes}
\end{figure}

For $\theta\in[0,1]$, the possible coordinate proportions form
\begin{equation*}
 \mathfrak P_\theta
 =\left\{(r_C,r_y,r_x,r_o):
 \begin{array}{l}
 0\le r_C\le1-\theta,\quad r_y,r_x,r_o\ge0,\\[-2pt]
 r_y+r_x+r_o=\theta,\quad r_o=0\ \text{if }b=2
 \end{array}\right\}.
\end{equation*}
For $\mathbf r=(r_C,r_y,r_x,r_o)\in\mathfrak P_\theta$, put
\begin{gather*}
 \eta(\mathbf r)=r_C+r_y+r_o,\qquad
 \xi(\mathbf r)=r_C+r_x+r_o,\\
 \mathcal S_{b,\theta}(\mathbf r)
 =(1-\theta)h_2\!\left(\frac{r_C}{1-\theta}\right)
   +r_C\log(b-1)-\sum_{\zeta\in\{y,x,o\}}r_\zeta
                  \log\frac{r_\zeta}{\theta}
   +r_o\log(b-2),
\end{gather*}
where $h_2(s)=-s\log s-(1-s)\log(1-s)$.  Terms from a zero-size
coordinate class are omitted, and $0\log0=0$; in particular,
$r_o\log(b-2)$ is omitted when $b=2$.
Thus, $\eta(\mathbf r)=d_{\rm H}(\mathbf x_m,v)/d$ is the trunk
distance, and $\xi(\mathbf r)=d_{\rm H}(v,\mathbf0)/d$ is the
continuation distance.  This coordinate classification refines the
split-vertex count in~\cite[Lemma~23]{BZ}.
Applying Stirling's formula to the corresponding binomial and
multinomial coefficients gives $\mathcal S_{b,\theta}$, which counts
vertices with the prescribed coordinate proportions on the exponential scale.

For $0\le\delta\le1$ and $s>0$, write
$\ell_\delta(s)=\delta\log A_b(s)+(1-\delta)\log C_b(s)$.
For a tuple $\mathfrak s=(\nu,z,\kappa)$ with $z>0$ and
$0\le\alpha\le1$, define the mean coordinate proportions
$\mathbf r^\alpha(\mathfrak s)\in\mathfrak P_\theta$ by
\begin{equation}
\begin{split}
 r_C^\alpha&=(1-\theta)
       \frac{(b-1)A_b(\alpha z)A_b\bigl((1-\alpha)z\bigr)}{C_b(z)},
 \qquad
 r_y^\alpha=\theta\frac{A_b(\alpha z)C_b\bigl((1-\alpha)z\bigr)}{A_b(z)},\\
 r_x^\alpha&=\theta\frac{C_b(\alpha z)A_b\bigl((1-\alpha)z\bigr)}{A_b(z)},
 \qquad
 r_o^\alpha=\theta\frac{(b-2)A_b(\alpha z)A_b\bigl((1-\alpha)z\bigr)}{A_b(z)}.
\end{split}
\label{eqtag:MR2}
\end{equation}
These are the mean coordinate proportions at time $\alpha z$ under
independent one-coordinate bridges of duration $z$ with the prescribed
source and target symbols.  The formula depends on $\theta,z,\alpha$;
the tuple also records the count and clock tilt when these are used.
The factors $b-1$ and $b-2$ count intermediate symbols.
The one-coordinate semigroup identities show that the symbol probabilities
sum to one in each initial coordinate class, including the cases $\theta=0,1$.

For a maximizing saddle at the critical time,
$\mathfrak s=(\nu,z,\kappa)\in\mathcal A(\theta,u_*(\theta))$, put
\begin{equation}
 B_\theta(\alpha;z)
 =\ell_{\xi(\mathbf r^\alpha(\mathfrak s))}\bigl((1-\alpha)z\bigr)
  -(1-\alpha)\ell_\theta(z),\qquad 0\le\alpha<1.
 \label{eq:canonical-continuation-exponent}
\end{equation}
The quantity $B_\theta(\alpha;z)$ is the continuation log weight minus
$(1-\alpha)$ times the full log weight. The following bound supplies both the middle-range estimate and the
derivative at $\alpha=0$ needed for a short initial trunk.

\begin{lemma}
\label{lem:setvalued-fractional-gaps}
Under Assumption~\ref{ass:environment}, there is $c_0>0$ such that,
uniformly over $\mathcal R_*$ and $0\le\alpha<1$,
\begin{equation}
 B_\theta(\alpha;z)\le-c_0\alpha(1-\alpha).
 \label{eq:root-canonical-continuation-gap}
\end{equation}
For $0<\alpha<1$, the trunk count density $\alpha\nu$ has spatial
and clock minimizers $\alpha z,\kappa$, and
\begin{equation}
 \mathcal S_{b,\theta}\bigl(\mathbf r^\alpha(\mathfrak s)\bigr)
 +G\Bigl(\eta\bigl(\mathbf r^\alpha(\mathfrak s)\bigr),\alpha u_*(\theta),\alpha\nu\Bigr)
 =-B_\theta(\alpha;z).
 \label{eq:canonical-prefix-entropy}
\end{equation}
\end{lemma}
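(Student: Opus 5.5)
I would prove the identity \eqref{eq:canonical-prefix-entropy} first and then obtain the gap \eqref{eq:root-canonical-continuation-gap} from it, using that the full saddle maximizes $G$ at a zero of $\widehat F$.

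\emph{Step 1: the trunk saddle.} Fix $\mathfrak s=(\nu,z,\kappa)\in\mathcal A(\theta,u_*(\theta))$ and $0<\alpha<1$, and write $\eta_\alpha=\eta(\mathbf r^\alpha(\mathfrak s))$. The clock claim is immediate. The clock equation $-H'(\kappa)/H(\kappa)=u/\nu$ is invariant under $(u,\nu)\mapsto(\alpha u,\alpha\nu)$, so $\kappa$ minimizes the trunk clock objective and $\chi(\alpha\nu,\alpha u_*)=\alpha\chi(\nu,u_*)$. For the spatial claim I must check that $J_{\eta_\alpha}(\alpha z)=\alpha\nu$. I would do this coordinatewise. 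Under independent one-coordinate bridges of duration $z$, the expected number of jumps before time $\alpha z$ equals $\alpha$ times the total expected number of jumps. The latter is $\theta zA_b'/A_b+(1-\theta)zC_b'/C_b=J_\theta(z)=\nu$. On the other side, a coordinate whose symbol at time $\alpha z$ differs from its start contributes $\alpha zA_b'(\alpha z)/A_b(\alpha z)$ in expectation, and one that agrees contributes $\alpha zC_b'(\alpha z)/C_b(\alpha z)$. The class weights \eqref{eqtag:MR2} are exactly the bridge marginals, so summing gives $J_{\eta_\alpha}(\alpha z)$. This is a direct but careful computation using $A_b'$, $C_b'$ and the semigroup identities $A_b(s+s')=A_bC_b+C_bA_b+(b-2)A_bA_b$ and their analogues. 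Strict monotonicity of $J$ (Lemma~\ref{lem:variational-regularity}) then gives uniqueness.

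\emph{Step 2: the identity.} With these minimizers,
\[
G\bigl(\eta_\alpha,\alpha u_*,\alpha\nu\bigr)=\alpha\nu\log(\alpha\nu)-\alpha\nu+\ell_{\eta_\alpha}(\alpha z)-\alpha\nu\log(\alpha z)+\alpha\chi(\nu,u_*).
\]
At the full saddle, $0=\widehat F=\nu\log\nu-\nu+\ell_\theta(z)-\nu\log z+\chi(\nu,u_*)$. Subtracting $\alpha$ times this reduces the left side of \eqref{eq:canonical-prefix-entropy} to $\mathcal S_{b,\theta}(\mathbf r^\alpha)+\ell_{\eta_\alpha}(\alpha z)-\alpha\ell_\theta(z)$. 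The remaining identity,
\[
\mathcal S+\ell_{\eta_\alpha}(\alpha z)+\ell_{\xi_\alpha}((1-\alpha)z)=\ell_\theta(z),
\]
is a Gibbs/KL identity. The bridge marginals have the form $\mathbf r_\zeta=(\text{class size})\cdot(\text{product of one-coordinate weights})/(\text{total})$, so the entropy of the mean proportions equals $\log$(total) minus the expected $\log$ weights. Written out, this is the equality case of the Gibbs variational principle.

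\emph{Step 3: the gap.} The same Gibbs principle gives, for every $\mathbf r\in\mathfrak P_\theta$,
\[
\mathcal S(\mathbf r)+\ell_{\eta(\mathbf r)}(\alpha z)+\ell_{\xi(\mathbf r)}((1-\alpha)z)\le\ell_\theta(z),
\]
with equality only at $\mathbf r^\alpha$. I would not use this directly; instead I would bound the trunk term. Since $\widehat F(\eta_\alpha,\cdot)$ has root $u_*(\eta_\alpha)$, the left side of \eqref{eq:canonical-prefix-entropy} is at most $\mathcal S+\widehat F(\eta_\alpha,\alpha u_*)$. So I need a strict bound $\mathcal S(\mathbf r^\alpha)+\widehat F(\eta_\alpha,\alpha u_*(\theta))\ge c_0\alpha(1-\alpha)$.

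This is the main obstacle. My approach would be a superadditivity argument. The full first moment is at least the sum over split vertices of the trunk moment times the continuation moment, as in \eqref{eq:see}. Going to exponential rates, this gives
\[
0=\widehat F(\theta,u_*)\ge\mathcal S+\widehat F(\eta_\alpha,\alpha u_*)+\widehat F(\xi_\alpha,(1-\alpha)u_*),
\]
which yields only the weak inequality. For strictness I would use that $(1-\alpha)u_*<u_*(\xi_\alpha)$ strictly when $\alpha>0$. The shorter continuation still has to cover a linear distance, so by \eqref{eqtag:R0} and \eqref{eqtag:R1} the continuation rate is $\le -c\,\alpha$. Near $\alpha=1$ I would instead expand $B_\theta$ directly. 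There $B_\theta(1;z)=0$, and the derivative in $\alpha$ at $1$ is positive, coming from $\ell_\xi(s)\approx\xi\log s$ for small $s$. At $\alpha=0$, I would compute $\partial_\alpha B_\theta(0;z)<0$ using Lemma~\ref{lem:positive-log-derivatives}.

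Combining the interior bound with these two endpoint expansions gives $-c_0\alpha(1-\alpha)$. Uniformity over $\mathcal R_*$ then follows from its compactness (Lemma~\ref{lem:root-active-stability}) and the continuity of all quantities in $(\theta,z,\alpha)$.
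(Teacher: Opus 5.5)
Your Steps 1 and 2 are correct and are essentially the paper's argument. The ingredients are the Shannon identity \eqref{eq:canonical-Shannon-identity} from the bridge marginals, the prefix count via uniformly distributed jump times, and subtracting $\alpha$ times the root identity. Using $G(\theta,u_*,\nu)=0$ together with homogeneity of $\chi$, instead of the paper's $z=\nu H(\kappa)$, is equivalent bookkeeping.

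\textbf{The gap is in Step 3.} Write $\eta_\alpha=\eta(\mathbf r^\alpha(\mathfrak s))$, $\xi_\alpha=\xi(\mathbf r^\alpha(\mathfrak s))$, $u_*=u_*(\theta)$. You need the \emph{lower} bound $\mathcal S_{b,\theta}(\mathbf r^\alpha)+G(\eta_\alpha,\alpha u_*,\alpha\nu)=-B_\theta(\alpha;z)\ge c_0\alpha(1-\alpha)$. Both of your inequalities bound this quantity from above:
\begin{itemize}
\item $G\le\widehat F$ gives $-B_\theta\le\mathcal S_{b,\theta}+\widehat F(\eta_\alpha,\alpha u_*)$, so a lower bound on the right side proves nothing.
\item Superadditivity, $0\ge\mathcal S_{b,\theta}+\widehat F(\eta_\alpha,\alpha u_*)+\widehat F(\xi_\alpha,(1-\alpha)u_*)$, again only bounds the trunk quantity from above, whatever the sign of the continuation term.
\end{itemize}

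The correctly oriented version applies your identity to the continuation. The count $(1-\alpha)\nu$ at distance density $\xi_\alpha$ has minimizers $(1-\alpha)z,\kappa$, so $B_\theta(\alpha;z)=G(\xi_\alpha,(1-\alpha)u_*,(1-\alpha)\nu)\le\widehat F(\xi_\alpha,(1-\alpha)u_*)$. But then everything rests on a quantitative separation $(1-\alpha)u_*(\theta)<u_*(\xi_\alpha)$, which you only assert. ``The continuation still has to cover a linear distance'' is not a reason, since the full path also covers distance $\theta$ in time $u_*(\theta)$. Nor do \eqref{eqtag:R0}--\eqref{eqtag:R1} ever compare $u_*(\xi_\alpha)$ with $(1-\alpha)u_*(\theta)$.

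Differentiating at $\alpha=0$ shows the separation is equivalent to the claim itself. At a single saddle, Danskin gives $u_*'(\theta)=\log(1/a)/\kappa$ with $a=A_b(z)/C_b(z)$, and the separation becomes $\partial_\alpha B_\theta(0;z)<0$. The paper argues in the opposite direction: it deduces subcriticality of typical continuations (Step~2 of Theorem~\ref{thm:globalfractionaldiagram}) from this lemma.

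Your endpoint claim is also unsupported. We have $\partial_\alpha B_\theta(0;z)=z\log a\,\bigl((1-\theta)a-\tfrac{\theta}{(b-1)a}\bigr)-\bigl(z\ell_\theta'(z)-\ell_\theta(z)\bigr)$. Lemma~\ref{lem:positive-log-derivatives} only signs the second term, and the first term is positive for $\theta$ near one.

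\textbf{The missing idea} is that the gap is a statement about one-coordinate bridges alone, independent of $u_*$ and of maximality.
\begin{itemize}
\item Since $\log p_s$ equals $-s+\log A_b(s)$ or $-s+\log C_b(s)$, the $-s$ terms cancel. Hence $B_\theta(\alpha;z)$ is the $(1-\theta,\theta)$-mixture \eqref{eq:canonical-coordinate-entropy} of $D_\sigma(\alpha z;z)-\alpha D_\sigma(z;z)$, where $D_\sigma(s;z)$ is the relative entropy of the bridge marginal at time $s$ with respect to the free marginal.
\item The Markov property of the bridge (the conditional entropy of $X_v$ given $X_a$ decreases in $a$) makes $s\mapsto D_\sigma(s;z)$ convex, with $D_\sigma(0;z)=0$.
\item Its midpoint lies below the chord by half the Shannon entropy of $X_{z/2}$. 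This is bounded below for $z$ in a compact subinterval of $(0,\infty)$ (Lemma~\ref{lem:bridge-entropy-convexity}).
\item The two chords through the midpoint then give $B_\theta(\alpha;z)\le-c\min\{\alpha,1-\alpha\}\le-c\alpha(1-\alpha)$ on all of $[0,1)$ at once.
\end{itemize}
So neither endpoint expansions nor a separate interior argument are needed. Compactness of $\mathcal R_*$ is used only to confine $z$.
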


\begin{proof}
We first prove the identity \eqref{eq:canonical-prefix-entropy},
then apply the relative-entropy bound for one-coordinate bridges in
Lemma~\ref{lem:bridge-entropy-convexity} to establish
\eqref{eq:root-canonical-continuation-gap}. Under a one-coordinate bridge, the logarithm of the probability of a symbol at the split
is the sum of the two segment log weights minus the full log weight.
Taking expectations, and summing over the two initial coordinate
classes, gives
\begin{equation}
 \mathcal S_{b,\theta}\bigl(\mathbf r^\alpha(\mathfrak s)\bigr)
 +\ell_{\eta(\mathbf r^\alpha(\mathfrak s))}(\alpha z)
 +\ell_{\xi(\mathbf r^\alpha(\mathfrak s))}\bigl((1-\alpha)z\bigr)
 =\ell_\theta(z).
 \label{eq:canonical-Shannon-identity}
\end{equation}
Indeed, each symbol in the matching class has weight
$C_b(\alpha z)C_b((1-\alpha)z)/C_b(z)$ or
$A_b(\alpha z)A_b((1-\alpha)z)/C_b(z)$.
For an initially mismatching coordinate, the probability of each symbol
(the coordinate's value) at the cut is the product of the two
corresponding segment weights, divided by $A_b(z)$. In \eqref{eqtag:MR2}, the factor $\theta$ is the proportion of initially
mismatching coordinates, and $b-2$ counts the possible symbols distinct
from both the source and target symbols.
Weighting the two class entropies by $1-\theta$ and $\theta$ gives
$\mathcal S_{b,\theta}$.

Conditional on the total jump count of a constant-rate coordinate walk
and its endpoints, the jump times are uniform on the time interval.
The expected number of jumps before the cut is therefore $\alpha$ times the expected
total count.  Conditioning instead on the symbol at the split gives
\begin{equation}
 J_{\eta(\mathbf r^\alpha(\mathfrak s))}(\alpha z)
 =\alpha J_\theta(z)=\alpha\nu.
 \label{eq:canonical-prefix-count}
\end{equation}
The mean map is strictly increasing, so $\alpha z$ is the spatial
minimizer for this trunk count density.  The full clock equation gives
$-H'(\kappa)/H(\kappa)=u_*(\theta)/\nu
=(\alpha u_*(\theta))/(\alpha\nu)$, so $\kappa$ is its clock minimizer.
At the maximizing count density $\nu$, the stationarity condition
\eqref{eq:UO8} gives $z=\nu H(\kappa)$.
Substituting this identity in the definition of $G$ yields
\begin{equation}
 G\Bigl(\eta\bigl(\mathbf r^\alpha(\mathfrak s)\bigr),\alpha u_*(\theta),\alpha\nu\Bigr)
 =\ell_{\eta(\mathbf r^\alpha(\mathfrak s))}(\alpha z)
  -\alpha\nu+\alpha\kappa u_*(\theta).
 \label{eq:canonical-prefix-exponent}
\end{equation}
Combine \eqref{eq:canonical-Shannon-identity} and
\eqref{eq:canonical-prefix-exponent} with the root identity
$\ell_\theta(z)-\nu+\kappa u_*(\theta)=0$ to obtain
\eqref{eq:canonical-prefix-entropy}.

Let $D_\sigma(s;z)$ be the relative entropy of the position at time $s$
under a one-coordinate bridge of duration $z$, relative to its law
under the unconditioned walk with the same initial symbol.
Here $\sigma=b-1$ denotes matching endpoints and $\sigma=-1$ denotes
mismatching endpoints, as in Lemma~\ref{lem:bridge-entropy-convexity} below.
The coordinate transition probabilities are $\exp(-s)C_b(s)$ for equal
symbols and $\exp(-s)A_b(s)$ for different symbols.
Thus, the definition of relative entropy gives
\begin{equation}
B_\theta(\alpha;z)
 ={}(1-\theta)\bigl(D_{b-1}(\alpha z;z)-\alpha D_{b-1}(z;z)\bigr)
 +\theta\bigl(D_{-1}(\alpha z;z)-\alpha D_{-1}(z;z)\bigr).
\label{eq:canonical-coordinate-entropy}
\end{equation}
The $z$-coordinates of $\mathcal R_*$ lie in a compact subinterval of
$(0,\infty)$.  Applying Lemma~\ref{lem:bridge-entropy-convexity} below to
\eqref{eq:canonical-coordinate-entropy} proves
\eqref{eq:root-canonical-continuation-gap}, including $\alpha=0$ by
continuity.  This completes both assertions.
\end{proof}

\subsubsection{Estimates for early splits}

For $0\le r\le\delta_I(q)d$, we keep the near-optimal continuation
counts and derive an explicit upper bound on the trunk's growth.  The retained
continuation estimate is stated first; the subsequent {gap estimate} and
mutation-count tail bound with exponential weights give the short-trunk estimate in Step~1 of
Theorem~\ref{thm:globalfractionaldiagram}.

Here a type $i\in(-\infty,\lambda_+]$ is the sampled signed rate.
Throughout its visit,
a particle branches at rate $i_+$ and dies at rate $(-i)_+$.
Each mutation starts a new
visit with an independent rate drawn from the probability measure $P$, including a return to an earlier
vertex.  At a split, both children inherit the current
label $i$.  Conditional on the split data, their subsequent evolutions
are independent.
Thus, the trunk's terminal rate is $i$, the branching factor is $i_+$,
and each continuation's first interval of duration $s$ contributes the
first-moment weight $e^{-(1-i)s}$.  These factors use the same $i$ before
integration.  On
$\Hc_d^{(b)}\times(-\infty,\lambda_+]$, let $\mathbf K_t$ be the positive
first-moment kernel obtained by weighting the rate-one mutation process,
with a fresh type at each mutation, by $\exp(\int_0^t I_s\,\dd s)$.
Its generator, on its domain, acts by
\begin{equation*}
 f(x,i)\longmapsto
 \bigl((b-1)d\bigr)^{-1}\sum_{y\sim x}\int f(y,j)\,P(dj)
 -(1-i)f(x,i).
\end{equation*}
The notation $\mathbf K_t((x,i),\{y\}\times\dd j)$ denotes a measure in
the terminal rate.  Set
\begin{equation}
\begin{split}
 \mathbf K_t^P\bigl(x;(y,j)\bigr)\,P(dj)
 &=\int\mathbf K_t\bigl((x,i),\{y\}\times\dd j\bigr)\,P(di),\\
 \mathbf k_t\bigl((y,j),z\bigr)
 &=\mathbf K_t\bigl((y,j),\{z\}\times(-\infty,\lambda_+]\bigr).
\end{split}
\label{eq:typed-kernel-definitions}
\end{equation}
The kernel $\mathbf K_t^P$ averages the initial rate with respect to $P$
and retains the terminal rate as an argument.  The kernel $\mathbf k_t$
fixes the initial rate and integrates over all terminal rates.
The first line defines a density with respect to $P$, whose explicit
version below is defined for every $j\in(-\infty,\lambda_+]$.
Summing the jump-count expansion over the rate labels gives
\begin{equation*}
 \widehat K_t(x,y)
 =\int\mathbf K_t^P\bigl(x;(y,j)\bigr)\,P(dj)
 =\int\mathbf k_t\bigl((x,i),y\bigr)\,P(di).
\end{equation*}
Detailed balance gives
\begin{equation}
 P(di)\mathbf K_t\bigl((x,i),\{y\}\times\dd j\bigr)
 =P(dj)\mathbf K_t\bigl((y,j),\{x\}\times\dd i\bigr),
 \qquad
 \mathbf K_t^P\bigl(x;(y,i)\bigr)=\mathbf k_t\bigl((y,i),x\bigr).
 \label{eq:typed-detailed-balance}
\end{equation}

Hereafter, $h^{*0}$ denotes the unit point mass at zero, so
$f*h^{*0}=f$.  For $k\ge0$, set
\begin{equation*}
 g_{k,i}=e^{-(1-i)\,\cdot}*h^{*k}.
\end{equation*}
The first holding interval carries the fixed rate label $i$; the
remaining $k$ holding intervals have independent rate labels drawn
from $P$.  Averaging over these fresh rate labels gives\begin{equation}
 \mathbf k_t\bigl((v,i),y\bigr)
 =\sum_{k\ge0}p_{d,d_{\mathrm H}(v,y)}(k)g_{k,i}(t).
 \label{eq:typed-jump-count}
\end{equation}

Apply Propositions~\ref{prop:global-oneobject} and
\ref{prop:global-oneobject-local} to $\mathcal K_{\rm rt}$ using the
same count interval and threshold $\varepsilon_{\mathcal K_{\rm rt}}$ for both propositions, and denote the resulting count
interval by $J_{\rm rt}$.  Enlarge this interval, if necessary, so that
all sufficiently near-maximizing count densities on $\mathcal K_{\rm rt}$
lie a fixed positive distance from its endpoints.  Fix
$0<\varepsilon_{\rm ret}\le\varepsilon_{\mathcal K_{\rm rt}}$
sufficiently small.  The exact expansion
\[
 \widehat K_{d,m}(t)=\sum_{n\ge0}p_{d,m}(n)h^{*(n+1)}(t)
\]
decomposes the kernel by mutation count.  We retain admissible counts with
$n/d\in J_{\rm rt}$ whose exponential rates are at most
$\varepsilon_{\rm ret}$ below the maximum.
For $(m/d,t/d)\in\mathcal K_{\rm rt}$, specialize Definition~\ref{def:retained-counts} by putting
\begin{equation}
 \mathcal I_d(m,t)
 =\mathcal I_d^{J_{\rm rt},\varepsilon_{\rm ret}}(m,t)
 =\left\{n\ge0:
 \begin{array}{l}
 p_{d,m}(n)>0,\quad n/d\in J_{\rm rt},\\
 G\!\left(\frac md,\frac td,\frac nd\right)
 \ge\widehat F\!\left(\frac md,\frac td\right)-\varepsilon_{\rm ret}
 \end{array}\right\}.
\label{eqtag:GR0}
\end{equation}
Choose $\varepsilon_{\rm ret}$ so that the retained tilts lie in the compact region of
Proposition~\ref{prop:global-oneobject-local}.  Every sufficiently small
fixed positive value of $\varepsilon_{\rm ret}$ gives the estimates below.  For each fixed exponent $q$
used in a short-trunk estimate, choose $\varepsilon_{\rm ret}$ after fixing $q$ so
that, for fixed $C_0<\infty$ and all sufficiently large $d$, every retained count
with $|t-du_*(m/d)|\le C_0\log d$ satisfies
\eqref{eq:finite-initial-transform-gap} of Lemma~\ref{lem:setvalued-initial-arm-gap}.  At each such
choice, Proposition~\ref{prop:global-oneobject} bounds the omitted
contribution by an exponentially small fraction of the kernel.
For $n\in\mathcal I_d(m,t)$, let $z_n,\kappa_n$ be the unique solutions
\begin{equation}
 J_{\frac{m}{d}}(z_n)=\frac nd,
 \qquad -\frac{H'(\kappa_n)}{H(\kappa_n)}=\frac{t}{n+1},              \label{eqtag:GR0a}
\end{equation}
and put
$$
 \pi_n=\frac{p_{d,m}(n)h^{*(n+1)}(t)}
              {\widehat K_{d,m}(t)},\qquad
 a_n=\frac{A_b(z_n)}{C_b(z_n)}.
$$
The factor $a_n^{\ell-m}$ appears in the spatial coefficient ratio when
the continuation distance changes from $m$ to $\ell$ at this retained
tilt.  Equivalently, it conjugates the mutation operator by distance
from the target; this is its role in the short-trunk estimate below.
The choice of $J_{\rm rt}$ and \eqref{eq:UO2} imply
$\sum_{n\in\mathcal I_d(m,t)}\pi_n\le1$, with an exponentially small
deficit from one.  Every retained count is admissible by definition; for
$b=2$ this enforces the span-two parity lattice.  Since
$n/d$ stays bounded away from zero, all parity-adjusted counts used below
are nonnegative for sufficiently large $d$.

The next estimate separates retained continuation counts from a remainder while keeping the dependence on the initial rate label $i$ explicit. The remainder is exponentially small when the removed time and the change in distance are sufficiently small fractions of $d$. Here $s$ is the time removed from the continuation;
we will apply it with $s=r$, the trunk duration.

\begin{lemma}
\label{lem:global-retained-arm}
Under Assumption~\ref{ass:environment}, for every fixed $C_0<\infty$
there are $\delta_*,c>0$ such that, if $m/d=\theta$,
$|t-du_*(\theta)|\le C_0\log d$,
$0\le s\le\min(t,\delta_*d)$, and $\ell=d_{\mathrm H}(v,\mathbf 0)$, define
\begin{align*}
 \mathcal M_{s,\ell,i}
 &=\frac1{\widehat K_{d,m}(t)}
   \sum_{n\in\mathcal I_d(m,t)}
       p_{d,\ell}\!\left(n-\one_{\{b=2,\ \ell\not\equiv m\pmod2\}}\right)
       g_{n-\one_{\{b=2,\ \ell\not\equiv m\pmod2\}},i}(t-s),\\
 \mathcal E_{s,\ell,i}
 &=\frac1{\widehat K_{d,m}(t)}
   \sum_{\substack{k\ge0:\ k\notin
       \{n-\one_{\{b=2,\ \ell\not\equiv m\pmod2\}}:
          n\in\mathcal I_d(m,t)\}}}
       p_{d,\ell}(k)g_{k,i}(t-s).
\end{align*}
Then, uniformly for $i\le\lambda_+$,
\begin{align}
 \frac{\mathbf k_{t-s}\bigl((v,i),\mathbf 0\bigr)}{\widehat K_{d,m}(t)}
 &=\mathcal M_{s,\ell,i}+\mathcal E_{s,\ell,i},
                                                               \label{eqtag:GR4}\\
 \mathcal M_{s,\ell,i}
 &\ll\sum_{n\in\mathcal I_d(m,t)}\pi_n
 \frac{e^{-\kappa_ns}a_n^{\ell-m}}
 { (\kappa_n+1-i)
   H(\kappa_n)^{1+\one_{\{b=2,\ \ell\not\equiv m\pmod2\}}}},
                                                               \label{eqtag:GR5}\\
 \mathcal E_{s,\ell,i}
 &\le d^C\exp\Bigl(-cd+C\bigl(s+|\ell-m|\bigr)\Bigr).           \label{eqtag:GR6}
\end{align}
For every open neighbourhood $U$ of the set $\mathcal R_*$,
$\varepsilon_{\rm ret}$ may be reduced (depending on $U$) so
that, throughout $|t-du_*(m/d)|\le C_0\log d$ and for all sufficiently
large $d$, every tuple
$(m/d,n/d,z_n,\kappa_n)$ belonging to the retained set lies in $U$.
All counts removed by this reduction are absorbed into \eqref{eqtag:GR6}:
more precisely, after the reduction \eqref{eqtag:GR6} holds with positive
constants $c_U,\delta_U$ in place of $c,\delta_*$.  These constants may
depend on $U$ and are uniform after $U$ is fixed.
\end{lemma}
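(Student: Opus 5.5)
The plan is to start from the exact jump-count expansion \eqref{eq:typed-jump-count} with $y=\mathbf 0$ and $\ell=d_{\mathrm H}(v,\mathbf0)$. Partitioning the index $k$ into the shifted retained set $\{n-\one_{\{b=2,\ \ell\not\equiv m\pmod2\}}:n\in\mathcal I_d(m,t)\}$ and its complement gives \eqref{eqtag:GR4} immediately; in the binary case the shift is exactly what makes $p_{d,\ell}(k)$ admissible. The two estimates \eqref{eqtag:GR5} and \eqref{eqtag:GR6} are then proved separately: the first by a term-by-term tilted comparison, the second by a perturbation of the exponent $G$.

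\emph{Main term.} Fix $n\in\mathcal I_d(m,t)$, write $k=n-\one_{\{b=2,\ \ell\not\equiv m\pmod2\}}$, and tilt both numerator and denominator at the same $z_n,\kappa_n$ from \eqref{eqtag:GR0a}.
\begin{itemize}
\item Spatial factor. By \eqref{eq:tilted-pdm} and the uniform upper bound in \eqref{eq:UO5} (valid for every $\ell$),
\[
p_{d,\ell}(k)\le C\,\frac{k!}{(dz_n)^k}A_b(z_n)^\ell C_b(z_n)^{d-\ell}\,d^{-1/2}.
\]
The lower bound in \eqref{eq:UO5} at $\ell=m$, $j=n$ gives
\[
p_{d,m}(n)\ge c\,\frac{n!}{(dz_n)^n}A_b(z_n)^mC_b(z_n)^{d-m}\,d^{-1/2}.
\]
Their ratio is at most $C a_n^{\ell-m}\bigl(dz_n/n\bigr)^{n-k}\ll a_n^{\ell-m}$, since $n/d$ and $z_n$ lie in compacts.
\item Clock factor. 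Write $e^{-(1-i)r}=(\kappa_n+1-i)^{-1}e^{\kappa_n r}\cdot(\kappa_n+1-i)e^{-(\kappa_n+1-i)r}$. This gives
\[
g_{k,i}(t-s)=\frac{e^{\kappa_n(t-s)}H(\kappa_n)^k}{\kappa_n+1-i}\,\bigl(\mathrm{Exp}(\kappa_n+1-i)*\widetilde f_{k,\kappa_n}\bigr)(t-s).
\]
Convolution with a probability density does not increase the sup norm, so the sup bound in \eqref{eq:UO6} bounds the last factor by $Cd^{-1/2}$, uniformly in $i\le\lambda_+$. On the other side, \eqref{eq:UO6} at the mean gives $h^{*(n+1)}(t)\gg e^{\kappa_nt}H(\kappa_n)^{n+1}d^{-1/2}$.
\end{itemize}
Multiplying the two ratios and using $p_{d,m}(n)h^{*(n+1)}(t)=\pi_n\widehat K_{d,m}(t)$ yields exactly the summand in \eqref{eqtag:GR5}, with $H(\kappa_n)^{k-n-1}=H(\kappa_n)^{-1-\one_{\{b=2,\ \ell\not\equiv m\pmod2\}}}$.

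\emph{Remainder.} First dispose of the trivial regime. Since $\mathbf k_{t-s}\le e^{\lambda_+t}$ and $\widehat K_{d,m}(t)\ge d^{-C}$ near the root (by \eqref{eq:UO1} and \eqref{eqtag:R0}), the bound \eqref{eqtag:GR6} holds trivially once $|\ell-m|\ge\delta'd$, provided $C$ is chosen with $C\delta'\ge c+\lambda_+u_{\max}$. So it suffices to treat $s+|\ell-m|\le 2\delta d$ with $\delta$ small.

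In this regime, use $e^{-(1-i)r}\le p_\varepsilon^{-1}e^{\varepsilon r}h(r)$ from \eqref{eq:Hh}. This gives $g_{k,i}(t-s)\le p_\varepsilon^{-1}e^{\varepsilon t}h^{*(k+1)}(t-s)$, after which the global bounds \eqref{eq:UO16} and \eqref{eq:UO17} give
\[
p_{d,\ell}(k)g_{k,i}(t-s)\le d^Ce^{\varepsilon t}\exp\Bigl(dG\bigl(\tfrac\ell d,\tfrac{t-s}d,\tfrac kd\bigr)\Bigr),
\]
plus a factorial tail for $k>Rd$ that is exponentially negligible.

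Every non-retained $k$ (after the parity shift, which moves $k/d$ by only $O(d^{-1})$) satisfies $G(\theta,t/d,k/d)\le\widehat F(\theta,t/d)-\varepsilon_{\rm ret}$, and $\widehat F(\theta,t/d)=O(\log d/d)$ by \eqref{eqtag:R0}. The task is to transfer this gap from $(\theta,t/d)$ to $(\ell/d,(t-s)/d)$ at a cost $C(s+|\ell-m|)/d$ in the exponent:
\begin{itemize}
\item On a fixed compact count window away from the diagonal $\nu=\theta$, use the Lipschitz envelope derivatives $\partial_\theta G=\log(A_b/C_b)$ and $\partial_uG=\kappa$.
\item Near $\nu=\theta$ and for large $\nu$, use uniform continuity of $G$ and compactness, as in the deficit argument of Lemma~\ref{lem:variational-regularity}. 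This gives $G<\widehat F-2\varepsilon$ on a whole $\delta$-neighbourhood in $(\theta,u)$.
\end{itemize}
Choosing $\varepsilon$ in \eqref{eq:Hh} small relative to $\varepsilon_{\rm ret}$ absorbs the factor $e^{\varepsilon t}$. Dividing by $\widehat K_{d,m}(t)\ge d^{-C}e^{d\widehat F}$ then gives \eqref{eqtag:GR6}.

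\emph{Final assertion and main obstacle.} For the last claim, apply Lemma~\ref{lem:root-active-stability}, including its finite-$d$ tilt version, with $\varepsilon_{\rm ret}\le\varepsilon_U$. This places every retained tuple $(m/d,n/d,z_n,\kappa_n)$ in $U$. The counts discarded by this reduction still carry a fixed gap $\varepsilon_U$, so the remainder argument above applies verbatim with constants $c_U,\delta_U$. I expect the main obstacle to be the uniformity of the remainder perturbation near the corner $\nu\downarrow\theta$, where $\partial_\theta G$ is unbounded. There I would rely on \eqref{eq:UO13a} and \eqref{eq:UO9} together with compactness, rather than on derivatives.
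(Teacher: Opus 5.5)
Your proposal is correct. The decomposition \eqref{eqtag:GR4}, the main-term bound \eqref{eqtag:GR5}, and the final assertion via Lemma~\ref{lem:root-active-stability} match the paper: the same exact tilts $z_n,\kappa_n$, the spatial ratio $a_n^{\ell-m}$ times the bounded parity factor $dz_n/n$, and the sup-norm bound after absorbing the first holding into an exponential density. The remainder \eqref{eqtag:GR6} is where your route differs.

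\emph{The paper's remainder argument.} It argues contrapositively. When $s+|\ell-m|\le\delta_*d$, every count retained at level $\varepsilon_{\rm ret}/4$ for the perturbed pair $(\ell,t-s)$ becomes, after the parity shift, a member of $\mathcal I_d(m,t)$. This needs uniform continuity of $G$ only at near-maximizing counts, which \eqref{eq:UO14} keeps away from the diagonal. Hence every count in $\mathcal E_{s,\ell,i}$ is a discarded count for $(\ell,t-s)$. After the comparison $g_{k,i}\le Ce^{\varepsilon(t-s)}h^{*(k+1)}$, the paper applies \eqref{eq:UO2} at $(\ell,t-s)$ wholesale. This gives $\mathcal E\le Ce^{-cd}\widehat K_{d,\ell}(t-s)/\widehat K_{d,m}(t)$, and the ratio is controlled by \eqref{eq:UO1} and continuity of $\widehat F$.

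\emph{Your remainder argument.} You bound each discarded term directly through \eqref{eq:UO16}--\eqref{eq:UO17} and then transfer the gap. This effectively re-proves \eqref{eq:UO2} at the perturbed parameters, which is why you need the case split. The split does work.
\begin{itemize}
\item Off the perturbed diagonal, on a compact count window, $G$ is locally Lipschitz in $(\theta,u,\nu)$. This follows from the envelope formulas, or from concavity of the two inner infima in $(\theta,\nu)$ and $(u,\nu)$.
\item Near the perturbed diagonal, and for counts outside $J_{\rm rt}$, no transfer from $(\theta,u)$ is needed. The contrapositive of \eqref{eq:UO14}, applied directly at $(\ell/d,(t-s)/d)\in\mathcal K_{\rm rt}$, gives a fixed gap below $\widehat F(\ell/d,(t-s)/d)$. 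Continuity of $\widehat F$ then compares this with $\widehat F(m/d,t/d)$.
\end{itemize}
So the ``main obstacle'' you flag is handled by \eqref{eq:UO14} itself, which already packages \eqref{eq:UO13a} and \eqref{eq:UO9} with compactness. Your route buys an explicit linear cost $C(s+|\ell-m|)/d$ in the exponent. The paper's route is shorter and never evaluates the perturbation at non-maximizing counts.

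\emph{A citation correction.} Your lower bound $\widehat K_{d,m}(t)\ge d^{-C}$ in the logarithmic band does not follow from \eqref{eqtag:R0} alone, and neither does $\widehat F(\theta,t/d)=O(\log d/d)$. Equation \eqref{eqtag:R0} only gives one-sided sign information. Use Corollary~\ref{cor:polynomial-root-band} instead, or \eqref{eq:UO1} together with the two-sided bound \eqref{eq:compact-max-local-bounds}.
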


\begin{proof}
For a retained denominator count $n$, use its exact spatial and clock
tilts $z_n,\kappa_n$ from \eqref{eqtag:GR0a}.
The shift maps the binary span-two lattice for the denominator to the
corresponding numerator lattice.  Exact coefficient tilting gives
\begin{equation*}
 \frac{p_{d,\ell}\bigl(n-\one_{\{b=2,\ \ell\not\equiv m\pmod2\}}\bigr)}{p_{d,m}(n)}
 =\begin{cases}
   \displaystyle
   a_n^{\ell-m}
   \frac{\Pp_{z_n}(\Sigma_{d,\ell,z_n}=n)}
        {\Pp_{z_n}(\Sigma_{d,m,z_n}=n)},
   &b>2\text{ or }\ell\equiv m\pmod2,\\[3mm]
   \displaystyle
   \frac{d z_n}{n}a_n^{\ell-m}
   \frac{\Pp_{z_n}(\Sigma_{d,\ell,z_n}=n-1)}
        {\Pp_{z_n}(\Sigma_{d,m,z_n}=n)},
   &b=2\text{ and }\ell\not\equiv m\pmod2.
  \end{cases}
\end{equation*}
The factorial and degree terms cancel except for the bounded binary
parity correction.  Proposition~\ref{prop:global-oneobject-local} bounds
the numerator local probability by $Cd^{-1/2}$ and the denominator,
evaluated at its mean, below by $cd^{-1/2}$.  Their ratio is therefore
bounded independently of $d$.

Applying the Laplace-tilting identity to the clock convolution at
$\kappa_n$ gives the exact exponential factor
\begin{equation*}
 e^{-\kappa_ns}
 \frac{H(\kappa_n)^{n-\one_{\{b=2,\ \ell\not\equiv m\pmod2\}}}}
 {(\kappa_n+1-i)H(\kappa_n)^{n+1}}
 =\frac{e^{-\kappa_ns}}
 { (\kappa_n+1-i)
   H(\kappa_n)^{1+\one_{\{b=2,\ \ell\not\equiv m\pmod2\}}}}.
\end{equation*}
Under the $\kappa_n$-tilt, the first holding factor, with fixed initial type $i$, becomes the
probability density
$(\kappa_n+1-i)e^{-(\kappa_n+1-i)\,\cdot}$.
Convolution with a probability density cannot increase the supremum norm.
Consequently, for either retained numerator count $k\asymp d$,
\[
 \Bigl\|\bigl((\kappa_n+1-i)
 e^{-(\kappa_n+1-i)\,\cdot}\bigr)*f_{\kappa_n}^{*k}
 \Bigr\|_\infty
 \le\bigl\|f_{\kappa_n}^{*k}\bigr\|_\infty\ll k^{-1/2}\asymp d^{-1/2},
\]
by Proposition~\ref{prop:global-oneobject-local}.  The same proposition
bounds the denominator clock density at its mean below by a constant
times $d^{-1/2}$.
The compact retained-tilt region gives $\kappa_n+1-i\ge c>0$ and
$H(\kappa_n)\ge c>0$, uniformly over retained $n$ and $i\le\lambda_+$.
Hence, the reciprocal denominator in \eqref{eqtag:GR5} is uniformly bounded.
Multiply the spatial and clock ratios by $\pi_n$ and sum over the
retained counts to obtain \eqref{eqtag:GR5}.  The partition into retained
and discarded counts gives \eqref{eqtag:GR4}.

We prove the discarded-count bound by showing that, when the changes in time and distance are sufficiently small, every sufficiently near-optimal continuation count belongs, after the required parity adjustment, to the retained set for the full path.
First restrict to $s+|\ell-m|\le\delta_*d$, choosing $\delta_*>0$
small enough that both parameter pairs belong to $\mathcal K_{\rm rt}$.
If
$k\in\mathcal I_d^{J_{\rm rt},\varepsilon_{\rm ret}/4}(\ell,t-s)$,
the bounds in \eqref{eq:UO14} place $k/d$ a fixed
positive distance from the geodesic boundary and from the endpoints of
$J_{\rm rt}$.  Put
$n=k+\one_{\{b=2,\ \ell\not\equiv m\pmod2\}}$.
For sufficiently small $\delta_*$ and all large $d$, this count lies in
$dJ_{\rm rt}$ and is admissible for distance $m$: its distance from the
geodesic boundary remains positive, and the binary parity is correct.
Uniform continuity of $G$ on this compact interior region and of
$\widehat F$ on $\mathcal K_{\rm rt}$ gives
\[
 G\!\left(\frac md,\frac td,\frac nd\right)
 \ge\widehat F\!\left(\frac md,\frac td\right)
       -\varepsilon_{\rm ret}
\]
after a further decrease of $\delta_*$.  Thus $n\in\mathcal I_d(m,t)$.
Every count contributing to $\mathcal E_{s,\ell,i}$ therefore lies
outside the displayed retained set for $(\ell,t-s)$.

To apply the discarded-count bound \eqref{eq:UO2} for the rate-averaged
kernel, we first compare the holding factor with fixed initial rate
label $i$ to its average $h$ under $P$. Fix a small $\varepsilon>0$.  Since
$P([\lambda_+-\varepsilon,\lambda_+])>0$, for every
$i\le\lambda_+$ and $r\ge0$,
\[
 e^{-(1-i)r}
 \le \frac{e^{\varepsilon r}}{P\bigl([\lambda_+-\varepsilon,\lambda_+]\bigr)}h(r).
\]
Positivity of convolution therefore gives, uniformly in $i$,
\[
 g_{k,i}(t-s)
 \le \frac{e^{\varepsilon(t-s)}}{P\bigl([\lambda_+-\varepsilon,\lambda_+]\bigr)}
          h^{*(k+1)}(t-s).
\]
Apply \eqref{eq:UO2} with $\varepsilon_{\rm ret}/4$.
Its discarded-count gap is fixed before
$\varepsilon$ is chosen.  Because $t-s\le Cd$, taking
$\varepsilon$ sufficiently small absorbs the extra exponential factor
into half of that gap and yields
\[
 \mathcal E_{s,\ell,i}
 \le C e^{-cd}
       \frac{\widehat K_{d,\ell}(t-s)}{\widehat K_{d,m}(t)}.
\]
The two-sided estimate \eqref{eq:UO1} and uniform continuity of
$\widehat F$ bound this ratio by
$Cd\exp(d[\widehat F(\ell/d,(t-s)/d)-\widehat F(m/d,t/d)])$.
Shrink $\delta_*$ once more so that this exponential variation is less
than half the fixed gap above.  Absorbing the polynomial factor then
gives $\mathcal E_{s,\ell,i}\le e^{-cd}$ in this range.

If $s+|\ell-m|>\delta_*d$, the expected total population from any initial type is at most
$e^{\lambda_+(t-s)}$, and the lower bound in \eqref{eq:UO1} gives
$\mathcal E_{s,\ell,i}\le d^Ce^{Cd}$.  Increasing the coefficient of
$s+|\ell-m|$ in \eqref{eqtag:GR6} makes its right side dominate this
bound.  This proves \eqref{eqtag:GR6} for every $\ell$.

Finally, fix an open neighbourhood $U\supset\mathcal R_*$.  By
Lemma~\ref{lem:root-active-stability}, there are
$\eta_U,\varepsilon_U>0$ such that every
$\varepsilon_U$-near-maximizer with $|u-u_*(\theta)|\le\eta_U$ has its associated
tuple in $U$.  Replace $\varepsilon_{\rm ret}$ by
$\min\{\varepsilon_{\rm ret},\varepsilon_U\}$.  Since
$|t/d-u_*(m/d)|=O(\log d/d)$, this time condition holds for all large
$d$; Proposition~\ref{prop:global-oneobject}, specifically \eqref{eq:UO2},
gives the newly discarded compact counts a positive gap $c_U$.  Combining
the count-tail estimates with the preceding argument at this reduced
value of $\varepsilon_{\rm ret}$, and then decreasing both the exponential-decay constant
and the cutoff on the removed time if necessary, proves \eqref{eqtag:GR6} with
$c_U,\delta_U>0$.  This proves the last assertion.  If $U$ depends on $q$,
so may $c_U$ and $\delta_U$.
\end{proof}

To sum the retained continuation weights over the short trunk, we use
distance conjugation.  For $c>0$, conjugating the mutation operator by
$c^{d_{\rm H}(\cdot,\mathbf0)}$ gives the off-diagonal row sum
\begin{equation}
 R_c(\xi)=(1-\xi)c+\frac{\xi}{b-1}c^{-1}
                 +\frac{(b-2)\xi}{b-1}.              \label{eqtag:IG0}
\end{equation}
The three terms correspond to mutations increasing the distance,
decreasing it, or leaving it unchanged.
{The next inequality yields an explicit upper bound on the growth of
the conjugated trunk.  This bound is outweighed by the continuation factor
$e^{-(1+q)\kappa_n r}$ in \eqref{eqtag:GR5}.}

{
\begin{lemma}
\label{lem:setvalued-initial-arm-gap}
Under Assumption~\ref{ass:environment}, for a maximizing saddle at the critical time,
$\mathfrak s=(\nu,z,\kappa)\in\mathcal A(\theta,u_*(\theta))$, put
$a=A_b(z)/C_b(z)$.  There are $q_0\in(0,1)$,
$\varepsilon\in(0,\kappa_{\rm rt}/4]$, and $\eta>0$ such that, uniformly over
the set $\mathcal R_*$ and $0<q\le q_0$,
\begin{equation}
 R_{a^{1+q}}(\theta)H\bigl((1+q)\kappa-\varepsilon q\bigr)
 \le1-\eta q.
 \label{eq:root-initial-transform-gap}
\end{equation}
Fix $q\in(0,q_0]$ and $C_0<\infty$.  There is
$\varepsilon_{\rm ret}(q)>0$ such that, for all sufficiently large $d$,
every count retained with this choice and with $\theta=m/d$ and
$|t-du_*(\theta)|\le C_0\log d$ obeys
\begin{equation}
 R_{a_n^{1+q}}(\theta)H\bigl((1+q)\kappa_n-\varepsilon q\bigr)
 \le1-\frac{\eta q}{2},
 \qquad \kappa_n\ge\frac{3\kappa_{\rm rt}}4.
 \label{eq:finite-initial-transform-gap}
\end{equation}
The quantifier order is $q$ first, then
$\varepsilon_{\rm ret}(q)$, and finally $d$.
\end{lemma}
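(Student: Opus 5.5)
The approach is a first-order expansion in $q$ at $q=0$, made uniform over $\mathcal R_*$ by compactness, followed by a perturbation from the critical saddles to the finite-$d$ retained tilts via Lemma~\ref{lem:root-active-stability}.

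\emph{Step 1: the value at $q=0$.} Define
\[
\Phi(q,\varepsilon;\mathfrak s)=\log R_{a^{1+q}}(\theta)+\log H\bigl((1+q)\kappa-\varepsilon q\bigr).
\]
At $q=0$ this is $\log R_a(\theta)+\log H(\kappa)$. I expect $R_a(\theta)H(\kappa)=1$. The reason is that conjugating the mutation kernel by $a^{d_{\rm H}(\cdot,\mathbf0)}$ has row sum $R_a(\xi)$ at distance density $\xi$, and at $\xi=\theta$ we get
\[
R_a(\theta)=(1-\theta)a+\frac{\theta}{(b-1)a}+\frac{(b-2)\theta}{b-1}.
\]
I will check this algebraically from the saddle equations. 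These are $z=\nu H(\kappa)$ (stationarity \eqref{eq:UO8}) and $\nu=J_\theta(z)=\theta zA_b'/A_b+(1-\theta)zA_b/C_b$. The derivatives are $C_b'=A_b$ and $A_b'=(C_b+(b-2)A_b)/(b-1)$, which follows from \eqref{eq:AC}. Substituting gives
\[
\frac{\nu}{z}=\theta\,\frac{C_b/A_b+(b-2)}{b-1}+(1-\theta)\,\frac{A_b}{C_b}=R_a(\theta),
\]
and therefore $R_a(\theta)H(\kappa)=1$. So $\Phi(0,\varepsilon;\mathfrak s)=0$ for every $\varepsilon$.

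\emph{Step 2: the first derivative in $q$.} Differentiating at $q=0$ gives
\[
\partial_q\Phi(0,\varepsilon;\mathfrak s)=\frac{a\log a\,\partial_cR_c(\theta)|_{c=a}}{R_a(\theta)}-(\kappa-\varepsilon)\mu_\kappa,
\qquad \mu_\kappa=-\frac{H'(\kappa)}{H(\kappa)}=\frac{u_*(\theta)}{\nu}.
\]
Here $a\,\partial_cR_c|_{c=a}=(1-\theta)a-\theta/((b-1)a)$, and $\log a<0$ since $A_b<C_b$. I must show this derivative is at most $-2\eta$ uniformly on the compact set $\mathcal R_*$, for $\varepsilon$ small. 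Since $\kappa\ge\kappa_{\rm rt}>0$, the clock term is bounded below by $c\kappa u_*/\nu$, and the $\varepsilon$-term can be made a small perturbation. The real content is the strict inequality
\[
\frac{\log a\,\bigl((1-\theta)a-\theta/((b-1)a)\bigr)}{R_a(\theta)}<\frac{\kappa u_*}{\nu}.
\]
By the root identity $\ell_\theta(z)-\nu+\kappa u_*=0$, the right side equals $1-\ell_\theta(z)/\nu$, so this is a purely spatial inequality in $(\theta,z)$. I would try to obtain it as the $\alpha\downarrow0$ derivative of the continuation gap in Lemma~\ref{lem:setvalued-fractional-gaps}. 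Indeed, $B_\theta(\alpha;z)\le-c_0\alpha(1-\alpha)$ gives $\partial_\alpha B_\theta(0;z)\le-c_0$, and a short trunk at the saddle exactly produces the drift $\log a\cdot$(conjugated distance change) against $\ell_\theta$ per unit count. Matching these two expressions is the main obstacle. If it does not close, I would fall back on proving the inequality directly through convexity of $\log C_b$, $\log A_b$ in the exponential variable, as in Lemma~\ref{lem:positive-log-derivatives}.

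\emph{Step 3: uniformity and the finite-$d$ transfer.} With $\partial_q\Phi(0)\le-2\eta$ uniformly, I bound $\partial_q^2\Phi$ uniformly for $q\le1$ and $\varepsilon\le\kappa_{\rm rt}/4$. This holds because the tilts lie in compact interiors and $(1+q)\kappa-\varepsilon q\ge\kappa_{\rm rt}/2$ stays away from $\kappa_\partial$. Taylor's theorem then gives $\Phi\le-\eta q$ for $q\le q_0$, and \eqref{eq:root-initial-transform-gap} follows from $e^{-\eta q}\le1-\eta q/2$ after renaming $\eta$. For \eqref{eq:finite-initial-transform-gap}, fix $q$. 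The left side is continuous in $(\theta,z,\kappa)$, so it is at most $1-\eta q/2$ on some open neighbourhood $U$ of $\mathcal R_*$. The same neighbourhood can be chosen so that $\kappa\ge3\kappa_{\rm rt}/4$ on it. Then the last assertion of Lemma~\ref{lem:global-retained-arm}, which rests on Lemma~\ref{lem:root-active-stability}, supplies $\varepsilon_{\rm ret}(q)$ such that all retained tuples $(m/d,n/d,z_n,\kappa_n)$ lie in $U$ for large $d$. This respects the stated quantifier order.
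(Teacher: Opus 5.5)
Your proposal follows the paper's proof. Stationarity gives $R_a(\theta)H(\kappa)=1$, the continuation gap of Lemma~\ref{lem:setvalued-fractional-gaps} controls the $q$-derivative at $q=0$, and compactness, Taylor's theorem and Lemma~\ref{lem:root-active-stability} give both displays in the stated quantifier order. The matching you flag as the main obstacle does close, so you do not need the fallback: differentiating \eqref{eqtag:MR2} at $\alpha=0$ gives $\frac{\dd}{\dd\alpha}\xi(\mathbf r^\alpha(\mathfrak s))\big|_{\alpha=0}=z\bigl((1-\theta)a-\tfrac{\theta}{(b-1)a}\bigr)$, which together with $z\ell_\theta'(z)=\nu$ and $\ell_\theta(z)-\nu+\kappa u_*(\theta)=0$ yields $\partial_\alpha B_\theta(0;z)=z\log a\bigl((1-\theta)a-\tfrac{\theta}{(b-1)a}\bigr)-\kappa u_*(\theta)$, so your $\partial_q\Phi(0,\varepsilon;\mathfrak s)$ equals exactly $\bigl(\partial_\alpha B_\theta(0;z)+\varepsilon u_*(\theta)\bigr)/\nu\le-c_0/(2\nu)$ once $\varepsilon u_{\max}\le c_0/2$.
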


\begin{proof}
Put $u=u_*(\theta)$.  Stationarity gives
$R_a(\theta)H(\kappa)=1$, $H(\kappa)=z/\nu$, and
$H'(\kappa)/H(\kappa)=-u/\nu$.
We use the continuation gap to control the derivative with respect to
$q$ at $q=0$.  Differentiating \eqref{eqtag:MR2} at $\alpha=0$ gives
\[
 \left.\frac{\dd}{\dd\alpha}\xi(\mathbf r^\alpha(\mathfrak s))
 \right|_{\alpha=0}
 =z\left((1-\theta)a-\frac{\theta}{b-1}a^{-1}\right).
\]
Differentiating \eqref{eq:canonical-continuation-exponent} and using $z\ell_\theta'(z)=\nu$ and $\ell_\theta(z)-\nu+\kappa u=0$, gives
$$
 \partial_\alpha B_\theta(0;z)
 =z\log a\left((1-\theta)a-\frac{\theta}{b-1}a^{-1}\right)-\kappa u.
$$
Choose $\varepsilon\le\kappa_{\rm rt}/4$ so that
$\varepsilon u_{\max}\le c_0/2$, where $c_0$ is from
\eqref{eq:root-canonical-continuation-gap}.  Hence,
$$
 \left.\frac{\dd}{\dd q}
 \Bigl(R_{a^{1+q}}(\theta)H\bigl((1+q)\kappa-\varepsilon q\bigr)\Bigr)
 \right|_{q=0}
 =\frac{\partial_\alpha B_\theta(0;z)+\varepsilon u}{\nu}
 \le-\frac{c_0}{2\nu}.
$$
Compactness of $\mathcal R_*$ makes this derivative uniformly negative
and the second derivatives uniformly bounded.  Taylor's theorem proves
\eqref{eq:root-initial-transform-gap}.  For fixed $q$, continuity keeps
half this slack on a neighbourhood of $\mathcal R_*$; intersect it with
$\{\kappa>3\kappa_{\rm rt}/4\}$ and apply
Lemma~\ref{lem:root-active-stability} to obtain
\eqref{eq:finite-initial-transform-gap} with this choice of $\varepsilon_{\rm ret}(q)$.
\end{proof}
}

The following comparison bounds the kernel above and below by powers of $d$ for
$|t-du_*(m/d)|\le C_0\log d$, independently of the split time.

\begin{corollary}
\label{cor:polynomial-root-band}
For every fixed $C_0<\infty$, there is $C<\infty$ such that,
uniformly for $0\le m\le d$ and
$|t-du_*(m/d)|\le C_0\log d$, one has, for all sufficiently large $d$,
\begin{equation}
 d^{-C}\le\widehat K_{d,m}(t)\le d^C.                 \label{eqtag:FD5}
\end{equation}
\end{corollary}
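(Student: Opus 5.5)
The plan is to combine the two-sided polynomial bound \eqref{eq:UO1} of Proposition~\ref{prop:global-oneobject} with the Lipschitz control of $\widehat F$ near the root graph. First fix the compact set. Since $|t/d-u_*(m/d)|\le C_0\log d/d\to0$, for all large $d$ the pair $(m/d,t/d)$ lies in $\mathcal K_{\rm rt}$. This set is compact, contained in the regular region $\mathcal R$ by the choice of $\eta_{\rm rt}$, and includes the endpoints $\theta=0,1$. Proposition~\ref{prop:global-oneobject}(1) therefore applies uniformly over $0\le m\le d$ and gives
\[
 c\,d^{-1/2}e^{d\widehat F(m/d,t/d)}\le\widehat K_{d,m}(t)\le C\,d^{1/2}e^{d\widehat F(m/d,t/d)}.
\]

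Next, bound the exponent. Write $u=t/d$ and $v=u-u_*(\theta)$, so that $|v|\le C_0\log d/d\le\eta_{\rm rt}$ for large $d$. We need $d|\widehat F(\theta,u)|\le C'\log d$.
\begin{itemize}
\item The lower bound on $\widehat F$ comes from \eqref{eqtag:R0}. For $v\ge0$ it gives $\widehat F\ge c_{\rm rt}v\ge0$. For $v<0$ we need a lower bound of order $-C|v|$, which \eqref{eqtag:R0} does not supply, since it only gives the upper bound $\le-c_{\rm rt}|v|$.
\item To get two-sided linear control, I would invoke the local bounds \eqref{eq:compact-max-local-bounds} from the proof of Lemma~\ref{lem:global-root-model-only}. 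On a neighbourhood of each root, $0<c\le\partial_uG\le C$ at the relevant near-maximizers, so $|\widehat F(\theta,u)-\widehat F(\theta,u_*(\theta))|\le C|v|$.
\item A finite cover of the compact root graph makes $C$ uniform in $\theta\in[0,1]$. This is exactly as in that proof, where the clock tilts $\kappa=\partial_uG$ lie in a compact interval.
\end{itemize}
An alternative route to the same bound is uniform Lipschitz continuity of $\widehat F(\theta,\cdot)$ on $\mathcal K_{\rm rt}$. This follows because $\partial_uG=\kappa^c$ is bounded on the compact tilt region $K_{\mathcal K}$ of \eqref{eq:UO14}, and integrating before taking the max gives the same bound. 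Since $\widehat F(\theta,u_*(\theta))=0$, we obtain $d|\widehat F|\le Cd|v|\le CC_0\log d$.

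Combining the two steps gives $d^{-1/2-CC_0}\ll\widehat K_{d,m}(t)\ll d^{1/2+CC_0}$, which is \eqref{eqtag:FD5} after enlarging $C$ to absorb the constants.

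The main point needing care is that the derivative bound on $\widehat F$ in $u$ holds uniformly across all $\theta$, including the endpoints. Uniformity in $m$ is inherited from the compactness of $\mathcal K_{\rm rt}$ together with the uniformity statement of Proposition~\ref{prop:global-oneobject}. If one prefers to avoid the Lipschitz argument altogether, Proposition~\ref{prop:global-oneobject-shifts} offers a second route. Apply it with base time $\lfloor du_*(m/d)\rfloor$, which is in the compact set, and shift by $|s|\le C_0\log d+1$. Then \eqref{eq:UO6a}--\eqref{eq:UO6b} bound the ratio by $d^{\overline\kappa C_0+1}$. It remains to bound $\widehat K$ at the root time itself, where $d|\widehat F|=O(1)$ by Lipschitz continuity of $u_*$ and rounding, so \eqref{eq:UO1} gives $d^{\pm1/2}$ there.
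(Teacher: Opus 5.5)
Your proof is correct. Your main route differs from the paper's, while your fallback route is essentially the paper's own proof.

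\emph{The paper's route.} The paper evaluates \eqref{eq:UO1} only at $t=du_*(m/d)$, where $\widehat F(m/d,t/d)=0$ exactly, and obtains $d^{-1/2}\ll\widehat K_{d,m}(t)\ll d^{1/2}$ there. It then transports this to the whole band $|s|\le C_0\log d$ with Proposition~\ref{prop:global-oneobject-shifts}, whose kernel ratios lie between $e^{\overline\kappa_{\mathcal K}s}$ and $e^{\underline\kappa_{\mathcal K}s}$ up to constants. No rounding to $\lfloor du_*(m/d)\rfloor$ is needed, since $t$ is a continuous variable. Your appeal to Lipschitz continuity of $u_*$ at that point is misplaced: the rounding error is in time, so what would be relevant is time-Lipschitz continuity of $\widehat F$. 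This route costs two lines because the shift proposition (exact tilting \eqref{eq:UO19} plus the clock local limit theorem on retained counts) is needed anyway later. It never requires any information about $\widehat F$ off the root curve.

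\emph{Your route.} You apply \eqref{eq:UO1} directly at $(m/d,t/d)\in\mathcal K_{\rm rt}$, and you pay instead with the deterministic estimate $|\widehat F(\theta,u)|\le C|u-u_*(\theta)|$, uniformly in $\theta\in[0,1]$. That estimate is valid, and it is more elementary than your finite-cover argument via \eqref{eq:compact-max-local-bounds} suggests:
\begin{itemize}
\item Since $\partial_uG=\kappa^c(u,\nu)\in[\kappa_\partial,\kappa_\partial+\nu/u]$, the map $u\mapsto G(\theta,u,\nu)$ is uniformly Lipschitz on compact positive-time intervals once $\nu$ is bounded.
\item On compact positive-time sets, maximizing counts lie in a common bounded interval (Lemma~\ref{lem:variational-regularity}).
\item Comparing $G$ at a maximizer for one time with its value at the other time then gives a time-Lipschitz bound for $\widehat F(\theta,\cdot)$ uniform in $\theta$. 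This needs neither regularity, nor the localization near maximizers, nor a cover.
\end{itemize}

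Both routes give exponents of the form $\tfrac12+O(C_0)$. The paper's version reuses a ratio tool it needs elsewhere. Yours isolates the only analytic input beyond \eqref{eq:UO1} as a one-line variational Lipschitz bound, and avoids the local-limit analysis of kernel ratios entirely.
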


\begin{proof}
At $t=du_*(m/d)$, Proposition~\ref{prop:global-oneobject} gives
$d^{-1/2}\ll\widehat K_{d,m}(t)\ll d^{1/2}$, uniformly in $m$.
Proposition~\ref{prop:global-oneobject-shifts} bounds the kernel ratio
under an $O(\log d)$ time shift by powers of $d$.  Combining these two
bounds proves \eqref{eqtag:FD5}.

\end{proof}

We also need an exponential tail for the mutation count of a short trunk.

\begin{lemma}
\label{lem:marked-short-time-tail}
For fixed $C,\zeta>0$ and $a\ge0$, there are $\delta,c>0$ such that,
for all sufficiently large $d$,
\begin{equation}
 e^{Cr}\sum_{j\ge\zeta d}e^{aj}\frac{(Cr)^j}{j!}
 \le e^{-cd},\qquad 0\le r\le\delta d.
 \label{eqtag:FD18b}
\end{equation}
In particular, this controls the sum of the positive jump-count terms
of a conjugated trunk that leaves a distance tube of width $\zeta d$,
even when each $j$-mutation term is multiplied by $e^{aj}$,
provided the conjugated off-diagonal row sums and diagonal potential
are uniformly bounded above.
\end{lemma}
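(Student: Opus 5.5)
The estimate \eqref{eqtag:FD18b} is a Poisson tail bound, so the plan is a Chernoff argument with the tilt chosen large. Write $\lambda=Cr\le C\delta d$. For any $\beta>0$ and $j\ge\zeta d$ we have $1\le e^{\beta(j-\zeta d)}$, so
\[
 e^{\lambda}\sum_{j\ge\zeta d}e^{aj}\frac{\lambda^j}{j!}
 \le e^{\lambda-\beta\zeta d}\sum_{j\ge0}\frac{(e^{a+\beta}\lambda)^j}{j!}
 =\exp\!\Bigl(\lambda\bigl(1+e^{a+\beta}\bigr)-\beta\zeta d\Bigr).
\]
Using $\lambda\le C\delta d$, the exponent is at most
$d\bigl(C\delta(1+e^{a+\beta})-\beta\zeta\bigr)$.

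Next, fix the parameters in order. First take $\beta=2/\zeta$, so that $\beta\zeta=2$. Then choose $\delta>0$ with $C\delta(1+e^{a+2/\zeta})\le1$. The exponent is then at most $-d$, which gives \eqref{eqtag:FD18b} with $c=1$. This holds for every $d$ and uniformly in $0\le r\le\delta d$, since the bound only used $r\le\delta d$. The case $r=0$ is trivial because every term with $j\ge1$ vanishes.

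For the ``in particular'' clause, expand the conjugated trunk kernel in jump counts, in the same way as \eqref{eq:typed-jump-count}. Suppose the conjugated off-diagonal row sums are bounded by some $C_1$ and the diagonal potential by $C_2$. Then the total weight of the $j$-mutation terms is at most $e^{C_2r}\,(C_1r)^j/j!$. A path that leaves the distance tube of width $\zeta d$ must make at least $\zeta d$ mutations, because each mutation changes the Hamming distance by at most one. Taking $C\ge\max(C_1,C_2)$ reduces the sum of these terms, including the extra weights $e^{aj}$, to the left side of \eqref{eqtag:FD18b}.

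There is no real obstacle. The only point that needs care is the order of choices: $\beta$ depends on $\zeta$ alone, and $\delta$ is chosen after $\beta$, depending on $C$, $a$ and $\zeta$.
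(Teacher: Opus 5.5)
Your proof is correct and is essentially the paper's argument: the same exponential-Markov bound $\one_{\{j\ge\zeta d\}}\le e^{\beta(j-\zeta d)}$ on the Poisson-type tail. The paper uses tilt $\beta=1$ with $C(1+e^{a+1})\delta\le\zeta/2$, which gives $c=\zeta/2$, whereas your $\beta=2/\zeta$ gives $c=1$. Your treatment of the application (row-sum bound on the $j$-mutation term, and at least $\zeta d$ mutations needed to leave the tube) matches the paper's.
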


\begin{proof}
Use $\one_{\{j\ge\zeta d\}}\le e^{j-\zeta d}$ to bound the left side by
$\exp(Cr+e^{a+1}Cr-\zeta d)$.
Choose $\delta$ so that $C(1+e^{a+1})\delta\le\zeta/2$.
Leaving the tube requires at least $\zeta d$ mutations, and a uniform
row-sum bound gives $(Cr)^j/j!$ for the $j$-mutation term.  This proves
the stated application as well.
\end{proof}

A space--time bridge records a discrete mutation path together with its
holding times.  For $n\ge0$, write
$$
 \Delta_n(t)=\bigl\{(r_0,\ldots,r_n)\in[0,\infty)^{n+1}:
                 r_0+\cdots+r_n=t\bigr\}.
$$
The independent-visit bridge law is\footnote{For $n=0$, integration over $\Delta_0(t)$ means evaluation at $r_0=t$
(the zero-dimensional simplex convention).}
\begin{equation}
\begin{split}
 \mathbb E_{\mathbb Q_{d,m,t}}[F]
 &:=\frac1{\widehat K_{d,m}(t)}
 \sum_{n\ge0}\bigl((b-1)d\bigr)^{-n}\sum_{\pi:\mathbf{x}_m\to\mathbf 0}
 \int_{\Delta_n(t)}
   \prod_{j=0}^nh(r_j)\,
   \one_{\{|\pi|=n\}}F(\pi,r_0,\ldots,r_n)\,\dd\boldsymbol r.
\end{split}
\label{eq:Qbridge}
\end{equation}
Here $\dd\boldsymbol r=\dd r_0\cdots\dd r_{n-1}$ is Lebesgue measure on
$\Delta_n(t)$.  For arbitrary endpoints $x,y$ and $t>0$, write
$\mathbb Q_{x,y,t}$ for the analogous law, normalized by
$\widehat K_{d,d_{\mathrm H}(x,y)}(t)$.  At $t=0,x=y$, this is the
zero-mutation bridge with its single holding time equal to zero.
The prescribed endpoints of a bridge are its starting and terminal
vertices, fixed by this conditioning.

\subsubsection{Upper bound for the fractional branching sum}

Here and below, unsubscripted $q\in(0,1)$ denotes an auxiliary fractional
exponent.
For $r\in[0,t]$, consider branching events in the independent-visit
system at time $r$.  The measure $\mathbf K_r^P(x;(v,i))P(di)$ is the
expected number of particles at $v$ with signed rate in $\dd i$.
Multiplication by $i_+\,\dd r$ gives the corresponding expected number of
branching events during $[r,r+\dd r]$.  Each child contributes, in
expectation, $\mathbf k_{t-r}((v,i),\mathbf0)$ target descendants at the
final time.  We therefore put
\begin{equation}
 \widehat{\mathcal B}_{t,r}^{(1+q)}(x)
 :=\sum_{v\in\Hc_d^{(b)}}\int
\mathbf K_r^P\bigl(x;(v,i)\bigr)i_+
\mathbf k_{t-r}\bigl((v,i),\mathbf 0\bigr)^{1+q}\,P(di).
\label{eq:fractional-branching-sum}
\end{equation}
All three factors use the same $i$ before integration.  Given
that common label, the two children evolve independently, so the square
of {the expected target count from one child} is the expected product of their target
counts.  The power $1+q$ is applied to the expected target count from one
child.  The square would give
the usual common-ancestor integrand for the second factorial moment,
up to the factor two for ordered children.  We use only small $q>0$;
\begingroup
after environmental averaging, the integrated branching-sum bound gives
the population moment estimate in
Proposition~\ref{prop:quenched-fractional-diagram}.
\endgroup

We also allow $q=0$ in the displayed definition.  The semigroup on
genotypes and rates gives the exact identity
\[
 \sum_v\int\mathbf K_r^P\bigl(x;(v,i)\bigr)
       \mathbf k_{t-r}\bigl((v,i),\mathbf0\bigr)\,P(di)
 =\widehat K_t(x,\mathbf0),\qquad 0\le r\le t.
\]
Since $i_+\le\lambda_+$, this gives
$\widehat{\mathcal B}_{t,r}^{(1)}(x)
\le\lambda_+\widehat K_t(x,\mathbf0)$.
The composition keeps the rate at the split and holds also at $r=0,t$.

\begin{example}
Suppose the common type at a split is equally likely to be slow or fast,
with expected target counts $1$ and $3$ per child, respectively.
The expected product of the children's counts differs from the square
of their common mean:
\[
 \tfrac12(1\cdot1)+\tfrac12(3\cdot3)=5,
 \qquad
 \bigl(\tfrac12\cdot1+\tfrac12\cdot3\bigr)^2=4.
\]
The second calculation averages each child's expected count over its
initial rate separately before multiplying, and therefore includes
mixed slow--fast pairs.  In the branching sum, the
actual split weights include the trunk first-moment weight at the shared rate
and $i_+$.  The
scalar kernel $\widehat K_{t-r}(v,\mathbf0)
=\int\mathbf k_{t-r}((v,i),\mathbf0)\,P(di)$ averages the type before
taking a power.  Both descriptions use the independent-visit model;
Lemma~\ref{lem:scalar-type-domination} compares them by an inequality.
\end{example}

\begin{theorem}
\label{thm:globalfractionaldiagram}
Under Assumption~\ref{ass:environment}, there is
$q_0>0$ such
that, for every fixed $q\in(0,q_0]$ and $C_0<\infty$, there is
$c_q\in(0,\infty)$ for which, uniformly in
$$
 0\le m\le d,\qquad
 \left|t-du_*\!\left(\frac md\right)\right|\le C_0\log d,
 \qquad 0\le r\le t,
$$
one has, for all sufficiently large $d$,
\begin{equation}
 {
 \widehat{\mathcal B}_{t,r}^{(1+q)}(\mathbf{x}_m)
 \ll_{q,C_0}
   \widehat K_{d,m}(t)^{1+q}e^{-c_qr}
   +\widehat K_{d,m}(t)e^{-c_q(t-r)}
 .}                                             \label{eqtag:FD2}
\end{equation}
Consequently,
\begin{equation*}
 \int_0^t\widehat{\mathcal B}_{t,r}^{(1+q)}(\mathbf{x}_m)\dd r
 \ll_{q,C_0}\widehat K_{d,m}(t)+\widehat K_{d,m}(t)^{1+q}.
\end{equation*}
\end{theorem}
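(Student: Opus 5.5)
I would follow the three-range decomposition announced in the table preceding the theorem, proving \eqref{eqtag:FD2} separately on $0\le r\le\delta_I(q_*)d$, $\delta_I(q_*)d\le r\le t-\delta_Td$, and $t-\delta_Td\le r\le t$. The quantifiers are fixed in this order: first the auxiliary $q_*\le q_0$ from Lemma~\ref{lem:setvalued-initial-arm-gap}, then the short window $\delta_I(q_*)$, then the terminal threshold $\delta_T$, and only then the final $q\in(0,q_*]$. The integrated bound is then immediate from \eqref{eqtag:FD2}, since $\int_0^te^{-c_qr}\dd r$ and $\int_0^te^{-c_q(t-r)}\dd r$ are bounded.

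\emph{Step 1 (early splits).} For $r\le\delta_I d$, with $\delta_I\le\delta_*$, I apply Lemma~\ref{lem:global-retained-arm} with $s=r$ and $\ell=d_{\mathrm H}(v,\mathbf0)$. Using $(x+y)^{1+q}\le2^q(x^{1+q}+y^{1+q})$ and Jensen's inequality for the probability weights $\pi_n$, the main part is bounded by
\[
\sum_{n\in\mathcal I_d(m,t)}\pi_ne^{-(1+q)\kappa_nr}\sum_v\int\mathbf K_r^P(\mathbf x_m;(v,i))\,i_+\,(\kappa_n+1-i)^{-1-q}a_n^{(1+q)(\ell-m)}P(di).
\]
The inner sum is a conjugated first-moment kernel of the typed walk. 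The weight $c^{d_{\mathrm H}(\cdot,\mathbf0)}$ with $c=a_n^{1+q}$ turns each mutation into the row sum $R_c(\xi)$. Tilting the holding times at $(1+q)\kappa_n-\varepsilon q$, the $j$-mutation term is at most $e^{((1+q)\kappa_n-\varepsilon q)r}\bigl(R_c(\xi)H((1+q)\kappa_n-\varepsilon q)\bigr)^j$, with $\xi$ frozen near $\theta$. Freezing $\xi$ is legitimate inside a distance tube of width $\zeta d$, because $R_c$ is continuous in $\xi$. By \eqref{eq:finite-initial-transform-gap} the bracket is at most $1-\eta q/2$, so summing over $j$ leaves a net factor $e^{-\varepsilon qr}$. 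The tube exit is controlled by Lemma~\ref{lem:marked-short-time-tail}, which gives an $e^{-cd}$ contribution. The remainder $\mathcal E^{1+q}$ from \eqref{eqtag:GR6} is exponentially small for $r\le\delta_Id$, since the trunk moves at most $O(r)$ in distance except on the same tail event. Because $\widehat K_{d,m}(t)\ge d^{-C}$ by Corollary~\ref{cor:polynomial-root-band}, this remainder is absorbed. The result is the term $\widehat K^{1+q}e^{-c r}$, proved at $q_*$. To pass to smaller $q$ on the same window, I use Hölder interpolation between $q=0$ and $q_*$. The $q=0$ case is the semigroup identity, giving the bound $\lambda_+\widehat K$. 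Interpolation yields $\widehat{\mathcal B}^{(1+q)}\le(\widehat{\mathcal B}^{(1)})^{1-q/q_*}(\widehat{\mathcal B}^{(1+q_*)})^{q/q_*}$, which still has exponential decay in $r$.

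\emph{Step 2 (middle splits).} I group $v$ by the coordinate proportions $\mathbf r\in\mathfrak P_\theta$ and bound $\mathbf k_{t-r}((v,i),\mathbf0)\le p_\varepsilon^{-1}e^{\varepsilon(t-r)}\widehat K_{t-r}(v,\mathbf0)$. For groups near $\mathbf r^\alpha(\mathfrak s)$, with $\alpha=r/t$ and $\mathfrak s$ ranging over $\mathcal R_*$, the identity \eqref{eq:canonical-prefix-entropy} and the gap \eqref{eq:root-canonical-continuation-gap} show that the continuation exponent is at most $-c_0\alpha(1-\alpha)d\le-cd$ on this range. This is an instance of \eqref{eq:see}, and it gives $\mathbf k^q\le e^{-cqd}$ once $\varepsilon$ is small. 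Atypical groups carry an exponentially small fraction of the total split weight $\widehat K_{d,m}(t)$. I would show this by a Chernoff/Cramér bound on the coordinate proportions under $\mathbb Q_{d,m,t}$ at retained counts: conditioned on the count, the coordinates are near-independent one-coordinate bridges. On these groups I use the crude bound $\mathbf k\le e^{Cd}$. Choosing $q$ small relative to that gap gives the bound $\widehat K\,(e^{-cqd}+e^{-cd+qCd})$, which is dominated by the right side of \eqref{eqtag:FD2} because $t\le Cd$.

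\emph{Step 3 (late splits).} For $s=t-r\le\delta_Td$, the bound $\mathbf k_s\le e^{\lambda_+s}\sup_w\Pp_w(X_s=\mathbf0)$ applies. The transition formula \eqref{eq:mutation-transition}, with $A_b(z)\le Cz$ and $\log C_b(z)\le Cz^2$, gives $\mathbf k_s\le e^{-(1-\lambda_+)s/2}$. Pulling out $\mathbf k^q$ and using the $q=0$ identity yields $\lambda_+\widehat Ke^{-q(1-\lambda_+)s/2}$.

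\emph{Main obstacle.} I expect Step 1 to be hardest. The conjugated-trunk bound must hold uniformly in $m$ and in the retained saddle, while the distance density drifts during the trunk. The bound must also be explicit rather than exponential-rate only, since polynomial prefactors are fatal at bounded $r$. My first attempt would be to dominate the conjugated generator by the constant-coefficient one with $\xi\in[\theta-\zeta,\theta+\zeta]$, chosen using the continuity slack in \eqref{eq:finite-initial-transform-gap}, and to discard the tube exit via Lemma~\ref{lem:marked-short-time-tail}.
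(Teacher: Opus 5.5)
Your proposal is correct and follows the paper's proof essentially step for step. It has the same three ranges and quantifier order ($q_*$, then $\delta_I(q_*)$ and $\delta_T$, then $q_0$). For early splits it uses Jensen over the retained counts with the conjugated-trunk gap \eqref{eq:finite-initial-transform-gap}, tube exit via Lemma~\ref{lem:marked-short-time-tail}, and H\"older interpolation; for middle splits, the class-counting bound \eqref{eq:see} plus coordinate-proportion concentration; and for the terminal arm, the exact heat kernel. The differences are cosmetic: in Step~1 you sum the jump-count expansion as a geometric series with frozen row sums, where the paper uses the supersolution $\varphi_\psi(i)=(\psi+1-i)^{-1}$ of the killed conjugated generator. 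Your middle-range concentration sketch should also make explicit, as the paper does, the clock concentration of the pre-split mutation count $J_r$ around $\alpha n$, and the de-Poissonization step that conditions the product of one-coordinate bridges on $N=n$ and $J=k$ at polynomial cost.
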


The two terms on the right-hand side of \eqref{eqtag:FD2} correspond to
the early and late splits described in Section~\ref{sec:proofstrategy},
respectively.

Extend the law $\mathbb Q_{d,m,t}$ from \eqref{eq:Qbridge} by giving
each visit of duration $s$ an independent rate with distribution
$h(s)^{-1}\exp(-(1-i)s)P(di)$.  Write $(X_s,I_s)$ for its position
and signed rate.  This is the first-moment path law of the
typed process: its density relative to the mutation-and-rate process
with fresh initial rate is
$\widehat K_{d,m}(t)^{-1}\exp(\int_0^t I_s\,\dd s)
\one_{\{X_t=\mathbf0\}}$.
The typed semigroup therefore gives
\begin{equation}
 \mathbb Q_{d,m,t}(X_r=v,I_r\in\dd i)
 =\frac{\mathbf K_r^P\bigl(\mathbf x_m;(v,i)\bigr)
              \mathbf k_{t-r}\bigl((v,i),\mathbf0\bigr)}
             {\widehat K_{d,m}(t)}\,P(di).
 \label{eq:typed-first-moment-bridge}
\end{equation}
Consequently, \eqref{eq:typed-first-moment-bridge} gives the exact identity
\begin{equation}
 \frac{\widehat{\mathcal B}_{t,r}^{(1+q)}(\mathbf x_m)}
      {\widehat K_{d,m}(t)}
 =\mathbb E_{\mathbb Q_{d,m,t}}\!
    \Bigl[(I_r)_+\,\mathbf k_{t-r}\bigl((X_r,I_r),\mathbf0\bigr)^q\Bigr].
 \label{eq:fractional-bridge-expectation}
\end{equation}

\begin{proof}
Put $\theta=m/d$ and $u=t/d$.  We use $\rho(v)=d_{\mathrm H}(v,\mathbf 0)$.

\emph{Step 1: short initial trunks and interpolation.}
First fix any $q\in(0,1)$ below the threshold in
Lemma~\ref{lem:setvalued-initial-arm-gap}.  Use the
$q$-dependent choice $\varepsilon_{\rm ret}(q)$ supplied by that lemma in \eqref{eqtag:GR0}; we
keep the notation $\mathcal I_d(m,t)$.  Every retained count then satisfies
{\eqref{eq:finite-initial-transform-gap}} and has a positive clock tilt.
All omitted counts are included in
$\mathcal E_{s,\ell,i}$.  By the final clause of
Lemma~\ref{lem:global-retained-arm}, \eqref{eqtag:GR6} then holds with
positive constants which we denote by
$c_{\rm tail}(q),\delta_{\rm tail}(q)$.
To apply \eqref{eqtag:GR4}--\eqref{eqtag:GR6} inside the branching sum,
we use Jensen's inequality to bound the $(1+q)$-power of the retained
sum by a sum of separate contributions from the retained counts.
For the subprobability weights $\pi_n$ and nonnegative $U_n$,
\begin{equation*}
\left(\sum_n\pi_nU_n\right)^{1+q}
 \le\sum_n\pi_nU_n^{1+q}.
\end{equation*}
For $c>0$, conjugating the independent-visit first-moment operator
by $c^{\rho(x)}$ gives
$$
 (\mathcal L^cf)(x,i)
 =\frac1{(b-1)d}\sum_{y\sim x}\int
 c^{\rho(y)-\rho(x)}f(y,j)\,P(dj)-(1-i)f(x,i).
$$
At a vertex with distance density $\xi=\rho(x)/d$, the sum of the spatial
coefficients in its off-diagonal term is $R_c(\xi)$ from \eqref{eqtag:IG0}.

{
For a retained count $n$, put $c=a_n^{1+q}$,
$\psi=(1+q)\kappa_n-\varepsilon q>0$, and
$\varphi_\psi(i)=(\psi+1-i)^{-1}$.  Since $R_c$ is affine and
$c,c^{-1},H(\psi)$ are uniformly bounded,
\eqref{eq:finite-initial-transform-gap} gives a fixed $\zeta=\zeta(q)>0$
such that $R_c(\rho(x)/d)H(\psi)\le1$ on
$T=\{x:|\rho(x)-m|\le\zeta d\}$.  Direct calculation gives
$$
 (\mathcal L^c\varphi_\psi)(x,i)
 =\psi\varphi_\psi(i)+R_c(\rho(x)/d)H(\psi)-1.
$$
Let $\mathcal L_T^c$ and $\mathbf K^c_{r,T}$ be the generator and
semigroup killed on leaving $T$.  Killing removes positive jump terms,
so $\mathcal L_T^c\varphi_\psi\le \psi\varphi_\psi$.
Both $\varphi_\psi$ and $(1-i)\varphi_\psi$ are bounded for $i\le\lambda_+$,
placing $\varphi_\psi$ in the generator domain of
Appendix~\ref{sec:linearprimer}.  Positivity of the killed semigroup and the generator inequality give
\[
 e^{-\psi r}\mathbf K^c_{r,T}\varphi_\psi-\varphi_\psi
 =\int_0^r e^{-\psi\tau}\mathbf K^c_{\tau,T}
   (\mathcal L_T^c-\psi)\varphi_\psi\,\dd\tau\le0.
\]
Therefore, 
$\mathbf K^c_{r,T}\varphi_\psi\le e^{\psi r}\varphi_\psi$.

The powers of $H(\kappa_n)$ in \eqref{eqtag:GR5} are uniformly bounded,
and the remaining rate factor satisfies
$$
 \frac{i_+}{(\kappa_n+1-i)^{1+q}}\le C\varphi_\psi(i),
 \qquad i\le\lambda_+.
$$
Indeed, it vanishes for $i\le0$, and the ratio is bounded on
$[0,\lambda_+]$.  Thus, after raising \eqref{eqtag:GR5} to $1+q$,
the contribution of this count inside $T$, divided by
$\widehat K_{d,m}(t)^{1+q}$ and before multiplication by $\pi_n$, is at most
\begin{equation*}
 C e^{-(1+q)\kappa_n r}
 \int(\mathbf K^c_{r,T}\varphi_\psi)(\mathbf x_m,i)\,P(di)
 \le C H(\psi)e^{-\varepsilon qr}\le C e^{-\varepsilon qr}.
\end{equation*}
}
{For the remaining estimates, $\mathbb E$ refers to the
rate-one mutation process started at}
$\mathbf x_m$, with initial rate and subsequent visit rates drawn
independently from $P$.  Let $J_r$ count its mutations by time $r$.
Paths leaving the tube have $J_r\ge\zeta d$.
Since $|\rho(X_r)-m|\le J_r$, their weighted mass, including
$e^{-(1+q)\kappa_nr}\le1$, is at most
$\exp(-\zeta d+Cr)$ by Lemma~\ref{lem:marked-short-time-tail}.
{The bound $\frac{i_+}{(\kappa_n+1-i)^{1+q}}\le C$ holds uniformly over
retained $n$ and $i\le\lambda_+$.}
Choose $\delta_I(q)>0$ so that this is at most $e^{-\zeta d/2}$ for
$r\le\delta_I(q)d$.

Jensen's inequality with $\pi_n$ and $\sum_n\pi_n\le1$ therefore
bounds the retained contribution by
{
$$
 C\widehat K_{d,m}(t)^{1+q}\left(e^{-\varepsilon qr}+e^{-\zeta d/2}\right).
$$
}

For the discarded arm, use
$(\mathcal M+\mathcal E)^{1+q}\le2^q(\mathcal M^{1+q}+\mathcal E^{1+q})$
and $|\rho(X_r)-m|\le J_r$ in \eqref{eqtag:GR6}.
The original mutation process has rate one and growth potential at
most $\lambda_+$, so its path sum with weight $e^{AJ_r}$ satisfies
\[
 \mathbb E\!\left[e^{\int_0^r I_s\,\dd s}e^{AJ_r}\right]
 \le\exp\bigl((\lambda_++e^A-1)r\bigr)
 \qquad\text{for every fixed }A\ge0.
\]
Consequently,
\[
 \sum_v\int\mathbf K_r^P\bigl(\mathbf x_m;(v,i)\bigr)i_+
       \mathcal E_{r,\rho(v),i}^{1+q}\,P(di)
 \le d^C\exp\bigl(-(1+q)c_{\rm tail}(q)d+C'_qr\bigr).
\]
Decrease $\delta_I(q)$ so that it is smaller than
$\delta_{\rm tail}(q)$ and $u_{\min}/8$, and
$C'_q\delta_I(q)\le(1+q)c_{\rm tail}(q)/2$.
The last display is then at most $e^{-c_{\rm rem}(q)d}$ for some
$c_{\rm rem}(q)>0$.  Since $\mathcal E$ is normalized by $\widehat K_{d,m}(t)$, the discarded
contribution to the branching sum is this bound multiplied by
$\widehat K_{d,m}(t)^{1+q}$.  Combining the two contributions gives
\[
 \frac{\widehat{\mathcal B}_{t,r}^{(1+q)}(\mathbf x_m)}
      {\widehat K_{d,m}(t)^{1+q}}
 \le {C e^{-\varepsilon qr}}+C e^{-c_{\rm rem}(q)d},
 \qquad 0\le r\le\delta_I(q)d.
\]
Here $c_{\rm rem}(q)$ is decreased to include the paths leaving the tube.
For {$0<c_q\le\min\{\varepsilon q,c_{\rm rem}(q)/\delta_I(q)\}$}, this yields
\begin{equation}
 \widehat{\mathcal B}_{t,r}^{(1+q)}(\mathbf{x}_m)
 \le C\widehat K_{d,m}(t)^{1+q}e^{-c_qr},
 \qquad 0\le r\le\delta_I(q)d.                        \label{eqtag:FD19}
\end{equation}
The parameter $\varepsilon_{\rm ret}(q)$ and the time cutoff $\delta_I(q)$ were chosen on fixed compact root
neighbourhoods after $q$ was fixed.  They are independent of $C_0$:
only the threshold on $d$ changes to put $C_0\log d/d$ inside those
neighbourhoods and absorb polynomial factors.

Now fix one such exponent $q_*\in(0,1)$ and its width $\delta_I(q_*)$.
For every $0<q\le q_*$, H\"older's inequality gives
$$
 \widehat{\mathcal B}_{t,r}^{(1+q)}(\mathbf{x}_m)
 \le
 \Bigl(\widehat{\mathcal B}_{t,r}^{(1)}(\mathbf{x}_m)\Bigr)^{1-q/q_*}
 \Bigl(\widehat{\mathcal B}_{t,r}^{(1+q_*)}(\mathbf{x}_m)\Bigr)^{q/q_*}
 \le C\widehat K_{d,m}(t)^{1+q}
       \exp\left(-\frac{c_{q_*}}{q_*}qr\right),
 \qquad 0\le r\le\delta_I(q_*)d.
$$
Here H\"older is applied to the measure
$i_+\mathbf K_r^P(\mathbf{x}_m;(v,i))P(di)$, summed over $v$;
the case $q=q_*$ is immediate.  The second inequality uses the exponent-one
bound preceding the theorem and \eqref{eqtag:FD19} at $q_*$.
The power of $\widehat K_{d,m}(t)$ is exactly
$1-q/q_*+(1+q_*)q/q_*=1+q$.  Thus, the same initial width works for every
smaller exponent.

\emph{Step 2: the middle range.}

Choose $0<\delta_T<u_{\min}/8$ small enough for Step~3, and put
$\delta_B=\min\{\delta_I(q_*),\delta_T\}$.  Throughout this step,
$\delta_I(q_*)d\le r\le t-\delta_Td$ and $\alpha=r/t$.
Both $\alpha$ and $1-\alpha$ are at least
$\delta_B/(u_{\max}+1)$ for all sufficiently large $d$.
We first prove estimates independent of the final fractional exponent.

Figure~\ref{fig:middle-range-proof} summarizes the estimates used in
\eqref{eq:fractional-bridge-expectation}.

\begin{figure}[htbp]
\centering
\begin{tikzpicture}[
  x=1cm,y=1cm,font=\small,
  box/.style={draw,rounded corners=2pt,align=center,
              text width=5.5cm,minimum height=1.25cm,inner sep=5pt},
  flow/.style={->,semithick,shorten >=2pt,shorten <=2pt}
]
\node[box] (prefix) at (-3.2,0)
  {Large total trunk weight\\
   \eqref{eq:midpoint-class-lower},
   \eqref{eq:bulk-prefix-single-count}};

\node[box] (concentration) at (3.2,0)
  {Clock and coordinate concentration\\
   \eqref{eq:bulk-clock-count-concentration},
   \eqref{eq:bulk-profile-concentration}};

\node[box] (continuation) at (-3.2,-1.9)
  {Small continuation kernel\\near the coordinate means\\
   \eqref{eq:bulk-typed-continuation}};

\node[box] (exception) at (3.2,-1.9)
  {Exponentially small probability\\of discarded counts or\\large coordinate deviations\\
   \eqref{eq:bulk-bridge-good-event}};

\node[box,text width=9cm] (combine) at (0,-3.9)
  {Fractional bridge identity
   \eqref{eq:fractional-bridge-expectation}\\
   Global continuation bound and sufficiently small $q$};

\node[box,text width=9cm] (bound) at (0,-5.7)
  {Middle-range branching bound\\
   \eqref{eqtag:FD11}};

\draw[flow] (prefix) -- (continuation);
\draw[flow] (concentration) -- (exception);
\draw[flow] (continuation.south)
  -- (combine.north -| continuation.south);
\draw[flow] (exception.south)
  -- (combine.north -| exception.south);
\draw[flow] (combine) -- (bound);
\end{tikzpicture}
\caption{Estimates used in the middle-range argument.}
\label{fig:middle-range-proof}
\end{figure}

First, the scalar kernels satisfy
\begin{equation}
 \sum_v\widehat K_r(x,v)\widehat K_h(v,y)
 \le\widehat K_{r+h}(x,y),\qquad r,h\ge0.
 \label{eq:scalar-convolution-inequality}
\end{equation}
For a fixed $v$, the two functions
$i\mapsto\mathbf k_r((v,i),x)$ and
$i\mapsto\mathbf k_h((v,i),y)$ are increasing by
\eqref{eq:typed-jump-count}.  For any increasing functions $f,g$,
\begin{equation}
 \int fg\,\dd P-\int f\,\dd P\int g\,\dd P
 =\frac12\iint\bigl(f(i)-f(j)\bigr)\bigl(g(i)-g(j)\bigr)P(di)P(dj)\ge0.
 \label{eq:rate-association}
\end{equation}
Apply \eqref{eq:rate-association} at each $v$, use detailed balance
\eqref{eq:typed-detailed-balance}, and sum:
$$
 \sum_v\widehat K_r(x,v)\widehat K_h(v,y)
 \le\sum_v\int\mathbf K_r^P\bigl(x;(v,i)\bigr)
                      \mathbf k_h\bigl((v,i),y\bigr)P(di)
 =\widehat K_{r+h}(x,y).
$$
This proves \eqref{eq:scalar-convolution-inequality} with the rate
preserved in the typed composition.

For a vertex $v$, let $\mathbf r(v)$ be its coordinate proportions from
Figure~\ref{fig:midpoint-coordinate-classes}.  Every vertex $w$ with
$\mathbf r(w)=\mathbf r(v)$ has the same two endpoint distances as $v$.
The binomial and multinomial counts, followed by Stirling's bounds, give
\begin{equation}
\#\bigl\{w:\mathbf r(w)=\mathbf r(v)\bigr\}
 =\binom{d-m}{dr_C}
   \binom{m}{dr_y,dr_x,dr_o}(b-1)^{dr_C}(b-2)^{dr_o}
 \ge d^{-C}\exp\Bigl(d\mathcal S_{b,\theta}\bigl(\mathbf r(v)\bigr)\Bigr).
\label{eq:midpoint-class-lower}
\end{equation}
Here the entries on the right are those of $\mathbf r(v)$, and the
$r_o$ entry and its factor are omitted for $b=2$.  Zero-size initial
classes contribute one.  The error in the logarithmic Stirling bounds
is $O(\log d)$ uniformly over all class sizes, so
\eqref{eq:midpoint-class-lower} also holds at $\theta=0,1$.

At $\mathbf r^\alpha(\mathfrak s)$ with
$\mathfrak s\in\mathcal A(\theta,u_*(\theta))$, \eqref{eq:canonical-prefix-entropy} and
\eqref{eq:root-canonical-continuation-gap} bound
$\mathcal S_{b,\theta}+G(\eta,\alpha u_*(\theta),\alpha\nu)$ below
by a positive constant on this fixed middle interval.
Moreover, \eqref{eq:canonical-prefix-count} gives
$$
 \alpha\nu-\eta\bigl(\mathbf r^\alpha(\mathfrak s)\bigr)
 =J_{\eta(\mathbf r^\alpha(\mathfrak s))}(\alpha z)
       -\eta\bigl(\mathbf r^\alpha(\mathfrak s)\bigr)>0.
$$
Compactness of $\mathcal R_*$ and of the interval of $\alpha$ bounds
this difference away from zero and places $\alpha z,\kappa$ in fixed
compact interior tilt intervals.  The inverse spatial and clock mean
maps are continuous there.  Lemma~\ref{lem:root-active-stability} thus
allows a fixed $\varepsilon_{\rm bulk}>0$ and a fixed
tolerance $\zeta_{\rm bulk}>0$ in the coordinate proportions such that the following holds.
For every $n\in\mathcal I_d^{J_{\rm rt},\varepsilon_{\rm bulk}}(m,t)$,
use its exact tilts $z_n,\kappa_n$ from \eqref{eqtag:GR0a}.
If
\begin{equation}
 \Bigl\|\mathbf r(v)-
        \mathbf r^\alpha\bigl((n/d,z_n,\kappa_n)\bigr)\Bigr\|_\infty
 \le\zeta_{\rm bulk},
 \label{eq:bulk-profile-neighbourhood}
\end{equation}
choose an admissible integer $j$ with $|j-\alpha n|\le2$ for the
prefix distance $d\eta(\mathbf r(v))$.  For $b=2$, choose its parity
to match that distance.  The uniform positive lower bound on the mutation
count density minus the trunk distance density ensures admissibility for
all sufficiently large $d$.
For some $c_3>0$, uniformly over these choices,
\begin{equation}
 \mathcal S_{b,\theta}\bigl(\mathbf r(v)\bigr)
 +G\left(\eta\bigl(\mathbf r(v)\bigr),\frac rd,\frac jd\right)\ge3c_3.
 \label{eq:bulk-prefix-positive}
\end{equation}
Indeed, choose $4c_3$ below the minimum of the left-hand side of
\eqref{eq:canonical-prefix-entropy}, taken over the critical saddles and
the fixed middle interval.  Then use continuity of $\mathcal S_{b,\theta}$ and $G$ and the
stability estimate in Lemma~\ref{lem:root-active-stability} to preserve the lower bound $3c_3$.
This also keeps the two inner tilts for the trunk count $j$ in
compact subsets of the interiors of their domains; replacing $j/d$ by $(j+1)/d$ in the clock mean
changes its tilt by $O(d^{-1})$.

Retain just this count in the prefix kernel.  Exponential tilting,
Stirling's bounds, and Lemmas~\ref{lem:spatialLCLT} and
\ref{lem:clockLCLT}, applied at these inner minimizers, give
\begin{equation}
 \widehat K_r(\mathbf x_m,v)
 \ge p_{d,d\eta(\mathbf r(v))}(j)h^{*(j+1)}(r)
 \ge c d^{-1/2}
       \exp\left(dG\left(\eta\bigl(\mathbf r(v)\bigr),\frac rd,\frac jd\right)\right).
 \label{eq:bulk-prefix-single-count}
\end{equation}
These local limit bounds concern the specified count; no maximizing
count for the segment is used. The extra terminal holding interval, which gives $j+1$ holding intervals
for $j$ mutations, changes the clock exponent by only $O(1)$ uniformly.
We now turn the lower bound on the total first-moment weight of trunks
ending at vertices with the specified coordinate proportions into an upper
bound on the continuation kernel. Retaining all vertices $w$ with $\mathbf r(w)=\mathbf r(v)$ in
\eqref{eq:scalar-convolution-inequality}, and applying
\eqref{eq:midpoint-class-lower}--\eqref{eq:bulk-prefix-single-count},
gives
\begin{equation}
 \widehat K_{t-r}(v,\mathbf0)
 \le d^C\widehat K_{d,m}(t)\exp(-3c_3d)
 \le\exp(-2c_3d).
 \label{eq:bulk-scalar-continuation}
\end{equation}
The second inequality uses the polynomial bound
\eqref{eqtag:FD5} for $|t-du_*(m/d)|\le C_0\log d$.

For every fixed $\varepsilon>0$, the comparison in the proof of
Lemma~\ref{lem:global-retained-arm} gives
$$
 \mathbf k_{t-r}\bigl((v,i),\mathbf0\bigr)
 \le
 \frac{\exp\bigl(\varepsilon(t-r)\bigr)}
      {P\bigl([\lambda_+-\varepsilon,\lambda_+]\bigr)}
      \widehat K_{t-r}(v,\mathbf0),\qquad i\le\lambda_+.
$$
Choose $\varepsilon$ so that
$\varepsilon(u_{\max}+1)<c_3/2$.
The denominator is a fixed positive number, so
\eqref{eq:bulk-scalar-continuation} implies, for all large $d$,
\begin{equation}
 \sup_i\mathbf k_{t-r}\bigl((v,i),\mathbf0\bigr)\le\exp(-c_3d)
 \quad\hbox{whenever \eqref{eq:bulk-profile-neighbourhood} holds}.
 \label{eq:bulk-typed-continuation}
\end{equation}
This comparison uses only the essential supremum of the rate law.

For each retained total mutation count, we show that
\eqref{eq:bulk-profile-neighbourhood} fails with exponentially small
conditional bridge probability.  We first control the mutation count
before the split, then the coordinate proportions of the split vertex.  Write $N$ for the total
mutation count and $J_r$ for the number of mutations before $r$ under
$\mathbb Q_{d,m,t}$.  By \eqref{eq:Qbridge},
conditional on $N=n$, the discrete mutation path is the $n$-step Hamming
bridge and is independent of the holding times.  Write these holding
times as $R_0,\ldots,R_n$.  They have the law of independent variables
of density $f_{\kappa_n}$,
conditioned on their sum being $t=(n+1)\mu_{\kappa_n}$.
For every measurable $A\subset[0,t]$, when $k,n+1-k\asymp d$,
Lemma~\ref{lem:clockLCLT} gives
\begin{equation}
\mathbb Q_{d,m,t}\left(\sum_{a=0}^{k-1}R_a\in A\,\middle|\,N=n\right)
 =\frac{\int_A\widetilde f_{k,\kappa_n}(s)
          \widetilde f_{n+1-k,\kappa_n}(t-s)\,\dd s}
         {\widetilde f_{n+1,\kappa_n}(t)}
 \le C\int_A\widetilde f_{k,\kappa_n}(s)\,\dd s.
\label{eq:bulk-conditioned-clock}
\end{equation}
For a small fixed $\epsilon>0$, take integers within a bounded
distance of $\alpha(n+1)\pm\epsilon d$.
For either chosen integer $k$, the sum $\sum_{a=0}^{k-1}R_a$ under the
unconditioned law has mean $k\mu_{\kappa_n}$, with
$|k\mu_{\kappa_n}-r|\ge c\epsilon d$.  Moreover, $k\asymp d$ and
$n+1-k\asymp d$.
The uniform exponential moment \eqref{eq:clockuniformmoments} yields
Chernoff bounds for these deviations.  Since the $k$th mutation occurs
at $\sum_{a=0}^{k-1}R_a$, \eqref{eq:bulk-conditioned-clock} therefore implies
\begin{equation}
 \mathbb Q_{d,m,t}\bigl(|J_r-\alpha n|>2\epsilon d\bigm| N=n\bigr)
 \le C\exp(-c_\epsilon d).
 \label{eq:bulk-clock-count-concentration}
\end{equation}
The difference between $\alpha n$ and $\alpha(n+1)$ is at most one
and is absorbed into the tolerance.

Fix an integer $k$ satisfying $|k-\alpha n|\le2\epsilon d$, and put $\beta=k/n$. We first obtain spatial concentration under independent coordinate
bridges, then condition on the total mutation count and the mutation
count before the cut. Consider the product law $\mathbb P_{\rm prod}^{z_n}$ of independent
one-coordinate bridges of duration $z_n$ with the full prescribed
endpoints, and write $Y_s$ for their vector of symbols.
Each entry of $\mathbf r(Y_{\beta z_n})$ is an average of $d$
independent indicators with mean equal to the corresponding entry
of $\mathbf r^\beta((n/d,z_n,\kappa_n))$.
Hoeffding's inequality \cite[Theorem~2]{Hoeffding1963}, applied to
the four entries, gives
\begin{equation}
 \mathbb P_{\rm prod}^{z_n}\biggl(
 \Bigl\|\mathbf r(Y_{\beta z_n})
       -\mathbf r^\beta\bigl((n/d,z_n,\kappa_n)\bigr)\Bigr\|_\infty>\epsilon
 \biggr)\le8\exp(-2\epsilon^2d).
 \label{eq:bulk-coordinate-Hoeffding}
\end{equation}
Under this product law, let $N$ again denote the total number of jumps
and $J_{\beta z_n}$ their number before $\beta z_n$.
Conditional on $N=n$ and the endpoints, the jump times are uniform
and independent of the discrete mutation path.  Thus,
\begin{equation}
\mathbb P_{\rm prod}^{z_n}(N=n,J_{\beta z_n}=k)
 =\mathbb P_{z_n}(\Sigma_{d,m,z_n}=n)
       \binom nk\beta^k(1-\beta)^{n-k}
 \ge\frac c{\sqrt d}\frac c{\sqrt n}\ge\frac c d.
\label{eq:bulk-spatial-conditioning}
\end{equation}
Here the spatial local limit theorem applies at its exact mean, and
the binomial estimate follows from Stirling's bounds, uniformly since
$\beta$ stays away from $0,1$.  Admissibility of $n$ supplies the binary
parity condition; there is no further parity condition on $k$.
After this conditioning, $Y_{\beta z_n}$ has exactly the law of the
$n$-step Hamming bridge at step $k$.

Divide \eqref{eq:bulk-coordinate-Hoeffding} by
\eqref{eq:bulk-spatial-conditioning}.  The vector $\mathbf r^\alpha((n/d,z_n,\kappa_n))$ is uniformly
continuous in $\alpha$ on the compact parameter set, and
$|\beta-\alpha|\le2\epsilon d/n\le C\epsilon$.
Choose $\epsilon$ small enough that the total error in coordinate proportions is at most
$\zeta_{\rm bulk}$.  Combining the conditioned spatial bound with
\eqref{eq:bulk-clock-count-concentration} proves
\begin{equation}
 \mathbb Q_{d,m,t}\biggl(
 \Bigl\|\mathbf r(X_r)
       -\mathbf r^\alpha\bigl((n/d,z_n,\kappa_n)\bigr)\Bigr\|_\infty
       >\zeta_{\rm bulk}\,\biggm|\,N=n\biggr)
 \le C\exp(-c d).
 \label{eq:bulk-profile-concentration}
\end{equation}
This is uniform over all retained $n$ and all middle-range $r$.

Let $E_r$ be the union, over
$n\in\mathcal I_d^{J_{\rm rt},\varepsilon_{\rm bulk}}(m,t)$, of the
events $N=n$ and \eqref{eq:bulk-profile-neighbourhood} with $v=X_r$.
The discarded-count estimate \eqref{eq:UO2}, together with
\eqref{eq:bulk-profile-concentration} and
\eqref{eq:bulk-typed-continuation}, gives a fixed $c_4>0$ such that
\begin{equation}
 \mathbb Q_{d,m,t}(E_r^c)\le\exp(-c_4d),\qquad
 \mathbf k_{t-r}\bigl((X_r,I_r),\mathbf0\bigr)\le\exp(-c_3d)
 \ \hbox{on }E_r.
 \label{eq:bulk-bridge-good-event}
\end{equation}
The retained-count probabilities sum to at most one, so this estimate
allows any number of maximizing full saddles.

The elementary first-moment bound holds everywhere:
$\mathbf k_{t-r}((v,i),\mathbf0)\le\exp(\lambda_+(t-r))$.
Choose the theorem's $q_0\le q_*$ so that
$q_0\lambda_+(u_{\max}+1)\le c_4/2$.
On $E_r$, the continuation kernel raised to $q$ is at most
$e^{-c_3qd}$.  On $E_r^c$, its contribution to the expectation in
\eqref{eq:fractional-bridge-expectation} is at most
$\lambda_+e^{-c_4d+q\lambda_+(t-r)}\le\lambda_+e^{-c_4d/2}$ for $0<q\le q_0$,
since $t-r\le(u_{\max}+1)d$.
For every fixed $0<q\le q_0$, equations
\eqref{eq:fractional-bridge-expectation} and
\eqref{eq:bulk-bridge-good-event} now give
\begin{equation}
 \widehat{\mathcal B}_{t,r}^{(1+q)}(\mathbf x_m)
 \le\lambda_+\widehat K_{d,m}(t)
   \Bigl(\exp(-c_3qd)+\exp\bigl(-c_4d+q\lambda_+(u_{\max}+1)d\bigr)\Bigr)
 \le C\widehat K_{d,m}(t)\exp(-c_5qd)
\label{eqtag:FD11}
\end{equation}
for a fixed $c_5>0$.  The parameter $\varepsilon_{\rm bulk}$, the other tolerances, and the constants
$c_3,c_4$ were fixed after the two time widths and before $q$.
The given $C_0$ affects only the threshold on $d$.
Since $t-r\le(u_{\max}+1)d$, \eqref{eqtag:FD11} is bounded by the second
term of \eqref{eqtag:FD2} after reducing $c_q$.  This proves the middle
range.

\emph{Step 3: the short terminal arm.}
Let $h=t-r\le\delta_Td$.  Every signed rate is at most
$\lambda_+<1$. For sufficiently small $\delta_T$, the short-time mutation cost
outweighs the maximal first-moment growth. We bound {$\mathbf k_h((v,i),\mathbf0)$} using the exact product
heat kernel, which, with
$\ell=d_{\mathrm H}(v,\mathbf 0)$, is
$$
 \mathbb P_v(X_h=\mathbf 0)=e^{-h}A_b\!\left(\frac hd\right)^\ell
 C_b\!\left(\frac hd\right)^{d-\ell}
 \le e^{-h+C\frac{h^2}{d}},
$$
where $A_b(u)\le C_b(u)$ and $\log C_b(u)\le Cu^2$ for small $u$.
Consequently,
\begin{equation*}
 \sup_{v,i}\mathbf k_h\bigl((v,i),\mathbf 0\bigr)
 \le e^{\lambda_+h}\sup_v\mathbb P_v(X_h=\mathbf 0)
 \le\exp\left(-(1-\lambda_+)h+C\frac{h^2}{d}\right)
 \le e^{-(1-\lambda_+)h/2},
\end{equation*}
by the choice of $\delta_T$.  Keep one terminal kernel in the semigroup
convolution and bound the remaining $q$ power by this supremum.  Since
$i_+\le\lambda_+$,
\begin{equation}
\begin{split}
 \widehat{\mathcal B}_{t,t-h}^{(1+q)}(\mathbf{x}_m)
 &\le\lambda_+
 \Bigl(\sup_{v,i}\mathbf k_h\bigl((v,i),\mathbf 0\bigr)\Bigr)^q
 \sum_v\int\mathbf K_{t-h}^P\bigl(\mathbf{x}_m;(v,i)\bigr)
                 \mathbf k_h\bigl((v,i),\mathbf 0\bigr)\,P(di)\\
 &\le C\widehat K_{d,m}(t)e^{-q(1-\lambda_+)h/2}.
\end{split}
\label{eqtag:FD13}
\end{equation}
This is the second term of \eqref{eqtag:FD2}, with a constant prefactor, including at
$h=0$.

For all sufficiently large $d$, the three ranges cover $[0,t]$.
The interpolated bound from Step~1 and
\eqref{eqtag:FD11}--\eqref{eqtag:FD13}, with a common smaller positive
$c_q$, prove \eqref{eqtag:FD2}.  Integrating the two exponentials gives
the stated integrated bound.
\end{proof}

\subsection{Identifying the main contributions to the nonterminal branching sum}
\label{sec:initial-branch}

\begingroup
We normalize the nonterminal branching weights to define a probability
law on trunks and continuations. Under this law, we bound the duration
and mutation count of the trunk. These bounds keep the continuation
parameters in the regular region with exponentially high probability,
as needed for environmental averaging.
\endgroup

{For the following integral and
Proposition~\ref{prop:fractional-short-trunk}, fix $C_0<\infty$ and
$q\in(0,\min\{q_0,1\}]$ from
Theorem~\ref{thm:globalfractionaldiagram}.}
Assume
$$
1\le m\le d,\qquad
\left|t-du_*\!\left(\frac md\right)\right|\le C_0\log d.
$$
By compactness, $\min_{\theta\in[0,1]}u_*(\theta)>0$.
Fix $\delta>0$, smaller than
$\frac14\min_{\theta\in[0,1]}u_*(\theta)$ and than the fixed initial and
terminal widths chosen in Theorem~\ref{thm:globalfractionaldiagram}.  \begingroup
For all
sufficiently large $d$, $t-\delta d>0$.  For a start $y$, define the
nonterminal independent-visit branching sum
\begin{equation}
 \widehat{\mathcal C}^{I}_{q,t}(y)
 =2\bar\lambda\int_0^{t-\delta d}
 \sum_v\widehat K_r(y,v)
       \widehat K_{t-r}(v,\mathbf 0)^{1+q}\dd r,
 \qquad \bar\lambda=\int i_+\,P(di).
 \label{eq:fresh-qnormalizer}
\end{equation}
The superscript $I$ records the nonterminal range $0\le r\le t-\delta d$.
The main starting vertex is $y=\mathbf{x}_m$.  We also need nearby starts
$d_{\rm H}(y,\mathbf{x}_m)\le M$ for fixed $M<\infty$: the preliminary family in the
main proof may make a bounded number of mutations before its descendants
begin their attempts to reach the target.
In \eqref{eq:fresh-qnormalizer}, $\widehat K_r(y,v)$ is the trunk weight
and $\widehat K_{t-r}(v,\mathbf0)^{1+q}$ is the fractional continuation
weight.  The cutoff removes the short terminal arm $t-r\le\delta d$, for which
we will later use a pathwise bound.

\endgroup

\begin{definition}
\label{def:trunk-arm-diagram}
A \emph{trunk--arm diagram} $\Gamma$ with starting vertex $y$ and terminal
time $t$ records a split time $r\in[0,t]$, a split vertex $v$, a trunk path
$\pi$ from $y$ to $v$, and $j\in\{1,2\}$ continuation paths
$\gamma_1,\ldots,\gamma_j$ from $v$ to $\mathbf0$.  Each path records its
successive holding times.  The trunk lasts $r$ and each arm lasts $t-r$.
The diagram also records the factor $(\Lambda_v)_+/\bar\lambda$ for the
branching event, to be included in its environmental weight.
Its prescribed endpoints are $y$ and $\mathbf0$.  The split vertex $v$
is sampled with the diagram; conditioning on it to analyse the paths
does not make it another prescribed endpoint of the complete diagram.
\end{definition}

The two-arm picture describes descendants sharing an ancestor before the
split, while the one-arm version is used in the comparison estimates.

\begingroup
Define the auxiliary split law by normalizing the deterministic weights in
\eqref{eq:fresh-qnormalizer}:
\begin{equation}
 \mathbb Q_{q;y,t}\bigl(\dd r,\{v\}\bigr)
 =\frac{2\bar\lambda}{\widehat{\mathcal C}_{q,t}^{I}(y)}
 \widehat K_r(y,v)
 \widehat K_{t-r}(v,\mathbf0)^{1+q}
 \one_{\{0\le r\le t-\delta d\}}\,\dd r.
\label{eq:weighted-split-law}
\end{equation}
Its mass is one by \eqref{eq:fresh-qnormalizer}.  Expectations under this
auxiliary law abbreviate the corresponding integrals divided by
$\widehat{\mathcal C}_{q,t}^{I}(y)$.
Conditional on $(r,v)$, take a trunk with law $\mathbb Q_{y,v,r}$.
For $j=1,2$, let $\mathbb Q_{q,j;y,t}$ be the resulting law after
attaching $j$ independent continuations with law
$\mathbb Q_{v,\mathbf0,t-r}$, independent also of the trunk conditional
on $(r,v)$.  This gives a law on the diagrams of
Definition~\ref{def:trunk-arm-diagram}.  The split factor
$(\Lambda_v)_+/\bar\lambda$ enters only the environmental weight; it does
not enter the bridge law or count as another visit.
\begingroup
We use these laws to average the environmental correction over a single diagram.
\endgroup

\endgroup

{The next lemma bounds the product of the separately averaged
kernel factors in \eqref{eq:fresh-qnormalizer} by the branching sum.}

\begin{lemma}
\label{lem:scalar-type-domination}
For every $x,v,y$, $r,h\ge0$, and $q\in(0,1)$,
\begin{equation}
 \bar\lambda\widehat K_r(x,v)\widehat K_h(v,y)^{1+q}
 \le\int\mathbf K_r^P\bigl(x;(v,i)\bigr)i_+
              \mathbf k_h\bigl((v,i),y\bigr)^{1+q}\,P(di).
 \label{eq:scalar-type-domination}
\end{equation}
\end{lemma}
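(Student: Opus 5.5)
The plan is to reduce \eqref{eq:scalar-type-domination} to positive association of increasing functions of the rate label under $P$, iterated three times, followed by Jensen's inequality. Use detailed balance \eqref{eq:typed-detailed-balance} to write $\mathbf K_r^P(x;(v,i))=\mathbf k_r((v,i),x)$, so the right side becomes
\[
\int f(i)\,i_+\,g(i)^{1+q}\,P(di),\qquad f(i)=\mathbf k_r\bigl((v,i),x\bigr),\quad g(i)=\mathbf k_h\bigl((v,i),y\bigr).
\]
By \eqref{eq:typed-jump-count}, both $f$ and $g$ are nondecreasing in $i$. Indeed, each term $g_{k,i}=e^{-(1-i)\cdot}*h^{*k}$ is nondecreasing in $i$ pointwise, since $e^{-(1-i)s}$ is. The factor $i\mapsto i_+$ is also nondecreasing, and so is $g^{1+q}$. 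The left side is $\int f\,dP\cdot\int i_+\,dP\cdot(\int g\,dP)^{1+q}$, using $\widehat K_r(x,v)=\int f\,dP$ and $\widehat K_h(v,y)=\int g\,dP$ (spatial transitivity and symmetry of $\widehat K$).

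The key steps, in order, are as follows.

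First, the product of nonnegative nondecreasing functions is nondecreasing. Applying \eqref{eq:rate-association} to $f$ and $i_+g^{1+q}$ gives
\[
\int f\,i_+g^{1+q}\,dP\ \ge\ \int f\,dP\int i_+g^{1+q}\,dP.
\]

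Second, apply \eqref{eq:rate-association} again, to $i_+$ and $g^{1+q}$, to get
\[
\int i_+g^{1+q}\,dP\ \ge\ \bar\lambda\int g^{1+q}\,dP.
\]

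Third, Jensen's inequality for the convex map $s\mapsto s^{1+q}$ on $[0,\infty)$ gives
\[
\int g^{1+q}\,dP\ge\Bigl(\int g\,dP\Bigr)^{1+q}.
\]
Chaining these three inequalities yields \eqref{eq:scalar-type-domination}.

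The only point needing care is integrability, so that the covariance identity \eqref{eq:rate-association} is legitimate. All functions involved are bounded on $(-\infty,\lambda_+]$: for instance $f,g\le e^{\lambda_+\max(r,h)}$ and $i_+\le\lambda_+$. They are also nonnegative, so every integral is finite and the identity applies. I expect no real obstacle here. The mildest subtlety is confirming monotonicity of $\mathbf k$ in the initial label, which follows directly from \eqref{eq:typed-jump-count} because only the first holding factor depends on $i$, and it does so increasingly.
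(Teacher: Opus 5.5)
Your proof is correct and follows essentially the same route as the paper. Both arguments use detailed balance, monotonicity of both kernels in the initial rate, two applications of positive association, and then Jensen for $s\mapsto s^{1+q}$. The only difference is the grouping: the paper first splits off $i_+$ from the product of the two kernel factors and then separates the kernels, while you first split off $\mathbf k_r$ and then separate $i_+$ from $g^{1+q}$, which is equally valid.
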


\begin{proof}
The detailed-balance identity \eqref{eq:typed-detailed-balance} gives
$\mathbf K_r^P(x;(v,i))=\mathbf k_r((v,i),x)$.
For fixed $r,h,x,v,y$, both
$i\mapsto\mathbf k_r((v,i),x)$ and
$i\mapsto\mathbf k_h((v,i),y)$ are increasing functions of
$i$, because each term in the path expansion contains the initial
holding factor $e^{-(1-i)s}$.
Apply positive association (Lemma~\ref{lem:onevariableassociation} below)
first to $i_+$ and the product of the two kernel factors, and then to those
two factors.  Jensen's inequality gives the last step:
\begin{align*}
 &\int i_+\,\mathbf k_r\bigl((v,i),x\bigr)
                 \mathbf k_h\bigl((v,i),y\bigr)^{1+q}\,P(di)\\
 &\quad\ge\bar\lambda
 \left(\int\mathbf k_r\bigl((v,i),x\bigr)\,P(di)\right)
 \left(\int\mathbf k_h\bigl((v,i),y\bigr)^{1+q}\,P(di)\right)\\
 &\quad\ge\bar\lambda\widehat K_r(x,v)\widehat K_h(v,y)^{1+q}.
\end{align*}
This proves \eqref{eq:scalar-type-domination}.
\end{proof}

\begingroup
We return to \eqref{eq:fresh-qnormalizer}.  Under
$\mathbb Q_{q,1;y,t}$, let $J_{\rm tr}$ be the trunk mutation count.
The next proposition bounds the normalizing constant, the trunk duration,
and $J_{\rm tr}$.  These estimates place
$(d_{\mathrm H}(v,\mathbf0)/d,(t-r)/d)$ in the regular region
$\mathcal R$, except on an event of exponentially small probability.

\begin{proposition}
\label{prop:fractional-short-trunk}
Under Assumption~\ref{ass:environment}, for every fixed $M<\infty$,
uniformly for $|t-du_*(m/d)|\le C_0\log d$ and
$d_{\mathrm H}(y,\mathbf{x}_m)\le M$,
\begin{equation}
 \widehat{\mathcal C}^{I}_{q,t}(y)
 \asymp\widehat K_t(y,\mathbf0)^{1+q}.
 \label{eq:qnormalizer-bounds}
\end{equation}
There are fixed $C,c>0$ such that
\begin{equation}
 \mathbb Q_{q,1;y,t}(r>s)\le Ce^{-cs}+e^{-cd},\qquad s\ge0.
 \label{eq:qtrunk-time-tail}
\end{equation}
For every fixed $a>0$, there are $C_a,c_a>0$ such that
\begin{equation}
 \mathbb E_{\mathbb Q_{q,1;y,t}}\!\left[
 e^{aJ_{\rm tr}}\one_{\{J_{\rm tr}>C_ad\}}\right]\le e^{-c_ad}.
 \label{eq:qtrunk-count-tail}
\end{equation}
Moreover, for every fixed $\zeta>0$, a sufficiently small
$\delta_*>0$ gives
\begin{equation}
 \mathbb Q_{q,1;y,t}(r\le\delta_*d,\ J_{\rm tr}\ge\zeta d)
 \le e^{-cd}.
 \label{eq:short-trunk-count-cutoff}
\end{equation}
The constants are independent of the subsequently fixed radius $M$;
only the lower threshold on $d$ may depend on $M$.
\end{proposition}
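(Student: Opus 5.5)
The plan is to derive everything from Theorem~\ref{thm:globalfractionaldiagram}, Lemma~\ref{lem:scalar-type-domination}, and the kernel-shift comparisons of Proposition~\ref{prop:global-oneobject-shifts}. The nearby start $y$ is handled by noting that $\widehat K_t(y,\cdot)$ depends only on $d_{\mathrm H}(y,\mathbf0)=m'$ with $|m'-m|\le M$. Since $u_*$ is Lipschitz (Lemma~\ref{lem:global-root-model-only}), we have $|t-du_*(m'/d)|\le C_0\log d+O(M)$. So all estimates at $\mathbf x_{m'}$ apply with a slightly larger $C_0$, and this only affects the threshold on $d$. Corollary~\ref{cor:polynomial-root-band} keeps all kernels polynomially bounded.

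\emph{Upper bound in \eqref{eq:qnormalizer-bounds} and the time tail.} Lemma~\ref{lem:scalar-type-domination} gives
\[
2\bar\lambda\sum_v\widehat K_r(y,v)\widehat K_{t-r}(v,\mathbf0)^{1+q}\le 2\widehat{\mathcal B}^{(1+q)}_{t,r}(y).
\]
Theorem~\ref{thm:globalfractionaldiagram} then bounds this by $C\widehat K_t^{1+q}e^{-c_qr}+C\widehat K_te^{-c_q(t-r)}$. On $r\le t-\delta d$ the second term is at most $\widehat K_t e^{-c_q\delta d}$. This is $\le e^{-cd}\widehat K_t^{1+q}$ by \eqref{eqtag:FD5}.

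Integrating over $r$ gives $\widehat{\mathcal C}^I\ll\widehat K_t^{1+q}$. Integrating over $r>s$ gives $\ll \widehat K_t^{1+q}(e^{-c_qs}+de^{-cd})$. Once the lower bound is available, dividing yields \eqref{eq:qtrunk-time-tail}.

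\emph{Lower bound in \eqref{eq:qnormalizer-bounds}.} Restrict to $r\in[0,1]$ and to the single vertex $v=y$. The no-mutation path gives $\widehat K_r(y,y)\ge h(r)\ge h(1)>0$. Proposition~\ref{prop:global-oneobject-shifts} with $|s|\le1$ gives $\widehat K_{t-r}(y,\mathbf0)\asymp\widehat K_t(y,\mathbf0)$. Hence $\widehat{\mathcal C}^I\gg\widehat K_t^{1+q}$.

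\emph{Count tails \eqref{eq:qtrunk-count-tail} and \eqref{eq:short-trunk-count-cutoff}.} The weight of trunks with $J_{\rm tr}=j$ is bounded by dropping the bridge constraint. The $j$-mutation part of $\sum_v\widehat K_r(y,v)$ is at most $e^{-(1-\lambda_+)r}r^j/j!\le r^j/j!$. The continuation factor satisfies the crude bound $\widehat K_{t-r}(v,\mathbf0)^{1+q}\le e^{(1+q)\lambda_+t}\le e^{C'd}$. Therefore
\[
\mathbb E[e^{aJ_{\rm tr}}\one_{\{J_{\rm tr}>C_ad\}}]\le \frac{e^{C'd}}{\widehat{\mathcal C}^I}\int_0^t\sum_{j>C_ad}e^{aj}\frac{r^j}{j!}\,\dd r .
\]
Since $r\le t\le(u_{\max}+1)d$, the factorial tail argument from Step~1 of Proposition~\ref{prop:global-oneobject} gives $\exp(-C_ad+e^{a+1}t)$. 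Choosing $C_a$ large beats $e^{C'd}$ and the polynomial inverse of $\widehat{\mathcal C}^I$. For \eqref{eq:short-trunk-count-cutoff}, the same bound restricted to $r\le\delta_*d$ and $j\ge\zeta d$ is Lemma~\ref{lem:marked-short-time-tail}, with the factor $e^{C'd}$ an issue noted below.

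\emph{Main obstacle.} The crude continuation bound $e^{C'd}$ is too large against the gain $e^{-c\zeta d}$ in the short-trunk cutoff, since $\zeta$ is arbitrary. I would instead keep the continuation sharp. Theorem~\ref{thm:globalfractionaldiagram}'s Step~1 already bounds the branching sum over $r\le\delta_I d$ through the conjugated tube argument, where paths leaving the tube $|\rho(X_r)-m|\le\zeta d$ carry mass $e^{-\zeta d/2}$ relative to $\widehat K_t^{1+q}$. Rerunning that argument with the indicator $J_{\rm tr}\ge\zeta d$, which forces either leaving the tube or having many mutations, gives \eqref{eq:short-trunk-count-cutoff} with $\delta_*\le\delta_I$. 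The tube width is adjusted to $\zeta$, and Lemma~\ref{lem:marked-short-time-tail} handles the jump tail.
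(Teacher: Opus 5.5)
Your arguments for \eqref{eq:qnormalizer-bounds}, \eqref{eq:qtrunk-time-tail} and \eqref{eq:qtrunk-count-tail} are the paper's. The upper bound uses type domination plus Theorem~\ref{thm:globalfractionaldiagram}. The lower bound uses zero-mutation trunks with $r\le1$ plus Proposition~\ref{prop:global-oneobject-shifts}. The count tail uses the crude bounds $r^j/j!$ and $\widehat K_{t-r}(v,\mathbf0)\le e^{\lambda_+(t-r)}$.

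You diverge at \eqref{eq:short-trunk-count-cutoff}, and there the obstacle you identify does not exist. The statement lets $\delta_*$ depend on $\zeta$, and the factorial tail over a short trunk is not limited to a gain $e^{-c\zeta d}$. Bounding $\one_{\{j\ge\zeta d\}}\le e^{\beta(j-\zeta d)}$ with a free parameter $\beta>0$ gives
\[
 e^{C_1d}\int_0^{\delta_*d}\sum_{j\ge\zeta d}\frac{r^j}{j!}\,\dd r
 \le Cd\exp\bigl(C_1d-\beta\zeta d+e^{\beta}\delta_*d\bigr).
\]
Taking $\beta\zeta>C_1+2$ and then $e^{\beta}\delta_*<1$ gives $Cde^{-d}$. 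Put differently, $\sum_{j\ge\zeta d}(\delta_*d)^j/j!$ is roughly $(e\delta_*/\zeta)^{\zeta d}$, which beats any fixed $e^{C_1d}$ once $\delta_*$ is small relative to $\zeta$. This is exactly the paper's conclusion, using the same crude estimate you already wrote for \eqref{eq:qtrunk-count-tail}. Lemma~\ref{lem:marked-short-time-tail} as proved just fixes $\beta=1$, which is why it looked insufficient.

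Your sharp-continuation detour can be made rigorous, but it is heavier and not quite as you describe it. You would need the following steps:
\begin{itemize}
\item Lemma~\ref{lem:scalar-type-domination} for the trunk kernel restricted to at least $\zeta d$ mutations. This is valid, since each $j$-mutation component $p_{d,d_{\mathrm H}(v,x)}(j)\,g_{j,i}(r)$ is increasing in $i$.
\item The split \eqref{eqtag:GR4}--\eqref{eqtag:GR6} with Jensen, bounding the conjugation factors in \eqref{eqtag:GR5} by $C^{J_{\rm tr}}$.
\item The factorial tail applied on $\{J_{\rm tr}\ge\zeta d\}$, with $\delta_*$ also taken below $\delta_I$ and $\delta_{\rm tail}(q)$.
\end{itemize}
The tube plays no role in this argument. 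Its width also could not be ``adjusted to $\zeta$'', since in Step~1 it is dictated by the slack in \eqref{eq:finite-initial-transform-gap} and cannot be enlarged. So the detour buys nothing over the crude route, which needs only the polynomial lower bound on $\widehat{\mathcal C}^{I}_{q,t}(y)$.
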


\begin{proof}
Since $|d_{\mathrm H}(y,\mathbf0)-m|\le M$ and $u_*$ is Lipschitz,
replacing $m$ by $d_{\mathrm H}(y,\mathbf0)$ changes the time band by
at most $CM\le\log d$ for all sufficiently large $d$.  Thus, we may
index the starting vertex by its actual distance and use $C_0+1$ in
place of $C_0$, independently of $M$.

\begin{itemize}
\item By \eqref{eq:scalar-type-domination} and \eqref{eqtag:FD2},
\begin{align*}
 \widehat{\mathcal C}^{I}_{q,t}(y)
 &\ll\int_0^{t-\delta d}
       \widehat{\mathcal B}_{t,r}^{(1+q)}(y)\dd r\\
 &\ll\widehat K_t(y,\mathbf0)^{1+q}
   +\widehat K_t(y,\mathbf0)\int_{\delta d}^\infty
          e^{-c_qh}\dd h
 \ll\widehat K_t(y,\mathbf0)^{1+q}.
\end{align*}
The last inequality uses the polynomial bounds \eqref{eqtag:FD5}.
\item For the reverse inequality, retain only $0\le r\le r_0$ and
zero-mutation trunks, for fixed $r_0>0$.  Their weight is $h(r)$.
{Proposition~\ref{prop:global-oneobject-shifts}} gives
$$
 \widehat{\mathcal C}^{I}_{q,t}(y)
 \ge2\bar\lambda\int_0^{r_0}h(r)\widehat K_{t-r}(y,\mathbf0)^{1+q}\dd r
 \gg\widehat K_t(y,\mathbf0)^{1+q}.
$$
\end{itemize}
Restricting the upper estimate to $r>s$ and dividing by the lower
estimate proves \eqref{eq:qtrunk-time-tail}, again by
\eqref{eqtag:FD5}.

The remaining bounds use only the exponential series.  The sum over
all endpoints of the trunk contribution with $j$ mutations is at most
$e^{-(1-\lambda_+)r}r^j/j!$, and
$\widehat K_{t-r}(v,\mathbf0)\le e^{\lambda_+(t-r)}$.
Since $t\le Cd$ and the normalizer in
\eqref{eq:qnormalizer-bounds} is bounded below by a power of $d$,
there is a fixed $C_1$ such that, for $a\ge0$, $\beta>0$, $N\ge0$,
and $0\le R_0\le t$,
\begin{equation}
 \mathbb E_{\mathbb Q_{q,1;y,t}}\!\left[
 e^{aJ_{\rm tr}}\one_{\{r\le R_0,\ J_{\rm tr}\ge N\}}\right]
 \le e^{C_1d}\int_0^{R_0}\sum_{j\ge N}\frac{(e^ar)^j}{j!}\dd r
 \le Cd\exp\left(C_1d-\beta N+e^{a+\beta}R_0\right).
 \label{eq:trunk-factorial-bound}
\end{equation}
Here $C_1$ is independent of $a,\beta,N,R_0$.  Take $\beta=1$,
$R_0=t\le Cd$, and $N=C_ad$ with $C_a$ sufficiently large to
obtain \eqref{eq:qtrunk-count-tail}.  For
\eqref{eq:short-trunk-count-cutoff}, take $a=0$, choose
$\beta$ so that $\beta\zeta>C_1+2$, and then choose
$\delta_*$ so that $e^\beta\delta_*<1$. 

To show that the continuation parameters lie in the regular region, put $\rho(v)=d_{\mathrm H}(v,\mathbf0)$
and let $L_u$ be a Lipschitz constant of $u_*$.  Since
$|\rho(v)-\rho(y)|\le J_{\rm tr}$, for all sufficiently large $d$,
\[
 \left|\frac{t-r}{d}-u_*\!\left(\frac{\rho(v)}d\right)\right|
 \le\frac{(C_0+1)\log d}{d}+\frac rd+L_u\frac{J_{\rm tr}}d.
\]
Choose $\zeta,\delta_*>0$ with
$\delta_*+L_u\zeta<\eta_{\rm rt}/2$, decreasing $\delta_*$ if needed for
\eqref{eq:short-trunk-count-cutoff}.  Equations~\eqref{eq:qtrunk-time-tail}
and~\eqref{eq:short-trunk-count-cutoff} then place
$(\rho(v)/d,(t-r)/d)$ in $\mathcal K_{\rm rt}\subset\mathcal R$ except on
an event of probability at most $e^{-cd}$ under $\mathbb Q_{q,1;y,t}$.

This completes the proof.
\end{proof}
\endgroup

\section{Path intersections and transferring to the static environment}
\label{sec:overlap-transfer}

\subsection{Intersection estimates for bridges and diagrams}
\label{sec:bridges}

\subsubsection{Setting up stages}

\begingroup
Section~\ref{sec:bridge-fractional-second-moment} gives branching estimates
for the independent-visit model.  To bound the environmental mean of the
static branching integral, we control repeated uses of a rate (i.e., revisits of a vertex) within one
trunk--arm diagram. We also control intersections of independent bridges. In both cases, the objects we study consist of segments that are ordinary Hamming bridges, after properly conditioning on endpoints, durations, and mutation counts.
\endgroup

Recall the space--time bridges in \eqref{eq:Qbridge} and the trunk--arm
diagrams in Definition~\ref{def:trunk-arm-diagram}.  In this section, the
starting vertex of a diagram is denoted by $x$.  The next definition counts how many
factors use each static rate.

\begin{definition}
\label{def:diagram-counts}
For a path $\pi=(v_0,\ldots,v_{|\pi|})$, define its discrete local time by
\begin{equation}
 L_z(\pi)=\sum_{k=0}^{|\pi|}\one_{\{v_k=z\}}.
 \label{eq:discrete-local-time}
\end{equation}
Write $\pi$ for the trunk from $x$ to $v$, and
$\gamma_1,\ldots,\gamma_j$ for the continuation paths from
$v$ to $\mathbf0$.
The number of factors in a diagram that use
the static variable $\Lambda_z$ is
\begin{equation}
 n_\Gamma(z)=L_z(\pi)+\sum_{a=1}^jL_z(\gamma_a)+\one_{\{v=z\}}.
 \label{eq:diagram-factor-count}
\end{equation}
{
% The last term counts the branching factor separately from the holdings.
The vertices used by the diagram form its support.  Define
\begin{equation}
\begin{split}
 \operatorname{Supp}(\Gamma)
 =\{z:n_\Gamma(z)>0\},\qquad
 I_1(\Gamma)=\sum_z\bigl(n_\Gamma(z)-1\bigr)_+.
\end{split}
 \label{eq:diagram-repeat-counts}
\end{equation}
Thus, $I_1$ counts all holdings beyond the first at each vertex,
including endpoints, together with the branching factor.
}

\end{definition}

{
Figure~\ref{fig:complete-diagram-counts} illustrates the conventions.
}

\begin{figure}[htbp]
\centering
\begin{tikzpicture}[
 x=1cm,y=1cm,font=\footnotesize,line width=.55pt,
 holding/.style={circle,draw,fill=white,inner sep=0pt,minimum size=4.5pt},
 prescribed/.style={rectangle,draw,fill=black!20,inner sep=0pt,
                    minimum size=5pt},
 splitting/.style={diamond,draw,fill=black!35,inner sep=0pt,
                   minimum size=6pt}]
 \node[anchor=west,font=\small\bfseries] at (.15,4.65)
   {{One complete diagram $\Gamma$}};

 % Distinct nodes record distinct holding factors, including the three
 % holdings at v.  The diamond records the additional splitting factor.
 \draw[<->] (.45,3.9)--node[above] {$r$} (3.7,3.9);
 \draw[<->] (3.7,3.9)--node[above] {$t-r$} (7.3,3.9);
 \draw (.45,2.15)--(3.7,2.15);
 \draw (3.7,2.15)--(4.3,3.1)--(7.3,3.1);
 \draw (3.7,2.15)--(4.3,1.2)--(7.3,1.2);
 \node[prescribed,label=above:{$x$}] at (.45,2.15) {};
 \node[holding,label=above:{$z$}] at (1.4,2.15) {};
 \node[holding,label=above:{$x$}] at (2.3,2.15) {};
 \node[holding] at (2.68,2.15) {};
 \node[holding,label=above:{$v$}] at (3.05,2.15) {};
 \node[splitting] at (3.7,2.15) {};
 \node[holding,label=above:{$v$}] at (4.3,3.1) {};
 \node[holding,label=above:{$z$}] at (5.6,3.1) {};
 \node[prescribed,label=above:{$\mathbf0$}] at (7.3,3.1) {};
 \node[holding,label=below:{$v$}] at (4.3,1.2) {};
 \node[holding] at (5.6,1.2) {};
 \node[prescribed,label=below:{$\mathbf0$}] at (7.3,1.2) {};
 \node at (1.75,1.55) {$\pi$ (counted once)};
 \node at (6.35,2.72) {$\gamma_1$};
 \node at (6.35,1.58) {$\gamma_2$};
 \node at (3.85,.25) {$n_\Gamma(z)=2$};
 \node at (3.85,-.2) {{$n_\Gamma(x)=2,\qquad n_\Gamma(\mathbf0)=2$}};

 \node[prescribed] at (.55,-1.2) {};
 \node[anchor=west] at (.75,-1.2) {prescribed endpoint holding};
 \node[holding] at (.55,-1.6) {};
 \node[anchor=west] at (.75,-1.6) {other holding occurrence};
 \node[splitting] at (.55,-2.0) {};
 \node[anchor=west] at (.75,-2.0)
   {splitting factor $(\Lambda_v)_+/\bar\lambda$};
\end{tikzpicture}
\caption{{Repeated uses of a rate in one diagram.  Each of
$z$, $x$, and $\mathbf0$ has two holdings and contributes $2-1=1$ to
$I_1$.  At the split $v$, three holdings and the branching factor
contribute $4-1=3$.}}
\label{fig:complete-diagram-counts}
\end{figure}

\begingroup
Proposition~\ref{prop:range} bounds the intersection probability of two
independent bridges by $O(d^{-1})$ after removing their endpoint vertices; this gives environmental concentration
for paths.  Proposition~\ref{prop:collisionmgf} bounds exponential moments
of repeated occurrences in a fixed family of bridges.  Together with
the mutation-count bounds in Proposition~\ref{lem:qdiagram-geometry},
it also bounds the environmental mean of one complete diagram.
\endgroup

\subsubsection{Discrete Hamming bridges}

Write
\[
 p_n(x,y)=\Pp_x(S_n=y),\qquad
 \Pp_{d,n;x,y}(\,\cdot\,)=\Pp_x(\,\cdot\mid S_n=y),
 \quad p_n(x,y)>0.
\]
We use only endpoint and count choices for which the bridge exists,
including the binary parity restriction.
For a bridge path $\pi$, write
$\operatorname{Supp}(\pi)=\{z:L_z(\pi)>0\}$ for its visited vertices.
For a space--time path $\Gamma$ with discrete path $\pi$, set
$\operatorname{Supp}(\Gamma)=\operatorname{Supp}(\pi)$.
The estimates for mutation counts proportional to the dimension, with
fixed positive comparison constants, remain valid at the geodesic
boundary $n=d_{\mathrm H}(x,y)$.

Lemma~\ref{lem:edgespread} bounds the probability of moving to a prescribed
next neighbour by the reciprocal of the remaining mutation count.

\begin{lemma}\label{lem:edgespread}
Fix $C_0<\infty$.  If $1\le n\le C_0d$, $p_n(x,y)>0$, and $w\sim x$, then
\begin{equation}
 \Pp_{d,n;x,y}(S_1=w)
 =\frac{p_{n-1}(w,y)}{(b-1)d\,p_n(x,y)}\ll_{C_0}\frac1n.
 \label{eq:edgespread}
\end{equation}
More generally, for $0\le k<n$ and $w\sim S_k$, almost surely,
\begin{equation}
 \Pp_{d,n;x,y}(S_{k+1}=w\mid S_0,\ldots,S_k)
 =\frac{p_{n-k-1}(w,y)}{(b-1)d\,p_{n-k}(S_k,y)}
 \ll_{C_0}\frac1{n-k}.
 \label{eq:edgespread-remaining}
\end{equation}
\end{lemma}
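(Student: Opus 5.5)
The identity is immediate from the Markov property of the embedded chain. I would then prove the bound $p_{n-1}(w,y)\ll (b-1)d\,p_n(x,y)/n$ by an exact spectral (product) formula for $n$-step transition probabilities on the Hamming cube.

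\emph{Identity.} Under $\Pp_x$ the chain first moves to each neighbour with probability $((b-1)d)^{-1}$. By the Markov property at time one,
\[
\Pp_{d,n;x,y}(S_1=w)=\frac{((b-1)d)^{-1}\,p_{n-1}(w,y)}{p_n(x,y)}.
\]
The conditional version \eqref{eq:edgespread-remaining} follows in the same way. Given $S_0,\ldots,S_k$, the remaining path is an $(n-k)$-step bridge from $S_k$ to $y$. This reduces everything to \eqref{eq:edgespread} with $n$ replaced by $n-k$, provided the bound is uniform over all $1\le n\le C_0d$ and all endpoints. Uniformity holds because $n-k\le n\le C_0d$.

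\emph{Bound.} Write $\ell=d_{\mathrm H}(x,y)$. Then $d_{\mathrm H}(w,y)\in\{\ell-1,\ell,\ell+1\}$. Using the decomposition over symmetric coordinates, $p_n(x,y)$ is a sum over eigenvalue classes,
\[
p_n(x,y)=b^{-d}\sum_{j}K_j(\ell)\Bigl(1-\tfrac{bj}{(b-1)d}\Bigr)^n ,
\]
with $K_j$ Krawtchouk polynomials. This is too oscillatory to use directly, so I would avoid it. Instead I would use the one-step recursion
\[
p_n(x,y)=\sum_{w'\sim x}((b-1)d)^{-1}p_{n-1}(w',y).
\]
The distance classes of neighbours $w'$ have sizes $\ell$ (distance $\ell-1$), $(b-2)\ell$ (distance $\ell$) and $(b-1)(d-\ell)$ (distance $\ell+1$). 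By symmetry $p_{n-1}(w',y)$ depends only on the distance, so
\[
(b-1)d\,p_n(x,y)=\ell\,q_{\ell-1}+(b-2)\ell\,q_\ell+(b-1)(d-\ell)\,q_{\ell+1},
\qquad q_j:=p_{n-1}\text{ at distance }j .
\]
It therefore suffices to show that for the distance $j$ of $w$, the coefficient of its class in this sum, weighted against $q_j$, is at least of order $n$, in the sense that
\[
q_j\ll \frac1n\bigl(\ell q_{\ell-1}+(b-2)\ell q_\ell+(b-1)(d-\ell)q_{\ell+1}\bigr).
\]
\begin{itemize}
\item If $j=\ell+1$ and $d-\ell\ge cn$, or $j=\ell-1$ and $\ell\ge cn$, this is immediate because the class of $w$ itself has size $\gg n$.
\item Otherwise I need a ratio comparison between neighbouring distances, $q_{j}\ll (n/\text{size})\,q_{j'}$, for some other class $j'$ of size $\gg n$. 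Since $n\le C_0d$ and the three class sizes add up to $(b-1)d$, at least one class has size $\gg d\ge n/C_0$. The needed comparison is then between $p_{n-1}$ at two distances differing by one or two.
\end{itemize}

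\emph{Main obstacle.} The hardest step is the uniform ratio estimate
\[
p_{n-1}(\text{dist } j)\ll \frac{n}{N_{j'}}\,p_{n-1}(\text{dist } j'),
\]
where $N_{j'}$ is the size of the large class $j'$. For example, when $\ell$ is small and $w$ is closer to $y$ ($j=\ell-1$), I must show that stepping away costs at most a factor of order $n/d$, which is bounded. The plan is to use the coefficient formula \eqref{eq:exact-pdm} together with the monotone-ratio structure of $A_b^{j}C_b^{d-j}$. Writing
\[
p_{d,j}(n-1)=(n-1)!\,d^{-(n-1)}\,[z^{n-1}]\,A_b(z)^jC_b(z)^{d-j},
\]
one has
\[
\frac{p_{d,j-1}(n-1)}{p_{d,j}(n-1)}=\frac{[z^{n-1}]A_b^{j-1}C_b^{d-j+1}}{[z^{n-1}]A_b^{j}C_b^{d-j}}.
\]
Since $C_b=\sum_k c_kz^k$ and $A_b\ge z/b$ coefficientwise up to the structure of \eqref{eq:AC}, I would compare coefficientwise. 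Multiplying by $A_b/C_b$ costs a factor controlled by the ratio at the saddle, which is of order $z_*$ with $z_*\asymp n/d$. An elementary combinatorial alternative is also available: count paths, noting that a path at distance $j-1$ extends to distance $j$ by inserting a detour along a fixed coordinate, an injection with multiplicity $\asymp n$. The same combinatorial detour-insertion or saddle-ratio argument handles $j=\ell+1$ small.

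I expect this uniform-in-$(\ell,n)$ ratio bound, including near the geodesic boundary $n\approx\ell$ and in the binary parity case, to be the delicate part. Near the boundary I would treat it via the explicit count of geodesic-type paths.
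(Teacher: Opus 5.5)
Your identity and the reduction of \eqref{eq:edgespread-remaining} to \eqref{eq:edgespread} by the Markov property are fine. So is the decomposition $(b-1)d\,p_n(x,y)=\ell q_{\ell-1}+(b-2)\ell q_\ell+(b-1)(d-\ell)q_{\ell+1}$. But this only restates the lemma. Its whole content is the uniform comparison of $q_j$ with a large class when the class of $w$ is small. You leave that step unproved, and none of the routes you sketch delivers it.

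\emph{The displayed target is inverted.} You need $q_j\ll (N_{j'}/n)\,q_{j'}$, not $q_j\ll (n/N_{j'})\,q_{j'}$. For $b=2$, $\ell=1$, $n=3$ one has $q_0=1/d$ and $q_2=2/d^2$, so $q_0/q_2=d/2$. This matches $N_{j'}/n\asymp d$ but violates your displayed bound by a factor of order $d^2$.

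\emph{The saddle heuristic uses the wrong scale.} For $\ell\ll n\ll d$, a regime the lemma must cover, $J_{\ell/d}(z)\approx \ell/d+z^2/(b-1)$, so $z_*\asymp\sqrt{(n-\ell)/d}$ rather than $n/d$. With $z_*\asymp n/d$ your heuristic would predict $q_{\ell-1}/q_{\ell+1}\asymp (d/n)^2$, which contradicts the lemma. Saddle-point and local-limit asymptotics are also not uniform down to bounded $n$.

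\emph{A termwise coefficient comparison cannot work.} Take $b=2$ and $w$ one step closer to $y$. Your identity gives exactly $\Pp_{d,n;x,y}(S_1=w)=\frac1n\,[z^n]\bigl(z\cosh z\,B_1\bigr)/[z^n]\bigl(\sinh z\,B_1\bigr)$ with $B_1=\sinh^{\ell-1}z\,\cosh^{d-\ell}z$. The $z^{2k+1}$ coefficients of $z\cosh z$ and $\sinh z$ differ by the unbounded factor $2k+1$, so the bound can only hold after averaging.

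\emph{The detour-insertion map is not uniformly $\asymp n$-to-one.} If the continuation immediately uses the coordinate in which $x$ and $w$ differ, there are few admissible insertion slots. A real double-counting argument would be needed, and you do not give one.

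The missing idea is exactly this averaging. In the binary example, the ratio above is precisely $\E[N_i\mid S_n=y]$, where $N_i$ counts the mutations of the coordinate $i$ in which $w$ and $x$ differ. The paper bounds such means directly instead of comparing distances.

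\begin{enumerate}
\item Split the bridge into its coordinate paths. Conditional on them, every interleaving is equally likely, so the first mutation uses coordinate $i$ with conditional probability $N_i/n$. Hence $\Pp_{d,n;x,y}(S_1=w)\le \E[N_i\mid S_n=y]/n$.
\item These conditional means are constant on the mismatching and on the matching coordinate classes, and their weighted average is $n/d$.
\item The difference of the two class means equals $[z^n]\,z(A_b'C_b-A_bC_b')A_b^{\ell-1}C_b^{d-\ell-1}$ divided by $[z^n]A_b^{\ell}C_b^{d-\ell}$.
\item Although $zA_b'$ is not coefficientwise dominated by $A_b$, the combination $z(A_b'C_b-A_bC_b')=ze^{(b-2)z/(b-1)}/(b-1)$ is dominated by $K_bA_bC_b$. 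So the difference lies in $[0,K_b]$.
\end{enumerate}

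Therefore $\E[N_i\mid S_n=y]\le n/d+K_b$ and $\Pp_{d,n;x,y}(S_1=w)\le 1/d+K_b/n\ll_{C_0}1/n$. This holds uniformly in $\ell$, including the parity and geodesic cases, and needs no comparison of $p_{n-1}$ at different distances.
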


\begin{proof}
We first bound the expected number of uses of a fixed coordinate under
the endpoint conditioning.  Put $A=A_b$, $C=C_b$, and
$m=d_{\mathrm H}(x,y)$.  {By \eqref{eq:AC} and the one-coordinate case of
\eqref{eq:mutation-transition}, the power-series coefficients of $A$
and $C$ are the one-coordinate path weights.}  Interleaving the coordinate paths\footnote{A coordinate path records that coordinate's successive mutations,
omitting steps that change other coordinates.  Interleaving means
ordering the mutations of all coordinates while preserving their order
within each coordinate.  If coordinate $i$ makes $k_i$ mutations,
with $\sum_i k_i=n$, there are $n!/\prod_i k_i!$ such orders.
Conditional on the coordinate paths, they are equally likely: each
resulting Hamming-walk path has probability $((b-1)d)^{-n}$, and
they all have the same endpoint.  Thus, the first mutation uses coordinate
$i$ with probability $k_i/n$.} shows
that, if $N_i$ counts mutations of coordinate $i$, then
\[
 \Pp\bigl((N_i)=(k_i)\bigm| S_n=y\bigr)
 =\frac{\prod_i[z^{k_i}]f_i(z)}{Z_n},\qquad
 Z_n=[z^n]A(z)^mC(z)^{d-m}>0,\qquad \sum_i k_i=n,
\]
where $f_i=A$ when $x_i\ne y_i$ and $f_i=C$ otherwise.
The means are equal within each of these two coordinate classes.
If both classes occur, take $i$ in the first and $j$ in the second,
and put $B=A^{m-1}C^{d-m-1}$.  Differentiating the coordinate factors
$f_i=A$ and $f_j=C$, respectively, gives
\[
 \E[N_i-N_j\mid S_n=y]
 =\frac{[z^n]z(A'C-AC')B}{Z_n},\qquad
 A'C-AC'=\frac{e^{c_bz}}{b-1},\quad c_b=\frac{b-2}{b-1}.
\]
We compare coefficients of nonnegative power series.
\begin{itemize}
\item For $b>2$,
$[z^s]AC\ge[z^s]A$ and, for every $s\ge1$,
\[
 [z^s]A\ge\frac{b-2}{b(b-1)s!},\qquad
 [z^s]\frac{ze^{c_bz}}{b-1}
 =\frac{s c_b^{s-1}}{(b-1)s!}.
\]
Since $0<c_b<1$, these inequalities bound every coefficient of
$z(A'C-AC')$ by $K_b$ times the corresponding coefficient of $AC$,
for a finite $K_b$.
\item For $b=2$, the same conclusion follows from
$z(A'C-AC')=z$ and $AC=\sinh(2z)/2$, with $K_2=1$.
\end{itemize}
Multiplication by $B$ preserves the comparison.  The two class means
therefore differ by a number in $[0,K_b]$.  Their weighted average is
$n/d$, so every coordinate mean is at most $n/d+K_b$.
If only one class occurs, symmetry gives each mean exactly $n/d$;
this also covers $d=1$.  These coefficient arguments apply to every
$Z_n>0$, including binary parity and geodesic counts.

Conditional on the coordinate paths, their interleaving is uniform.
If $w$ differs from $x$ in coordinate $i$, then
\[
 \Pp_{d,n;x,y}(S_1=w)
 \le\Pp(\text{the first mutation uses coordinate }i\mid S_n=y)
 =\frac{\E[N_i\mid S_n=y]}n\ll_{C_0}\frac1n.
\]
The ordinary Markov property, conditioned on $S_n=y$, gives the
transition identity in \eqref{eq:edgespread-remaining}.
The first bound applies to the remaining bridge and gives the claimed
conditional estimate.
\end{proof}

Unless a probability weight is written explicitly, every
$\ell^p$ norm below is taken with respect to counting measure on the underlying state space. Our next result shows that two independent bridges intersect away from their endpoints with probability \(O(d^{-1})\), uniformly in the endpoints.

\begin{proposition}\label{prop:range}
Fix $0<\nu_1<\nu_2<\infty$.  Every admissible Hamming bridge from $x$
to $y$ with
$$
 \nu_1d\le n\le\nu_2d,\qquad d_{\mathrm H}(x,y)\le n,
$$
including the admissible geodesic boundary $n=d_{\mathrm H}(x,y)$, has marginal distributions
$\mu_k(z)=\Pp_{d,n;x,y}(S_k=z)$ satisfying
\begin{equation}
 \sum_{k=1}^{n-1}\|\mu_k\|_2\ll_{\nu_1,\nu_2}d^{-\frac12}.    \label{eq:sliceL2}
\end{equation}
Consequently, if $\pi_i$ is a bridge from $x_i$ to $y_i$, $i=1,2$,
and the two bridges are independent with mutation counts $\asymp d$,
then after their
endpoint vertex labels are removed,
\begin{equation}
 \Pp\Bigl(\bigl(\operatorname{Supp}(\pi_1)\cap\operatorname{Supp}(\pi_2)\bigr)
              \setminus\{x_1,y_1,x_2,y_2\}\ne\varnothing\Bigr)
 \ll \frac1d.                                              \label{eq:crosscap}
\end{equation}
All constants are uniform in the endpoints subject to the admissibility
and mutation-count conditions stated above.
\end{proposition}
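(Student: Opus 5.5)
The plan is to prove \eqref{eq:sliceL2} by splitting the slice index $k$ into a near-start range $1\le k\le n/2$ and a near-end range $n/2<k\le n-1$. The second range reduces to the first by time reversal. The reversed bridge from $y$ to $x$ is again an admissible Hamming bridge with the same count, since the one-step kernel $P$ is symmetric.

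For $k\le n/2$, write $\mu_k(z)=\Pp_x(S_k=z)\,p_{n-k}(z,y)/p_n(x,y)$ and use $\|\mu_k\|_2\le\|\mu_k\|_\infty^{1/2}$. The task is then to bound $\|\mu_k\|_\infty$. For small $k$, Lemma~\ref{lem:edgespread} applied iteratively through \eqref{eq:edgespread-remaining} gives the following. Any prescribed path of $k$ steps has bridge probability at most $(C/(n-k))^k\ll(C/d)^k$. The number of length-$k$ paths from $x$ to a fixed $z$ with $d_{\mathrm H}(x,z)=j$ is at most of order $((b-1)d)^{k}\,\Pp_x(S_k=z)$. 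Combining these, $\mu_k(z)\ll C^k\Pp_x(S_k=z)$.

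The unconditioned walk is a product of coordinate chains, each with return probability at most $1-ck/d$ after $k$ steps, for $k\le d$. This gives $\sup_z\Pp_x(S_k=z)\le$ (probability of the most likely point), which is at most of order $\exp(-ck\log(d/k))$ for $k\le\epsilon d$, and $e^{-cd}$ for $k\ge\epsilon d$. The factor $C^k$ is harmless when $k\ll d$: there $\log(d/k)$ dominates. The resulting sum $\sum_k\|\mu_k\|_\infty^{1/2}$ is dominated by $k=1$, which contributes $\|\mu_1\|_2\le(\max_w\mu_1(w))^{1/2}\ll d^{-1/2}$ by \eqref{eq:edgespread}. The term $k=2$ contributes $d^{-1}$, so small $k$ gives $O(d^{-1/2})$.

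The main obstacle is the range $\epsilon d\le k\le n/2$, where the $C^k$ loss is fatal. Here I would instead use the tilted representation \eqref{eq:tilted-pdm} together with the uniform local bound \eqref{eq:spatialconcentration}. This gives $p_{n-k}(z,y)\le d^{C}\,\max$ over distances, up to an exponential factor. It also gives $p_n(x,y)\gg d^{-C}e^{d\Psi_b}$, via Lemma~\ref{lem:spatialLCLT} at the exact tilt; the geodesic boundary needs a separate elementary count. Combining these, $\mu_k(z)$ is at most $d^{C}$ times $e^{-d\,\mathcal S}$, where $\mathcal S$ is the entropy of the coordinate-class proportions, as in \eqref{eq:midpoint-class-lower}. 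A middle vertex of the bridge has exponentially many equiprobable companions with the same proportions $\mathbf r(z)$, and by symmetry $\mu_k$ is constant on such classes. Hence $\mu_k(z)\le1/\#\{w:\mathbf r(w)=\mathbf r(z)\}$. Since $k,n-k\ge\epsilon d$, the concentration argument of \eqref{eq:bulk-coordinate-Hoeffding}--\eqref{eq:bulk-spatial-conditioning} shows that, except with probability $e^{-cd}$, the class has size $e^{cd}$. Thus $\|\mu_k\|_2^2\le\max_z\mu_k(z)\le e^{-cd}$, and the middle range contributes $O(de^{-cd/2})$.

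Given \eqref{eq:sliceL2}, \eqref{eq:crosscap} follows by a union bound. The probability of a common non-endpoint vertex is at most $\sum_{k,l}\sum_z\mu^{(1)}_k(z)\mu^{(2)}_l(z)$, restricted to $1\le k\le n_1-1$ and $1\le l\le n_2-1$. The endpoint times $k=0,n_1$ and $l=0,n_2$ are excluded because the endpoint labels are removed; a visit to a removed label at an interior time is simply dropped. By Cauchy--Schwarz each term is at most $\|\mu^{(1)}_k\|_2\|\mu^{(2)}_l\|_2$, so the total is at most $\bigl(\sum_k\|\mu_k^{(1)}\|_2\bigr)\bigl(\sum_l\|\mu_l^{(2)}\|_2\bigr)\ll d^{-1}$.
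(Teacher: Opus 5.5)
\textbf{Gap in the middle range.} Your small-$k$ estimate and the Cauchy--Schwarz deduction of \eqref{eq:crosscap} are sound. The middle range $\epsilon d\le k\le n/2$, however, is not established with the uniformity the statement claims.

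The concentration step you borrow from \eqref{eq:bulk-coordinate-Hoeffding}--\eqref{eq:bulk-spatial-conditioning} divides by the lower bound $\Pp_{z_n}(\Sigma_{d,m,z_n}=n)\gg d^{-1/2}$. Lemma~\ref{lem:spatialLCLT} supplies this only for tilts $z_n$ in a fixed compact subset of $(0,\infty)$, i.e.\ only when $(n-d_{\mathrm H}(x,y))/d$ is bounded below. The compactness argument that gives a uniform positive lower bound on $\mathcal S_{b,\theta}$ at the mean proportions runs over that same tilt range.

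The proposition, however, covers every admissible bridge with $d_{\mathrm H}(x,y)\le n$. This includes $n=d_{\mathrm H}(x,y)$ and counts with $n-d_{\mathrm H}(x,y)=o(d)$, where $z_n\to0$. Your ``separate elementary count'' is aimed at the lower bound for $p_n(x,y)$ at the exact geodesic. It does not repair the concentration step, and near-geodesic counts are not addressed at all.

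\textbf{A smaller point on the sup bound.} The bound you need, $\sup_z\Pp_x(S_k=z)\le(ek/d)^{k/2}$, is true, but not for the reason you give. The discrete walk $S_k$ is not a product of coordinate chains. Moreover, per-coordinate return probabilities $1-ck/d$ would only give $e^{-ck}$, which does not beat your $C^k$ loss. The correct gain comes from
$\sup_z\Pp_x(S_k=z)^2\le p_{2k}(x,x)\le\binom dk(k/d)^{2k}\le(ek/d)^k$.

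\textbf{How to avoid the middle range.} This is exactly how the paper argues. Rather than applying Lemma~\ref{lem:edgespread} to all $k$ steps from $S_0$, condition on $S_{k-j}=w$ with $j=\min\{k,3\}$. Apply the lemma only to the last $j$ steps, during which at least $n/2$ steps remain. This gives $\Pp_{d,n;x,y}(S_k=z\mid S_{k-j}=w)\le C^jp_j(w,z)$.

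Minkowski's inequality in $\ell^2$ and vertex transitivity then give $\|\mu_k\|_2\le C^j p_{2j}(x,x)^{1/2}$. A return in $2j$ steps uses at most $j$ coordinates, so $p_{2j}(x,x)\le\binom dj(j/d)^{2j}\ll d^{-j}$. Hence $\|\mu_1\|_2\ll d^{-1/2}$, $\|\mu_2\|_2\ll d^{-1}$, and $\|\mu_k\|_2\ll d^{-3/2}$ for every $3\le k\le n/2$. The $O(d)$ terms therefore sum to $O(d^{-1/2})$, and time reversal handles the second half.

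The loss is $C^3$ instead of $C^k$, so no entropy or local-limit input is needed. Since Lemma~\ref{lem:edgespread} holds whenever $p_n(x,y)>0$, the bound is automatically uniform up to and including the geodesic boundary.
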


\begin{proof}
For $k\le n/2$ and $1\le j\le k$, condition on $S_{k-j}=w$.
During the next $j$ mutations, at least $n-k\ge\nu_1d/2$ mutations remain
before the prescribed endpoint.  Lemma~\ref{lem:edgespread} therefore
bounds every prescribed path of length $j$ by $(C/d)^j$.
The ordinary walk gives each such path probability $\bigl((b-1)d\bigr)^{-j}$.
Summing over paths from $w$ to $z$ yields
\[
 \Pp_{d,n;x,y}(S_k=z\mid S_{k-j}=w)\le C^j p_j(w,z).
\]
Average over $w$ and apply the triangle inequality.  Symmetry and vertex
transitivity then give
\[
 \|\mu_k\|_2
 \le\sum_w\Pp_{d,n;x,y}(S_{k-j}=w)\,
       C^j\left(\sum_z p_j(w,z)^2\right)^{1/2}
 =C^j p_{2j}(x,x)^{1/2}.
\]
{
A return after $2j$ mutations uses at most $j$ coordinates: a coordinate
used exactly once cannot return to its initial value.  For $1\le j\le3$ and $d\ge3$,
cover the used coordinates by a $j$-element set and take a union bound:
\begin{equation}
 p_{2j}(x,x)\le\binom dj\left(\frac jd\right)^{2j}
 \ll_j d^{-j}.
 \label{eq:returnbound}
\end{equation}
Taking $j=\min\{k,3\}$ gives
$$
 \|\mu_1\|_2\ll d^{-1/2},\qquad
 \|\mu_2\|_2\ll d^{-1},\qquad
 \sup_{3\le k\le n/2}\|\mu_k\|_2\ll d^{-3/2}.
$$
The first half therefore sums to $O(d^{-1/2})$, since $n\le\nu_2d$.
Time reversal treats the second half and proves \eqref{eq:sliceL2}.
}

For two independent bridges with marginal distributions $(\mu_k)$ and
$(\nu_\ell)$ at their interior steps, Cauchy--Schwarz gives
\[
 \sum_{k,\ell}\sum_z\mu_k(z)\nu_\ell(z)
 \le\left(\sum_k\|\mu_k\|_2\right)
     \left(\sum_\ell\|\nu_\ell\|_2\right)\ll d^{-1}.
\]
Deleting endpoint vertex labels can only reduce this intersection local
time.  Markov's inequality proves \eqref{eq:crosscap}.
\end{proof}

To control repeated visits and intersections, we first bound how often a
bridge visits the neighbours of a fixed vertex.
\begin{lemma}\label{lem:bridgestar}
For every fixed $a,C_0>0$,
\begin{equation}
 \sup_{x,y,z}\ \sup_{\substack{n\le C_0d\\p_n(x,y)>0}}
 \E_{\Pp_{d,n;x,y}}\!\left[
 \exp\left(a\sum_{k=0}^n\one_{\{S_k\sim z\}}\right)\right]
 \ll_{a,C_0}1.                                           \label{eq:bridgestar}
\end{equation}
\end{lemma}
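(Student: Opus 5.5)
The plan is to reduce the exponential moment to a geometric-type tail bound on the neighbourhood occupation $V=\sum_{k=0}^n\one_{\{S_k\sim z\}}$, built from the one-step bridge estimate of Lemma~\ref{lem:edgespread}. The basic geometric input is that the neighbour set $N(z)=\{w:w\sim z\}$ has size $(b-1)d$. Any vertex $w\ne z$ has at most $\max\{2,b-2\}$ neighbours in $N(z)$; a vertex $w\in N(z)$ has at most $b-2$ neighbours in $N(z)$ (the other symbols in the same coordinate), together with $z$ itself. Thus from any current position $S_k$, the set of next steps landing in $N(z)$ has cardinality at most $C_b$, a constant depending only on $b$. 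By \eqref{eq:edgespread-remaining}, the conditional probability of entering $N(z)$ at step $k+1$ is therefore $\ll C_b/(n-k)$.

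I would split the bridge at $n/2$. On the first half, $n-k\ge n/2$, so each step enters $N(z)$ with conditional probability at most $C/n$. For the second half I would use time reversal: the reversed bridge is a bridge from $y$ to $x$ with the same count, and $P$ is symmetric. By Cauchy--Schwarz,
\[
\E\bigl[e^{aV}\bigr]\le \E\bigl[e^{2aV_1}\bigr]^{1/2}\E\bigl[e^{2aV_2}\bigr]^{1/2},
\]
so it suffices to treat one half, with $a$ replaced by $2a$.

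On the first half, $V_1\le 1+\#\{k<n/2: S_{k+1}\in N(z)\}$, where the initial $1$ accounts for $S_0$. Since each entry probability is at most $p=C/n$ conditionally on the past, a standard supermartingale argument gives
\[
\E\Bigl[\prod_k e^{2a\one_{\{S_{k+1}\in N(z)\}}}\Bigr]\le \bigl(1+(e^{2a}-1)C/n\bigr)^{n}\le \exp\bigl(C(e^{2a}-1)\bigr).
\]
This is the familiar comparison with a Binomial$(n,C/n)$ variable, proved by iterated conditioning: the conditional expectation of $e^{2a\one}$ given $S_0,\ldots,S_k$ is at most $1+(e^{2a}-1)C/n$. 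The resulting bound is uniform in $x,y,z$ and in $n$, and the factor $e^{2a}$ from $S_0$ is harmless.

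The remaining issue, which I expect to be the main (mild) obstacle, is the range of $n$. Lemma~\ref{lem:edgespread} gives $\ll_{C_0}1/(n-k)$ for every $n\le C_0d$, with no lower bound on $n$ required, so the argument above works for all such $n$. For small $n$ one could alternatively bound $V\le n+1$, which already gives a uniform bound whenever $n$ is bounded. The only real care needed is to check that the constant in \eqref{eq:edgespread-remaining} is uniform in the remaining length $n-k$ and in $S_k$, and that the time-reversed bridge inherits the same estimate with the same constant. Both follow from the lemma's statement holding for arbitrary endpoints with $p_n(x,y)>0$, together with the reversibility identity $p_n(x,y)=p_n(y,x)$.
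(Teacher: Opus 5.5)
\textbf{Gap: the hazard bound fails at $S_k=z$.} Your overall structure matches the paper's: split at $n/2$, take the one-step hazard from Lemma~\ref{lem:edgespread}, compare with a binomial via a supermartingale, use time reversal, and finish with Cauchy--Schwarz. However, the key hazard bound is false as stated.

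You claim that from any current position $S_k$ only $C_b$ next steps land in $N(z)$. This misses the case $S_k=z$. There, all $(b-1)d$ neighbours of $S_k$ lie in $N(z)$, so $\Pp(S_{k+1}\in N(z)\mid S_0,\ldots,S_k)=1$ on $\{S_k=z\}$. Since $z$ is arbitrary, it may equal $x$ or $y$, or be a vertex the bridge revisits. Your iterated-conditioning step needs $\E\bigl[e^{2a\one_{\{S_{k+1}\in N(z)\}}}\mid S_0,\ldots,S_k\bigr]\le 1+(e^{2a}-1)C/n$ at \emph{every} step. On $\{S_k=z\}$ this conditional factor is $e^{2a}$, so the product bound $\bigl(1+(e^{2a}-1)C/n\bigr)^n$ does not follow. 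Controlling this requires separate control of the visits to $z$ themselves.

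\textbf{Repair: charge forced entries to the preceding visit to $z$.} The visit to $z$ itself has small hazard. Put
$B_k=\one_{\{S_k=z\}}+\one_{\{S_{k-1}\ne z,\ S_k\sim z\}}$, which is $\{0,1\}$-valued.
\begin{itemize}
\item On $\{S_{k-1}=z\}$, $B_k$ vanishes, because the walk has no self-loops.
\item On $\{S_{k-1}=u\ne z\}$, $B_k$ is the indicator of stepping into $\{z\}\cup N(z)$. This set contains at most $1+\max\{2,b-2\}$ neighbours of $u$. By \eqref{eq:edgespread-remaining}, on the first half $\Pp(B_k=1\mid S_0,\ldots,S_{k-1})\le C/n$.
\end{itemize}
Now account for each visit to $N(z)$ at step $k$:
\begin{itemize}
\item if $S_{k-1}\ne z$, it is counted by $B_k$;
\item if $S_{k-1}=z$, it is counted by $B_{k-1}$, since $\one_{\{S_{k-1}=z\}}=1$;
\item only the occurrences at steps $0$ and $1$ can remain uncharged.
\end{itemize}
Hence the half-count is at most $2+\sum_kB_k$, and your supermartingale bound applies to $\sum_kB_k$ unchanged. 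With this modification your argument coincides with the paper's proof.
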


\begin{proof}
If $n\le1$, the sum in \eqref{eq:bridgestar} has at most two terms, so its
exponential is at most $e^{2a}$.  Hence, assume $n\ge2$.
Cut the path after $\lfloor n/2\rfloor$ mutations and first reveal the
initial half from $x$.  At each reveal at least $n/2$ mutations remain,
so \eqref{eq:edgespread-remaining} bounds every prescribed next neighbour
by $C/n$.

For $1\le k\le\lfloor n/2\rfloor$, put
$$
 B_k=\one_{\{S_k=z\}}
     +\one_{\{S_{k-1}\ne z,\ S_k\sim z\}}.
$$
For $u\ne z$, the common-neighbour count satisfies
$\bigl|\{w:w\sim u,\ w\sim z\}\bigr|\le\max(2,b-2)$.  It follows that
$\Pp(B_k=1\mid\mathcal F_{k-1})\le C/n$.
Charge a visit to a neighbour of $z$ that does not follow $z$ to $B_k$.
A visit immediately following $z$ is charged to $B_{k-1}$ instead.
Only boundary visits can remain uncharged.  Therefore,
$$
 \sum_{k\,\mathrm{in\ the\ half}}\one_{\{S_k\sim z\}}
 \le2+\sum_kB_k.
$$
Iterated conditional expectation gives
\begin{equation}
 \E\bigl[e^{a\sum_kB_k}\bigr]
 \le\left(1+\frac Cn(e^a-1)\right)^{\frac n2}
 \le e^{C(e^a-1)}.
                                                               \label{eq:starBernoulli}
\end{equation}
This bounds the exponential moment of the initial half's neighbour count.
Revealing the terminal half backwards from $y$ gives the same bound.
Writing $H_-,H_+$ for these two counts and applying the bound with
parameter $2a$, Cauchy--Schwarz gives
\begin{equation}
 \E\bigl[e^{a(H_-+H_+)}\bigr]
 \le\E\bigl[e^{2aH_-}\bigr]^{\frac12}\E\bigl[e^{2aH_+}\bigr]^{\frac12}\le C.       \label{eq:starhalves}
\end{equation}
{This proves \eqref{eq:bridgestar}.}
\end{proof}

\paragraph{A conditional counting bound.}
We will apply the following estimate to hits on another bridge and
to returns within one bridge.  It bounds the exponential moment of the
hit count using bounds on the conditional probability of each hit.
Let $(\mathcal F_k)$ be a filtration.  For integers $1\le\ell\le n$
and $0\le k<\ell$, let $B_{k+1}\in\{0,1\}$ be
$\mathcal F_{k+1}$-measurable.  Suppose
$p_k:=\E[B_{k+1}\mid\mathcal F_k]\le CH_k/n$ for nonnegative
$\mathcal F_k$-measurable $H_k$.  For every $a>0$ and $\ell\ge1$,
\begin{equation}
 \E\!\left[\exp\left(a\sum_{k<\ell}B_{k+1}\right)\right]
 \le\left(\sup_{k<\ell}\E\bigl[e^{C(e^{2a}-1)H_k}\bigr]\right)^{1/2}.
 \label{eq:hitHolder}
\end{equation}
Indeed, the conditional Bernoulli identity and $1+x\le e^x$ make
$\exp\bigl(2a\sum_{k<j}B_{k+1}-(e^{2a}-1)\sum_{k<j}p_k\bigr)$ a
nonnegative supermartingale with initial value one.  Cauchy--Schwarz gives
$$
 \E\!\left[e^{a\sum_{k<\ell}B_{k+1}}\right]
 \le\left(\E\!\left[
 e^{(e^{2a}-1)\sum_{k<\ell}p_k}\right]\right)^{1/2}.
$$
Using $p_k\le CH_k/n$, apply convexity with weight $1/n$ at each $H_k$
and the remaining weight $1-\ell/n$ at zero.  This gives
\begin{equation}
 \exp\left(\frac{C(e^{2a}-1)}n\sum_{k<\ell}H_k\right)
 \le1-\frac\ell n+\frac1n\sum_{k<\ell}e^{C(e^{2a}-1)H_k}.
 \label{eq:hitcompensator}
\end{equation}
The right side of \eqref{eq:hitcompensator} has expectation at most
$\sup_{k<\ell}\E\bigl[e^{C(e^{2a}-1)H_k}\bigr]$, since this supremum is at least
one.  This proves \eqref{eq:hitHolder}; with no steps the count is zero.

\begin{lemma}\label{lem:bridgehit}
Let $X,Y$ be independent Hamming bridges with fixed endpoints and lengths
at most $C_0d$.  Delete any specified initial or terminal occurrences
that are prescribed common endpoints, and put
\[
 \mathcal R(Y)=
 \bigl\{Y_j:0\le j\le |Y|,\ Y_j\text{ is not a deleted endpoint occurrence}\bigr\}.
\]
Deletion here concerns the indexed occurrence, not its vertex label:
a later revisit remains.  In \eqref{eq:crosscap}, instead, the endpoint
vertices are removed from the supports entirely.
Then, for every fixed $a>0$,
\begin{equation}
 \E\!\Biggl[\exp\biggl(a\sum_{\substack{0\le k\le |X|\\
                 X_k\text{ is not a deleted endpoint occurrence}}}
                   \one_{\{X_k\in\mathcal R(Y)\}}\biggr)\Biggr]
 \ll_{a,C_0}1.                                           \label{eq:bridgehit}
\end{equation}
\end{lemma}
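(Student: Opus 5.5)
We plan to apply the conditional counting bound \eqref{eq:hitHolder}, revealing $X$ step by step with $Y$ fixed. First we condition on the whole path $Y$ and use independence, so that $\mathcal R(Y)$ is a fixed finite set of vertices occupied by at most $|Y|+1\le C_0d+1$ indexed occurrences. As in the proof of Lemma~\ref{lem:bridgestar}, we cut $X$ after $\lfloor |X|/2\rfloor$ mutations. The initial half is revealed forward from its start and the terminal half backward from its end. The two halves are then combined by Cauchy--Schwarz, exactly as in \eqref{eq:starhalves}. This reduces the claim to a bound for one half; we describe the forward half, and we may assume $|X|\ge2$, since short bridges contribute at most a bounded number of terms. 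The non-deleted initial occurrence $X_0$, if present, adds at most one hit and hence a factor $e^{a}$.

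For $0\le k<\lfloor |X|/2\rfloor$, let $\mathcal F_k=\sigma(Y,X_0,\ldots,X_k)$ and $B_{k+1}=\one_{\{X_{k+1}\in\mathcal R(Y)\}}$. At least $|X|/2$ mutations remain at each reveal, so \eqref{eq:edgespread-remaining} bounds the probability of each prescribed next neighbour by $C/|X|$. Summing over the neighbours of $X_k$ that lie in $\mathcal R(Y)$ gives
\[
 \E[B_{k+1}\mid\mathcal F_k]\le\frac{C}{|X|}H_k,\qquad
 H_k=\sum_{j}\one_{\{Y_j\sim X_k,\ Y_j\text{ not deleted}\}}.
\]
Applying \eqref{eq:hitHolder} with $n=|X|$, the task becomes a uniform bound
\[
 \sup_k\E\bigl[e^{bH_k}\bigr]\ll_{b,C_0}1
\]
for every fixed $b$. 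When $X$ is the bridge of fixed length $\ell\ll d$, we use $|X|\ge1$ and the trivial bound. The count $H_k$ is measurable with respect to $(X_k,Y)$ and counts visits of $Y$ to the neighbourhood of the vertex $X_k$.

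The main obstacle is that $X_k$ depends on $Y$ only through the conditioning structure, so we exchange the roles of the two paths. For any fixed vertex $u$, Lemma~\ref{lem:bridgestar} applied to $Y$ gives $\E[e^{b\sum_j\one_{\{Y_j\sim u\}}}]\ll1$ uniformly in $u$. Since $X$ and $Y$ are independent, we condition on $X_k=u$ and take this expectation over $Y$, which bounds $\E[e^{bH_k}]$ uniformly.

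This ordering needs care. Inside \eqref{eq:hitHolder}, conditioning on $Y$ first is fine, because the supermartingale argument is carried out conditionally on $Y$ and gives
\[
 \E\bigl[e^{a\sum B}\bigm| Y\bigr]\le\Bigl(\E\bigl[e^{(e^{2a}-1)\sum_k p_k}\bigm| Y\bigr]\Bigr)^{1/2}.
\]
We then take the full expectation and apply Jensen. Using the convexity step \eqref{eq:hitcompensator}, this reduces to $\sup_k\E[e^{C(e^{2a}-1)H_k}]$ under the joint law, which the exchange argument above controls. For the backward half we use time reversal of $X$. The endpoint deletions only remove terms, and a prescribed common endpoint that is revisited is still handled because $H_k$ counts neighbours rather than the vertex $X_k$ itself. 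Combining the two halves with parameter $2a$ yields \eqref{eq:bridgehit}.
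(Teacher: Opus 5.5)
Your proposal is correct and follows the paper's proof: split $X$ into halves exposed from their external endpoints with $Y$ revealed at time zero, bound the hit hazard by $C H_Y(X_k)/|X|$ via Lemma~\ref{lem:edgespread}, control $\E[e^{\beta H_Y(X_k)}]$ by conditioning on $X_k=u$ and applying Lemma~\ref{lem:bridgestar} to the independent bridge $Y$, then apply \eqref{eq:hitHolder}, add one hit per external endpoint, and combine the halves by Cauchy--Schwarz. The differences are cosmetic: the paper drops the deletions first and bounds hits on the full range of $Y$, whereas you keep $\mathcal R(Y)$; your aside about bridges of length $\ell\ll d$ is unnecessary, since Lemma~\ref{lem:edgespread} already holds for every $1\le n\le C_0d$.
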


\begin{proof}
Deleting the specified occurrences only decreases the hit count, so we
bound hits on the full range of $Y$.  If $n:=|X|\le1$, there are at most
two hits.  Otherwise split $X$ in halves and expose either half from its
external endpoint, revealing the whole independent bridge $Y$ at time zero.
At least $n/2$ mutations remain at each reveal.  Write
$N(u)=\{z:z\sim u\}$ and $H_Y(u)=\sum_j\one_{\{Y_j\in N(u)\}}$.
For $B_{k+1}=\one_{\{X_{k+1}\in\{Y_0,\ldots,Y_{|Y|}\}\}}$,
Lemma~\ref{lem:edgespread} gives
\[
 \E[B_{k+1}\mid Y,\mathcal F_k^X]\le\frac CnH_Y(X_k).
\]
Conditional on $X_k=u$, independence leaves $Y$ an ordinary bridge.
Lemma~\ref{lem:bridgestar} therefore gives
$\sup_k\E\bigl[e^{\beta H_Y(X_k)}\bigr]\ll_{\beta,C_0}1$ for every fixed $\beta>0$.
Applying \eqref{eq:hitHolder} under the joint law bounds the exponential
moment of the hits along either half's transitions.  Each external
endpoint adds at most one hit.  Cauchy--Schwarz, using the half bounds
at $2a$, proves \eqref{eq:bridgehit}.
\end{proof}

\subsubsection{Space--time bridges and environmental weights}

Recall the independent-visit bridge law in \eqref{eq:Qbridge}.
For the rate at $v$ and a holding time $r$, put
$$
 Y_v(r)=\frac{e^{-(1-\Lambda_v)r}}{h(r)}.
$$
Then $\Eenv[Y_v(r)]=1$.  For every $\varepsilon>0$, Assumption~\ref{ass:environment} gives
\begin{equation}
 p_\varepsilon:=\Penv(\Lambda_{\mathbf0}\ge\lambda_+-\varepsilon)>0,
 \qquad
 h(r)\ge p_\varepsilon e^{-(1-\lambda_++\varepsilon)r},
 \qquad
 Y_v(r)\le p_\varepsilon^{-1}e^{\varepsilon r}.
 \label{eq:quantile-holding-bound}
\end{equation}
The factor $e^{\varepsilon r}$ in \eqref{eq:quantile-holding-bound}
cannot in general be discarded.  For example,
if the rate law is uniform on $[-L,\lambda_+]$, with
$L>0$, then, as $r\to\infty$,
\[
 h(r)\sim\frac{e^{-(1-\lambda_+)r}}{(\lambda_++L)r},
 \qquad
 \Eenv\bigl[Y_v(r)^2\bigr]=\frac{h(2r)}{h(r)^2}
 \sim\frac{(\lambda_++L)r}{2}.
\]
\begingroup
Thus, the expected product of two holding ratios need not be bounded
uniformly in their durations.  To deal with this issue, in the proof of
Lemma~\ref{lem:abstract-environment-concentration} below, we integrate the
holding times before estimating these products.  This leaves only a
bounded cost per revisit, together with a small exponential
in the total trunk duration for diagrams.
\endgroup

Recall from Lemma~\ref{lem:clockLCLT} that
$f_\kappa(r)=e^{-\kappa r}h(r)/H(\kappa)$ has mean
$\mu_\kappa=-H'(\kappa)/H(\kappa)$, and that
$\widetilde f_{N,\kappa}$ is the density of the sum of $N$ independent
variables with density $f_\kappa$.
For $N\ge2$ and $T>0$, conditioning their sum to equal $T$ means the
following density relative to $\dd r_1\cdots\dd r_{N-1}$, on
$r_i\ge0$ and $\sum_{i<N}r_i\le T$, with
$r_N=T-\sum_{i<N}r_i$:
$$
 \frac{\prod_{i=1}^N f_\kappa(r_i)}
      {\widetilde f_{N,\kappa}(T)}
 =\frac{\prod_{i=1}^N h(r_i)}{h^{*N}(T)}.
$$
The denominator is positive for every $T>0$.  For $N=1$, the
conditional variable is $R_1=T$.  The tilt cancels at fixed total
duration, so this is the independent-visit holding-time law.

The repetition estimate below requires conditional independence of the
discrete bridge segments.  For $Q\in\mathbb N$ and $C_{\rm len}<\infty$,
consider a family $\gamma_1,\ldots,\gamma_Q$, where $Q$ counts the segments
after any cuts.  Let $x_i,y_i,n_i$ be the prescribed start, endpoint, and
mutation count of $\gamma_i$.  Call the family $(Q,C_{\rm len})$-admissible
if, almost surely for feasible prescribed data,
$$
 \mathcal L\!\left((\gamma_i)_{i=1}^Q
 \mid(x_i,y_i,n_i)_{i=1}^Q\right)
 =\bigotimes_{i=1}^Q\Pp_{d,n_i;x_i,y_i},
 \qquad 0\le n_i\le C_{\rm len}d.
$$
Given these data, the segments are independent ordinary Hamming bridges.
In particular, a fixed finite collection of independent ordinary Hamming
bridges of deterministic lengths at most $C_{\rm len}d$ is admissible;
the conditioning is then vacuous.  A segment shared by several diagrams
may be retained once before repeated occurrences are counted, provided
the resulting family satisfies this same conditional product law.
Using the discrete local time in \eqref{eq:discrete-local-time}, put
$$
 \widetilde I_Q=\sum_v\left(\sum_{j=1}^QL_v(\gamma_j)-1\right)_+.
$$
This counts all occurrences beyond the first at each vertex across the
family, including returns within one path and meetings of different paths.
\begin{proposition}
\label{prop:collisionmgf}
For every fixed $Q\in\mathbb N$, $C_{\rm len}<\infty$, and $a>0$, every
$(Q,C_{\rm len})$-admissible family satisfies
\begin{equation*}
 \sup_d\E\bigl[e^{a\widetilde I_Q}\bigr]\ll_{a,Q,C_{\rm len}}1.
\end{equation*}
The conclusion remains true after adding a fixed number of branching
factors $(\Lambda_v)_+/\bar\lambda$ to the factor count at their split vertices.
\end{proposition}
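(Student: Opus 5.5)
Conditional on the prescribed data $(x_i,y_i,n_i)_{i\le Q}$ the segments are independent ordinary Hamming bridges with $n_i\le C_{\rm len}d$. It therefore suffices to prove a bound uniform in these data; averaging over the data is then immediate. The plan is to bound $\widetilde I_Q$ by a finite sum of two kinds of hit counts. The first kind counts returns within a single bridge, $\sum_v(L_v(\gamma_i)-1)_+$. The second counts hits of one bridge on the range of another, $\sum_k\one_{\{(\gamma_i)_k\in\operatorname{Supp}(\gamma_j)\}}$ for $i\ne j$. Indeed, order the family and charge each occurrence beyond the first at a vertex either to an earlier occurrence in the same path or to a hit on the support of an earlier path. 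This gives
\[
\widetilde I_Q\le\sum_{i=1}^Q\sum_v\bigl(L_v(\gamma_i)-1\bigr)_++\sum_{i<j}\sum_{k=0}^{|\gamma_j|}\one_{\{(\gamma_j)_k\in\operatorname{Supp}(\gamma_i)\}}.
\]
By H\"older's inequality with $Q+\binom Q2$ factors, it is enough to bound the exponential moment of each term at the parameter $a(Q+\binom Q2)$. The cross terms are handled directly by Lemma~\ref{lem:bridgehit}, with no deletions. Adding a fixed number $B$ of branching factors raises the count by at most $B$ and costs a factor $e^{aB}$. For the factor $(\Lambda_v)_+/\bar\lambda$ this is only a combinatorial count increase, so nothing more is needed.

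The main step is the single-bridge return count. Fix a bridge $S$ of length $n\le C_{\rm len}d$. If $n$ is bounded, the count is trivially bounded, so we may assume $n\ge2$ and split at $\lfloor n/2\rfloor$. We bound $\sum_v(L_v-1)_+$ by $R_-+R_++X$. Here $R_-$ counts steps $k<n/2$ with $S_{k+1}\in\{S_0,\ldots,S_k\}$. The quantity $R_+$ is the analogous count for the reversed second half, which is again a bridge from $y$ with at least $n/2$ steps remaining at each reveal. Finally $X$ counts occurrences in the second half that hit the first half's support. For $R_-$ we apply \eqref{eq:return}: by Lemma~\ref{lem:edgespread}, $\Pp(S_{k+1}\in\{S_0,\ldots,S_k\}\mid\mathcal F_k)\le (C/n)H_k$, where $H_k=\sum_{j\le k}\one_{\{S_j\sim S_k\}}$. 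The conditional counting bound \eqref{eq:hitHolder} then reduces matters to $\sup_k\E[e^{\beta H_k}]$. To control this, condition on $S_k=u$. The prefix is then a $k$-step bridge from $x$ to $u$ with $k\le C_{\rm len}d$, so Lemma~\ref{lem:bridgestar} with $z=u$ gives $\E[e^{\beta H_k}\mid S_k=u]\ll_\beta1$ uniformly in $u$. The same reasoning handles $R_+$.

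The cross-half term $X$ is the obstacle, because the two halves are not independent. I would try to resolve it by conditioning on the midpoint $S_{\lfloor n/2\rfloor}=w$. Given $w$, the halves are independent bridges (from $x$ to $w$, and from $w$ to $y$), each of length at most $C_{\rm len}d$. Lemma~\ref{lem:bridgehit} applies with the common endpoint occurrence $w$ deleted, giving a bound uniform in $w$. A later revisit of $w$ is still counted by that lemma, so this step loses nothing. A final Cauchy--Schwarz combination over $R_-,R_+,X$ completes the single-bridge bound, and with it the proposition.
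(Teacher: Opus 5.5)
Your proposal is correct and follows the paper's proof essentially step for step. It uses the same ordering-and-charging bound on $\widetilde I_Q$ with H\"older over $Q+\binom Q2$ terms, and the same half-splitting of a single bridge: the return hazard from Lemma~\ref{lem:edgespread}, the conditional counting bound \eqref{eq:hitHolder}, Lemma~\ref{lem:bridgestar} on the prefix bridge ending at $S_k$, and midpoint conditioning with Lemma~\ref{lem:bridgehit} for cross-half hits. The only difference is cosmetic: you use one cross-half count where the paper uses both $H_{-+},H_{+-}$ plus a boundary constant, and this is harmless because one direction suffices once $R_+$ covers the whole second half.
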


\begin{proof}
We first count repetitions within one bridge.  The case $n\le1$ is
immediate, so take a bridge of length $n\ge2$ and expose a half from its
external endpoint.  At every exposed step at least $\frac n2$ bridge
steps remain.  Let
$$
 R_{k+1}=\one_{\{S_{k+1}\in\{S_0,\ldots,S_k\}\}},
 \qquad H_k(u)=\sum_{j=0}^k\one_{\{S_j\in N(u)\}}.
$$
If $S_k=u$, every possible repeated next vertex belongs to
$N(u)\cap\{S_0,\ldots,S_k\}$.  Lemma~\ref{lem:edgespread}, applied with the
remaining length, therefore gives the predictable bound
\begin{equation}
 p_k:=\Pp(R_{k+1}=1\mid\mathcal F_k)
 \le \frac{C}{n} H_k(S_k).                              \label{eq:repeathazard}
\end{equation}
Conditional on $S_k=u$, the exposed prefix is an ordinary bridge ending at
$u$, and $H_k(u)$ is its occupation of the neighbour set $N(u)$.
Lemma~\ref{lem:bridgestar} bounds $\E\bigl[e^{\beta H_k(S_k)}\bigr]$ uniformly for
every fixed $\beta>0$.  Apply \eqref{eq:hitHolder} with
\eqref{eq:repeathazard} to obtain
\[
 \sup_{k\le\lceil n/2\rceil}
 \E\!\left[\exp\left(a\sum_{j<k}R_{j+1}\right)\right]
 \ll_{a,C_{\rm len}}1.
\]
Time reversal gives the same estimate for the other half.  Conditional
on the midpoint, the halves are independent, so
Lemma~\ref{lem:bridgehit} bounds their visits to each other's ranges.
Let $R_-,R_+$ count repetitions within the two halves, let
$H_{-+},H_{+-}$ be the two cross-hit counts after prescribed endpoint
occurrences are deleted, and let
$C_{\rm bd}$ account for the fixed midpoint boundary occurrences.  Pathwise,
\begin{equation}
 \sum_v\bigl(L_v(\gamma)-1\bigr)_+
 \le R_-+R_++H_{-+}+H_{+-}+C_{\rm bd}.                  \label{eq:rangeDeficitDecomp}
\end{equation}
Each of the first four variables has a bounded exponential moment at every
fixed parameter.  H\"older's inequality applied to
\eqref{eq:rangeDeficitDecomp} therefore gives, for every $a>0$,
$$
 \E\!\left[e^{a\sum_v(L_v(\gamma)-1)_+}\right]
 \le e^{aC_{\rm bd}}
 \prod_{X\in\{R_-,R_+,H_{-+},H_{+-}\}}
       \E\bigl[e^{4aX}\bigr]^{\frac14}\ll_{a,C_{\rm len}}1.
$$

Now order the conditionally independent paths
$\gamma_1,\ldots,\gamma_Q$.  On adding $\gamma_j$, the increase in
$\widetilde I_Q$ is at most the number of its own repeated occurrences,
$\sum_v(L_v(\gamma_j)-1)_+$, plus
$$
 \sum_{i<j}\sum_k
 \one_{\{\gamma_j(k)\in\mathcal R(\gamma_i)\}}.
$$
The self-repetition count has the one-bridge bound just proved, and each
cross-hit count in the sum has the bound of Lemma~\ref{lem:bridgehit}.
There are at most
$R=Q+\binom Q2$ variables in the resulting upper bound for
$\widetilde I_Q$; enumerate them as $X_1,\ldots,X_R$.  H\"older's
inequality gives
\begin{equation*}
 \E\bigl[e^{a\widetilde I_Q}\bigr]
 \le\prod_{j=1}^{R}\E\bigl[e^{aR X_j}\bigr]^{\frac1R}
 \ll_{a,Q,C_{\rm len}}1,
\end{equation*}
uniformly in the prescribed bridge data.  A shared segment is counted once
before applying the bound.  Each additional branching factor at a split
vertex increases the count by at most one, hence multiplies the bound
by at most $e^a$.
\end{proof}

\begingroup
\subsubsection{Mutation counts and repeated factors in a diagram}
\label{sec:intersection-estimates}

For a diagram with law $\mathbb Q_{q,j;x,t}$, write $N_a$ for
the mutation count of arm $a$, $1\le a\le j\in\{1,2\}$.

\begin{proposition}
\label{lem:qdiagram-geometry}
Fix $C_0,M<\infty$.  Uniformly for
\begin{equation}
 1\le m\le d,\qquad d_{\mathrm H}(x,\mathbf{x}_m)\le M,\qquad
 \left|t-du_*\!\left(\frac md\right)\right|\le C_0\log d,
 \label{eq:qdiagram-parameters}
\end{equation}
and $j\in\{1,2\}$, the following hold for all sufficiently large $d$.
\begin{enumerate}
\renewcommand{\labelenumi}{(\roman{enumi})}
\item For every fixed $a>0$, there are $C_a,c_a>0$ such that
\begin{equation}
 \E_{\mathbb Q_{q,j;x,t}}\!\left[
 e^{a(J_{\rm tr}+\sum_{e=1}^jN_e)}
 \one_{\{J_{\rm tr}+\sum_{e=1}^jN_e>C_ad\}}\right]\le e^{-c_ad}.
 \label{eq:qdiagram-counttail}
\end{equation}
\item There are fixed $\zeta,\delta_*,\varepsilon>0$ and a compact
$\mathcal K\subset\mathcal R$ such that the event
\begin{equation}
 \mathcal G_\Gamma
 =\{r\le\delta_*d,\ J_{\rm tr}<\zeta d\}
 \cap\bigcap_{a=1}^j
 \left\{N_a\in\mathcal I_d^{J_{\mathcal K},\varepsilon}
       \bigl(d_{\mathrm H}(v,\mathbf0),t-r\bigr)\right\}
 \label{eq:qdiagram-retained-event}
\end{equation}
satisfies $\mathbb Q_{q,j;x,t}(\mathcal G_\Gamma^c)\ll e^{-cd}$.
On this event $(d_{\mathrm H}(v,\mathbf0)/d,(t-r)/d)\in\mathcal K$,
and every arm has $N_a\asymp d$ and a clock tilt in a fixed compact
subset of $(\lambda_+-1,\infty)$.
\end{enumerate}
The constants are independent of $M$, except for the lower threshold
on $d$.
\end{proposition}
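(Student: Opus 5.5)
The plan is to reduce both assertions to the trunk estimates of Proposition~\ref{prop:fractional-short-trunk} and to the arm count estimates of Proposition~\ref{prop:global-oneobject}. The key observation is that, conditional on the split data $(r,v)$, each arm under $\mathbb Q_{q,j;x,t}$ has law $\mathbb Q_{v,\mathbf0,t-r}$. Under this law the arm mutation count $N_a$ takes the value $n$ with probability $p_{d,\rho(v)}(n)h^{*(n+1)}(t-r)/\widehat K_{t-r}(v,\mathbf0)$, where $\rho(v)=d_{\mathrm H}(v,\mathbf0)$. Also, the arms are independent of one another and of the trunk given $(r,v)$. The split law for $j=2$ coincides with that for $j=1$, since both are defined by \eqref{eq:weighted-split-law}. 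So all split-level tail bounds from Proposition~\ref{prop:fractional-short-trunk} transfer to both $j\in\{1,2\}$.

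\emph{Step 1: good split data.} Let $\mathcal D=\{r\le\delta_*d,\ J_{\rm tr}<\zeta d\}$. Choose $\zeta,\delta_*$ as at the end of the proof of Proposition~\ref{prop:fractional-short-trunk}, so that $\delta_*+L_u\zeta<\eta_{\rm rt}/2$. Then \eqref{eq:qtrunk-time-tail} with $s=\delta_*d$, together with \eqref{eq:short-trunk-count-cutoff}, gives $\mathbb Q(\mathcal D^c)\ll e^{-cd}$. On $\mathcal D$ we have $|\rho(v)-\rho(x)|\le J_{\rm tr}$, so $(\rho(v)/d,(t-r)/d)$ lies in $\mathcal K:=\{(\theta,u):|u-u_*(\theta)|\le\eta_{\rm rt}/2+o(1)\}\subset\mathcal K_{\rm rt}\subset\mathcal R$. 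This set $\mathcal K$ is compact, and Lemma~\ref{lem:variational-regularity} places it inside the regular region.

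\emph{Step 2: arms on $\mathcal D$.} Apply \eqref{eq:UO2} on $\mathcal K$ with $\varepsilon=\varepsilon_{\mathcal K}$. Conditional on $(r,v)\in\mathcal D$, this shows that each arm count lies outside $\mathcal I_d^{J_{\mathcal K},\varepsilon}(\rho(v),t-r)$ with probability at most $e^{-cd}$, uniformly in the split data. A union bound over at most two arms, combined with Step~1, proves (ii). On $\mathcal G_\Gamma$, Proposition~\ref{prop:global-oneobject-local} then gives $N_a/d\in J_{\mathcal K}$, which is bounded away from $0$ and $\infty$, and gives the arm clock tilts in the compact set $K_{\mathcal K}\Subset(\lambda_+-1,\infty)$.

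\emph{Step 3: part (i).} Bound the indicator by $\one\{J_{\rm tr}>C'd\}+\sum_a\one\{N_a>C'd\}$, and use $e^{a(J+\sum N)}\le e^{3aJ}+\sum_ae^{3aN_a}$ in the style of H\"older (or AM--GM). The trunk term is handled by \eqref{eq:qtrunk-count-tail}. For the cross terms, bound $J_{\rm tr}$ or $N_a$ when it is small by $C'd$ and absorb the factor $e^{aC'd}$ by enlarging $C_a$.

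For each arm term on $\mathcal D$, apply \eqref{eq:UO3} with weight $3a$ (or with $\mathcal K$ and a larger weight). This bounds $\E[e^{3aN_a}\one\{N_a>C'd\}\mid r,v]$ by $e^{-cd}$, and also bounds $\E[e^{3aN_a}\mid r,v]\le Ce^{C''d}$. Off $\mathcal D$, I do not have a regular-region estimate, and \textbf{this is the main obstacle}. There I plan to use the crude bound from \eqref{eq:UO17}: $\sum_n e^{an}p\,h^{*(n+1)}(t-r)\le\exp(\kappa_\partial(t-r)+e^at)$. Divided by the arm normalizer $\widehat K_{t-r}(v,\mathbf0)$, this is at most $e^{Cd}$ times the unnormalized split weight.

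So I will avoid normalizing arms off $\mathcal D$. Instead I will return to the unnormalized definition of the diagram weight, which for $j=1$ is $\widehat K_r\widehat K_{t-r}^{1+q}$ divided by $\widehat{\mathcal C}^I$. The arm contributions then multiply the split weight by $\widehat K_{t-r}^{q}$ (for $j=1$; a similar power for $j=2$) times the weighted sum. The total is at most $d^C e^{Cd}\sum_j e^{-cj}$-type factorial sums, as in \eqref{eq:trunk-factorial-bound}, restricted to large counts. Taking $C_a$ large makes the factorial tail $\sum_{n>C_ad}(e^{a}t)^n/n!\le e^{-C_ad+e^{a+1}t}$ beat every $e^{Cd}$. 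This should give \eqref{eq:qdiagram-counttail}.

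Finally, the constants are independent of $M$ because Proposition~\ref{prop:fractional-short-trunk} already has this property. The dependence on $M$ enters only through the threshold placing $\rho(x)$ within $O(\log d)$ of the band.
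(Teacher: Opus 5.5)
Your proof is correct. Part (ii) is essentially the paper's argument. You use the same $\zeta,\delta_*$ from \eqref{eq:qtrunk-time-tail} and \eqref{eq:short-trunk-count-cutoff} and place $(d_{\mathrm H}(v,\mathbf0)/d,(t-r)/d)$ in a fixed compact subset of $\mathcal R$. You should fix that set as, e.g., $\{|u-u_*(\theta)|\le3\eta_{\rm rt}/4\}$ rather than writing an $o(1)$ into its definition. You then apply \eqref{eq:UO2} conditionally on $(r,v)$, using that the arms are independent of the trunk given the split, and take a union bound over the arms.

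Part (i) is where you differ from the paper. The paper never uses the regular region there. Since the split law forces $h=t-r\ge\delta d$, the geodesic paths alone give $\widehat K_{d,\ell}(h)\ge P([-L,\lambda_+])^{\ell+1}e^{-(1+L)h}\bigl(h/((b-1)d)\bigr)^\ell\ge e^{-Cd}$ for every split. Then \eqref{eq:UO17} gives $\E[e^{a'N_a}\one_{\{N_a>Rd\}}\mid r,v]\le\exp(Cd-Rd+e^{a'+1}h)$ uniformly in the split data. The $j+1$ counts are combined through $e^{a\sum_iN_i}\one_{\{\sum_iN_i>C_ad\}}\le\sum_ie^{a(j+1)N_i}\one_{\{N_i>C_ad/(j+1)\}}$, which avoids cross terms altogether.

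Your alternative cancels the arm normalizer against one factor of $\widehat K_{t-r}(v,\mathbf0)^{1+q}$ in \eqref{eq:weighted-split-law}. After that it needs only upper bounds:
\begin{itemize}
\item $\widehat K_{t-r}(v,\mathbf0)^q\le e^{q\lambda_+(t-r)}$ and $\sum_v\widehat K_r(x,v)\le e^{\lambda_+r}$;
\item the factorial tail \eqref{eq:UO17};
\item $\widehat{\mathcal C}^I_{q,t}(x)\ge d^{-C}$, from \eqref{eq:qnormalizer-bounds} and \eqref{eqtag:FD5}.
\end{itemize}
This route is also valid. It does not use the cutoff $t-r\ge\delta d$ and works for all split data, so your split into $\mathcal D$ and $\mathcal D^c$ and the appeal to \eqref{eq:UO3} are superfluous.

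Its cost is that it is not a bound conditional on $(r,v)$, and it only handles one arm at a time. For $j=2$, the joint two-arm weight leaves $\widehat K_{t-r}(v,\mathbf0)^{q-1}$. That negative power would need exactly the lower bound you are avoiding. Your decomposition does reduce every term to a single arm: in each cross term, bound the small factor by $e^{O(d)}$ and enlarge the other threshold. A single arm's marginal law is the same for $j=1$ and $j=2$, so this works. You should state it explicitly rather than appeal to ``a similar power for $j=2$''.
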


\begin{proof}
\noindent\emph{(i)} Equation~\eqref{eq:qtrunk-count-tail} bounds the
trunk count.  For an arm, put $h=t-r$ and
$\ell=d_{\mathrm H}(v,\mathbf0)$, so that
$\delta d\le h\le Cd$.  Choose $L$ with $P([-L,\lambda_+])>0$.
The geodesic contribution gives
$$
 \widehat K_{d,\ell}(h)
 \ge P([-L,\lambda_+])^{\ell+1}e^{-(1+L)h}
       \left(\frac h{(b-1)d}\right)^\ell\ge e^{-Cd},
$$
Using \eqref{eq:UO17} for the numerator summands, for every fixed $a'>0$,
$$
 \E[e^{a'N_a}\one_{\{N_a>Rd\}}\mid r,v]
 \le e^{Cd}\sum_{n>Rd}\frac{(e^{a'}h)^n}{n!}
 \le\exp\left(Cd-Rd+e^{a'+1}h\right).
$$
This is exponentially small for sufficiently large $R$.
For the $j+1$ component counts $N_i$, use
$$
 e^{a\sum_iN_i}\one_{\{\sum_iN_i>C_ad\}}
 \le\sum_i e^{a(j+1)N_i}
           \one_{\{N_i>C_ad/(j+1)\}}.
$$
The component bounds at $a'=(j+1)a$ prove
\eqref{eq:qdiagram-counttail}.

\noindent\emph{(ii)} Choose $\zeta,\delta_*>0$ small enough that
$$
 \bigl|d_{\mathrm H}(v,\mathbf0)-m\bigr|\le\zeta d+M,
 \qquad
 \left|\frac{t-r}d-u_*(m/d)\right|
 \le\delta_*+\frac{C_0\log d}d
$$
place the pair $(d_{\mathrm H}(v,\mathbf0)/d,(t-r)/d)$ in one compact
$\mathcal K\subset\mathcal R$.  Decrease $\delta_*$ as required by
\eqref{eq:short-trunk-count-cutoff}, and choose
$0<\varepsilon\le\varepsilon_{\mathcal K}$ in
\eqref{eq:UO2}--\eqref{eq:UO4}.  Conditional on the split time $r$ and
vertex $v$, each arm has law $\mathbb Q_{v,\mathbf0,t-r}$ and is
independent of the trunk, by the construction following
\eqref{eq:weighted-split-law}.  For
$(d_{\mathrm H}(v,\mathbf0)/d,(t-r)/d)\in\mathcal K$,
\eqref{eq:UO2} gives
$$
 \mathbb Q_{q,j;x,t}\!\left(
 N_a\notin\mathcal I_d^{J_{\mathcal K},\varepsilon}
       \bigl(d_{\mathrm H}(v,\mathbf0),t-r\bigr)\mid r,v
 \right)\le Ce^{-cd}.
$$
On $\{r\le\delta_*d,\ J_{\rm tr}<\zeta d\}$, the pair lies in
$\mathcal K$.  Summing over the arms and averaging over the split and
trunk therefore gives
$$
 \mathbb Q_{q,j;x,t}(\mathcal G_\Gamma^c)
 \le\mathbb Q(r>\delta_*d)
   +\mathbb Q(r\le\delta_*d,\ J_{\rm tr}\ge\zeta d)
   +jCe^{-cd}\ll e^{-cd}.
$$
The first two terms use \eqref{eq:qtrunk-time-tail} and
\eqref{eq:short-trunk-count-cutoff}; the arm-count and tilt conclusions
follow from \eqref{eq:UO4}.  All constants are independent of $M$ once
$d$ is sufficiently large.
\end{proof}
\endgroup

\subsection{Transferring to the static environment}
\label{sec:static-transfer}

\subsubsection{Recap: differentiating static and independent-visit models}

Let us start by recalling the quantitative difference between the static and independent-visit models.  
In the static environment, the rate $\Lambda_z$ is sampled once at each
vertex and is reused on every visit to that vertex.  In the
independent-visit expansion, each holding occurrence is averaged
separately.

By the many-to-one formula (see
\cite[Section~2.3]{KoenigSurvey2021} for its relation to the
parabolic Anderson equation),
\[
 \Ew_x\bigl[N_t(y)\bigr]=K_t^\omega(x,y).
\]
If $\pi=(v_0,\ldots,v_n)$ is a discrete mutation path from $x$ to $y$ of
length $n$, conditioning on the path and integrating over its $n+1$
holding times gives the exact expansion of the continuous-time
Feynman--Kac kernel:
\begin{equation}
 K_t^\omega(x,y)
 =\sum_{n\ge0}\bigl((b-1)d\bigr)^{-n}\sum_{\pi:x\to y,\,|\pi|=n}
   \int_{\Delta_n(t)}
        \exp\left(-\sum_{j=0}^n(1-\Lambda_{v_j})r_j\right)
        \dd r_0\cdots\dd r_{n-1}.
                                                        \label{eq:quenched-path-expansion}
\end{equation}
To pass between this expansion and \eqref{eq:freshK}, we multiply
each independent-visit path weight by the ratios of its actual holding
factors to their separate averages, as follows.

For an independent-visit space--time path $\Gamma$, define
$$
 W_\Gamma^\omega=\prod_{a\in\Gamma}
 \frac{e^{-(1-\Lambda_{v_a})r_a}}{h(r_a)}.
$$
Each holding ratio is increasing in $\Lambda_{v_a}$, has
environment mean one, and satisfies \eqref{eq:quantile-holding-bound}.
Here $a$ indexes a holding occurrence, $v_a$ is its vertex,
and $r_a$ is its duration.
For a single path in a fixed environment, averaging $W_\Gamma^\omega$ under the deterministic
independent-visit bridge law gives exactly
$K_t^\omega(y,\mathbf0)/\widehat K_t(y,\mathbf0)$. 

\subsubsection{Concentration of path weights}

The goal of this section is to develop quenched and annealed estimates for the correction term that links the independent-visit model to the static model. We start from defining the key quantities that deal with the ratios in two different cases: space--time bridge and trunk--arm diagram. Recall also the trunk--arm diagram consists of two subcases: one or two arms; see Figure \ref{fig:sketch-shared-rate}.

 For a single space--time bridge $\Gamma$
from $x$ to $\mathbf0$, with law $\mathbb Q_{x,\mathbf0,t}$, put
\begin{equation}
 F_{\Gamma,z}^{\mathrm{path}}(\lambda)
 =\prod_{a:v_a=z}\frac{e^{-(1-\lambda)r_a}}{h(r_a)}.
 \label{eq:path-site-factor}
\end{equation}
This product collects the holding ratios on the vertex $z$. Recall
$\bar\lambda=\int(\lambda)_+P(d\lambda)$. 
For a trunk--arm diagram with law $\mathbb Q_{q,j;x,t}$,
$j\in\{1,2\}$, and split vertex $v$, let $a$ index the holding
occurrences of the trunk and all arms, and define
\begin{equation}
 F_{\Gamma,z}^{\mathrm{diagram}}(\lambda)
 =\begin{cases}
 \displaystyle\prod_{a:v_a=z}\frac{e^{-(1-\lambda)r_a}}{h(r_a)},
       &z\ne v,\\[6pt]
 \displaystyle\frac{(\lambda)_+}{\bar\lambda}
       \prod_{a:v_a=v}\frac{e^{-(1-\lambda)r_a}}{h(r_a)},
       &z=v,
 \end{cases}
 \label{eq:diagram-site-factor}
\end{equation}
For each of the three choices of $(\mathbb Q,F_{\Gamma,z})\in \{(\mathbb Q_{x,\mathbf0,t},F_{\Gamma,z}^{\mathrm{path}}),(\mathbb Q_{q,1;x,t},F_{\Gamma,z}^{\mathrm{diagram}}),(\mathbb Q_{q,2;x,t},F_{\Gamma,z}^{\mathrm{diagram}})\}$, write
\begin{equation}
 X^\omega=\E_{\mathbb Q}\!\left[\prod_zF_{\Gamma,z}(\Lambda_z)\right],
 \qquad
 B^{\rm end}=\Eenv\bigl[X^\omega\bigm|\Lambda_x,\Lambda_{\mathbf0}\bigr].
 \label{eq:environment-path-average}
\end{equation}
In the path case, $X^\omega=K_t^\omega(x,\mathbf0)/\widehat K_t(x,\mathbf0)$;
in the diagram case, with $\Gamma_0$ denoting the trunk and $\Gamma_1,\ldots,\Gamma_j$
denoting its arms,
\begin{equation}
X^\omega
 =\E_{\mathbb Q_{q,j;x,t}}\!\left[
          \prod_zF_{\Gamma,z}^{\mathrm{diagram}}(\Lambda_z)\right]
 =\E_{\mathbb Q_{q,j;x,t}}\!\left[
       \frac{(\Lambda_v)_+}{\bar\lambda}
       W_{\Gamma_0}^\omega\prod_{e=1}^jW_{\Gamma_e}^\omega\right].
 \label{eq:diagram-environment-average}
\end{equation}

\begin{lemma}
\label{lem:abstract-environment-concentration}
\begingroup
For $X^\omega,B^{\rm end}$ in \eqref{eq:environment-path-average}, the
following estimates hold.
\begin{enumerate}
\renewcommand{\labelenumi}{(\roman{enumi})}
\item If $F_{\Gamma,z}=F_{\Gamma,z}^{\mathrm{path}}$ and
$\mathbb Q=\mathbb Q_{x,\mathbf0,t}$, then for every integer $R\ge1$ and $s>1$,
\begin{equation}
 \Eenv\Bigl[\bigl|X^\omega-B^{\rm end}\bigr|^{2R}\Bigr]
 \ll_{R,s}d^{-R(1-\frac1s)},\qquad B^{\rm end}\ll1.
 \label{eq:abstract-high-moment}
\end{equation}
These bounds are uniform for $(d_{\mathrm H}(x,\mathbf0)/d,t/d)$ in
any fixed compact subset of $\mathcal R$.  For fixed $L\ge0$, also
$B^{\rm end}\asymp_L1$ when $\Lambda_x,\Lambda_{\mathbf0}\ge-L$.
The moment estimate holds conditional on the endpoint rates, uniformly
in those rates, and hence also unconditionally.
\item If $F_{\Gamma,z}=F_{\Gamma,z}^{\mathrm{diagram}}$ and
$\mathbb Q=\mathbb Q_{q,j;x,t}$, then $B^{\rm end}\ll1$,
uniformly for $j\in\{1,2\}$, the parameters in
\eqref{eq:qdiagram-parameters}, and the conditioned endpoint rates.
\end{enumerate}
Case (i) also covers \eqref{eq:qdiagram-parameters}.  Under those
conditions, all constants are independent of the subsequently fixed
radius $M$; only the lower threshold on $d$ may depend on $M$.
They may depend on the compact set, $C_0$, and the fixed $q,\delta$
in case (ii).
\endgroup

\end{lemma}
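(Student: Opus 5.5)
The bounds $B^{\rm end}\ll1$, in both the path and the diagram case, will come first. Conditioning on the endpoint rates and using independence over vertices, $\Eenv[\prod_zF_{\Gamma,z}(\Lambda_z)\mid\Lambda_x,\Lambda_{\mathbf0}]$ factorizes. At an interior vertex with $k$ holdings, one ratio is kept with mean one and the other $k-1$ are bounded by $p_\varepsilon^{-1}e^{\varepsilon r}$ via \eqref{eq:quantile-holding-bound}. At the endpoints $x,\mathbf0$, and at $v$ with its branching factor (bounded by $\lambda_+/\bar\lambda$), every ratio is bounded. Raw exponentials $e^{\varepsilon r_a}$ cannot be summed blindly, so I will integrate the holding times first.

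For arms and single paths, condition on the mutation count $n$, restricted to the retained set by Proposition~\ref{prop:global-oneobject-local}; the omitted counts are handled by \eqref{eq:UO2}--\eqref{eq:UO3}. The holding times are then $f_{\kappa_n}$-variables conditioned on their sum. By the local limit bound \eqref{eq:UO6}, removing a bounded number of conditioned holdings costs only a constant. Each bounded ratio then integrates as in \eqref{eq:1.5} to at most $(H(\kappa_n)(\kappa_n+1-\lambda_+))^{-1}\le C$. To handle the number of bounded ratios, I would instead use H\"older: the term $p_\varepsilon^{-1}e^{\varepsilon r}$ has an $f_\kappa$-moment bounded by a constant $C_\varepsilon$ when $\varepsilon<\kappa-\kappa_\partial$. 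The total cost is then $C^{I_1(\Gamma)+O(1)}$ times a density ratio controlled by Lemma~\ref{lem:clockLCLT}.

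For the trunk in case (ii), I will keep $e^{\varepsilon r}$. Since $\mathbb Q_{q,j;x,t}(r>s)\le Ce^{-cs}+e^{-cd}$ by \eqref{eq:qtrunk-time-tail}, choosing $\varepsilon<c/4$ and applying H\"older with Proposition~\ref{prop:collisionmgf} (exponential moments of $I_1$) and Proposition~\ref{lem:qdiagram-geometry} gives $B^{\rm end}\ll1$. The bad event $\mathcal G_\Gamma^c$ is absorbed by the crude bound $e^{Cd}$ against the probability $e^{-cd}$, after shrinking $\varepsilon$. The lower bound $B^{\rm end}\gg_L1$ in case (i) follows from Jensen/positivity: restrict to self-avoiding retained bridges, which form a positive fraction by Proposition~\ref{prop:collisionmgf} and Markov's inequality. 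On these, every interior ratio has conditional mean exactly one, and the endpoint ratios are at least $e^{-(1+L)r}/h(r)$; their first and last holdings are $O(1)$ with positive probability by the clock law, giving a constant $c_L$.

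For the moment bound in (i), I will use a Doob martingale revealing $\Lambda_z$, $z\notin\{x,\mathbf0\}$, one at a time. Burkholder--Rosenthal reduces $\Eenv|X^\omega-B^{\rm end}|^{2R}$ to moments of $\sum_z D_z^2$, where $D_z$ is the change under resampling $\Lambda_z$. Since only paths visiting $z$ change, $|D_z|\le\E_{\mathbb Q}[(W_\Gamma+W_\Gamma')\one_{\{L_z(\Gamma)>0\}}]$, with $W'$ computed in the resampled environment. Then
\begin{equation*}
\sum_zD_z^2\le\E_{\mathbb Q\otimes\mathbb Q}\Bigl[(W_\Gamma+W_\Gamma')(W_{\Gamma'}+W'_{\Gamma'})\,\#\bigl(\operatorname{Supp}\Gamma\cap\operatorname{Supp}\Gamma'\setminus\{x,\mathbf0\}\bigr)\Bigr].
\end{equation*}
Taking the $R$-th power and the environmental expectation yields a product over $2R$ bridges. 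Applying H\"older with exponents $s$ and $s'$ separates the intersection indicator, whose probability is $O(d^{-1})$ by Proposition~\ref{prop:range}, from an environmental moment of a product of $O(R)$ weights. That moment is bounded by the integrated-holding argument above together with Proposition~\ref{prop:collisionmgf} for a $(2R,C)$-admissible family, and this bookkeeping produces $d^{-R(1-1/s)}$.

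The main obstacle is exactly that joint-weight control: the product of many $W$'s over overlapping bridges, where shared vertices multiply ratios, needs the clock integration uniformly for several conditioned families. I will first try to reduce it to Proposition~\ref{prop:collisionmgf} by treating all $2R$ bridges as one admissible family and charging each repeated use a constant.
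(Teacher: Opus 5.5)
Your outline follows the paper's architecture: a constant per repeated use of a rate after integrating holding times, the factor $e^{\varepsilon r}$ kept on the trunk and controlled by \eqref{eq:qtrunk-time-tail}, one-site resampling, and $O(d^{-1})$ intersections. However, two steps fail as written.

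\emph{The H\"older step in the concentration bound.} You apply H\"older with exponents $s,s'$ jointly over environment, holding times and bridges. This puts the weights in $L^{s'}$ of the environment, so you would need $\Eenv[(\prod_iW_{\Gamma_i})^{s'}]\ll1$. The integrated-holding argument does not give this. It rests on $\Eenv[Y_z(u)]=1$ for the one unbounded factor kept at each vertex, whereas $\Eenv[Y_z(u)^{s'}]=h(s'u)/h(u)^{s'}>1$ for every $u>0$ unless $\Lambda_{\mathbf0}$ is degenerate. Every holding, repeated or not, then costs a constant larger than one after integration, and a bridge with $n\asymp d$ holdings produces $e^{cd}$, which swamps $d^{-R}$.

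The paper never raises weights to powers. It takes the environmental expectation and integrates the holding times at the first power, conditionally on spatial paths, counts and split data. This gives \eqref{eq:integrated-product-bound}, a bound $K_\varepsilon^{I_p+2}(\cdots)$ depending only on spatial data. Only then does it apply H\"older over the spatial randomness, against an event measurable with respect to it. The result is \eqref{eq:localized-product-moment} and $\Eenv[H_z^p]\ll a_z^{p(1-1/r)}$, which is combined with \eqref{eq:path-resampling-moment} and $\sum_za_z^{2-2/r}\le(\sum_za_z)^{2/r}(\sum_za_z^2)^{1-2/r}$.

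Your square-function route can be repaired the same way, with two further fixes. First, $\#(\operatorname{Supp}\Gamma\cap\operatorname{Supp}\Gamma'\setminus\{x,\mathbf0\})$ is a count, not an indicator. So \eqref{eq:crosscap} must be combined with its exponential moments (Lemma~\ref{lem:bridgehit}) to bound its $s$-th moment by $d^{-1+o(1)}$. Second, your $W'$ depends on $z$, so the sum over $z$ does not collapse to a single weighted intersection count.

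Relatedly, on an arm the number $k$ of bounded holdings is not bounded. Neither \eqref{eq:UO6} nor H\"older across holdings suffices, since the latter needs $k\varepsilon<\kappa-\kappa_\partial$. What is needed is the pointwise domination \eqref{eq:integrated-holding-factor} together with a bound on $(f_\kappa^{*(N-k)}*g_\kappa^{*k})(T)/\widetilde f_{N,\kappa}(T)$ uniform in $0\le k\le N$. This holds because one of $k,N-k$ is at least $N/2$.

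\emph{The lower bound $B^{\rm end}\gg_L1$.} Exponential-moment bounds on $\widetilde I_1$ from Proposition~\ref{prop:collisionmgf} cannot show $\Pp(\widetilde I_1=0)>0$; they also hold for $\widetilde I_1\equiv1$. Markov's inequality on the mean fails too: immediate backtracks alone contribute an amount of order $n/d$ to $\E[\widetilde I_1]$, which need not be below one. Moreover, $x=\mathbf0$ is allowed (it is used in Proposition~\ref{prop:quenched-barrier}), and then no bridge avoids its starting vertex.

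Self-avoidance is unnecessary. Each $Y_z(r_a)$ is bounded and increasing in $\Lambda_z$, so Lemma~\ref{lem:onevariableassociation} gives $\Eenv[\prod_{a:v_a=z}Y_z(r_a)]\ge1$ at every nonendpoint vertex, whatever the repetitions. The endpoint factors are at least $e^{-(L+\lambda_+)u}$. Exchangeability of the conditioned holdings bounds the expected total endpoint holding time by $C\E_{\mathbb Q}[2+I_1]+t\,\mathbb Q(\mathcal G^c)\ll1$. Jensen's inequality then gives $B^{\rm end}\ge e^{-C_L}$.
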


\begin{proof}
\begingroup
Throughout the proof, we condition on $\Lambda_x,\Lambda_{\mathbf0}$.
For a single path, put $n_\Gamma(z)=L_z(\pi)$; for a diagram, use the
count in Definition~\ref{def:diagram-counts}, including the branching factor.

The proof consists of three main parts.  Steps~1--3 bound the environmental mean $B^{\rm end}$ 
for both paths and diagrams by charging a constant for each repeated
use of a rate, after integrating the holding times.  In Step~4, we
apply a resampling argument to show that $X^\omega$ is concentrated near $B^{\rm end}$ in the path case.  Step~5 gives the lower bound on the path mean by
controlling the time spent at the two endpoints.

\medskip\noindent\textbf{Step 1: Count repeated uses of the same rate.}
For a fixed integer $p\ge1$, take independent paths or diagrams
$\Gamma_1,\ldots,\Gamma_p$ with law $\mathbb Q$, evaluated in the same
environment to estimate $\Eenv[(X^\omega)^p]$. Put
$$
 I_p=\sum_z\left(\sum_{i=1}^p n_{\Gamma_i}(z)-1\right)_+.
$$
Every split vertex has a holding occurrence, so $I_p$ is the full
excess holding count plus the number of branching factors.
In particular, it counts repetitions within each copy and meetings between copies. 
Let $N_i$ be the total mutation count of $\Gamma_i$.
A path has $N_i+1$ holding factors; a diagram has $N_i+j+1$ holding
factors and one branching factor.  Removing the copies with $N_i>Ld$
therefore gives
$$
 I_p\le
 \sum_z\left(\sum_{i:N_i\le Ld}n_{\Gamma_i}(z)-1\right)_+
 +\sum_{i=1}^p(N_i+4)\one_{\{N_i>Ld\}}.
$$
After conditioning, the remaining copies of paths/diagrams contain at most
$3p$ independent ordinary Hamming bridges of length at most $Ld$.
Proposition~\ref{prop:collisionmgf} then gives that for fixed $a>0$,
\begin{equation}
\begin{aligned}
 \E_{\mathbb Q^p}[e^{aI_p}]
 &=\E_{\mathbb Q^p}\!\left[
   \E_{\mathbb Q^p}\!\left[e^{aI_p}
       \mid\text{segment counts and split data}\right]\right]\\
 &\ll_{a,p,L}\E_{\mathbb Q^p}\!\left[
       \prod_{i=1}^p e^{a(N_i+4)\one_{\{N_i>Ld\}}}\right]\\
 &=\prod_{i=1}^p\left(
       \mathbb Q(N_i\le Ld)
       +e^{4a}\E_{\mathbb Q}[e^{aN_i}\one_{\{N_i>Ld\}}]\right)\\
 &\le\prod_{i=1}^p\left(
       1+e^{4a}\E_{\mathbb Q}[e^{aN_i}\one_{\{N_i>Ld\}}]\right)
 \ll_{a,p}1.,
\end{aligned}
 \label{eq:combined-factor-moment}
\end{equation}
where we have used independence between copies, and that for the final step, for sufficiently
large fixed $L=L(a,p)$, \eqref{eq:qdiagram-counttail} gives
$\E_{\mathbb Q}[e^{aN_i}\one_{\{N_i>Ld\}}]\le e^{-cd}$,
which proves the last bound.

\medskip\noindent\textbf{Step 2: Integrate the holding times to bound the repeated factors.}
Choose the fixed retained-count tolerance from
Propositions~\ref{prop:global-oneobject} and~\ref{prop:global-oneobject-local}
for paths, or Proposition~\ref{lem:qdiagram-geometry} for diagrams,
and denote it by $\varepsilon_{\mathcal K}$ in either case.
For paths, put
$$
 \mathcal G=\bigcap_{i=1}^p
 \left\{N_i\in\mathcal I_d^{J_{\mathcal K},\varepsilon_{\mathcal K}}
       \bigl(d_{\mathrm H}(x,\mathbf0),t\bigr)\right\}.
$$
For diagram $\Gamma_i$, write $r_i,v_i$ for its split time and vertex,
$J_{\rm tr}(\Gamma_i)$ for its trunk mutation count, and $\gamma_{i,a}$
for its $a$th arm. Expanding \eqref{eq:qdiagram-retained-event}, put
$$
\begin{aligned}
 \mathcal G=\bigcap_{i=1}^p\Bigl(&
   \{r_i\le\delta_*d,\ J_{\rm tr}(\Gamma_i)<\zeta d\}\\
 &{}\cap\bigcap_{a=1}^j
   \left\{|\gamma_{i,a}|
      \in\mathcal I_d^{J_{\mathcal K},\varepsilon_{\mathcal K}}
       \bigl(d_{\mathrm H}(v_i,\mathbf0),t-r_i\bigr)\right\}\Bigr).
\end{aligned}
$$
Here $|\gamma_{i,a}|$ is the arm's mutation count; in the path case,
set $r_i=0$. By \eqref{eq:UO2} and
Proposition~\ref{lem:qdiagram-geometry}, a union bound gives
$\mathbb Q^p(\mathcal G^c)\ll e^{-cd}$.
% These tolerances and $c>0$ stay fixed while the quantile parameter
% $\varepsilon$ in \eqref{eq:quantile-holding-bound} is chosen below.

\emph{On $\mathcal G$.}
Condition on the spatial paths, counts, split vertices and split times.
Each path or arm has $N\asymp d$ holdings (one more than its mutation
count), total duration $T=O(d)$,
and an exact tilt $\kappa$ satisfying $N\mu_\kappa=T$.
By \eqref{eq:UO4} and Proposition~\ref{lem:qdiagram-geometry},
these tilts lie in a fixed compact subset of $(\lambda_+-1,\infty)$,
so the uniform clock estimates of Lemma~\ref{lem:clockLCLT} apply.

Take $N$ independent variables with density
$f_\kappa(u)=e^{-\kappa u}h(u)/H(\kappa)$, then condition their sum
to equal $T$. This gives the holding-time law, where the density of the
first $N-1$ holdings, with respect to $\dd u_1\cdots\dd u_{N-1}$, is
$$
 \frac{\prod_{i=1}^N f_\kappa(u_i)}
      {\widetilde f_{N,\kappa}(T)}
 =\frac{\prod_{i=1}^N h(u_i)}{h^{*N}(T)},
 \qquad u_N=T-\sum_{i=1}^{N-1}u_i,\quad u_1,\ldots,u_N\ge0,
$$
where $\widetilde f_{N,\kappa}=f_\kappa^{*N}$.
% ; the exponential tilt
% cancels because the total is fixed. Holdings within a segment are now
% dependent, but the holding-time vectors of distinct segments remain
% independent given the counts and split data. Trunks keep their original
% conditional holding-time laws.

{
We apply the integration argument in \eqref{eq:1.5} under this conditioning.
Let $g_\kappa(u)=(\kappa+1-\lambda_+)
e^{-(\kappa+1-\lambda_+)u}$, $u\ge0$, be the exponential density.
For every $\lambda\le\lambda_+$,
\begin{equation}
 f_\kappa(u)\frac{e^{-(1-\lambda)u}}{h(u)}
 =\frac{e^{-(\kappa+1-\lambda)u}}{H(\kappa)}
 \le\frac{g_\kappa(u)}{H(\kappa)(\kappa+1-\lambda_+)}.
 \label{eq:integrated-holding-factor}
\end{equation}
The factor $1/(H(\kappa)(\kappa+1-\lambda_+))$ is uniformly bounded.
At each nonendpoint vertex visited by the
copies, choose one holding in a fixed order, without using its duration,
choosing a trunk holding whenever one is present. For the other factors, use the following bounds:
\begin{itemize}
\item For a holding on a path or arm, use
\eqref{eq:integrated-holding-factor}.
\item For a trunk holding, use
$Y_z(u)\le p_\varepsilon^{-1}e^{\varepsilon u}$ from
\eqref{eq:quantile-holding-bound}.  The trunk can be too short for
the retained-count clock estimates.
\item At $x,\mathbf0$, apply these respective bounds to every
path or arm holding and every trunk holding.  Bound every branching
factor by $\lambda_+/\bar\lambda$.
\end{itemize}
Only the chosen holding at each nonendpoint vertex still depends on
its rate, and $\int e^{-(1-\lambda)u}P(d\lambda)/h(u)=1$.
Independence of these rates therefore removes all the chosen ratios.
The number of factors to which a bound was applied is exactly
$$
 I_p+\#\left\{z\in\{x,\mathbf0\}:
                 \sum_{i=1}^p n_{\Gamma_i}(z)>0\right\}\le I_p+2.
$$
If $k$ holdings on a path or arm were bounded, integrating the resulting
density costs the constant in \eqref{eq:integrated-holding-factor}
to power $k$, times
$$
 \frac{(f_\kappa^{*(N-k)}*g_\kappa^{*k})(T)}
      {\widetilde f_{N,\kappa}(T)}\ll1,
 \qquad 0\le k\le N,
$$
where a zeroth convolution power is the point mass at zero.\footnote{Indeed, one of $N-k$ and $k$ is at least $N/2$.
Equation~\eqref{eq:clockconcentration}, or the gamma density bound
$\|g_\kappa^{*k}\|_\infty\ll k^{-1/2}$, bounds the numerator by
$CN^{-1/2}$ after convolution with the other probability measure.
The denominator is at least $cN^{-1/2}$ by
\eqref{eq:uniformclockLCLT}.} 
Choose $K_\varepsilon\ge1$ to dominate the holding and branching
constants.  There are at most $2p$ paths or arms.  The bounded trunk
holdings have total duration at most $\sum_i r_i$, so their exponential
factors have product at most $e^{\varepsilon\sum_i r_i}$.
Integrating the trunk holding times under their original conditional
laws therefore bounds the conditional environmental and holding-time mean by
$C_pK_\varepsilon^{I_p+2}e^{\varepsilon\sum_i r_i}$.

\emph{On $\mathcal G^c$.}
Average one holding per nonendpoint site and use
\eqref{eq:quantile-holding-bound} for every other holding, again
bounding each branching factor by $\lambda_+/\bar\lambda$.
The total duration of all holdings is $pt$ for paths and
$\le2pt$ for diagrams.  It is therefore at most
$C_pd$, giving the bound
$K_\varepsilon^{I_p+2}e^{\varepsilon C_pd}$.

To summarize, for an event $A$ determined by the spatial paths,
counts, split times and split vertices, we have
\begin{equation}
 \Eenv\!\left[\E_{\mathbb Q^p}
 \left[\one_A\prod_{i=1}^p\prod_zF_{\Gamma_i,z}(\Lambda_z)\right]\right]
 \ll_p\E_{\mathbb Q^p}\left[
 \one_AK_\varepsilon^{I_p+2}
 \left(e^{\varepsilon\sum_i r_i}
       +e^{\varepsilon C_pd}\one_{\mathcal G^c}\right)\right].
 \label{eq:integrated-product-bound}
\end{equation}
}

\medskip\noindent\textbf{Step 3: Bound weighted probabilities and the conditional means.}
For the trunk durations, \eqref{eq:qtrunk-time-tail} and $r_i\le Cd$
give $\E_{\mathbb Q^p}[e^{\eta\sum_i r_i}]\ll_p1$ for some fixed $\eta>0$.
Indeed, writing the tail bound as
$\mathbb Q(r_i>u)\ll e^{-c_1u}+e^{-c_2d}$ gives
\begin{align}
    \E_{\mathbb Q}[e^{\eta r_i}]
 =1+\eta\int_0^{Cd}e^{\eta u}\mathbb Q(r_i>u)\dd u
 \ll1+\frac{\eta}{c_1-\eta}+e^{(C\eta-c_2)d}\ll1\label{eq:155}
\end{align}
when $\eta<c_1$ and $C\eta<c_2$.  Independence of the copies gives
the bound for their sum; in the path case every $r_i=0$.
Fix $r>1$, then choose $\varepsilon$ so that
$2r\varepsilon<\eta$ and $\varepsilon C_p<c/(2r)$.
H\"older's inequality with exponents $r/(r-1),2r,2r$ applied to \eqref{eq:integrated-product-bound}, followed by applications of \eqref{eq:combined-factor-moment} and \eqref{eq:155} gives
\begin{align}\label{eq:localized-product-moment}
   \begin{split}
   &     \Eenv\!\left[\E_{\mathbb Q^p}
 \left[\one_A\prod_{i=1}^p\prod_zF_{\Gamma_i,z}(\Lambda_z)\right]\right]\\
 &\ll_p\E_{\mathbb Q^p}\left[
 \one_AK_\varepsilon^{I_p+2}
 \left(e^{\varepsilon\sum_i r_i}
       +e^{\varepsilon C_pd}\one_{\mathcal G^c}\right)\right]\\
       &\ll \mathbb Q^p(A)^{1-\frac1r}
 \E_{\mathbb Q^p}\!\left[{K_\varepsilon^{2r(I_p+2)}}\right]^{\frac1{2r}}
 \left(
 \E_{\mathbb Q^p}\!\left[e^{2r\varepsilon\sum_i r_i}\right]^{\frac1{2r}}
 +e^{\varepsilon C_pd}\mathbb Q^p(\mathcal G^c)^{\frac1{2r}}
 \right)\\
 &\ll_{p,r}\mathbb Q^p(A)^{1-1/r}
 \left(1+e^{(\varepsilon C_p-c/(2r))d}\right)
 \ll_{p,r}\mathbb Q^p(A)^{1-1/r}.
   \end{split}
\end{align}
Taking $A$ to be the whole space and using independence of the copies
under $\mathbb Q^p$ gives
$\Eenv[X^p]=\Eenv[\E_{\mathbb Q^p}[\prod_i\prod_z
F_{\Gamma_i,z}(\Lambda_z)]]\ll_p1$.
For $p=1$, this proves $B^{\rm end}\ll1$ in both cases.
This completes part~(ii); the remaining steps below concern only paths without diagrams.

\medskip\noindent\textbf{Step 4: Concentration via martingale resampling.}
Changing $\Lambda_z$ affects only paths visiting $z$.  We first bound
the visit probabilities, then use them to estimate the effect of
resampling each rate.
For $z\notin\{x,\mathbf0\}$, put
$a_z=\mathbb Q(z\in\operatorname{Supp}(\Gamma))$.
The count tails give
$\sum_{z\notin\{x,\mathbf0\}}a_z\le\E_{\mathbb Q}[N_1+1]\ll d$.
For each retained count $n$, the union bound and
\eqref{eq:sliceL2} give
$$
 \left(\sum_{z\notin\{x,\mathbf0\}}
 \Pp_{d,n;x,\mathbf0}(z\in\operatorname{Supp}(\pi))^2\right)^{\frac12}
 \le\sum_{k=1}^{n-1}
 \left(\sum_{z\notin\{x,\mathbf0\}}\Pp_{d,n;x,\mathbf0}(S_k=z)^2\right)^{\frac12}
 \ll d^{-\frac12}.
$$
Taking $p=1$ in the definition of $\mathcal G$,
\eqref{eq:UO2} and \eqref{eq:UO3} give
$\E_{\mathbb Q}[(N_1+1)\one_{\mathcal G^c}]\ll e^{-cd}$, by splitting
the count at $Ld$.  Minkowski's inequality, averaging first over the
retained counts and then adding this discarded contribution, gives
$$
 \sum_{z\notin\{x,\mathbf0\}}a_z^2
 =\E_{\mathbb Q^2}\!\left[
   \left|\bigl(\operatorname{Supp}(\Gamma_1)\cap\operatorname{Supp}(\Gamma_2)\bigr)
   \setminus\{x,\mathbf0\}\right|\right]
 \ll d^{-1}.
$$
Thus, two independent paths have few common vertices away from the
endpoints.

Set $p=2R$.  For each nonendpoint $z$, replace $\Lambda_z$ by an
independent rate with law $P$, independent of the entire original
environment, and denote the resulting value by $X^{(z)}$.\footnote{
All environmental expectations and $L^p$ norms below average the
nonendpoint rates and any resampled rates, conditional on
$\Lambda_x,\Lambda_{\mathbf0}$.}
Put $H_z=\E_{\mathbb Q}
[\one_{\{z\in\operatorname{Supp}(\Gamma)\}}
 \prod_wF_{\Gamma,w}(\Lambda_w)]$.
Let $H_z^{(z)}$ denote the same quantity after resampling $\Lambda_z$.
Paths avoiding $z$ have unchanged weights, so
$X-X^{(z)}=H_z-H_z^{(z)}$ and
$\|X-X^{(z)}\|_p\le2\|H_z\|_p$.
Taking $p$ independent paths, using Tonelli's theorem and then
\eqref{eq:localized-product-moment}, gives
$$
 \Eenv[H_z^p]
 =\Eenv\!\left[\E_{\mathbb Q^p}\!\left[
 \prod_{i=1}^p\one_{\{z\in\operatorname{Supp}(\Gamma_i)\}}
 \prod_{i=1}^p\prod_wF_{\Gamma_i,w}(\Lambda_w)\right]\right]
 \ll_{p,r}a_z^{p(1-\frac1r)}.
$$
Here the event that all $p$ copies visit $z$ has probability $a_z^p$
under $\mathbb Q^p$.

Revealing the nonendpoint rates one at a time gives a Doob martingale
from $B^{\rm end}$ to $X$.
{
Enumerate these vertices as $z_1,z_2,\ldots$.  Independence of the
resampled rates gives its increments as
$$
 D_i=\Eenv\!\left[X-X^{(z_i)}\mid
                 \Lambda_{z_1},\ldots,\Lambda_{z_i}\right],
 \qquad \sum_iD_i=X-B^{\rm end}.
$$
The classical case of the martingale convexity inequality
\cite[Corollary~3]{RicardXu2016} for $p>2$ and orthogonality for $p=2$, 
followed by conditional Jensen, gives
\begin{equation}
 \|X-B^{\rm end}\|_p^2
 \le(p-1)\sum_i\|D_i\|_p^2
 \le(p-1)\sum_{z\notin\{x,\mathbf0\}}\|X-X^{(z)}\|_p^2.
 \label{eq:path-resampling-moment}
\end{equation}
}
Taking $r=4s$ therefore gives
$$
 \|X-B^{\rm end}\|_p^2
 \ll_{p,s}\sum_{z\notin\{x,\mathbf0\}}a_z^{2-2/r}
 \le\left(\sum_{z\notin\{x,\mathbf0\}}a_z\right)^{2/r}
       \left(\sum_{z\notin\{x,\mathbf0\}}a_z^2\right)^{1-2/r}
 \ll_{p,s}d^{-1+4/r}=d^{-(1-1/s)}.
$$
Raising to the power $p/2=R$ proves \eqref{eq:abstract-high-moment}.

\medskip\noindent\textbf{Step 5: Bound the conditional path mean from below.}
Take $p=1$ and suppose $\Lambda_x,\Lambda_{\mathbf0}\ge-L$.
At every nonendpoint site, positive association gives
$$
 \Eenv\!\left[\prod_{a:v_a=z}Y_z(r_a)\right]
 \ge\prod_{a:v_a=z}\Eenv[Y_z(r_a)]=1.
$$
At the endpoints, the factors are at least
$e^{-(L+\lambda_+)u}$ for a holding of length $u$.
Conditional on a path of $n$ mutations, exchangeability gives mean
$t/(n+1)$ to each holding.  On retained counts this is bounded, and
there are at most $2+I_1$ endpoint occurrences.  Their total duration
is at most $t$ on the discarded counts, so
\eqref{eq:combined-factor-moment} and \eqref{eq:UO2} together give
$$
 \E_{\mathbb Q}\!\left[\sum_{a:v_a\in\{x,\mathbf0\}}r_a\right]
 \le C\E_{\mathbb Q}[2+I_1]+t\mathbb Q(\mathcal G^c)
 \ll1+de^{-cd}\ll1.
$$
Jensen's inequality therefore gives
$$
 B^{\rm end}\ge\E_{\mathbb Q}\!\left[
 e^{-(L+\lambda_+)\sum_{a:v_a\in\{x,\mathbf0\}}r_a}\right]
 \ge e^{-C_L}\gg_L1,
$$ as desired.
\endgroup
\end{proof}

\subsubsection{Independent-visit time-shift estimates}

{The occupation and bootstrapping arguments require
independent-visit estimates at nearby times and starting vertices.
The next lemma gives these comparisons.}

\begin{lemma}
\label{lem:global-fresh-harnack}
There is $\kappa_0>0$ with the following property.  For every fixed
$C_0,C_1,M<\infty$ and every fixed $a\ge0$,
for all sufficiently large $d$,\footnote{The threshold for $d$ may depend on $M,a$.} uniformly in
$$
1\le m\le d,\qquad d_{\mathrm H}(y,\mathbf{x}_m)\le M,\qquad
 \left|t-du_*\!\left(\frac md\right)\right|\le C_0\log d,
$$
we have
\begin{equation}
 \widehat M_{t+a}(y)
 \gg_M e^{\kappa_0a}\widehat M_t(\mathbf{x}_m).              \label{eq:global-fresh-harnack}
\end{equation}
Uniformly also for $0\le h\le C_1\log d$,
\begin{equation}
 \widehat M_{t-h}(\mathbf{x}_m)
 \ll e^{-\kappa_0h}\widehat M_t(\mathbf{x}_m),               \label{eq:global-fresh-backward-ratio}
\end{equation}
whenever the time $t-h$ is nonnegative.
The implicit constants may depend on $C_0,C_1$.
\end{lemma}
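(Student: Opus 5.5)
Both inequalities reduce to the time-shift comparisons of Proposition~\ref{prop:global-oneobject-shifts}, applied on the compact set $\mathcal K_{\rm rt}\subset\mathcal R$ with its unit-window version. Choose $\kappa_0\in(0,\underline\kappa_{\mathcal K_{\rm rt}}]$. This is possible because every retained clock tilt in the root neighbourhood is at least $3\kappa_{\rm rt}/4>0$, by Lemma~\ref{lem:global-root-model-only} and Lemma~\ref{lem:setvalued-initial-arm-gap}. Shrinking the compact tilt interval to retained tilts near $\mathcal R_*$, as allowed by Lemma~\ref{lem:root-active-stability}, makes $\underline\kappa$ depend only on the root neighbourhood. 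Hence $\kappa_0$ is independent of $C_0,C_1,M,a$.

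\emph{Backward ratio \eqref{eq:global-fresh-backward-ratio}.} Take $s=-h$ with $0\le h\le C_1\log d$. For large $d$, both $(m/d,t/d)$ and $(m/d,(t-h)/d)$ lie in $\mathcal K_{\rm rt}$, since $|t-du_*|\le C_0\log d$. The upper bound in \eqref{eq:UO6b}, in its unit-window form, gives $\widehat M_{t-h}(\mathbf x_m)\ll e^{-\underline\kappa h}\widehat M_t(\mathbf x_m)\le e^{-\kappa_0h}\widehat M_t(\mathbf x_m)$, with constants depending on $C_0,C_1$ only.

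\emph{Forward comparison \eqref{eq:global-fresh-harnack}.} This has two steps.
\begin{enumerate}
\item \emph{Change the start.} Let $\ell=d_{\mathrm H}(y,\mathbf0)$, so $|\ell-m|\le M$. By transitivity, $\widehat M_t(y)$ is the unit-window kernel at distance $\ell$. Since $u_*$ is Lipschitz, $|t-du_*(\ell/d)|\le C_0\log d+L_uM\le (C_0+1)\log d$ for large $d$. To compare distances $\ell$ and $m$ at the same time, use a Harnack-type bound. Pass from $\mathbf x_m$ to $y$ in a bounded time $\tau$, through a geodesic path of at most $M$ mutations whose holding rates are averaged. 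The semigroup inequality \eqref{eq:scalar-convolution-inequality} then gives
\[
\widehat K_{s+\tau}(y,\mathbf0)\ge \widehat K_\tau(y,\mathbf x_m)\,\widehat K_s(\mathbf x_m,\mathbf0),
\]
where $\widehat K_\tau(y,\mathbf x_m)\ge c_M d^{-M}$. This loses a power $d^{-M}$, which is too costly. It is better to compare through the exact coefficient tilting used in the proof of Lemma~\ref{lem:global-retained-arm}. There, the ratio $p_{d,\ell}(n')/p_{d,m}(n)$ for retained $n$ equals $a_n^{\ell-m}$ times a bounded ratio of local probabilities. The shifts $|\ell-m|\le M$ and $|n'-n|\le1$ stay within the uniform window of Lemma~\ref{lem:spatialLCLT}, which explicitly allows $O(\log d)$ changes of $m$ and $j$. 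Since $a_n$ lies in a compact subset of $(0,\infty)$, this yields $\widehat K_{d,\ell}(s)\gg_M\widehat K_{d,m}(s)$ up to the parity correction. For $b=2$ with $\ell\not\equiv m$, one extra mutation shifts the count by one; the clock LCLT then gives a bounded factor $H(\kappa_n)^{\pm1}$. The omitted counts are controlled by \eqref{eq:UO2}.
\item \emph{Shift the time.} Applying the lower bound in \eqref{eq:UO6a} with $s=a$ gives $\widehat M_{t+a}(y)\gg e^{\kappa_0 a}\widehat M_t(y)\gg_M e^{\kappa_0a}\widehat M_t(\mathbf x_m)$. Here $a$ is fixed, and the threshold on $d$ may depend on $a$.
\end{enumerate}

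The main obstacle is the distance-change step. Its constant must depend only on $M$ and not on $d$, which rules out the naive path-concatenation Harnack bound. The key inputs are uniformity of the spatial LCLT under bounded changes of $m$, and the binary parity bookkeeping.
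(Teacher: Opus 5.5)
Your proposal is correct and follows essentially the paper's route. Both arguments first shrink the retained level (Lemma~\ref{lem:root-active-stability}) so that every retained exact tilt is at least $\kappa_0=\kappa_{\rm rt}/2$; they then compare $p_{d,\ell}(n+\varepsilon)$ with $p_{d,m}(n)$ by exact coefficient tilting with the binary parity shift, and get the backward ratio as the reciprocal of the forward time-shift comparison at base time $t-h$. The difference is organizational: the paper does the distance change and the time shift in one summand-by-summand comparison using the tilts $z_n(s),\kappa_n(s)$ at distance $m$, whereas you split them into two steps and rerun the proof of Proposition~\ref{prop:global-oneobject-shifts} with the shrunken retained level (which also keeps the prefactor independent of $a$ once $a\le\log d$).
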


At the deterministic center,
$\widehat M_{\widehat\theta_{d,m}}(\mathbf x_m)=1$ by
\eqref{eq:independent-center-equality}.  Thus,
{\eqref{eq:global-fresh-backward-ratio} gives exponential
decay of the independent-visit occupation mean in the preceding
logarithmic window.}  In the forward direction,
\eqref{eq:global-fresh-harnack} lets a sufficiently large fixed time shift
compensate for any fixed starting radius $M$; this is the mean estimate
used in the bootstrapping argument.

\begin{proof}
{Choose the retained level $\varepsilon_{\rm ret}>0$ in
\eqref{eqtag:GR0} sufficiently small, within the range of
Proposition~\ref{prop:global-oneobject-local}.  Every maximizing saddle
on the zero set has $\kappa\ge\kappa_{\rm rt}>0$ by
Lemma~\ref{lem:global-root-model-only}.  The finite-$d$ conclusion of
Lemma~\ref{lem:root-active-stability} therefore lets us choose this
level so that the exact tilts of $n\in\mathcal I_d(m,s)$ are at least
$\kappa_{\rm rt}/2$ throughout the enlarged time interval below.}
We work uniformly in $m$ and
base times $s$ with
$|s-du_*(m/d)|\le(C_0+C_1+3)\log d$, which includes all the times and
unit windows used below, for sufficiently large $d$.
By \eqref{eq:UO4}, their exact clock tilts satisfy
$$
 -\frac{H'(\kappa_n(s))}{H(\kappa_n(s))}=\frac{s}{n+1}.
$$
Thus, the sum of the $n+1$ tilted holding times has mean $s$.
{The choice of retained level gives}
\begin{equation}
 \kappa_n(s)\ge\kappa_0:=\frac{\kappa_{\rm rt}}2>0,
 \qquad n\in\mathcal I_d(m,s).
 \label{eq:time-shift-clock-positive}
\end{equation}
Moreover, \eqref{eq:UO2} gives
\begin{equation}
 \sum_{n\in\mathcal I_d(m,s)}p_{d,m}(n)h^{*(n+1)}(s)
 \ge(1-e^{-cd})\widehat K_{d,m}(s).
 \label{eq:retained-kernel-mass}
\end{equation}
Let $\ell=d_{\mathrm H}(y,\mathbf0)$, so $|\ell-m|\le M$, and put
$a_n(s)=A_b\bigl(z_n(s)\bigr)/C_b\bigl(z_n(s)\bigr)$.  To compare a retained summand with
one at distance $\ell$, use count $n+\varepsilon$, where $\varepsilon=1$
if $b=2$ and $\ell-m$ is odd, and $\varepsilon=0$ otherwise.
This assignment is injective and respects the binary lattice.
By \eqref{eq:tilted-pdm},
\[
 \frac{p_{d,\ell}(n+\varepsilon)}{p_{d,m}(n)}
 =a_n(s)^{\ell-m}
   \frac{(n+\varepsilon)!}{n!\,(dz_n(s))^\varepsilon}
   \frac{\Pp_{z_n(s)}(\Sigma_{d,\ell,z_n(s)}=n+\varepsilon)}
        {\Pp_{z_n(s)}(\Sigma_{d,m,z_n(s)}=n)}
 \gg e^{-CM}.
\]
Here $a_n(s)^{\ell-m}\ge e^{-CM}$, and the factorial factor
is bounded above and away from zero on retained counts.
The numerator count is within $O(M+1)$ of its tilted mean, so
Lemma~\ref{lem:spatialLCLT} bounds the local-probability ratio below.

For $0\le h\le(C_1+1)\log d$, the clock ratio with the changed count is
$$
 \frac{h^{*(n+1+\varepsilon)}(s+h)}{h^{*(n+1)}(s)}
 =e^{\kappa_n(s)h}H\bigl(\kappa_n(s)\bigr)^\varepsilon
  \frac{\widetilde f_{n+1+\varepsilon,\kappa_n(s)}(s+h)}
       {\widetilde f_{n+1,\kappa_n(s)}(s)}.
$$
The numerator's displacement from its mean is
$h-\varepsilon\mu_{\kappa_n(s)}=O(\log d)$.
Lemma~\ref{lem:clockLCLT} bounds that ratio below uniformly, and
$H\bigl(\kappa_n(s)\bigr)^\varepsilon$ is bounded away from zero.
Using \eqref{eq:time-shift-clock-positive}, summing over these distinct
numerator terms, and applying \eqref{eq:retained-kernel-mass} gives
\[
 \widehat K_{d,\ell}(s+h)
 \ge ce^{-CM}e^{\kappa_0h}\widehat K_{d,m}(s).
\]
For a fixed $a\ge0$, choose $d$ large enough that $a\le\log d$ and
integrate this bound over $s\in[t,t+1]$ with $h=a$.
This proves \eqref{eq:global-fresh-harnack}, with a prefactor independent
of $a$.

For the backward estimate, apply the same lower comparison at base time
$s-h$, with $\ell=m$, and take its reciprocal:
\[
 \widehat K_{d,m}(s-h)\ll
 e^{-\kappa_0h}\widehat K_{d,m}(s).
\]
Both base times satisfy the enlarged time condition above.
Integration over $s\in[t,t+1]$
proves \eqref{eq:global-fresh-backward-ratio}.
\end{proof}

{\subsubsection{
An occupation bound before the deterministic centering}}

% Before bounding occupation near the center, we estimate the annealed
% kernel where $\widehat F<0$.

{
\begin{lemma}
\label{lem:global-onebridge}
For every fixed compact
$\mathcal K\subset\{(\theta,u):u>0,\ \widehat F(\theta,u)<0\}$,
there is $c_{\mathcal K}>0$ such that, uniformly for
$(m/d,t/d)\in\mathcal K$,
\begin{equation}
 \Eenv\bigl[K_t^\omega(\mathbf x_m,\mathbf0)\bigr]
 \le e^{-c_{\mathcal K}d}
 \label{eq:precritical-annealed-upper}
\end{equation}
for all sufficiently large $d$.
\end{lemma}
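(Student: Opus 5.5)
The plan is to write the annealed kernel through the path expansion \eqref{eq:quenched-path-expansion} as
\[
\Eenv\bigl[K_t^\omega(\mathbf x_m,\mathbf0)\bigr]
 =\widehat K_{d,m}(t)\,\E_{\mathbb Q_{d,m,t}}\Bigl[\Eenv\Bigl[\prod_aY_{v_a}(r_a)\Bigr]\Bigr],
\]
and then to bound the correction factor by $e^{\varepsilon t}$ times a bounded repeat cost. Proposition~\ref{prop:global-oneobject} supplies the global upper bound $\widehat K_{d,m}(t)\ll d^{3/2}e^{d\widehat F}$. Since $\widehat F\le-2c$ on $\mathcal K$ by compactness and continuity (Lemma~\ref{lem:variational-regularity}), it will be enough to show that the correction factor is at most $e^{cd/2}$.

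The crude route needs no regularity of $\mathcal K$, which matters because $\mathcal K$ need not lie in $\mathcal R$.
\begin{itemize}
\item At each vertex, keep one holding ratio to be averaged, using $\Eenv[Y_z(r)]=1$. For every other holding, use \eqref{eq:quantile-holding-bound}, namely $Y_z(r)\le p_\varepsilon^{-1}e^{\varepsilon r}$.
\item The total duration is $t\le Cd$, so the product of these bounds is at most $p_\varepsilon^{-I_1}e^{\varepsilon Cd}$, where $I_1=\sum_z(L_z(\pi)-1)_+$.
\item Conditional on the mutation count $n$, the path is an ordinary Hamming bridge independent of the holding times. Proposition~\ref{prop:collisionmgf} with $Q=1$ then bounds $\E[p_\varepsilon^{-I_1}\mid N=n]$ uniformly for $n\le C_{\rm len}d$.
\end{itemize}

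The remaining issue is large mutation counts $n>Rd$, where the bridge estimate is not uniform. For these I will use the trivial bound $I_1\le n+1$ together with the tail bound \eqref{eq:UO17}:
\[
\sum_{n>Rd}p_\varepsilon^{-(n+1)}e^{\varepsilon Cd}\,e^{\kappa_\partial t}\frac{t^n}{n!}
 \le\exp\bigl(-Rd+e^{1+\log p_\varepsilon^{-1}}t+\varepsilon Cd\bigr).
\]
This is $\le e^{-d}$ once $R$ is large, in the style of Step~1 of that proposition. Note that this bounds the absolute contribution directly rather than relative to $\widehat K$, so no lower bound on $\widehat K$ is needed.

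The main obstacle is making sure the $n\le Rd$ part is controlled in absolute terms as well. That part is at most
\[
C_{R}\,e^{\varepsilon Cd}\sum_{n\le Rd}p_{d,m}(n)h^{*(n+1)}(t)
 \le C_R\,e^{\varepsilon Cd}\,\widehat K_{d,m}(t)
 \ll d^{3/2}e^{(\varepsilon C-2c)d}.
\]
Choosing $\varepsilon$ with $\varepsilon C<c$ gives $e^{-cd/2}$ for large $d$, and adding the tail part proves \eqref{eq:precritical-annealed-upper}. The point to check is that $C_R$ from Proposition~\ref{prop:collisionmgf} depends only on $R$ and $\varepsilon$, not on $\mathcal K$. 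This holds because that proposition is uniform over all bridges of length at most $C_{\rm len}d$. Here $\varepsilon$ is fixed first, then $R$, then $d$.
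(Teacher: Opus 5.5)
Your proposal is correct and follows essentially the same route as the paper's proof: the same per-vertex "average one holding ratio, bound the rest by $p_\varepsilon^{-1}e^{\varepsilon r}$" step, Proposition~\ref{prop:collisionmgf} with $Q=1$ for $n\le Rd$, and the global upper bound \eqref{eq:global-oneobject-upper}. The only minor difference is in the tail $n>Rd$. There the paper bounds the static contribution directly via $\Lambda_z\le\lambda_+$, giving $e^{-(1-\lambda_+)t}\sum_{n>Rd}t^n/n!$, whereas you use $I_1\le n$ and \eqref{eq:UO17}; both give an absolute exponentially small bound.
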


\begin{proof}
On $\mathcal K$, write $\widehat F\le-c_0$ and $t\le C_0d$, and fix
$0<\varepsilon<c_0/(2C_0)$.
\begin{itemize}
\item For $n\le Rd$, fix a path and its holding times.  At a vertex $z$
visited for durations $r_1,\ldots,r_k$, apply
\eqref{eq:quantile-holding-bound} to all but the first ratio and average
the first ratio exactly:
$$
\Eenv\!\left[\prod_{\ell=1}^kY_z(r_\ell)\right]
 \le p_\varepsilon^{-(k-1)}e^{\varepsilon\sum_{\ell=2}^kr_\ell}
       \Eenv[Y_z(r_1)]
 =p_\varepsilon^{-(k-1)}e^{\varepsilon\sum_{\ell=2}^kr_\ell}.
$$
Independence between vertices and the total holding time $t$ give
$$
 \Eenv[W_\Gamma^\omega]
 \le p_\varepsilon^{-\widetilde I_1(\pi)}e^{\varepsilon t},
 \qquad\widetilde I_1(\pi)=\sum_z(L_z(\pi)-1)_+.
$$
Here, $\widetilde I_1$ is the one-bridge repetition count from
Proposition~\ref{prop:collisionmgf}; it includes endpoint revisits.
Conditional on $N=n$, the spatial path under
$\mathbb Q_{\mathbf x_m,\mathbf0,t}$ is an ordinary Hamming bridge,
independent of its holding times.  For every admissible $n\le Rd$,
Proposition~\ref{prop:collisionmgf} gives
$$
 \E_{\Pp_{d,n;\mathbf x_m,\mathbf0}}
     [p_\varepsilon^{-\widetilde I_1(\pi)}]
 =\E_{\Pp_{d,n;\mathbf x_m,\mathbf0}}
     [e^{\log(1/p_\varepsilon)\widetilde I_1(\pi)}]
 \ll_{\varepsilon,R}1.
$$
If $p_\varepsilon=1$, this expectation equals one directly.  Consequently,
$$
 \E_{\mathbb Q_{\mathbf x_m,\mathbf0,t}}
     [\Eenv[W_\Gamma^\omega]\mid N=n]
 \ll_{\varepsilon,R}e^{\varepsilon t}.
$$
By \eqref{eq:Qbridge}, each admissible count has probability
$$
 \mathbb Q_{\mathbf x_m,\mathbf0,t}(N=n)
 =\frac{p_{d,m}(n)h^{*(n+1)}(t)}{\widehat K_{d,m}(t)}.
$$
Multiplying the bridge average of $W_\Gamma^\omega$ by
$\widehat K_{d,m}(t)$ recovers the static kernel, by
\eqref{eq:quenched-path-expansion}.  Thus, the environmental mean of
its contribution from $n\le Rd$ is
\begin{align*}
 &\widehat K_{d,m}(t)\E_{\mathbb Q_{\mathbf x_m,\mathbf0,t}}
       [\one_{\{N\le Rd\}}\Eenv[W_\Gamma^\omega]]\\
 &\quad=\sum_{\substack{0\le n\le Rd\\p_{d,m}(n)>0}}
 p_{d,m}(n)h^{*(n+1)}(t)
 \E_{\mathbb Q_{\mathbf x_m,\mathbf0,t}}
       [\Eenv[W_\Gamma^\omega]\mid N=n]\\
 &\quad\ll_{\varepsilon,R}e^{\varepsilon t}
 \sum_{n\le Rd}p_{d,m}(n)h^{*(n+1)}(t)
 \le e^{\varepsilon t}\widehat K_{d,m}(t).
\end{align*}
Infeasible counts contribute zero; all admissible counts below the cutoff,
including zero when feasible, satisfy this estimate.
\item For the remaining counts,
the pointwise bound $\Lambda_z\le\lambda_+$ bounds their contribution by
$$
 e^{-(1-\lambda_+)t}\sum_{n>Rd}\frac{t^n}{n!}.
$$
Since $\sum_{n>Rd}t^n/n!\le\exp(-Rd+et)$, increasing the fixed
$R$ if necessary makes this tail at most $e^{-c_0d}$ for $t\le C_0d$.
\end{itemize}
By the global upper bound \eqref{eq:global-oneobject-upper},
the contribution from $n\le Rd$ is
$\ll_{\varepsilon,R}d^{3/2}e^{-c_0d/2}$.
Absorbing the polynomial factor proves
\eqref{eq:precritical-annealed-upper}.
\end{proof}}

The next proposition bounds the expected total time spent at the target
by all particles before the deterministic center $\widehat\theta_{d,m}$. 
Write $M_t^\omega(x)=\int_t^{t+1}K_s^\omega(x,\mathbf0)\dd s$
for the expected target occupation over a unit time window.

\begin{proposition}
\label{prop:global-center}
Under Assumption~\ref{ass:environment}, there is $c>0$ such that, for
every fixed $L\ge0$, on events of environment probability tending to
one, whenever $\Lambda_{\mathbf0}\ge-L$, simultaneously for every $A\ge0$,
\begin{equation}
 \sup_{1\le m\le d}
 \int_0^{(\widehat\theta_{d,m}-A)_+}
 K_s^\omega(\mathbf{x}_m,\mathbf 0)\,\dd s
 \ll_L e^{-cA}+d^{-1}.                                \label{eq:global-backward}
\end{equation}
For each fixed $M$, the same bound holds with $\mathbf x_m$ in the
integrand replaced by any $y\ne\mathbf0$ satisfying
$d_{\rm H}(y,\mathbf x_m)\le M$, with an implicit constant depending also
on $M$. 
\end{proposition}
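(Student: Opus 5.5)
The plan is to bound the annealed mean of the occupation integral, split into three time ranges, and then pass from annealed to quenched with Markov's inequality and a union bound over $m$. The three ranges are
$$[0,\varepsilon d],\qquad [\varepsilon d,\ \widehat\theta_{d,m}-C\log d],\qquad [\widehat\theta_{d,m}-C\log d,\ \widehat\theta_{d,m}-A].$$
The conditioning $\Lambda_{\mathbf0}\ge-L$ is only needed for the lower bound in Lemma~\ref{lem:abstract-environment-concentration}(i). For upper bounds I use the unconditional bounds $B^{\rm end}\ll1$, which are uniform in the endpoint rates. So I can work conditionally on $\Lambda_{\mathbf x_m},\Lambda_{\mathbf0}$ throughout.

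\emph{Early times.} The pathwise bound $K_s^\omega(x,\mathbf0)\le e^{\lambda_+s}\Pp_x(X_s=\mathbf0)$ holds in every environment. Combined with the product heat kernel formula \eqref{eq:mutation-transition}, exactly as in the proof of Lemma~\ref{lem:independent-center-location}, it gives $\int_0^{2\varepsilon d}K_s^\omega\,\dd s\ll d^{-1}$ deterministically, uniformly in $m$. This range produces the $d^{-1}$ term.

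\emph{Middle times.} Lemma~\ref{lem:global-onebridge} applies on the compact set $\{\varepsilon\le u\le u_*(\theta)-\eta/2\}$, where $\widehat F\le -c_\eta$ by \eqref{eqtag:R1}. It gives $\Eenv[K_s^\omega]\le e^{-cd}$ there. In the window $[du_*-\eta d,\ \widehat\theta_{d,m}-C\log d]$ I instead use the conditional-mean bound $\Eenv[K_s^\omega\mid\Lambda_{\mathbf x_m},\Lambda_{\mathbf0}]\ll\widehat K_{d,m}(s)$. This is Lemma~\ref{lem:abstract-environment-concentration}(i), applied on the compact set $\mathcal K_{\rm rt}\subset\mathcal R$, or on a band around the root graph if the window is wider. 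Then \eqref{eq:deterministic-center-sign-estimates} gives $\widehat K\ll d^Be^{-ca}$. Choosing $C$ large makes the integral over this window $\ll d^{-2}$. Summing over the $d$ values of $m$, Markov's inequality shows that the sup over $m$ of these contributions is $O(d^{-1})$ off an event of probability $o(1)$.

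\emph{Final logarithmic window.} This is the main obstacle, because the claim must hold simultaneously for all $A$ and all $m$ on a single high-probability event. I decompose into unit windows $[\widehat\theta_{d,m}-k-1,\ \widehat\theta_{d,m}-k]$ for $A\le k\le C\log d$. On each window, \eqref{eq:global-fresh-backward-ratio} together with \eqref{eq:independent-center-equality} gives $\widehat M\ll e^{-\kappa_0k}$. For the quenched statement I would use the high-moment concentration in Lemma~\ref{lem:abstract-environment-concentration}(i): $X^\omega-B^{\rm end}$ has $2R$-th moment $\ll d^{-R/2}$, and $B^{\rm end}\ll 1$. The bound is applied at the discretized times $s$ in a $d^{-3}$ grid, with monotonicity/continuity of $K_s^\omega$ in $s$ controlled through the Kolmogorov equation with bounded generator. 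Taking $R$ large and union bounding over $m$, $k$ and the grid points, I get that with probability $1-o(1)$ we have $K_s^\omega(\mathbf x_m,\mathbf 0)\le C\widehat K_{d,m}(s)$ for all relevant $s$ and $m$. Summing the geometric series then gives $\ll e^{-\kappa_0A}$ for every $A$ at once, with $c=\kappa_0$. If the moment bound is too weak to make the union bound work, I would fall back to bounding the annealed sum $\sum_k e^{\kappa_0 k/2}\int_{\text{window}_k}K_s^\omega$, whose mean is $O(1)$, and applying Markov to this single random variable. That yields $\le e^{-\kappa_0A/2}$ times a tight constant, which is still enough.

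\emph{Nearby starts.} For $y$ with $d_{\rm H}(y,\mathbf x_m)\le M$ the same argument runs with $m$ replaced by $d_{\rm H}(y,\mathbf0)$. Two extra inputs are needed. First, $|\widehat\theta_{d,\ell}-\widehat\theta_{d,m}|\ll_M1$, which follows from \eqref{eq:global-fresh-harnack} together with the window comparisons. Second, the union bound over the at most $(bd)^M$ such $y$ is absorbed by taking $R$ larger.
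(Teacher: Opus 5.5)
Your argument is correct in outline, but it handles the window before the center differently from the paper.

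\textbf{The paper's route.} The paper never splits at $\widehat\theta_{d,m}-C\log d$. It first shows $\widehat K_{d,\ell}(s+1)\ge e^{\gamma}\widehat K_{d,\ell}(s)$ uniformly on the whole strip $|s/d-u_*(\ell/d)|\le\eta$. Hence $\widehat K_{d,\ell}$ decays exponentially backwards from $\widehat\theta_{d,\ell}+D$ across the entire strip, not only in a logarithmic window. It then forms one exponentially weighted occupation per start, $Z_y^\omega=\int_{du_*(\ell/d)-\eta d}^{\widehat\theta_{d,\ell}+D_M}e^{c(\widehat\theta_{d,\ell}+D_M-s)}K_s^\omega(y,\mathbf0)\dd s$. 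Its conditional mean is $O_M(1)$ by $B^{\rm end}\ll1$. Minkowski's integral inequality transfers \eqref{eq:abstract-high-moment} to $Z_y^\omega$, and a union bound over the $O(d^{M+1})$ starts gives $Z_y^\omega\ll_M1$ on a single event. Every $A$ is then handled at once, because the part of the occupation integral after $du_*-\eta d$ is at most $e^{-cA}Z_y^\omega$.

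\textbf{Your route.} You cover $[du_*-\eta d,\widehat\theta_{d,m}-C\log d]$ with the polynomial-loss estimate \eqref{eq:deterministic-center-sign-estimates} and annealed Markov. Take $C$ large enough that the mean summed over all starts is $o(d^{-1})$; a per-$m$ bound $\ll d^{-2}$ only gives $O(d^{-1})$, which Markov does not turn into an $o(1)$ failure probability. In the last window you prove the stronger pointwise comparison $K_s^\omega\le C\widehat K_{d,m}(s)$ at grid times by a union bound, then sum the geometric series from \eqref{eq:global-fresh-backward-ratio}. The paper's route needs no time discretization and does not use the lower bound on $\Lambda_{\mathbf0}$. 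Yours gives a pointwise comparison, but it requires a time-regularity step between grid times.

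\textbf{The time-regularity step fails as written.} A ``bounded generator'' does not help. The norm of $P-I+\operatorname{diag}(\Lambda)$ involves $\max_v|\Lambda_v|$, which is uncontrolled when the rate law is unbounded below. Absolute bounds on $\partial_sK_s^\omega$ are also exponentially large in $d$, so a Lipschitz estimate on a $d^{-3}$ grid yields nothing.

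What works is a relative one-sided bound. The forward equation gives $\partial_sK_s^\omega(x,\mathbf0)\ge-(1-\Lambda_{\mathbf0})K_s^\omega(x,\mathbf0)$. Hence $K_{s'}^\omega(x,\mathbf0)\le e^{(1+L)(s-s')}K_s^\omega(x,\mathbf0)$ for $s'\le s$ when $\Lambda_{\mathbf0}\ge-L$, and a unit grid suffices. So in your route the hypothesis $\Lambda_{\mathbf0}\ge-L$ is used exactly here, contrary to your remark. Alternatively, apply \eqref{eq:abstract-high-moment} directly to unit-window integrals via Minkowski, as the paper does.

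\textbf{Your fallback does not suffice.} Markov's inequality for a single variable of mean $O(1)$ gives only tightness for each start. It does not give a deterministic constant on one event of probability $1-o(1)$ simultaneously for all $m$ and all nearby starts. For that you need high-moment concentration of the weighted variable, which is precisely the paper's Step~2.
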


\begin{proof}
{
We first show that, near the center, the independent-visit kernel
decreases exponentially as we move backwards in time.  We then use
environmental concentration to bound the corresponding weighted
occupation integral in the static model.  Finally, we control earlier
times separately and remove the exponential weight to obtain the bound
for every backward shift $A\ge0$.

\noindent\emph{Step 1: exponential decay before the center.}
Choose a small fixed $\eta>0$ so that $2\eta<u_{\min}$,
$2\eta\le\eta_{\rm rt}$, and the closed
strip $|u-u_*(\theta)|\le2\eta$ lies in $\mathcal R$.
At $u=u_*(\theta)$, every maximizing saddle satisfies
$\kappa\ge\kappa_{\rm rt}>0$ by
Lemma~\ref{lem:global-root-model-only}.  By
Lemma~\ref{lem:root-active-stability}, we can decrease $\eta$ and choose
the tolerance $\varepsilon_{\rm ret}>0$ in \eqref{eqtag:GR0} sufficiently
small that
$$
 \left|\frac{s}{d}-u_*\!\left(\frac{\ell}{d}\right)\right|\le2\eta,
 \qquad n\in\mathcal I_d(\ell,s)
 \quad\Longrightarrow\quad
 \kappa_n(s)\ge\frac{\kappa_{\rm rt}}2,
$$
uniformly for all sufficiently large $d$.  Here the retained counts in
\eqref{eqtag:GR0} satisfy
$$
 G\!\left(\frac{\ell}{d},\frac{s}{d},\frac nd\right)
 \ge\widehat F\!\left(\frac{\ell}{d},\frac{s}{d}\right)
       -\varepsilon_{\rm ret},
$$
and $\kappa_n(s)$ is defined by \eqref{eqtag:GR0a}, namely
$-H'(\kappa_n(s))/H(\kappa_n(s))=s/(n+1)$.
Equations~\eqref{eq:UO2} and \eqref{eq:UO19}, with displacement one,
and the uniform clock local limit estimate give
$$
 \frac{\widehat K_{d,\ell}(s+1)}{\widehat K_{d,\ell}(s)}
 \ge(1-o(1))e^{\kappa_{\rm rt}/2}
 \ge e^\gamma,\qquad \gamma:=\frac{\kappa_{\rm rt}}4>0,
$$
uniformly for $1\le\ell\le d$ and
$|s/d-u_*(\ell/d)|\le\eta$, once $d$ is sufficiently large.
Fix this lower threshold on $d$ before iterating the last inequality.
Lemma~\ref{lem:independent-center-location} puts
$\widehat\theta_{d,\ell}+D$ inside this strip for every fixed $D\ge0$.
Iteration up to that time, followed by the bounded-shift comparison in
Proposition~\ref{prop:global-oneobject-shifts} for the remaining
displacement in $[0,1)$, yields
\begin{equation}
\begin{split}
 \widehat K_{d,\ell}(s)
 \ll e^{-\gamma(\widehat\theta_{d,\ell}+D-s)}
       \widehat K_{d,\ell}(\widehat\theta_{d,\ell}+D),\qquad 
 du_*(\ell/d)-\eta d\le s\le\widehat\theta_{d,\ell}+D.
\end{split}
 \label{eq:occupation-fresh-decay}
\end{equation}
The bounded-shift comparison and
\eqref{eq:independent-center-equality} give
$\widehat K_{d,\ell}(\widehat\theta_{d,\ell}+D)\asymp_D1$.
Fix $0<c<\gamma$.  Integrating \eqref{eq:occupation-fresh-decay}
therefore gives, uniformly in $\ell$,
\begin{equation}
 \int_{du_*(\ell/d)-\eta d}^{\widehat\theta_{d,\ell}+D}
 e^{c(\widehat\theta_{d,\ell}+D-s)}
 \widehat K_{d,\ell}(s)\dd s\ll_D1.
 \label{eq:occupation-fresh-weighted}
\end{equation}
The choices of $\eta,\gamma,c$ are independent of the starting radius.

\noindent\emph{Step 2: concentrate the weighted occupation.}
Fix $M<\infty$.  We first compare the centers at distances
$|\ell-m|\le M$, with $\ell,m\ge1$.
The Lipschitz bound on $u_*$ and
Lemma~\ref{lem:independent-center-location} put each center in the
logarithmic root band for both distances.  Since
$d_{\rm H}(\mathbf x_\ell,\mathbf x_m)=|\ell-m|$, Lemma~\ref{lem:global-fresh-harnack}
and \eqref{eq:independent-center-equality} give
$$
 \widehat M_{\widehat\theta_{d,m}+D}(\mathbf x_\ell)
 \gg_M e^{\kappa_0D}
$$
for every fixed $D\ge0$, with a prefactor independent of $D$.
Choose $D_M$ so that this lower bound exceeds one.  The first-crossing
definition \eqref{eq:thetahat definition} gives
$\widehat\theta_{d,\ell}\le\widehat\theta_{d,m}+D_M$.
Exchanging $\ell,m$ proves
\begin{equation}
 |\widehat\theta_{d,\ell}-\widehat\theta_{d,m}|\le D_M
 \qquad (|\ell-m|\le M,\ \ell,m\ge1).
 \label{eq:nearby-center-distance}
\end{equation}

For each required start $y\ne\mathbf0$, put
$\ell=d_{\rm H}(y,\mathbf0)$ and define its weighted occupation by
$$
 Z_y^\omega=
 \int_{du_*(\ell/d)-\eta d}^{\widehat\theta_{d,\ell}+D_M}
 e^{c(\widehat\theta_{d,\ell}+D_M-s)}
 K_s^\omega(y,\mathbf0)\dd s.
$$
Condition on $\Lambda_y,\Lambda_{\mathbf0}$, and use conditional
environmental $L^{2R}$ norms below.  The conditional mean bound in
Lemma~\ref{lem:abstract-environment-concentration}(i) and
\eqref{eq:occupation-fresh-weighted} give
$\Eenv[Z_y^\omega\mid\Lambda_y,\Lambda_{\mathbf0}]\ll_M1$.
For every fixed integer $R\ge1$ and $a>1$, Minkowski's integral
inequality and \eqref{eq:abstract-high-moment} give
\begin{align*}
 &\left\|Z_y^\omega-
       \Eenv[Z_y^\omega\mid\Lambda_y,\Lambda_{\mathbf0}]\right\|_{2R}\ll_{R,a}d^{-(1-1/a)/2}
 \int_{du_*(\ell/d)-\eta d}^{\widehat\theta_{d,\ell}+D_M}
 e^{c(\widehat\theta_{d,\ell}+D_M-s)}
 \widehat K_{d,\ell}(s)\dd s
 \ll_{M,R,a}d^{-(1-1/a)/2}.
\end{align*}
All these estimates are uniform in the conditioned endpoint rates.
There are at most $C_Md^{M+1}$ required starts.  After averaging the
conditional moment bound, Markov's inequality and a union bound,
with $R(1-1/a)>M+2$, give a single environment event of probability
tending to one on which $Z_y^\omega\ll_M1$ for every such $y$.

\noindent\emph{Step 3: earlier times and all backward shifts.}
Choose $\varepsilon>0$ small enough that
$2\varepsilon<u_{\min}-\eta$.
Homogeneous domination and the exact Hamming heat kernel give
$$
 K_s^\omega(y,\mathbf0)
 \le e^{-(1-\lambda_+)s}A_b(s/d)^\ell C_b(s/d)^{d-\ell}.
$$
For small $u$, $A_b(u)\le Cu$, $A_b(u)\le C_b(u)$, and
$\log C_b(u)\le Cu^2$.  Decreasing $\varepsilon$ if necessary and
using $\ell\ge1$ proves the pathwise bound
\begin{equation}
 \sup_{y\ne\mathbf0}\int_0^{\varepsilon d}
 K_s^\omega(y,\mathbf0)\dd s
 \ll\frac1d\int_0^\infty se^{-(1-\lambda_+)s/2}\dd s
 \ll d^{-1}.
 \label{eq:global-initial}
\end{equation}
On the compact set
$\{(\theta,u):\theta\in[0,1],
\varepsilon\le u\le u_*(\theta)-\eta\}$,
$\widehat F$ has a strictly negative maximum by \eqref{eqtag:R1}.
Lemma~\ref{lem:global-onebridge} and invariance of the environment law
under Hamming-cube automorphisms therefore give
$$
 \Eenv\!\left[
 \sum_{m=1}^d
 \sum_{\substack{y\ne\mathbf0\\d_{\rm H}(y,\mathbf x_m)\le M}}
 \int_{\varepsilon d}^{du_*(d_{\rm H}(y,\mathbf0)/d)-\eta d}
 K_s^\omega(y,\mathbf0)\dd s\right]
 \ll_M d^{M+2}e^{-c_1d}.
$$
Markov's inequality makes this sum at most $e^{-c_1d/2}$ on another
environment event of probability tending to one.

Finally, we intersect the two events above.  Fix any eligible $m,y$ and $A\ge0$.
If $(\widehat\theta_{d,m}-A)_+>du_*(\ell/d)-\eta d$, then every $s$
between these two times satisfies
$\widehat\theta_{d,\ell}+D_M-s\ge A$ by
\eqref{eq:nearby-center-distance}.  Hence, this part of the occupation
integral is at most $e^{-cA}Z_y^\omega\ll_M e^{-cA}$.
If the interval is empty it contributes zero.  The earlier part is
bounded by \eqref{eq:global-initial} and the preceding exponential
bound.  This proves \eqref{eq:global-backward} and its nearby-start
version, simultaneously for every $A\ge0$, on the same event.}
\end{proof}

Proposition \ref{prop:global-center} does not directly 
give a lower bound to the FPT, because a small population at
one time would not rule out an earlier visit followed by departure.
The idea is to show that an early hit produces a positive expected amount of occupation
during the following unit time interval; see Section \ref{sec:all-distance-proof}. 

\begingroup
\subsubsection{A positive hitting probability}

We first bound the annealed $(1+q)$-moment of $N_t(\mathbf0)$.
We then obtain a lower bound on $\Pw_y(N_t(\mathbf0)>0)$, which also
bounds the hitting probability from below since
$\{N_t(\mathbf0)>0\}\subset\{\tau_y^\omega\le t\}$.

The next two propositions use the parameter range
{
\begin{equation}
 1\le m\le d,\qquad d_{\mathrm H}(y,\mathbf{x}_m)\le M,\qquad
 \left|t-du_*\!\left(\frac md\right)\right|\le C_0\log d,
 \label{eq:global-onebridge-range}
\end{equation}}
where $C_0,M<\infty$ are fixed.

Define the branching integral
\begin{equation}
 \mathcal C_{q,t}^{\omega}(x)
 =2\int_0^t 
 \sum_vK_r^\omega(x,v)(\Lambda_v)_+
       K_{t-r}^\omega(v,\mathbf0)^{1+q}\dd r,
 \label{eq:quenched-branching-integral}
\end{equation}
and write $\mathcal C_{q,t}^{\omega,I}$ for its restriction to
$r\le t-\delta d$.

\begin{proposition}
\label{prop:quenched-fractional-diagram}
Fix $q\in(0,q_0]$ as in Section~\ref{sec:initial-branch}.
Uniformly in the starting vertices and times in
\eqref{eq:global-onebridge-range}, and in the conditioned endpoint rates,
\begin{equation}
 \Eenv\!\left[\Ew_y[N_t(\mathbf0)^{1+q}]
          \bigm|\Lambda_y,\Lambda_{\mathbf0}\right]
 \ll \widehat K_t(y,\mathbf0)
          +\widehat K_t(y,\mathbf0)^{1+q}.
 \label{eq:annealed-population-moment}
\end{equation}
The constant is independent of every subsequently fixed starting radius
$M$; only the lower threshold on $d$ may depend on $M$.
\end{proposition}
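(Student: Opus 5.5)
The plan starts from the quenched moment inequality \eqref{eq:moment-connection}. It comes from first-event conditioning, together with the elementary estimate $\E[(Z+Z')^{1+q}-Z^{1+q}-(Z')^{1+q}]\le 2(\E Z)^{1+q}$ for the two i.i.d.\ descendant families at a split. It gives
\[
\Ew_y[N_t(\mathbf0)^{1+q}]\le K_t^\omega(y,\mathbf0)+\mathcal C_{q,t}^{\omega}(y),
\]
with $\mathcal C_{q,t}^\omega$ from \eqref{eq:quenched-branching-integral}. The first term is handled by Lemma~\ref{lem:abstract-environment-concentration}(i): conditionally on $\Lambda_y,\Lambda_{\mathbf0}$, its mean is $B^{\rm end}\widehat K_t(y,\mathbf0)\ll\widehat K_t(y,\mathbf0)$, uniformly in the endpoint rates. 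This holds because the parameters \eqref{eq:global-onebridge-range} place $(d_{\rm H}(y,\mathbf0)/d,t/d)$ in $\mathcal K_{\rm rt}\subset\mathcal R$. We then split $\mathcal C_{q,t}^\omega$ at $r=t-\delta d$ into the nonterminal part $\mathcal C_{q,t}^{\omega,I}$ and a terminal part.

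For the terminal part, $s=t-r\le\delta d$, and we argue pathwise exactly as in Step~3 of Theorem~\ref{thm:globalfractionaldiagram}. Homogeneous domination and the Hamming heat kernel give $\sup_v K_s^\omega(v,\mathbf0)\le e^{-(1-\lambda_+)s/2}$ in every environment. We bound $K_s^\omega(v,\mathbf0)^q$ by this supremum, $(\Lambda_v)_+$ by $\lambda_+$, and apply the static semigroup identity $\sum_vK_r^\omega(y,v)K_{t-r}^\omega(v,\mathbf0)=K_t^\omega(y,\mathbf0)$. Integrating in $s$ then bounds the terminal part by $C K_t^\omega(y,\mathbf0)$. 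By Lemma~\ref{lem:abstract-environment-concentration}(i) again, its conditional mean is $\ll\widehat K_t(y,\mathbf0)$.

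For the nonterminal part we follow the transfer scheme sketched in Section~\ref{sec:proofstrategy}. We write the integrand as the reference weight $2\bar\lambda\widehat K_r(y,v)\widehat K_{t-r}(v,\mathbf0)^{1+q}$ times the correction
\[
\frac{K_r^\omega(y,v)}{\widehat K_r(y,v)}\,\frac{(\Lambda_v)_+}{\bar\lambda}\,a^{1+q},\qquad a=\frac{K_{t-r}^\omega(v,\mathbf0)}{\widehat K_{t-r}(v,\mathbf0)},
\]
and use $a^{1+q}\le a+a^2$. Normalizing the reference weights by $\widehat{\mathcal C}^I_{q,t}(y)$ gives the split law \eqref{eq:weighted-split-law}. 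By the bridge-averaging identity $X^\omega=K^\omega/\widehat K$ for each segment, applied to the trunk and to each independent arm, we obtain
\[
\mathcal C^{\omega,I}_{q,t}(y)\le \widehat{\mathcal C}^I_{q,t}(y)\,\bigl(X^\omega_{(1)}+X^\omega_{(2)}\bigr),
\]
where $X^\omega_{(j)}$ is the diagram average \eqref{eq:diagram-environment-average} under $\mathbb Q_{q,j;y,t}$. Lemma~\ref{lem:abstract-environment-concentration}(ii) gives $\Eenv[X^\omega_{(j)}\mid\Lambda_y,\Lambda_{\mathbf0}]\ll1$ uniformly for $j\in\{1,2\}$ and the endpoint rates. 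Proposition~\ref{prop:fractional-short-trunk} gives $\widehat{\mathcal C}^I_{q,t}(y)\asymp\widehat K_t(y,\mathbf0)^{1+q}$. Together these bound the nonterminal conditional mean by $\ll\widehat K_t(y,\mathbf0)^{1+q}$. Summing the three pieces yields \eqref{eq:annealed-population-moment}.

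The main obstacle is justifying the factorization with conditioning on the endpoint rates. The trunk ends at the random split vertex $v$, and the arms restart there, so the rate at $v$ is reused by the trunk, by the arms and by the branching factor. This is exactly why the diagram factor \eqref{eq:diagram-site-factor} includes $(\lambda)_+/\bar\lambda$ and all holdings at $v$ in a single site product, and why $I_1$ counts them. We must check that the pathwise identity
\[
K_r^\omega(y,v)(\Lambda_v)_+K_{t-r}^\omega(v,\mathbf0)^j=\bar\lambda\widehat K_r\widehat K_{t-r}^j\,\E\Bigl[\tfrac{(\Lambda_v)_+}{\bar\lambda}W_{\Gamma_0}\prod_eW_{\Gamma_e}\Bigm| r,v\Bigr]
\]
holds exactly, which follows from independence of the trunk and arm bridges given $(r,v)$. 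We also need the case $v\in\{y,\mathbf0\}$ to be covered by the endpoint treatment in Step~2 of that lemma, where all factors at $y$ and $\mathbf0$ are bounded rather than averaged. Uniformity in $M$ comes from the $M$-independence in Propositions~\ref{prop:fractional-short-trunk} and~\ref{lem:qdiagram-geometry}.
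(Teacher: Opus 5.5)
Your proposal is correct and follows the paper's proof step for step. The steps match: the first-event-conditioning inequality, the pathwise bound on the short terminal arm via the semigroup identity, and the nonterminal part via the split law, $a^{1+q}\le a+a^2$, Lemma~\ref{lem:abstract-environment-concentration}(ii) and \eqref{eq:qnormalizer-bounds}, with Lemma~\ref{lem:abstract-environment-concentration}(i) for both $K_t^\omega(y,\mathbf0)$ terms. The factorization you flag as the main obstacle is immediate in a fixed environment, as in \eqref{eq:exact-qratio}, and the paper adds only one detail: finiteness of all moments via Yule domination, to justify the first-event conditioning.
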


\begin{proof}
For independent copies $Z,Z'$ of a nonnegative random variable,
subadditivity of the $q$-th power and Jensen's inequality give
$$
 \E[(Z+Z')^{1+q}-Z^{1+q}-(Z')^{1+q}]
 \le2\E[Z]\E[Z^q]\le2\E[Z]^{1+q}.
$$
First-event conditioning in the fixed environment, followed by
variation of constants, therefore yields
\begin{equation}
 \Ew_y[N_t(\mathbf0)^{1+q}]
 \le K_t^\omega(y,\mathbf0)
 +2\int_0^t\sum_vK_r^\omega(y,v)(\Lambda_v)_+
                 K_{t-r}^\omega(v,\mathbf0)^{1+q}\dd r
 =K_t^\omega(y,\mathbf0)+\mathcal C_{q,t}^{\omega}(y).
 \label{eq:population-branching-inequality}
\end{equation}
Note that all moments are finite: suppressing deaths and increasing splitting
rates to $\lambda_+$ dominates the population by a Yule process.

For $h=t-r\le\delta d$, the exact mutation kernel and
$\log C_b(u)\ll u^2$ give, after decreasing $\delta$,
$$
 \sup_vK_h^\omega(v,\mathbf0)
 \le e^{\lambda_+h}\Pp_{\mathbf0}(X_h=\mathbf0)
 \le e^{-(1-\lambda_+)h/2}.
$$
The semigroup identity and $(\Lambda_v)_+\le\lambda_+$ then imply
$$
 \mathcal C_{q,t}^{\omega}(y)
      -\mathcal C_{q,t}^{\omega,I}(y)
 \le2\lambda_+K_t^\omega(y,\mathbf0)
       \int_0^{\delta d}e^{-q(1-\lambda_+)h/2}\dd h
 \ll K_t^\omega(y,\mathbf0),
$$
since $q(1-\lambda_+)>0$.

For the other split times, the scalar split law gives
\begin{equation}
 \frac{\mathcal C_{q,t}^{\omega,I}(y)}
      {\widehat{\mathcal C}_{q,t}^{I}(y)}
 =\E_{\mathbb Q_{q;y,t}}\!\left[
 \frac{(\Lambda_v)_+}{\bar\lambda}
 \frac{K_r^\omega(y,v)}{\widehat K_r(y,v)}
 \left(\frac{K_{t-r}^\omega(v,\mathbf0)}
             {\widehat K_{t-r}(v,\mathbf0)}\right)^{1+q}\right]
 \le X_{q,1}^\omega+X_{q,2}^\omega.
 \label{eq:exact-qratio}
\end{equation}
The inequality uses $a^{1+q}\le a+a^2$, and the two terms are exactly
the single-diagram averages in \eqref{eq:diagram-environment-average}.
Their conditional environmental means are bounded by
Lemma~\ref{lem:abstract-environment-concentration}(ii).
Together with \eqref{eq:qnormalizer-bounds}, this gives
$$
 \Eenv[\mathcal C_{q,t}^{\omega,I}(y)
                  \mid\Lambda_y,\Lambda_{\mathbf0}]
 \ll \widehat K_t(y,\mathbf0)^{1+q}.
$$
Finally, the conditional mean bound in
Lemma~\ref{lem:abstract-environment-concentration}(i) gives
$\Eenv[K_t^\omega(y,\mathbf0)\mid\Lambda_y,\Lambda_{\mathbf0}]
\ll\widehat K_t(y,\mathbf0)$.
Averaging \eqref{eq:population-branching-inequality} proves
\eqref{eq:annealed-population-moment}, with the stated uniformity.
\end{proof}
\endgroup

\begingroup
\begin{proposition}
\label{prop:quenched-barrier}
For every fixed $L\ge0$, there is $p_L>0$ with the following property.
For each fixed $M<\infty$, one can choose a fixed $a_M\ge0$ such that,
on events of environment probability tending to one, simultaneously
for $1\le m\le d$ and $d_{\mathrm H}(y,\mathbf x_m)\le M$,
\begin{equation}
 \Pw_y\!\left(N_{\widehat\theta_{d,m}+a_M}(\mathbf0)>0\right)
 \ge p_L
 \qquad\text{whenever }\Lambda_y,\Lambda_{\mathbf0}\ge-L.
 \label{eq:quenched-hit}
\end{equation}
The success probability $p_L$ is independent of $M$.
\end{proposition}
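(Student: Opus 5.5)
The plan has three parts: a conditional-mean lower bound from fractional Paley--Zygmund, concentration of the quenched success probability about that mean, and a union bound over the polynomially many starts. Fix $L$ and $M$, and set $t=\widehat\theta_{d,m}+a_M$. By Lemma~\ref{lem:independent-center-location}, $t$ lies in the logarithmic band \eqref{eq:global-onebridge-range}. We choose $a_M$ through Lemma~\ref{lem:global-fresh-harnack} and \eqref{eq:independent-center-equality}. Together with the bounded-shift comparison in Proposition~\ref{prop:global-oneobject-shifts}, this gives $\widehat K_t(y,\mathbf0)\ge1$ for every $y$ with $d_{\mathrm H}(y,\mathbf x_m)\le M$, and also $\widehat K_t(y,\mathbf0)\ll_M1$.

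\emph{Step 1: the conditional mean of the success probability.} Write $N=N_t(\mathbf0)$. Fractional Paley--Zygmund (Hölder with exponents $1+q$ and $(1+q)/q$) gives, pointwise in $\omega$,
\[
\Pw_y(N>0)\ge \frac{(\Ew_y N)^{(1+q)/q}}{(\Ew_y N^{1+q})^{1/q}}.
\]
Taking conditional environmental means given $\Lambda_y,\Lambda_{\mathbf0}$ is awkward directly, because the map $(a,b)\mapsto a^{(1+q)/q}b^{-1/q}$ is jointly convex. Jensen's inequality therefore gives
\[
\Eenv[\Pw_y(N>0)\mid\Lambda_y,\Lambda_{\mathbf0}]\ge \frac{B^{(1+q)/q}}{D^{1/q}},
\]
where $B=\Eenv[K_t^\omega(y,\mathbf0)\mid\cdot]$ and $D=\Eenv[\Ew_yN^{1+q}\mid\cdot]$. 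Lemma~\ref{lem:abstract-environment-concentration}(i) gives $B\gg_L\widehat K_t(y,\mathbf0)$ when $\Lambda_y,\Lambda_{\mathbf0}\ge-L$. Proposition~\ref{prop:quenched-fractional-diagram} gives $D\ll\widehat K_t+\widehat K_t^{1+q}$. Since $\widehat K_t\ge1$, the ratio is at least some $2p_L>0$ that does not depend on $M$. The reason is that the $M$-dependence enters only through the size of $\widehat K_t$, and the ratio $\widehat K_t^{(1+q)/q}/(2\widehat K_t^{1+q})^{1/q}$ is bounded below uniformly in it.

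\emph{Step 2: concentration of $\Pw_y(N>0)$.} This is the main obstacle. I would use the martingale resampling scheme of Step~4 in Lemma~\ref{lem:abstract-environment-concentration}, applied to $X^\omega=\Pw_y(N>0)$ in place of the path average. Resampling $\Lambda_z$ at a nonendpoint $z$ can change the event $\{N>0\}$ only through realisations in which some particle contributing to $N$ has an ancestral line visiting $z$. Coupling the two environments by monotonicity, $|X-X^{(z)}|$ is bounded by the probability under the larger of the two environments that some target particle at time $t$ has visited $z$. This is at most a first moment, $\E\bigl[\#\{\text{target particles whose ancestry visits }z\}\bigr]=\widehat K_t(y,\mathbf0)\,H_z$ in the notation of Lemma~\ref{lem:abstract-environment-concentration}, evaluated with the maximum of the two rates at $z$. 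The localized product-moment bound \eqref{eq:localized-product-moment} then gives $\|X-X^{(z)}\|_p\ll a_z^{1-1/r}$. The same computation as before, using $\sum a_z\ll d$ and $\sum a_z^2\ll d^{-1}$, yields
\[
\Eenv\bigl[|X-\Eenv[X\mid\Lambda_y,\Lambda_{\mathbf0}]|^{2R}\mid\Lambda_y,\Lambda_{\mathbf0}\bigr]\ll d^{-R(1-1/s)}.
\]
The delicate point is that replacing $\Lambda_z$ by the maximum of the two rates keeps $\Eenv[Y_z]$ bounded rather than equal to one. Only a constant factor is lost at the single vertex $z$, and this is absorbed in $K_\varepsilon$.

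\emph{Step 3: union bound.} There are at most $C_Md^{M+1}$ pairs $(m,y)$. Choose $R$ with $R(1-1/s)>M+2$. Markov's inequality then gives an event of environment probability $1-o(1)$ on which every deviation is at most $p_L$, uniformly in the conditioned endpoint rates. On that event, $\Pw_y(N>0)\ge 2p_L-p_L=p_L$ whenever $\Lambda_y,\Lambda_{\mathbf0}\ge-L$, which is \eqref{eq:quenched-hit}.
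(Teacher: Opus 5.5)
Your proposal is correct and follows the paper's proof essentially step for step: fractional Paley--Zygmund for the conditional mean (the paper applies it directly to the annealed conditional moments, which is equivalent to your quenched-then-joint-convexity route), martingale resampling using $\sum_z a_z\ll d$ and $\sum_z a_z^2\ll d^{-1}$, and a union bound over $O_M(d^{M+1})$ pairs. The one real variation is how you bound $|p_t^\omega(y)-p_t^{\omega^{(z)}}(y)|$. You use a monotone coupling and the environment with rate $\max(\Lambda_z,\Lambda_z')$ at $z$, which costs at most a factor $2$ in the exact average at $z$; the paper instead compares both environments with the process killed on entering $z$, whose law does not depend on the rate at $z$, obtaining $k(H_z^\omega+H_z^{\omega^{(z)}})$ without altering the rate law at $z$.
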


\begin{proof}

The proof proceeds by first applying
Proposition~\ref{prop:quenched-fractional-diagram} and fractional
Paley--Zygmund to bound below the environmental mean of
$\Pw_y(N_t(\mathbf0)>0)$, conditional on the endpoint rates.  Then, we
show that this probability is concentrated at its conditional
environmental mean by a resampling argument.

\noindent\emph{Step 1: the conditional environmental mean.}
Fix $y,t$ in \eqref{eq:global-onebridge-range}, write
$k=\widehat K_t(y,\mathbf0)$, and put
$p_t^\omega(y)=\Pw_y(N_t(\mathbf0)>0)$.
All environmental expectations and $L^p$ norms in this proof are
conditional on $\Lambda_y,\Lambda_{\mathbf0}$.
If $y=\mathbf0$, this means conditioning on the single rate
$\Lambda_{\mathbf0}$.
When these two rates are at least $-L$,
Lemma~\ref{lem:abstract-environment-concentration}(i) and
Proposition~\ref{prop:quenched-fractional-diagram} give
$$
 \Eenv[\Ew_y[N_t(\mathbf0)]]\gg_L k,
 \qquad
 \Eenv[\Ew_y[N_t(\mathbf0)^{1+q}]]\ll k+k^{1+q}.
$$
The fractional Paley--Zygmund inequality
\cite[Lemma~4.4]{AngelHutchcroftJarai2021} therefore gives
\begin{equation}
 \Eenv[p_t^\omega(y)\mid\Lambda_y,\Lambda_{\mathbf0}]
 \ge\left(
 \frac{\Eenv[\Ew_y[N_t(\mathbf0)]]^{1+q}}
      {\Eenv[\Ew_y[N_t(\mathbf0)^{1+q}]]}
       \right)^{1/q}
 \gg_L\frac{k}{1+k}.
 \label{eq:annealed-success-lower}
\end{equation}

\noindent\emph{Step 2: concentration of the success probability.}
Let $\omega^{(z)}$ resample the rate at a vertex
$z\notin\{y,\mathbf0\}$.
{Kill each particle when it first enters $z$, and write
$p_t^{\neg z}(y)$ for {\color{black}the probability that the resulting process
has a particle at the target at time $t$}.
This process uses only rates outside $z$, so its law is the same in
$\omega$ and $\omega^{(z)}$.  It can be obtained from either original
genealogy by deleting every descendant of a particle entering $z$.
If this deletion removes a success at time $t$, an original target
particle has an ancestral path through $z$.
Let $kH_z^\omega$ be the quenched expected number of such target particles.
Then
$$
 0\le p_t^\omega(y)-p_t^{\neg z}(y)\le kH_z^\omega.
$$
Apply the same bound in $\omega^{(z)}$.  The triangle inequality and
the many-to-one path expansion give}
\begin{equation}
 H_z^\omega
 =\E_{\mathbb Q_{y,\mathbf0,t}}
   [\one_{\{z\in\operatorname{Supp}(\Gamma)\}}W_\Gamma^\omega],
 \qquad
 |p_t^\omega(y)-p_t^{\omega^{(z)}}(y)|
 \le k(H_z^\omega+H_z^{\omega^{(z)}}).
 \label{eq:success-resampling}
\end{equation}
Put $a_z=\mathbb Q_{y,\mathbf0,t}(z\in\operatorname{Supp}(\Gamma))$
for $z\notin\{y,\mathbf0\}$.
As in the path case of
Lemma~\ref{lem:abstract-environment-concentration},
$\sum_{z\notin\{y,\mathbf0\}}a_z\ll d$ and
$\sum_{z\notin\{y,\mathbf0\}}a_z^2\ll d^{-1}$.
For every integer $R\ge1$ and $r>1$,
\eqref{eq:localized-product-moment}, applied to $2R$ independent
bridges all using $z$, gives
$\Eenv[(H_z^\omega)^{2R}]\ll_{R,r}a_z^{2R(1-1/r)}$.
Apply the martingale resampling inequality \eqref{eq:path-resampling-moment} to
\eqref{eq:success-resampling}.  Taking $r=4s$, for $s>1$, yields
\begin{align*}
 \|p_t^\omega(y)-\Eenv[p_t^\omega(y)
                    \mid\Lambda_y,\Lambda_{\mathbf0}]\|_{2R}^2
 &\ll_{R,s} k^2\sum_{z\notin\{y,\mathbf0\}}a_z^{2-2/r}\\
 &\ll_{R,s}k^2
       \left(\sum_{z\notin\{y,\mathbf0\}}a_z\right)^{2/r}
       \left(\sum_{z\notin\{y,\mathbf0\}}a_z^2\right)^{1-2/r}
 \ll_{R,s} k^2d^{-(1-1/s)}.
\end{align*}
Consequently,
\begin{equation}
 \Eenv\!\left[
  \left|p_t^\omega(y)-\Eenv[p_t^\omega(y)
            \mid\Lambda_y,\Lambda_{\mathbf0}]\right|^{2R}
   \bigm|\Lambda_y,\Lambda_{\mathbf0}\right]
 \ll_{R,s} k^{2R}d^{-R(1-1/s)}.
 \label{eq:success-high-moment}
\end{equation}
The estimate is uniform in the conditioned rates and holds on any
fixed compact subset of $\mathcal R$; averaging also gives its
unconditional version.

\noindent\emph{Step 3: concluding the proof.}
At the deterministic center,
$\widehat M_{\widehat\theta_{d,m}}(\mathbf x_m)=1$ by
\eqref{eq:independent-center-equality}.
The bounded-time comparison in
{Proposition~\ref{prop:global-oneobject-shifts}} and the Harnack bound
\eqref{eq:global-fresh-harnack} give, for every fixed $a\ge0$ and
all sufficiently large $d$,
$$
 \widehat K_{\widehat\theta_{d,m}+a}(y,\mathbf0)
 \asymp\widehat M_{\widehat\theta_{d,m}+a}(y)
 \gg_M e^{\kappa_0a},\qquad d_{\mathrm H}(y,\mathbf x_m)\le M.
$$
Choose a sufficiently large fixed $a_M$ for the lower bound in
\begin{equation}
 1\le\widehat K_{\widehat\theta_{d,m}+a_M}(y,\mathbf0)
 \le C_M<\infty,
 \qquad d_{\mathrm H}(y,\mathbf x_m)\le M,
 \label{eq:success-common-time-kernel}
\end{equation}
uniformly in $m$ for all sufficiently large $d$.
For the upper bound, we use the fixed-distance comparison in the proof of
Lemma~\ref{lem:global-fresh-harnack}, with the two distances exchanged,\footnote{The same coefficient and clock estimates apply when
$y=\mathbf0$, since the retained-count and local limit estimates
include $\theta=0$.}
and then the bounded-time comparison at distance $m$.
These times satisfy \eqref{eq:global-onebridge-range} with a fixed
coefficient of $\log d$ independent of $M$, by
\eqref{eq:all-distance-center-log}; only the threshold on $d$ changes.

By \eqref{eq:annealed-success-lower}, the conditional mean at each
such time is at least $2p_L>0$, with $p_L$ independent of $M$.
There are $O_M(d^{M+1})$ pairs $(m,y)$.
Choose $R$ so that $R(1-1/s)>M+2$ in
\eqref{eq:success-high-moment}.  Markov's inequality, the upper bound
in \eqref{eq:success-common-time-kernel}, and a union bound show that
all success probabilities differ from their conditional means by at
most $p_L$ on one event of environment probability tending to one.
This proves \eqref{eq:quenched-hit}.
\end{proof}
\endgroup

\section{Proof of main results}
\label{sec:main-results-proofs}

\subsection{Proof of Theorem~\ref{thm:sparse-expansion}}
\label{sec:sparse-expansion-proof}

\paragraph{Right derivative of $u_*$ at zero.}

To expand the deterministic center at small starting distance, we compute
the right derivative of $u_*$ at zero, using
$\widehat F(\theta,u_*(\theta))=0$.  Write $u_0=u_*(0)$.
Recall from the paragraph following
Definition~\ref{def:maximizing-saddle} that $\mathcal S_*$ is the set
of maximizing saddles at $(0,u_0)$ and
$\mathcal K_*=\{\kappa:(\nu,z,\kappa)\in\mathcal S_*\}$ is the set
of their clock tilts.

For the maximizing saddles considered here,
Lemma~\ref{lem:variational-regularity} gives $\nu>\theta$,
$0<z<\infty$, and $\lambda_+-1<\kappa<\infty$.
Recall that $J_\theta$ satisfies
$$
 J_\theta(z)
 =z\frac{\dd}{\dd z}
   \bigl(\theta\log A_b(z)+(1-\theta)\log C_b(z)\bigr).
$$
Setting the derivatives of the spatial objective in \eqref{eq:Psi} with respect
to $z$, $G$ with respect to $\nu$, and the clock objective in
\eqref{eq:chi} with respect to $\kappa$ equal to zero gives, respectively,
\begin{equation}
 \nu=J_\theta(z),\qquad
 z=\nu H(\kappa),\qquad
 u=-\nu\frac{H'(\kappa)}{H(\kappa)}.
 \label{eq:saddlesystem}
\end{equation}
The middle identity uses
$\partial_\nu G=\log\nu-\log z+\log H(\kappa)=0$.

Although $G$ is smooth near its maximizers, its maximum over $\nu$ can
have a corner when the maximizing count changes.
Danskin's formula \eqref{eq:compact-max-Hadamard} gives the directional
derivative of this maximum by taking the largest directional derivative
over the maximizing counts.  The right Hadamard derivative $D_H^+$
from Section~\ref{sec:root-curve} permits converging directions
$(\alpha,\beta)$ as the positive step size tends to zero.  We need this
when evaluating at $(\theta,u_*(\theta))$ as $\theta\downarrow0$.
Feasible directions satisfy $\alpha\ge0$, since $\theta\ge0$.

\begin{lemma}
\label{lem:sparsederivative}
The sets $\mathcal S_*$ and $\mathcal K_*$ are nonempty and compact, and
every $\kappa\in\mathcal K_*$ satisfies $\kappa>0$ and $H(\kappa)>1$.
For every $\alpha\ge0$ and $\beta\in\mathbb R$, the following right
directional derivative exists:
\begin{equation}
 D_H^+\widehat F(0,u_0)(\alpha,\beta)
 =\max_{\kappa\in\mathcal K_*}
     \bigl(\beta\kappa-\alpha\log H(\kappa)\bigr).              \label{eq:HadamardF}
\end{equation}
Consequently, \eqref{eq:sparseright} holds.
\end{lemma}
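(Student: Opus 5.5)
We prove the three assertions of Lemma~\ref{lem:sparsederivative} in order, working entirely on a compact neighbourhood of the root point $(0,u_0)$ inside the regular region $\mathcal R$.

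\emph{Step 1: the saddle set.} By Lemma~\ref{lem:global-root-model-only}, $u_0=u_*(0)$ exists and satisfies $\widehat F(0,u_0)=0$. Lemma~\ref{lem:variational-regularity} then gives the following: maximizing counts exist; every one satisfies $\nu>0$; the inner minimizers are interior and unique. Hence $\mathcal S_*$ is nonempty. Compactness of $\mathcal S_*$ follows from Lemma~\ref{lem:root-active-stability}, since $\mathcal S_*$ is the $\theta=0$ slice of $\mathcal R_*$. Its image $\mathcal K_*$ under the continuous map $(\nu,z,\kappa)\mapsto\kappa$ is therefore compact. Positivity $\kappa\ge\kappa_{\rm rt}>0$ is part of Lemma~\ref{lem:global-root-model-only}.

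For $H(\kappa)>1$, we use the stationarity system \eqref{eq:saddlesystem} at $\theta=0$. It gives $\nu=J_0(z)=zA_b(z)/C_b(z)$ and $z=\nu H(\kappa)$. Combining these, $H(\kappa)=C_b(z)/A_b(z)$, and $A_b<C_b$ for $z>0$ gives $H(\kappa)>1$. We also note, for the next step, that $\partial_\theta G=\log(A_b/C_b)=-\log H(\kappa)$.

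\emph{Step 2: the directional derivative.} We apply the compact-max framework of Section~\ref{sec:root-curve} with $p=\theta$, $t=u$ and $g=G$, restricted to a finite union of small closed $\nu$-intervals around the maximizing counts. The envelope formulas \eqref{eq:Genvelope} give $\partial_\theta G=\log(A_b(z)/C_b(z))$ and $\partial_uG=\kappa$. These are jointly continuous because the inner minimizers are smooth there, by the implicit function theorem with $J_\theta'>0$ and $\operatorname{Var}_{f_\kappa}>0$. At $\theta=0$ we need the one-sided version; the proof of Lemma~\ref{lem:global-root-model-only} notes that the spatial minimizer extends smoothly to $\theta<0$, so $G$ has a smooth extension across $\theta=0$. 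Danskin's formula \eqref{eq:compact-max-Hadamard} then gives the Hadamard derivative as the maximum over $\mathcal S_*$ of $\alpha\partial_\theta G+\beta\partial_u G=\beta\kappa-\alpha\log H(\kappa)$. This quantity depends on the saddle only through $\kappa$, which yields \eqref{eq:HadamardF}.

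\emph{Step 3: the root derivative.} We apply \eqref{eq:compact-max-root-Hadamard} with $\alpha=1$. Since every $\partial_uG=\kappa>0$, the right derivative of $u_*$ at $0$ equals $\min_{\kappa\in\mathcal K_*}\log H(\kappa)/\kappa=r_+$. This is positive by Step~1, and the minimum is attained because $\mathcal K_*$ is compact and the ratio is continuous. Hadamard differentiability along the sequence of directions gives $u_*(\theta)=u_0+r_+\theta+o(\theta)$, which is \eqref{eq:sparseright}.

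The expansion \eqref{eq:sparse-center-expansion} then follows by substituting $\theta=m_d/d$ into \eqref{eq:all-distance-center-log}: the term $d\,o(m_d/d)$ is $o(m_d)$.

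The main obstacle is justifying Danskin at the boundary $\theta=0$. This requires the uniform confinement of nearby maximizers to the chosen compact $\nu$-neighbourhood, which comes from upper semicontinuity in Lemma~\ref{lem:variational-regularity}, together with joint continuity of $\partial_\theta G$ for the extended function. I would verify this by checking that $\nu-\theta$ stays bounded below near $(0,u_0)$, which keeps the extension feasible.
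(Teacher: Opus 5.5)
Your proposal is correct and follows essentially the same route as the paper. Both establish the saddle-set properties from Lemmas~\ref{lem:variational-regularity} and~\ref{lem:global-root-model-only}, derive $H(\kappa)=C_b(z)/A_b(z)>1$ from the stationarity system at $\theta=0$, and then apply Danskin's formula \eqref{eq:compact-max-Hadamard} and the root formula \eqref{eq:compact-max-root-Hadamard}, using the smooth extension of the spatial minimizer across $\theta=0$.
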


\begin{proof}
Lemmas~\ref{lem:variational-regularity} and
\ref{lem:global-root-model-only} give nonempty compact maximizing
saddle sets at $(0,u_0)$, with $\kappa\ge\kappa_{\rm rt}>0$.
At $\theta=0$, $C_b'(z)=A_b(z)$, so the first two equations in
\eqref{eq:saddlesystem} give, at every such saddle,
\begin{equation}
 \frac{A_b(z)}{C_b(z)}=\frac1{H(\kappa)}.                \label{eq:activeAH}
\end{equation}
Since $0<A_b(z)<C_b(z)$ for $z>0$, this also proves $H(\kappa)>1$.

The proof of Lemma~\ref{lem:global-root-model-only} already localizes all
nearby maximizing counts to a fixed compact set on which the inner
minimizers are smooth, including a smooth extension across $\theta=0$.
Apply \eqref{eq:compact-max-Hadamard} with
$p=\theta$ and $t=u$.  Equations~\eqref{eq:Genvelope} and
\eqref{eq:activeAH} give \eqref{eq:HadamardF} for all feasible directions
$\alpha\ge0$.  Applying \eqref{eq:compact-max-root-Hadamard} to the zero
$u_*(\theta)$, with these partial derivatives, gives
$$
 \lim_{\theta\downarrow0}\frac{u_*(\theta)-u_0}{\theta}
 =\min_{(\nu,z,\kappa)\in\mathcal S_*}
      \frac{-\partial_\theta G(0,u_0,\nu)}{\partial_uG(0,u_0,\nu)}
 =\min_{\kappa\in\mathcal K_*}\frac{\log H(\kappa)}{\kappa}=r_+.
$$
The minimum is attained and positive by compactness, proving
\eqref{eq:sparseright}.
\end{proof}

\begin{proof}[Proof of Theorem~\ref{thm:sparse-expansion}]
Lemma~\ref{lem:independent-center-location} proves
\eqref{eq:all-distance-center-log}, and
Lemma~\ref{lem:sparsederivative} proves \eqref{eq:sparseright}.
Substituting \eqref{eq:sparseright}, with $\theta=m_d/d$, into
\eqref{eq:all-distance-center-log} gives \eqref{eq:sparse-center-expansion}.
\end{proof}

\subsection{Proof of Theorem~\ref{thm:quenched-all-distance}}
\label{sec:all-distance-proof}

To prove Theorem~\ref{thm:quenched-all-distance}, we use {Proposition~\ref{prop:quenched-barrier}} to obtain a
positive hitting probability, then bootstrap this bound over descendant
families to control late hits, which leads to an upper bound for the FPT.  Expected target occupation controls early hits, giving the lower bound for the FPT.
To mitigate large death rates, we
use starting vertices whose rates are bounded below.  The annealed
conclusion then follows by averaging with the survival probabilities.

Recall from \eqref{eq:survival-probability} that
$s^\omega(x)=\Pw_x(\mathsf S)$ is the probability of survival from
one particle at $x$.  Its average over a uniformly chosen neighbour is
$$
 (Ps^\omega)(x)=\sum_vP(x,v)s^\omega(v)
 =\frac1{(b-1)d}\sum_{v:\,v\sim x}s^\omega(v).
$$
Here $P(x,v)$ is the one-mutation transition probability.
If $\Lambda_x<0$, the first event is mutation or death, so
$s^\omega(x)=(Ps^\omega)(x)/(1-\Lambda_x)$: the factor
$1/(1-\Lambda_x)$ is the probability of mutation before death.
The next lemma bounds the average survival probability after that
mutation away from zero and estimates $s^\omega(x)$ uniformly in $x$.

\begin{lemma}\label{lem:uniform-survival}
There is $c>0$ such that, on events of environment probability tending
to one, for every vertex $x$,
\begin{equation}
 \frac{c}{1-\Lambda_x}\le s^\omega(x)
 \le\min\left(1,\frac1{1-\Lambda_x}\right),
 \qquad (Ps^\omega)(x)\ge c.
 \label{eq:survival-comparison}
\end{equation}
For all sufficiently large $d$, also
$\inf_x\Eenv[s^\omega(x)]\gg1$.
\end{lemma}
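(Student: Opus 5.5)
The upper bound $s^\omega(x)\le\min(1,1/(1-\Lambda_x))$ holds deterministically. When $\Lambda_x\ge0$ it is just $s\le1$. When $\Lambda_x<0$ it follows from the first-event identity $s^\omega(x)=(Ps^\omega)(x)/(1-\Lambda_x)$ together with $Ps^\omega\le1$. For the lower bound on $s^\omega(x)$, if $\Lambda_x<0$ the same identity reduces it to $(Ps^\omega)(x)\ge c$. If $\Lambda_x\ge0$, the particle mutates before dying, so $s^\omega(x)\ge (Ps^\omega)(x)$ (branching only helps), and $1/(1-\Lambda_x)\le 1/(1-\lambda_+)$ is bounded. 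Hence everything reduces to proving, with high environment probability, $\inf_x (Ps^\omega)(x)\ge c$.

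\textbf{Step 1: a good neighbourhood.} I would use a local survival mechanism. Fix $\varepsilon>0$ small and call a vertex good if $\Lambda_v\ge \lambda_+-\varepsilon$; this has probability $p_\varepsilon>0$. I would consider the independent-visit comparison in a local ball: from a good vertex, descendants that perform a bounded number of mutations and stay among good vertices form a supercritical Galton--Watson structure. Each family at $v$ produces a mean of $1/(1-\Lambda_v)$ mutants, and the supercriticality condition $\int P(d\lambda)/(1-\lambda)>1$ gives a positive survival probability. The cleaner route is to embed a multitype branching process along self-avoiding mutation paths. On the Hamming cube, within distance $k$ of $x$ the tree of fresh vertices has branching of order $d$, so revisits are negligible for bounded depth. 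I would therefore aim to show that with high probability, for every $x$, a positive fraction of neighbours $v$ of $x$ satisfy $s^\omega(v)\ge c$.

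\textbf{Step 2: deriving survival from fresh exploration.} For a neighbour $v$, I would construct a lower-bounding process. Explore mutants along paths that only enter previously unvisited vertices, discarding all others. Because the environment at fresh vertices is i.i.d., the counts in this process dominate a Galton--Watson process in which each individual produces offspring with mean $\int P(d\lambda)/(1-\lambda)>1$, up to a $1-O(k/d)$ loss per generation. Its survival probability $\sigma>0$ is deterministic. Survival of this Galton--Watson process to depth $k$ with many particles then gives, via independence, $s\ge\sigma/2$, after letting the number of particles grow (a particle surviving to generation $k$ with $\ge Q$ descendants yields survival $\ge 1-(1-\sigma')^Q$).

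\textbf{Step 3: uniformity over all $x$.} This is the main obstacle: there are $b^d$ vertices, so a union bound needs failure probability $o(b^{-d})$ per vertex. For a fixed $x$, the events ``neighbour $v$ has a good local exploration'' depend on neighbourhoods that overlap only slightly. I would use a Chernoff or bounded-difference bound over the $(b-1)d$ neighbours of $x$, restricting each neighbour's exploration to its own disjoint subcube directions (for instance, paths from $v=x+e_i$ use coordinates in a block assigned to $i$). This yields independence and a failure probability $e^{-cd}$. Since $e^{-cd}$ alone does not beat $b^d$, I would instead require exploration depth $k$ with $d$ neighbours each independently succeeding with probability $\sigma/2$. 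The probability that fewer than $\sigma d/4$ succeed is $e^{-c' d}$ with $c'$ depending on $\sigma$, which may not exceed $\log b$. To fix this, I would strengthen the argument using two-level neighbourhoods: $Ps^\omega(x)\ge c$ fails only if many neighbours have all their own neighbours poor, which is a $d^2$-sized independent family with failure probability $e^{-cd^2}$. This beats $b^d$. Finally, $\inf_x\Eenv[s^\omega(x)]\gg1$ follows from the lower bound on the high-probability event, together with transitivity of the environment law.
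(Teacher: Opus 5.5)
Your reduction is correct and matches the end of the paper's proof. The deterministic upper bound holds, and the stationary identity $s^\omega(x)\bigl(1-\Lambda_x+(\Lambda_x)_+s^\omega(x)\bigr)=(Ps^\omega)(x)$ reduces both lower bounds to $\inf_x(Ps^\omega)(x)\ge c$.

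The gap is in how you prove that infimum bound. Step~2 does not bound the \emph{eternal quenched} survival probability. The fresh-vertex Galton--Watson domination is annealed: it holds only after averaging over the i.i.d.\ rates at fresh vertices. It is also valid only to bounded depth $k$ on the finite cube. So at best it gives, with positive annealed probability, $Q$ particles at depth $k$. Turning that into survival forever needs a lower bound on survival from those $Q$ frontier particles. The constant $\sigma'$ in $1-(1-\sigma')^Q$ is exactly such a bound, which is what you are trying to prove, so the argument is circular. A min/fixed-point argument on the time-$t$ survival function $u(t,\cdot)$ over a good set could close it. That would need local success probabilities $p_0$ with $p_0Q>1$, holding quenched and uniformly on a set whose complement is sparse in every neighbourhood, and none of this is set up.

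The paper avoids exploration entirely. Take $H(0)\ge1+4\varepsilon$ and $G=\{x:\frac1{(b-1)d}\sum_{v\sim x}(1-\Lambda_v)^{-1}\ge1+2\varepsilon\}$. It checks that $f(x)=c_1\one_G(x)/(1-\Lambda_x)$ satisfies $Pf-(1-\Lambda_x)f-(\Lambda_x)_+f^2\ge0$ everywhere. On $G$ this holds because few neighbours lie outside $G$, so $Pf-(1-\Lambda_x)f\ge c_1\varepsilon$, which beats $\lambda_+c_1^2/(1-\lambda_+)^2$ for small $c_1$; off $G$ it is trivial. The comparison principle for $\partial_tu=Pu-(1-\Lambda_x)u-(\Lambda_x)_+u^2$ with $u(0,\cdot)=1$ then gives $s^\omega\ge f$ globally, with no circularity.

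Step~3 correctly identifies the union-bound obstruction, but the fix is unspecified. You never say what ``poor'' means, or why enough non-poor second neighbours forces $(Ps^\omega)(x)\ge c$. Survival is global, and that implication is exactly what the subsolution supplies.

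The point of the paper's $G$ is that $\Penv(x\notin G)\le e^{-cd}$, since it is a neighbour average of $(b-1)d$ i.i.d.\ bounded variables with mean $H(0)$. Your events $\{\Lambda_v\ge\lambda_+-\varepsilon\}$ and local GW success have only constant probability. The correlated events $\{v\notin G\}$, $v\sim x$, are handled as follows:
\begin{itemize}
\item for each $k$-set $T$ of neighbours, $\Penv(T\subset G^c)\le\exp(-\varepsilon^2k(b-1)d(1-\lambda_+)^2)$, by Hoeffding on the distance-two vertices (multiplicities $w_z\le2$);
\item take $k\approx\eta(b-1)d$ and union bound over $x$ and $T$, which beats $b^d2^{(b-1)d}$.
\end{itemize}
You also need a second event: every vertex has at least $\frac34(b-1)d$ neighbours with $(1-\Lambda_v)^{-1}\ge a_0$. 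Since $s^\omega(v)\le(1-\Lambda_v)^{-1}$, neighbours with very negative rates contribute almost nothing to $(Ps^\omega)(x)$, however good their surroundings. The two events together give $(Ps^\omega)(x)\ge Pf(x)\ge c_1a_0/2$.
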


\begin{proof}
The variables $(1-\Lambda_x)^{-1}$ are independent, take values in
$(0,(1-\lambda_+)^{-1}]$, and have mean $H(0)>1$.
Choose $\varepsilon>0$ with $H(0)\ge1+4\varepsilon$, and let
$$
 G=\left\{x:\frac1{(b-1)d}\sum_{v:\,v\sim x}
                    \frac1{1-\Lambda_v}\ge1+2\varepsilon\right\}.
$$
We first show that, for each fixed $\eta>0$, with probability tending
to one every vertex has at most $\eta(b-1)d$ neighbours outside $G$.

Fix a vertex $x$ and a set $T$ of $k$ of its neighbours.  In the sum
$\sum_{v\in T}\sum_{z:\,z\sim v}(1-\Lambda_z)^{-1}$,
keep only vertices $z$ at distance two from $x$.
Their contribution is $\sum_z w_z/(1-\Lambda_z)$, where $w_z$
counts the neighbours $v\in T$ through which $z$ is reached.  We have
$$
 0\le w_z\le2,\qquad
 \sum_z w_z=k(b-1)(d-1),\qquad
 \sum_z w_z^2\le2k(b-1)d.
$$
Indeed, a vertex at distance two can be reached through exactly two
neighbours of $x$; each $v\in T$ has $(b-1)(d-1)$ neighbours at distance
two from $x$.  If $T\subset G^c$, positivity of the omitted terms gives
$\sum_z w_z/(1-\Lambda_z)\le k(b-1)d(1+2\varepsilon)$.
For all sufficiently large $d$, this is at least $\varepsilon k(b-1)d$
below its expectation.  Hoeffding's inequality therefore gives
$$
 \Penv(T\subset G^c)
 \le\exp\left(-\varepsilon^2 k(b-1)d(1-\lambda_+)^2\right).
$$
Take $k=\lfloor\eta(b-1)d\rfloor+1$ and sum over the $b^d$ choices of
$x$ and at most $2^{(b-1)d}$ choices of $T$.  The resulting failure probability
is $\exp(-c_\eta d^2)$ for all sufficiently large $d$.

Choose $\eta\le\min(1/4,\varepsilon(1-\lambda_+))$.
Since $(1-\Lambda_x)^{-1}>0$ almost surely, we can choose $a_0>0$
so small that, with probability tending to one, every vertex has at
least $3(b-1)d/4$ neighbours with $(1-\Lambda_v)^{-1}\ge a_0$.
Indeed, writing $p=\Penv((1-\Lambda_x)^{-1}<a_0)$, the failure
probability is at most $b^d2^{(b-1)d}p^{(b-1)d/4}$, which tends to
zero exponentially when $p$ is sufficiently small.
Work on the intersection of the two events just established:
\begin{equation}
\begin{split}
 &\left\{\text{for every }x,\quad
       \#\{v:v\sim x,\ v\notin G\}\le\eta(b-1)d\right\},\\
 &\left\{\text{for every }x,\quad
       \#\left\{v:v\sim x,\ \frac1{1-\Lambda_v}\ge a_0\right\}
           \ge\frac34(b-1)d\right\}.
\end{split}
\label{eq:survival-neighbour-events}
\end{equation}
For $x\in G$, removing the neighbours outside $G$ loses at most
$\eta/(1-\lambda_+)$ from the average.  For every $x$, at least
the fraction $3/4-\eta\ge1/2$ of its neighbours are in $G$ and
satisfy $(1-\Lambda_v)^{-1}\ge a_0$.  Thus,
\begin{align*}
 \frac1{(b-1)d}\sum_{v:\,v\sim x}
    \frac{\one_{\{v\in G\}}}{1-\Lambda_v}
 &\ge1+2\varepsilon-\frac{\eta}{1-\lambda_+}
 \ge1+\varepsilon\qquad(x\in G),\\
 \frac1{(b-1)d}\sum_{v:\,v\sim x}
    \frac{\one_{\{v\in G\}}}{1-\Lambda_v}
 &\ge a_0\left(\frac34-\eta\right)\ge\frac{a_0}{2}
 \qquad\text{for every }x.
\end{align*}

For a fixed finite environment, the probability $u(t,x)$ that the
population is nonempty at time $t$ solves
$$
 \partial_tu(t,x)=(Pu(t,\cdot))(x)-(1-\Lambda_x)u(t,x)
                    -(\Lambda_x)_+u(t,x)^2,
 \qquad u(0,x)=1.
$$
Choose $c_1>0$ with $c_1\le1-\lambda_+$ and
$c_1\lambda_+/(1-\lambda_+)^2\le\varepsilon$, and put
$f(x)=c_1\one_G(x)/(1-\Lambda_x)$.
\begin{itemize}
\item On $G$,
$$
 Pf(x)-(1-\Lambda_x)f(x)-(\Lambda_x)_+f(x)^2
 \ge c_1\varepsilon-\frac{\lambda_+c_1^2}{(1-\lambda_+)^2}\ge0;
$$
\item Off $G$, the left side is $Pf(x)\ge0$.
\end{itemize}
Thus $f$ is a stationary subsolution below the initial value one.
The comparison principle for this finite cooperative system gives
$u(t,x)\ge f(x)$.  The functions $u(t,x)$ decrease to $s^\omega(x)$,
which satisfies the stationary equation, so
\begin{equation}
 s^\omega(x)
 =\frac{(Ps^\omega)(x)}{1-\Lambda_x+(\Lambda_x)_+s^\omega(x)}.
 \label{eq:survival-stationary}
\end{equation}
The denominator lies between $1-\Lambda_x$ and
$1+(-\Lambda_x)_+$, while $(Ps^\omega)(x)\ge c_1a_0/2$ and
$(Ps^\omega)(x)\le1$.  Consequently,
$$
 \frac{c_1a_0}{2(1+(-\Lambda_x)_+)}
 \le s^\omega(x)\le\frac1{1-\Lambda_x},
 \qquad
 \frac1{1+(-\Lambda_x)_+}\ge\frac{1-\lambda_+}{1-\Lambda_x}.
$$
Together with $s^\omega(x)\le1$, this proves
\eqref{eq:survival-comparison}, after decreasing $c$.
Finally, if $E_d$ is the environment event just constructed, then
$$
 \Eenv[s^\omega(x)]\ge c\Eenv\left[\frac{\one_{E_d}}{1-\Lambda_x}\right]
 \ge c\left(H(0)-\frac{\Penv(E_d^c)}{1-\lambda_+}\right)\gg1.
$$
The bound is uniform in the deterministic starting vertex, as claimed.
\end{proof}

We mark a particle if its descendants survive forever.  Conditional
on $\mathsf S$, at least one particle is marked, and a marked particle
has at least one marked child at every split.  The marked population
therefore never decreases.
The construction is the finite-state version of the survival
decomposition in~\cite[Theorem~3 and Corollaries~14--15]{HarrisHesseKyprianou2016}.
We use the fixed-time conclusion below in Section~\ref{sec:kpp-proof}
to obtain occupation at a common later time under the no-death assumption,
and the stopping-time conclusion in the proof of
Theorem~\ref{thm:quenched-all-distance}, where the trial probabilities
require a lower bound on the starting rates.

\begin{lemma}\label{lem:survival-preliminary-family}
There is $L_0<\infty$ such that, for every fixed integer $Q\ge1$ and
$\eta>0$, there are fixed $T,M<\infty$ and events of environment
probability tending to one with the following properties, uniformly
over the starting vertices $\mathbf x_m$, conditional on survival.
\begin{enumerate}
\item[(i)] With probability at least $1-\eta$, one can retain $Q$
marked particles at time $T$, all within distance $M$ of the start.
\item[(ii)] With probability at least $1-\eta$, one can stop $Q$
marked particles whose descendant trees are disjoint, by time $T$, all
within distance $M$ of the start, each at a vertex of rate at least
$-L_0$.  Given the marked-particle history up to these stops, their
descendant systems, including unmarked descendants, are independent
copies of the original process, each conditioned on its own survival.
\end{enumerate}
\end{lemma}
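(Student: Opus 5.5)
We work on the environment event of Lemma~\ref{lem:uniform-survival}, together with one further local event built below. The plan has three parts: choose $L_0$ from the survival estimates, show that the marked process started from $\mathbf x_m$ reaches $Q$ particles quickly and nearby, and then check the independence claim for stopped particles.

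\emph{Choice of $L_0$.} By \eqref{eq:survival-comparison}, every vertex has $(Ps^\omega)(x)\ge c$. The marked (surviving) subprocess is a multitype branching process. Its mutation kernel is tilted by $s^\omega$: a marked particle at $x$ moves to $v\sim x$ at rate proportional to $P(x,v)s^\omega(v)/s^\omega(x)$. It produces two marked children at a rate of order $(\Lambda_x)_+s^\omega(x)$. We choose $L_0$ so that a positive fraction of each neighbourhood has rate at least $-L_0$, with weight at least $c/2$ in the $s^\omega$-tilted average. Concretely, we use the Hoeffding and union-bound argument from the proof of Lemma~\ref{lem:uniform-survival}. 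For fixed $a_0$ and fixed $\varepsilon'$, every vertex has at most $\varepsilon'(b-1)d$ neighbours $v$ with $\Lambda_v<-L_0$, once $\Penv(\Lambda_{\mathbf0}<-L_0)$ is small enough. Since $s^\omega\le 1$, the neighbours with $\Lambda_v<-L_0$ contribute at most $\varepsilon'$ to $(Ps^\omega)(x)$. Taking $\varepsilon'=c/2$ leaves $\ge c/2$ on the good neighbours. The same argument gives a positive fraction of neighbours with $\Lambda_v\ge\lambda_+-\varepsilon$, hence with splitting rate bounded below. Near those vertices we also have $s^\omega(v)\ge c/(1-\Lambda_v)\ge c'$.

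\emph{Growth of the marked population.} Conditional on survival, the spine (one marked lineage) mutates at a rate bounded above. Indeed, $s^\omega(x)\ge c/(1-\Lambda_x)$ and $(Ps^\omega)(x)\le1$, so the tilted jump rate out of $x$ is at most $(1-\Lambda_x)/c$. This is bounded on vertices with $\Lambda_x\ge -L_0$, but not in general. To avoid this, we look only at the next few steps. From any vertex, with conditional probability bounded below, the spine's next mutation lands on a neighbour that has rate $\ge\lambda_+-\varepsilon$ and survival probability $\ge c'$. It then holds there for a unit time and creates a marked split with probability $\ge p>0$; each child survives with probability $\ge c'$. Iterating over blocks, each block has a fixed duration and at most a fixed number $k$ of mutations along each tracked lineage. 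The number of marked particles then dominates a supercritical Galton--Watson process with bounded offspring increments, and we track only the first $Q$ lineages. After $N$ blocks, each of positive probability, we have $Q$ marked particles with probability $\ge1-\eta$. We take $T$ as $N$ block lengths plus a margin that absorbs the first mutation time. We take $M=kN$, using a Poisson tail to control the number of mutations by time $T$ along the $Q$ tracked lineages.

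\emph{The main obstacle} is the first mutation of the spine from a start $\mathbf x_m$ with a very negative rate. There the holding time is short, but the particle may also die; conditioning on survival only tilts towards mutation. So a bounded number of steps and a bounded time suffice, and we must bound the tilted rate uniformly. We handle this through the representation $s^\omega(x)=(Ps^\omega)(x)/(1-\Lambda_x+\dots)$, which shows that conditioning on survival reduces the first step to a mutation into the $s^\omega$-weighted neighbourhood.

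\emph{Parts (i) and (ii).} For (i), we keep the $Q$ marked particles alive to the fixed time $T$; the marked population never decreases. For (ii), we stop each tracked marked lineage at its first visit, after its split, to a vertex with rate $\ge -L_0$; by construction the good neighbours supply such a visit within the block. We make the descendant trees disjoint by taking distinct children at marked splits. Independence given the marked history, and the fact that each stopped system is the original process conditioned on its own survival, follow from the strong Markov property at these stopping lines. We then apply the spine decomposition of \cite{HarrisHesseKyprianou2016} in the finite-state setting: given that a particle is marked, its subtree has the law $\Pw_v(\cdot\mid\mathsf S)$, and distinct stopped particles have conditionally independent subtrees.
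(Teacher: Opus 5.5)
Your overall architecture matches the paper's. You follow a marked lineage under the $s^\omega$-tilted dynamics, use neighbours of positive rate to create extra marked particles, freeze children at these splits, and appeal to the survival decomposition of \cite{HarrisHesseKyprianou2016} for conditional independence. At such neighbours $s^\omega\ge c$, the marked mutation rate is at most one, and double-marked splits occur at a rate bounded below.

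The gap is in the time and distance bookkeeping. You use blocks of fixed duration with at most $k$ mutations per tracked lineage, and you try to control them through an \emph{upper} bound on the tilted mutation rate. As you note, $(Ps^\omega)(x)/s^\omega(x)=1-\Lambda_x$ is unbounded when $\Lambda_x\ll0$. So the number of spine mutations in a fixed time window is not uniformly bounded. Nothing in the proposal shows that the next spine mutation occurs within bounded time from an arbitrary vertex; your ``margin'' for the first mutation time is asserted, not proved. The missing fact is the uniform \emph{lower} bound
\[
 \frac{(Ps^\omega)(x)}{s^\omega(x)}=1-\Lambda_x+(\Lambda_x)_+s^\omega(x)\ge1-\lambda_+,
\]
which is immediate from \eqref{eq:survival-stationary}. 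With it the two controls decouple, as in the paper:
\begin{itemize}
\item Index by the spine's mutation times $\sigma_k$. Then the distance after $J$ mutations is at most $J$, deterministically.
\item Let $B_k$ indicate that the $k$th destination has rate at least $\lambda_0>0$ and that a double-marked split occurs before $\sigma_{k+1}$. This has uniformly positive conditional probability, so $\sum_{k\le J}B_k\ge Q-1$ with high probability for a large fixed $J$.
\item Dominate $\sigma_{J+1}$ stochastically by a sum of $J+1$ independent $\mathrm{Exp}(1-\lambda_+)$ variables to obtain a fixed $T$.
\end{itemize}
The very negative starting rate, which you flag as the main obstacle, is harmless in this scheme: it only speeds up the conditioned first step.

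Part (i) has the same problem. A Poisson tail for the mutations of a marked lineage between its creation and time $T$ does not follow from the tilted rates, because they are unbounded. Go through the unconditioned process instead. By many-to-one, the expected number of descendants at elapsed time $t$ whose ancestral line made at least $n$ mutations is at most $e^{t}\Pp(\mathrm{Poisson}(t)\ge n)$. Dividing by $s^\omega(y)\ge c$ at the frozen vertex $y$, which has positive rate, bounds the probability that the followed marked lineage made $n$ or more mutations. This gives $M=J+n$.

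Two smaller points:
\begin{itemize}
\item The positive-rate-neighbour event must be imposed only at vertices within distance $J$ of some $\mathbf x_m$, which are polynomially many in $d$. A union bound over all $b^d$ vertices fails when $P([\lambda_0,\lambda_+])$ is small.
\item Your Hoeffding choice of $L_0$ is unnecessary. The stopped particles sit at split vertices of rate at least $\lambda_0>0$, so $L_0=0$ already works.
\end{itemize}
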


\begin{proof}
\begingroup
We may without loss of generality assume that $Q\ge2$, since $Q=1$ follows by retaining one of two.
Fix an environment in the event of Lemma~\ref{lem:uniform-survival}.

\smallskip
\noindent\emph{Step 1: rates of the marked particles.}
Under the unconditioned quenched law, given the ordinary population
history up to a fixed time, the particles  alive are marked
independently, with probabilities $s^\omega$ at their locations.
By the Bayes' rule, first-event conditioning gives the mutation rate
of a marked particle and its rate of producing two marked children:
\begin{equation}
 x\longrightarrow v:\quad P(x,v)\frac{s^\omega(v)}{s^\omega(x)},
\qquad x\longrightarrow(x,x):\quad
 (\Lambda_x)_+\frac{s^\omega(x)^2}{s^\omega(x)}
 =(\Lambda_x)_+s^\omega(x).
 \label{eq:survival-marked-rates}
\end{equation}

Equation~\eqref{eq:survival-stationary} gives
$(Ps^\omega)(x)=(1-\Lambda_x)s^\omega(x)
+(\Lambda_x)_+s^\omega(x)^2$.
Summing the mutation rates in \eqref{eq:survival-marked-rates} therefore gives
\begin{equation}
 \sum_vP(x,v)\frac{s^\omega(v)}{s^\omega(x)}
 =\frac{(Ps^\omega)(x)}{s^\omega(x)}
 =1-\Lambda_x+(\Lambda_x)_+s^\omega(x)\ge1-\lambda_+>0.
 \label{eq:marked-total-mutation-rate}
\end{equation}
If $\Lambda_x\ge\lambda_0>0$, then \eqref{eq:survival-comparison} also gives
$$
 \frac{(Ps^\omega)(x)}{s^\omega(x)}
 =1-\Lambda_x(1-s^\omega(x))\le1,
 \qquad (\Lambda_x)_+s^\omega(x)\ge c\lambda_0.
$$

\smallskip
\noindent\emph{Step 2: produce $Q$ marked lineages in bounded time.}
Choose $\lambda_0>0$ with $p=P([\lambda_0,\lambda_+])>0$;
such a choice is possible because $H(0)>1$.
For each fixed integer $J$, impose also the event
$$
 \#\{v:v\sim z,\ \Lambda_v\ge\lambda_0\}\ge p(b-1)d/2
 \quad\text{if }d_{\rm H}(z,\mathbf x_m)\le J
 \text{ for some }1\le m\le d.
$$
Each count is binomial, and there are at most $C_Jd^{J+1}$ such
vertices.  Therefore, the failure probability is at most
$C_Jd^{J+1}e^{-p(b-1)d/8}$.

Follow one marked lineage, taking the unique marked child when there is
one, and the first in a fixed ordering when there are two.  Denote its
mutation times by $\sigma_k$.  For $1\le k\le J$, let $B_k$ indicate
that the vertex visited at $\sigma_k$ has rate at least $\lambda_0$ and
that a split into two marked children occurs during
$[\sigma_k,\sigma_{k+1})$.  Reveal the mutation time before its destination, and
let $\mathcal F_k$ be this pre-destination history.  The previous
indicators are $\mathcal F_k$-measurable.  If $x$ is the location just
before $\sigma_k$, the probability of choosing a vertex of rate at
least $\lambda_0$ is
$$
 \frac{\sum_{v:\,v\sim x,\ \Lambda_v\ge\lambda_0}
              P(x,v)s^\omega(v)}{(Ps^\omega)(x)}\ge\frac{cp}{2},
$$
since $s^\omega(v)\ge c$ on these neighbours and $(Ps^\omega)(x)\le1$.
At that vertex, Step 1 bounds the probability of a split into two marked
children before the next mutation below by $c\lambda_0/(1+c\lambda_0)$.
Consequently,
$$
 \Pw_{\mathbf x_m}(B_k=1\mid\mathcal F_k,\mathsf S)
 \ge p_*:=\frac{cp}{2}\frac{c\lambda_0}{1+c\lambda_0}>0.
$$
Hence,
$$
 \Ew_{\mathbf x_m}[e^{-B_k}\mid\mathcal F_k,\mathsf S]
 \le1-p_*(1-e^{-1}).
$$
Iterating this bound and applying Markov's inequality gives
\begin{equation}
\begin{split}
 \Pw_{\mathbf x_m}\!\left(\sum_{k=1}^JB_k<Q-1\,\middle|\,\mathsf S\right)
 &\le e^{Q-1}\Ew_{\mathbf x_m}\left[e^{-\sum_{k=1}^JB_k}\mid\mathsf S\right]\\
 &\le e^{Q-1}\bigl(1-p_*(1-e^{-1})\bigr)^J.
\end{split}
\label{eq:marked-split-count-tail}
\end{equation}
Choose $J$ so that \eqref{eq:marked-split-count-tail} is at most $\eta/4$.
Freeze the other child at the first double-marked split in each of the
first $Q-2$ successful intervals, and freeze both children at that split
in the $(Q-1)$st successful interval. Reveal no future of a frozen child
beyond its survival mark. This gives exactly $Q$ genealogically disjoint
leaves, all at rates at least $\lambda_0$ and within distance $J$ of the
start, by time $\sigma_{J+1}$ whenever $\sum_{k=1}^JB_k\ge Q-1$.
Let $E_i$ be independent random variables distributed as $\mathrm{Exp}(1-\lambda_+)$.
By \eqref{eq:marked-total-mutation-rate}, choose a fixed $T$ with
\[
 \Pw_{\mathbf x_m}(\sigma_{J+1}>T\mid\mathsf S)
 \le\Pp\left(\sum_{i=1}^{J+1}E_i>T\right)\le\eta/4.
\]
Thus, all the stops occur by $T$ with probability at least $1-\eta/2$.
% The auxiliary continuation to $\sigma_{J+1}$ is used only for this bound;
% it is not revealed after the final freeze.

At a split, the two full descendant trees are independent; conditioning
both to survive gives the product of their survival-conditioned laws.
Iterating this observation along the explored marked spine shows that,
given the stopped marked history, all frozen full descendant systems are
independent and individually conditioned on survival. This proves (ii)
with $L_0=0$ and radius $J$.

For (i), continue one marked lineage from each frozen particle to time
$T$. At each birth site $y$, we have $s^\omega(y)\ge c$ by
\eqref{eq:survival-comparison}. Let $Z_n(t)$ count descendants at elapsed
time $t$ whose ancestral lines have made at least $n$ mutations since
that birth. Suppressing deaths and raising the splitting rate to one gives
\[
 \Ew_y[Z_n(t)]\le e^t\Pp(\operatorname{Poisson}(t)\ge n).
\]
If the chosen marked lineage has made $n$ mutations, its endpoint is
counted in $Z_n(t)$. Conditional on the stopped history, its mutation
count over the remaining time $t\le T$ therefore exceeds $n-1$ with
probability at most $c^{-1}e^T\Pp(\operatorname{Poisson}(T)\ge n)$.
Choose $n$ so that $Q$ times this bound is at most
$\eta/2$. A union bound gives $Q$ marked particles at time $T$, each
within distance $M=J+n$ of the start, with probability at least $1-\eta$.
This same $M$ also works in (ii), and all constants and environment
events have the required uniformity.
\endgroup
\end{proof}

\begin{proof}[Proof of Theorem~\ref{thm:quenched-all-distance}]
\begingroup
Fix $\eta>0$. Choose $L\ge L_0$ from
Lemma~\ref{lem:survival-preliminary-family} so large that
$\Penv(\Lambda_{\mathbf0}<-L)<\eta$.
Proposition~\ref{prop:quenched-barrier} supplies $p_L>0$ independent
of every subsequently fixed starting radius. On the event of
Lemma~\ref{lem:uniform-survival}, if $\Lambda_{\mathbf0}\ge-L$, the
branching property at a deterministic time $t$ gives
$$
 \Pw_y(N_t(\mathbf0)>0,\mathsf S)
 \ge s^\omega(\mathbf0)\Pw_y(N_t(\mathbf0)>0).
$$
Indeed, on target occupation, select one particle there and require its
descendants to survive. Dividing by $s^\omega(y)\le1$ and using
\eqref{eq:survival-comparison}, we obtain
$$
 \Pw_y(N_t(\mathbf0)>0\mid\mathsf S)
 \ge\frac{c}{1+L}\Pw_y(N_t(\mathbf0)>0).
$$

Choose $Q$ so that $(1-cp_L/(1+L))^Q<\eta$, and apply
Lemma~\ref{lem:survival-preliminary-family}(ii) to obtain fixed
$T,M<\infty$. Next, choose $a_M$ from
Proposition~\ref{prop:quenched-barrier}. Intersect the environment
events in these two results and Lemma~\ref{lem:uniform-survival}.
Their probabilities tend to one, and all choices made so far are
independent of $d$.

We restrict to this intersection and on $\Lambda_{\mathbf0}\ge-L$.
Conditional on survival, with probability at least $1-\eta$, the
preliminary construction stops $Q$ marked particles by time $T$,
at vertices $y$ within distance $M$ of $\mathbf x_m$ and with
$\Lambda_y\ge-L$. Given the stopped marked history, their full
descendant systems are independent, each conditioned on its own
survival. By \eqref{eq:quenched-hit}, every such system occupies
the target at elapsed time $\widehat\theta_{d,m}+a_M$ with conditional
probability at least $cp_L/(1+L)$. This statement includes a stopped
particle at the target. All these trial times occur before the
absolute deadline $\widehat\theta_{d,m}+T+a_M$. Consequently,
$$
 \sup_{1\le m\le d}
 \Pw_{\mathbf x_m}\!\left(
  \tau_{\mathbf x_m}^\omega>\widehat\theta_{d,m}+T+a_M
  \,\middle|\,\mathsf S\right)
 \le\eta+\left(1-\frac{cp_L}{1+L}\right)^Q<2\eta.
$$
This proves the required upper-tail bound by the amplification argument
of~\cite[Lemma~25 and the upper-bound proof of Theorem~2]{BZ}.

For the lower tail, intersect also the environment event of
Proposition~\ref{prop:global-center}, including its radius-one version.
Following one child after every split at the target, the probability
of neither death nor mutation during the next unit interval is at
least $e^{-(1+L)}$. Therefore, for $y\ne\mathbf0$ and $t\ge0$,
$$
 e^{-(1+L)}\Pw_y(\tau_y^\omega\le t)
 \le\int_0^{t+1}K_s^\omega(y,\mathbf0)\,\dd s.
$$
For every fixed $A\ge1$, Proposition~\ref{prop:global-center} bounds
the right-hand side by $O_L(e^{-c(A-1)}+d^{-1})$ at
$t=\widehat\theta_{d,m}-A$, uniformly over
$d_{\mathrm H}(y,\mathbf x_m)\le1$ with $y\ne\mathbf0$.

If $x=\mathbf x_m$ has $\Lambda_x\ge0$, divide this bound by
$s^\omega(x)\ge c$ from \eqref{eq:survival-comparison}.
If $\Lambda_x<0$, there is no split before the first mutation or
death, and the survival equation gives
$s^\omega(x)=(Ps^\omega)(x)/(1-\Lambda_x)$.
Conditional on survival, the first mutation time is exponential of
rate $1-\Lambda_x$, and its destination has distribution
$P(x,v)s^\omega(v)/(Ps^\omega)(x)$. Thus,
\begin{align*}
 \Pw_x(\tau_x^\omega\le t\mid\mathsf S)
 &=\frac1{(Ps^\omega)(x)}
   \sum_vP(x,v)s^\omega(v)\int_0^t
   (1-\Lambda_x)e^{-(1-\Lambda_x)r}
   \Pw_v(\tau_v^\omega\le t-r\mid\mathsf S)\,\dd r\\
 &\le\frac1{(Ps^\omega)(x)}
        \sum_vP(x,v)\Pw_v(\tau_v^\omega\le t).
\end{align*}
Here $s^\omega(v)$ times the conditional probability is the
probability of hitting and survival, hence at most the unconditional
hitting probability. The denominator is bounded below by
\eqref{eq:survival-comparison}. Every neighbour $v\ne\mathbf0$
satisfies the preceding radius-one bound. If $x$ is adjacent to the
target, its target neighbour contributes at most $1/((b-1)d)$ to
the sum. Hence, uniformly in $m$,
$$
 \Pw_{\mathbf x_m}\!\left(
  \tau_{\mathbf x_m}^\omega\le\widehat\theta_{d,m}-A
  \,\middle|\,\mathsf S\right)
 \ll_L e^{-c(A-1)}+d^{-1}.
$$
Choose a fixed large $A\ge T+a_M$ after these parameters.
The excluded target-rate event has probability less than $\eta$;
all other environment failure probabilities tend to zero. Since
$\eta$ is arbitrary, the two tail bounds prove
\eqref{eq:all-distance-window}.

Finally, Lemma~\ref{lem:uniform-survival} bounds
$\Eenv[s^\omega(\mathbf x_m)]$ below uniformly in $m$.
For every $\varepsilon>0$, the ratio in
\eqref{eq:all-distance-annealed} is at most a constant times
$$
 \varepsilon+
 \Penv\!\left(
  \sup_{1\le m\le d}\Pw_{\mathbf x_m}\!\left(
   |\tau_{\mathbf x_m}^\omega-\widehat\theta_{d,m}|>A
   \,\middle|\,\mathsf S\right)>\varepsilon\right).
$$
Indeed, its numerator averages the conditional tail with weight
$s^\omega(\mathbf x_m)\le1$. Taking first the dimension limit,
then $A\to\infty$, and finally $\varepsilon\downarrow0$ proves
\eqref{eq:all-distance-annealed}.
\endgroup
\end{proof}

\subsection{Proofs of comparative statics results}

\subsubsection{Proof of Corollary~\ref{thm:mutation-rate}}
\label{sec:mutation-rate-proof}

We compare first-passage times in physical units, keeping the signed
rates fixed.  Changing $\varrho$ therefore changes the normalized rate law
to which we apply the rate-one results.

Let the mutation walk have generator $\varrho(P-I)$ and write
$\mathbb P_x^{\omega,\varrho}$ and $\tau_x^{\omega,\varrho}$ for the
corresponding quenched law and first-passage time.  Deterministic time change gives,
for every Borel set $B\subseteq[0,\infty]$,
\begin{equation}
 \mathbb P_x^{\omega,\varrho}
       \bigl(\varrho\tau_x^{\omega,\varrho}\in B\bigr)
 =\mathbb P_x^{\omega/\varrho,1}
       \bigl(\tau_x^{\omega/\varrho,1}\in B\bigr).      \label{eq:mutation-time-change}
\end{equation}
After the clock change, mutation has total rate one, splitting at $v$
has rate $(\Lambda_v)_+/\varrho$, and death has rate
$(-\Lambda_v)_+/\varrho$.  The scaled law satisfies
Assumption~\ref{ass:environment} precisely when
\eqref{eq:mutation-supercritical} holds.  The time change preserves eternal
survival, so \eqref{eq:mutation-time-change} also holds after conditioning
both laws on survival.

Let $\widehat F^{[\varrho]}$ denote the exponent for the law of
$\Lambda_{\mathbf0}/\varrho$, whose root is $u_*^{[\varrho]}$.
The subscript $\varrho$ below instead denotes mutation rate $\varrho$
with time in physical units.  Following \eqref{eq:H}, \eqref{eq:chi}, and \eqref{eq:F}, we define
\begin{equation}
 H_\varrho(\beta)
 =\int\frac{P(d\lambda)}{\beta+\varrho-\lambda},
 \qquad \beta>\lambda_+-\varrho,                       \label{eq:Hphysical}
\end{equation}
and
\begin{equation}
 \widehat F_\varrho(\theta,u)
 =\sup_{\nu\ge0}\left\{
    \Psi_b(\theta,\nu)
    +\inf_{\beta>\lambda_+-\varrho}
       \left(\beta u+\nu\log\bigl(\varrho
                    H_\varrho(\beta)\bigr)\right)
  \right\}.                                             \label{eq:Fphysical}
\end{equation}
The substitution $\beta=\varrho\kappa$ gives
\begin{equation}
 \widehat F_\varrho(\theta,u)
 =\widehat F^{[\varrho]}(\theta,\varrho u).             \label{eq:Fphysical-scaling}
\end{equation}
Consequently, its unique positive zero is
\begin{equation}
 \frac{u_*^{[\varrho]}(\theta)}{\varrho}.              \label{eq:vstar}
\end{equation}
Write $v_0(\varrho)=u_*^{[\varrho]}(0)/\varrho$, which is the first-order coefficient in the FPT asymptotics when $m=o(d)$.

For $j\ge1$ and $\beta>\lambda_+-\varrho$, set
\begin{equation}
 M_{j,\varrho}(\beta)
  =\int\frac{P(d\lambda)}{(\beta+\varrho-\lambda)^j}.
                                                               \label{eq:mutation-Mj}
\end{equation}
Thus, $M_{1,\varrho}=H_\varrho$.  Moreover,
\begin{equation}
 \chi_\varrho(\nu,u)
  =\inf_{\beta>\lambda_+-\varrho}
       \left(\beta u+\nu\log\bigl(\varrho
                         M_{1,\varrho}(\beta)\bigr)\right).
                                                               \label{eq:mutation-clock-objective}
\end{equation}
The factor $\varrho$ inside the logarithm records the factor
$\varrho^n$ carried by a path with $n$ mutation jumps.
Write
$G_\varrho(\theta,u,\nu)
 =\Psi_b(\theta,\nu)+\chi_\varrho(\nu,u)$.

\begin{lemma}
\label{lem:mutation-profile-monotonicity}
The function $v_0(\varrho)=u_*^{[\varrho]}(0)/\varrho$ is strictly
increasing on each interval satisfying \eqref{eq:mutation-supercritical}.
\end{lemma}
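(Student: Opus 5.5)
The plan is to view $v_0(\varrho)$ as the unique positive zero of $u\mapsto\widehat F_\varrho(0,u)$, by \eqref{eq:Fphysical-scaling} and \eqref{eq:vstar}. I will then apply the ``zeros of a compact maximum'' framework from Section~\ref{sec:root-curve}, with parameter $p=\varrho$ in place of $\theta$, time variable $u$, and $g=G_\varrho(0,u,\nu)$. The key step is to show that at every maximizing saddle at $(0,v_0(\varrho))$ two sign conditions hold:
\[
 \partial_uG_\varrho=\beta>0
 \qquad\text{and}\qquad
 \partial_\varrho G_\varrho<0 .
\]
Then \eqref{eq:compact-max-root-Hadamard} gives both one-sided derivatives of the root as a minimum, respectively maximum, of the strictly positive ratios $-\partial_\varrho G_\varrho/\partial_uG_\varrho$ over the maximizing saddles. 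Together with the local Lipschitz bound \eqref{eq:compact-max-root-Lipschitz}, this makes $v_0$ locally Lipschitz with positive Dini derivatives everywhere. Hence it is strictly increasing on each interval satisfying \eqref{eq:mutation-supercritical}.

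First I transfer regularity. For each $\varrho$ in the interval, the rescaled law $\mathcal L(\Lambda_{\mathbf0}/\varrho)$ satisfies Assumption~\ref{ass:environment}. Lemmas~\ref{lem:variational-regularity} and~\ref{lem:global-root-model-only}, applied to this law and translated through $\beta=\varrho\kappa$, give the following at $(0,v_0(\varrho))$: the maximizing counts lie in a compact subset of $\nu>0$; the inner minimizers $z,\beta$ are interior and smooth; and $\beta=\varrho\kappa\ge\varrho\kappa_{\rm rt}>0$. This yields $\partial_uG_\varrho=\beta>0$ by the envelope identity. Next I need joint continuity in $(\varrho,u,\nu)$ of $G_\varrho$ and of its $\varrho$- and $u$-derivatives near the maximizers, with upper semicontinuity of the maximizing sets in $\varrho$. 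Since $\varrho$ only enters through the explicit integrals $M_{j,\varrho}$, this should follow from dominated convergence and the implicit function theorem exactly as in the proof of Lemma~\ref{lem:variational-regularity}, rerun with $\varrho$ as an extra parameter. Doing this uniformly on a neighbourhood of each $\varrho_0$ is the main technical obstacle. I would handle it by enlarging the compact neighbourhoods of the maximizers used there by a finite cover in $\varrho$.

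For the sign of $\partial_\varrho G_\varrho$, the envelope theorem applied to \eqref{eq:mutation-clock-objective} at the interior minimizer $\beta$ gives
\[
 \partial_\varrho G_\varrho
 =\nu\,\partial_\varrho\log\bigl(\varrho M_{1,\varrho}(\beta)\bigr)
 =\nu\left(\frac1\varrho-\frac{M_{2,\varrho}(\beta)}{M_{1,\varrho}(\beta)}\right),
\]
since $\Psi_b$ does not depend on $\varrho$. To see that this is negative, I use the saddle system \eqref{eq:saddlesystem} in physical form: $z=\nu\varrho M_{1,\varrho}(\beta)$ and, at $\theta=0$, $\nu=zA_b(z)/C_b(z)$. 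As in \eqref{eq:activeAH}, these give
\[
 \varrho M_{1,\varrho}(\beta)=\frac{C_b(z)}{A_b(z)}>1 .
\]
Jensen's inequality gives $M_{2,\varrho}\ge M_{1,\varrho}^2$, so
\[
 \frac{M_{2,\varrho}}{M_{1,\varrho}}\ge M_{1,\varrho}>\frac1\varrho ,
\]
and hence $\partial_\varrho G_\varrho<0$ strictly. Compactness of the saddle set bounds this quantity away from zero. Substituting into \eqref{eq:compact-max-local-bounds}--\eqref{eq:compact-max-root-Hadamard} completes the argument.
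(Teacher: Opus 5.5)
Your proposal is correct and follows the paper's proof: it uses the same compact-max root framework with $p=\varrho$, the same sign $\partial_uG_\varrho=\beta>0$, and the same key inequality. That inequality combines $\varrho M_{1,\varrho}(\beta)=C_b(z)/A_b(z)>1$ with $M_{2,\varrho}\ge M_{1,\varrho}^2$ to make the $\varrho$-derivative strictly negative at every maximizing saddle. The paper differs in two small ways. It substitutes $\kappa=\beta+\varrho-1$ to get $G_\varrho=G+\nu\log\varrho-(\varrho-1)u$; this makes the joint regularity in $\varrho$, which you flag as the main obstacle, immediate, because the clock minimizer in $\kappa$ does not move with $\varrho$. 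It also yields $\partial_\varrho G_\varrho=\nu/\varrho-u$, which equals your expression by the stationarity $u=\nu M_{2,\varrho}/M_{1,\varrho}$. Finally, the paper integrates this derivative to get $\widehat F_{\varrho_2}(0,v_0(\varrho_1))<0$ instead of going through the Hadamard root derivative.
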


\begin{proof}
Let $(\nu,z,\beta)$ be any maximizing saddle in \eqref{eq:Fphysical}
at $(0,v_0(\varrho))$.  Lemma~\ref{lem:variational-regularity} applied
to the scaled law gives $\nu>0$ and interior spatial and clock
minimizers.  Lemma~\ref{lem:global-root-model-only} gives
$\beta=\varrho\kappa>0$, where $\kappa$ is the corresponding scaled
clock tilt.  Stationarity in the spatial, count, and clock variables gives
\begin{equation}
 \nu=z\frac{A_b(z)}{C_b(z)},\qquad
 z=\varrho\nu M_{1,\varrho}(\beta),\qquad
 v_0(\varrho)=\nu\frac{M_{2,\varrho}(\beta)}
                  {M_{1,\varrho}(\beta)}.              \label{eq:mutation-saddle-system}
\end{equation}
The first two equations imply
\begin{equation}
 \varrho M_{1,\varrho}(\beta)
   =\frac{C_b(z)}{A_b(z)}>1.                            \label{eq:mutation-H-greater-one}
\end{equation}
Since $P$ is a probability measure, Cauchy--Schwarz gives
$M_{2,\varrho}\ge M_{1,\varrho}^2$.  Therefore,
\begin{equation}
 v_0(\varrho)-\frac{\nu}{\varrho}
 =\nu\left(\frac{M_{2,\varrho}}{M_{1,\varrho}}
             -\frac1\varrho\right)>0.                 \label{eq:mutation-strict-sign}
\end{equation}
All terms in this display are evaluated at $\beta$.  Strictness follows
from \eqref{eq:mutation-H-greater-one}, even for a constant branching rate.

To compare maximizing counts at nearby rates $\varrho_1<\varrho_2$, first put their objectives
on a common clock domain.  For this change of variables, use
$\kappa=\beta+\varrho-1$ in
\eqref{eq:mutation-clock-objective}.  This maps the constraint onto
$\kappa>\lambda_+-1$ and gives
$M_{1,\varrho}(\beta)=H(\kappa)$, with $H$ the transform of the
original law.  Hence
\begin{equation}
 G_\varrho(\theta,u,\nu)
 =G(\theta,u,\nu)+\nu\log\varrho-(\varrho-1)u,
 \qquad \varrho>0,\ u>0,\ \nu\ge0,                     \label{eq:mutation-affine-tilt}
\end{equation}
with $G$ the rate-one objective of the original law.  In particular,
$\partial_\varrho G_\varrho=\nu/\varrho-u$ at every feasible count.

Fix a base rate $\varrho_0$ satisfying \eqref{eq:mutation-supercritical}
and its root $v_0(\varrho_0)$.  The condition persists in a neighbourhood
of $\varrho_0$ by dominated convergence.
On a compact neighbourhood of this point where time and rate are positive,
\eqref{eq:mutation-affine-tilt} and \eqref{eq:UO12} give
$$
 G_\varrho(0,u,\nu)
 \le-(\varrho-\lambda_+)u
   +\nu\left(1+\log\frac{\varrho u}{\nu}\right),
$$
which tends uniformly to $-\infty$ as $\nu\to\infty$.
Keeping one maximizing count at the base point bounds the maximum below
uniformly nearby.  Since $G$ is continuous also at $\nu=0$, the maximum
can therefore be restricted to a fixed compact count interval.
Berge's maximum theorem~\cite{FeinbergKasyanovVoorneveld2014} gives
joint continuity of the value and upper semicontinuity of the maximizing
count sets.

At the base point, apply Lemmas~\ref{lem:variational-regularity} and
\ref{lem:global-root-model-only} to the single scaled law
$\Lambda_{\mathbf0}/\varrho_0$.  The set of maximizing saddles is compact and
interior, with $\beta>0$ and the strict sign
\eqref{eq:mutation-strict-sign}.  The inner minimizers extend smoothly
to a neighbourhood of these maximizing counts.  By continuity and upper
semicontinuity, we may restrict all nearby maxima to a fixed compact
set of counts on which
\begin{equation}
 \partial_uG_\varrho=\beta>0,\qquad
 \partial_\varrho G_\varrho=\frac\nu\varrho-u<0.
 \label{eq:mutation-envelope-signs}
\end{equation}
Compactness makes both signs uniform.  The local-zero argument following
\eqref{eq:compact-max-local-bounds}, with $p=\varrho$ and $t=u$,
therefore gives a continuous root for nearby rates.  This root equals
$v_0(\varrho)$ by \eqref{eq:Fphysical-scaling} and the uniqueness in
Lemma~\ref{lem:global-root-model-only} for that scaled law.

For nearby $\varrho_1<\varrho_2$, integrate the rate derivative in
\eqref{eq:mutation-envelope-signs} and take maxima over the same fixed
count set.  This gives
$$
 \widehat F_{\varrho_2}\bigl(0,v_0(\varrho_1)\bigr)
 <\widehat F_{\varrho_1}\bigl(0,v_0(\varrho_1)\bigr)=0.
$$
Since $u\mapsto\widehat F_{\varrho_2}(0,u)$ increases near its zero,
$v_0(\varrho_2)>v_0(\varrho_1)$.  A finite cover of any compact
admissible rate interval, followed by a subdivision into intervals
contained in these neighbourhoods, proves strict increase throughout.
\end{proof}

\begin{proof}[Proof of Corollary~\ref{thm:mutation-rate}]
Fix $\varrho$ satisfying \eqref{eq:mutation-supercritical} and deterministic
integers $1\le m_d\le d$.  We apply
Theorem~\ref{thm:quenched-all-distance} to the rate-one model with
rate law $\mathcal L(\Lambda_{\mathbf0}/\varrho)$ and use the scaling relation 
\eqref{eq:mutation-time-change}.  Theorems~\ref{thm:quenched-all-distance}
and~\ref{thm:sparse-expansion} give, uniformly in $m$,
\[
 \varrho\tau_{\mathbf{x}_m}^{\omega,\varrho}
 =d u_*^{[\varrho]}(m/d)
  +O_{\mathbb P_{\mathbf{x}_m}^{\omega,\varrho}(\cdot\mid\mathsf S)}(\log d)
 \quad\text{in $\Penv$-probability}.
\]
Dividing by $\varrho d$ gives
\begin{equation}
 \frac{\tau_{\mathbf{x}_{m_d}}^{\omega,\varrho}}d
 =\frac{u_*^{[\varrho]}(m_d/d)}{\varrho}
  +o_{\mathbb P_{\mathbf{x}_{m_d}}^{\omega,\varrho}(\cdot\mid\mathsf S)}(1)
 \quad\text{in $\Penv$-probability}.                  \label{eq:mutation-rate-Op}
\end{equation}
{Fix two rates $\varrho_1<\varrho_2$ in $I$.}
Lemma~\ref{lem:mutation-profile-monotonicity} gives
$$
 \frac{u_*^{[\varrho_2]}(0)}{\varrho_2}
 -\frac{u_*^{[\varrho_1]}(0)}{\varrho_1}
 =v_0(\varrho_2)-v_0(\varrho_1)>0.
$$
By Lemma~\ref{lem:global-root-model-only}, applied to each scaled law,
$\theta\mapsto\frac{u_*^{[\varrho_2]}(\theta)}{\varrho_2}
 -\frac{u_*^{[\varrho_1]}(\theta)}{\varrho_1}$ is continuous. It therefore remains
positive on $[0,\varepsilon]$ for some
$\varepsilon=\varepsilon(\varrho_1,\varrho_2)\in(0,1]$, proving the stated
inequality.  Its minimum on this interval is positive and its maximum
is finite.  {Applying \eqref{eq:mutation-rate-Op} at both
rates and taking a union bound gives an order-$d$ delay whenever
$1\le m_d\le\varepsilon d$, under any joint law of the two quenched
passage times, each conditioned on its own survival.  Environments
where either survival probability is zero have probability tending
to zero by Lemma~\ref{lem:uniform-survival}, applied to the two scaled laws.}
\end{proof}

\subsubsection{Proof of Corollary~\ref{cor:rate-convex-order}}
\label{sec:convex-order-proof}

\begin{proof}[Proof of Corollary~\ref{cor:rate-convex-order}]
The case $P_1=P_2$ is immediate, so suppose the laws differ.
For $i=1,2$, $h_i(r)$ is the expected number of descendants whose
ancestral path has made no mutation by time $r$, averaged over the
starting rate with law $P_i$:
$$
 h_i(r)=\int e^{-(1-\lambda)r}P_i(d\lambda),\qquad r\ge0.
$$
For every $s$, convex order gives
$\int(\lambda-s)_+(P_2-P_1)(d\lambda)\ge0$.
As a function of $s$, this integral is continuous and zero above the
larger essential upper endpoint.  It is positive on some interval, since the functions
$s\mapsto\int(\lambda-s)_+P_i(d\lambda)$ determine their laws.
The identity $e^{r\lambda}=r^2\int_{-\infty}^{\lambda}
e^{rs}(\lambda-s)\,\dd s$ and Tonelli's theorem give, for every $r>0$,
\[
 h_2(r)-h_1(r)
 =r^2e^{-r}\int_{\mathbb R}e^{rs}
       \left(\int(\lambda-s)_+(P_2-P_1)(d\lambda)\right)\dd s>0.
\]
Positivity of convolution in \eqref{eq:freshK} yields
$\widehat K_{2,d,m}(t)>\widehat K_{1,d,m}(t)$ for $t>0$, and hence
$$
 \widehat M_{2,t}(\mathbf x_m)
 =\int_t^{t+1}\widehat K_{2,d,m}(s)\,\dd s
 >\int_t^{t+1}\widehat K_{1,d,m}(s)\,\dd s
 =\widehat M_{1,t}(\mathbf x_m),\qquad t\ge0.
$$
The definition \eqref{eq:thetahat definition} gives
$\widehat\theta_{2,d,m}\le\widehat\theta_{1,d,m}$.

To obtain strictness on the macroscopic scale, write
$\lambda_{+,i}=\operatorname*{ess\,sup}_{\lambda\sim P_i}\lambda$,
$i=1,2$, for the essential upper endpoints of the two laws.  We have
$\lambda_{+,1}\le\lambda_{+,2}$ by applying convex order to
$(\lambda-\lambda_{+,2})_+$.  Fix $\nu,u>0$, and choose the second
law's minimizing clock tilt $\kappa_2$ from
Lemma~\ref{lem:variational-regularity}, allowing its finite boundary
minimum.  Then $\kappa_2\ge\lambda_{+,2}-1\ge\lambda_{+,1}-1$
and $H_2(\kappa_2)<\infty$.  Since $h_2(r)>h_1(r)$ for every $r>0$,
also $H_1(\kappa_2)<\infty$, with endpoint values taken by continuity, and
$$
 H_2(\kappa_2)-H_1(\kappa_2)
 =\int_0^\infty e^{-\kappa_2r}\bigl(h_2(r)-h_1(r)\bigr)\,\dd r>0.
$$
It follows that
\[
 \chi_2(\nu,u)
 =\kappa_2u+\nu\log H_2(\kappa_2)
 >\kappa_2u+\nu\log H_1(\kappa_2)
 \ge\chi_1(\nu,u).
\]
At $u=u_{*,1}(\theta)$, every maximizing count $\nu_1$ for the
first law is positive by Lemma~\ref{lem:variational-regularity}.
Hence
\[
 \widehat F_2(\theta,u)
 \ge\Psi_b(\theta,\nu_1)+\chi_2(\nu_1,u)
 >\Psi_b(\theta,\nu_1)+\chi_1(\nu_1,u)=0.
\]
The sign of $\widehat F_2(\theta,\cdot)$ on either side of its unique positive zero,
proved in Lemma~\ref{lem:global-root-model-only}, gives
$u_{*,2}(\theta)<u_{*,1}(\theta)$.

By Theorems~\ref{thm:sparse-expansion}
and~\ref{thm:quenched-all-distance}, for any deterministic integers
$1\le m_d\le d$, the passage time divided by $d$ differs from
$u_{*,i}(m_d/d)$ by a quantity tending to zero
in quenched probability conditional on survival, in environment
probability, under law $P_i$.
This gives the claimed comparison of the leading times.
\end{proof}

\subsection{Proof of Corollary~\ref{cor:kpp-arrival}}
\label{sec:kpp-proof}

\begingroup
Similarly to the proof of Theorem~\ref{thm:quenched-all-distance}, we again use independent descendant families, but now observed at a common
time. A related amplification argument appears
in~\cite[Section~4.3]{DrewitzSchmitz2022}.

\begin{proof}[Proof of Corollary~\ref{cor:kpp-arrival}]
We first exclude early crossings. Since there are no deaths, following
one child after each split and requiring no mutation at the target gives
$K_{t+h}^\omega(y,\mathbf0)\ge e^{-h}K_t^\omega(y,\mathbf0)$ for
$0\le h\le1$. In particular, $K_t^\omega(y,\mathbf0)\le eM_t^\omega(y)$.
For each fixed $A\ge1$ and all sufficiently large $d$,
Proposition~\ref{prop:global-center} therefore gives, uniformly 
in $m$,
$$
 \sup_{0\le s\le\widehat\theta_{d,m}-A}
 v^\omega(s,\mathbf x_m)
 \le e\int_0^{\widehat\theta_{d,m}-A+1}
            K_r^\omega(\mathbf x_m,\mathbf0)\,\dd r
 \ll e^{-c(A-1)}+d^{-1}
$$
on events of environment probability tending to one. Choose $A$
large enough that this bound is less than $\alpha$ for all large $d$.
Since $w^\omega\le v^\omega$, this excludes early crossings for both
equations.

For the late bound, let $p_0>0$ be the constant in
Proposition~\ref{prop:quenched-barrier} with $L=0$.
Fix $\eta>0$ with $2\eta<1-\alpha$, and choose $Q$ so that
$(1-p_0)^Q<\eta$. The no-death assumption gives $s^\omega\equiv1$.
Lemma~\ref{lem:survival-preliminary-family}(i) supplies fixed
$T,M<\infty$ such that, on events of environment probability tending
to one, there are $Q$ particles within distance $M$ of the start at
time $T$ with quenched probability at least $1-\eta$, uniformly in
$m$. Choose the fixed $a_M$ in Proposition~\ref{prop:quenched-barrier}
and intersect its environment event with the preceding one.

On the event that these particles are available, choose them using
the population history through time $T$. Their descendant systems
are conditionally independent under the ordinary quenched law.
Equation~\eqref{eq:quenched-hit} gives success probability at least
$p_0$ for each family at the same elapsed time
$\widehat\theta_{d,m}+a_M$, including families started at
$\mathbf0$. Therefore, uniformly in $m$,
$$
 w^\omega(\widehat\theta_{d,m}+T+a_M,\mathbf x_m)
 \ge1-\eta-(1-p_0)^Q>\alpha.
$$
This proves the late bound for $w^\omega$, and hence for $v^\omega$.
The comparison $w^\omega\le v^\omega$ implies
$\mathcal T_\alpha[v^\omega](\mathbf x_m)
 \le\mathcal T_\alpha[w^\omega](\mathbf x_m)$.
The preceding early- and late-crossing bounds prove the first assertion. On their common event, the nonlinear crossing time exceeds the linear one by at most \(2A\), which proves the second assertion.
\end{proof}
\endgroup

% Keep the notation heading with the table opening when space is short.
\par
\ifdim\dimexpr\pagegoal-\pagetotal\relax<12\baselineskip
  \vfil\break
\fi
\appendix

\section{Frequently used notation}
\label{sec:notations}

% The superscript $I$ in the branching integrals denotes the nonterminal
% range $0\le r\le t-\delta d$.
% The scalar kernels in $\widehat{\mathcal C}_{q,t}^{I}$ average
% rates before the continuation power, whereas
% $\widehat{\mathcal B}_{t,r}^{(1+q)}$ retains the rate shared at the split.

\begingroup
\small
\setlength{\LTpre}{0.4\baselineskip}
\setlength{\LTpost}{0pt}
\renewcommand{\arraystretch}{1.18}
\begin{longtable}{@{}>{\raggedright\arraybackslash}p{0.21\textwidth}
                       >{\raggedright\arraybackslash}p{0.74\textwidth}@{}}
\caption{Notation used throughout the paper.}\label{tab:notation}\\
\toprule
\textbf{Notation} & \textbf{Meaning} \\
\midrule
\endfirsthead
\multicolumn{2}{@{}l}{\small\itshape Table~\ref{tab:notation} continued}\\
\toprule
\textbf{Notation} & \textbf{Meaning} \\
\midrule
\endhead
\midrule
\multicolumn{2}{r@{}}{\small\itshape Continued on next page}\\
\endfoot
\bottomrule
\endlastfoot
$b,d,\Hc_d^{(b)}$
  & The alphabet size, dimension, and $b$-ary Hamming cube;
    Section~\ref{sec:model}. \\
$d_{\mathrm H},\mathbf 0,\mathbf{x}_m$,\newline
$P(x,y),P^n$
  & Hamming distance, the target vertex, the canonical vertex at distance $m$
    from the target, and the one-step mutation matrix and its powers;
    Section~\ref{sec:main-results}. \\
$\Lambda_v$
  & The random net growth rate at vertex $v$; splitting and death rates
    are $(\Lambda_v)_+$ and $(-\Lambda_v)_+$;
    Assumption~\ref{ass:environment}. \\
$P(d\lambda),\lambda_+,\bar\lambda$
  & The rate law, its essential upper bound, and the mean splitting rate
    $\Eenv[(\Lambda_{\mathbf0})_+]$;
    Assumption~\ref{ass:environment} and \eqref{eq:atom-summary}. \\
$\Penv,\Eenv$
  & Probability and expectation over the static environment. \\
$\Pw_x,\Ew_x$
  & Quenched probability and expectation for the BRW started at $x$, with the
    environment held fixed; Section~\ref{sec:model}. \\
$\mathsf S,s^\omega(x)$
  & Eternal survival and its quenched probability from $x$;
    \eqref{eq:survival-probability}. \\
$\Pp_x,\E_x$
  & Probability and expectation for the mutation walk started at $x$;
    Section~\ref{sec:heuristics}. \\
$N_t(y),\tau_x^\omega$
  & The number of particles at $y$ at time $t$ and the first time a descendant
    of the particle started at $x$ reaches $\mathbf 0$;
    Section~\ref{sec:model}. \\
$K_t^\omega,M_t^\omega$
  & The quenched first-moment kernel and its integral over the unit window
    $[t,t+1]$; Section~\ref{sec:heuristics} and the definition before
    Proposition~\ref{prop:global-center}, respectively. \\
$h,H$
  & The environment-averaged one-holding-time weight and its Laplace
    transform; Section~\ref{sec:heuristics} and \eqref{eq:H}. \\
$\widehat K_{d,m},\widehat M_t,\widehat\theta_{d,m}$
  & The deterministic independent-visit kernel, its unit-window integral, and
    the first time that integral reaches one;
    \eqref{eq:freshK}, \eqref{eq:fresh-unit-window}, and
    Section~\ref{sec:main-results}. \\
$\Psi_b,\chi,G,\widehat F$
  & The functions in the logarithmic estimates for the two factors in
    \eqref{eq:freshK}, their fixed-count sum, and
    the result of maximizing that sum over the mutation-count density;
    \eqref{eq:Psi}--\eqref{eq:F} and Section~\ref{sec:main-results}. \\
$u_*(\theta),r_+$
  & The unique positive zero of $u\mapsto\widehat F(\theta,u)$ and the right
    derivative of $u_*$ at $\theta=0$;
    Section~\ref{sec:main-results} and \eqref{eq:sparseright}. \\
$\mathbf K_t^P,\mathbf k_t$
  & The forward first-moment density in the terminal rate with respect to
    $P$, and {the expected target count from one particle
    with a fixed initial rate};
    \eqref{eq:typed-kernel-definitions}. \\
$\widehat{\mathcal B}_{t,r}^{(1+q)}$
  & The fractional branching sum at split time $r$, retaining the same
    inherited rate in the trunk, split factor, and continuation;
    \eqref{eq:fractional-branching-sum}. \\
$\widehat{\mathcal C}_{q,t}^{I}$
  & {The scalar independent-visit branching integral with
    nonterminal cutoff; \eqref{eq:fresh-qnormalizer}.}
    It normalizes the split law. \\
$\mathcal C_{q,t}^{\omega}$, 
$\mathcal C_{q,t}^{\omega,I}$
  & The quenched branching integral over all split times and its
    nonterminal restriction; \eqref{eq:quenched-branching-integral}
    and the definition following it. \\
$\mathbb Q_{d,m,t},\mathbb Q_{x,y,t}$
  & The independent-visit first-moment space--time bridge laws, normalized
    by their endpoint kernels; \eqref{eq:Qbridge}. \\
$\mathbb Q_{q;y,t}$, 
$\mathbb Q_{q,j;y,t}$
  & The split marginal, \eqref{eq:weighted-split-law}, and its
    extension by a trunk and $j$ independent arms conditional on the split,
    for $j=1,2$; Section~\ref{sec:initial-branch}. \\
\end{longtable}
\endgroup

\section{Finite-state and rate-coordinate semigroup tools}
\label{sec:linearprimer}

\begingroup
\subsection{Jump-count expansions and conjugation}

On a finite state space, write $A=D+J$, where $D$ is diagonal and
$J$ is the nonnegative off-diagonal part.  Conditioning on the ordered
transition times gives
$$
 e^{t(D+J)}
 =\sum_{n=0}^\infty
 \int_{0<s_1<\cdots<s_n<t}
 e^{s_1D}J e^{(s_2-s_1)D}J\cdots J e^{(t-s_n)D}
 \dd s_1\cdots\dd s_n,
$$
where the $n=0$ term is $e^{tD}$.  Each term has nonnegative entries
and contains exactly $n$ transitions.  Replacing $J$ by $e^aJ$ therefore
weights that term by $e^{an}$.  For any real-valued function $r$ and
$c>0$, conjugation by
$x\mapsto c^{r(x)}$ multiplies the $(x,y)$ entry of both the operator
and its semigroup by $c^{r(y)-r(x)}$; the factors at intermediate
vertices cancel in every term.

For the independent-visit model, work on $L^2$ of counting measure on
$\Hc_d^{(b)}$ times $P$.  Here $D$ is multiplication by $-(1-i)$ and
$J$ is the mutation operator that resamples the rate from $P$.
The multiplication semigroup $e^{tD}$ is strongly continuous, with norm
at most $e^{-(1-\lambda_+)t}$, and $J$ is bounded, with $\|J\|\le1$.
The same expansion converges in operator norm because its $n$th term
has norm at most $e^{-(1-\lambda_+)t}\|J\|^nt^n/n!$.
For a bounded nonnegative terminal function $f$, it is also defined
pointwise for every finite initial rate $i\le\lambda_+$: the $n$th term
is at most $\|f\|_\infty e^{-(1-\lambda_+)t}t^n/n!$, and their sum is
at most $\|f\|_\infty e^{\lambda_+t}$.  Thus, the jump-count and
conjugation identities remain valid when rates are unbounded below;
for conjugation, the positive multiplying function and its reciprocal
are bounded.  The expansion includes the zero-mutation term with its
unchanged rate.

Weighting the initial rate by $P$, then reversing the vertices, rate
labels, and holding durations in each term preserves its weight:
the mutation matrix is symmetric and every rate label carries $P$.
This proves the detailed-balance identities
\eqref{eq:typed-detailed-balance}, including the stated pointwise
version of the terminal-rate density.

\subsection{First moments and Fisher--KPP level times}

\begin{proposition}
\label{prop:finite-state-tools}
Under Assumption~\ref{ass:environment}, let $P$ be the one-step mutation
matrix and
$A^\omega=P-I+\operatorname{diag}((\Lambda_v)_{v\in\Hc_d^{(b)}})$.
Then
\begin{equation}
 \Ew_x[N_t(y)]=K_t^\omega(x,y)=\bigl(e^{tA^\omega}\bigr)(x,y).
 \label{eq:FKmatrix}
\end{equation}
If all rates are nonnegative and at least one is positive, then
$w^\omega(t,x)\to1$ at every vertex.  Consequently, the level times
$\mathcal T_\alpha[w^\omega](x)$ and
$\mathcal T_\alpha[v^\omega](x)$ are finite for every $\alpha\in(0,1)$.
\end{proposition}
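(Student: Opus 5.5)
The statement has three parts: the matrix identity \eqref{eq:FKmatrix}, the convergence $w^\omega(t,x)\to1$, and finiteness of the level times. I treat them in that order; only the convergence needs care.

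For the identity \eqref{eq:FKmatrix} I use the finite-state Feynman--Kac formula. On the finite state space $\Hc_d^{(b)}$ the jump-count expansion above, with $D=\operatorname{diag}(\Lambda_v-1)$ and $J=P$, gives $(e^{tA^\omega})(x,y)$ as the sum over mutation paths from $x$ to $y$ with holding-time integrals. This is exactly the path expansion \eqref{eq:quenched-path-expansion}, and hence equals $\E_x[\exp(\int_0^t\Lambda_{X_s}\dd s)\one_{\{X_t=y\}}]=K_t^\omega(x,y)$. To get the many-to-one identity $\Ew_x[N_t(y)]=K_t^\omega(x,y)$ I use first-event conditioning on $m(t,x,y)=\Ew_x[N_t(y)]$. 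The first event is a mutation, a split or a death, with total rate $1+|\Lambda_x|$, and this yields the backward equation $\partial_tm=A^\omega m$ with $m(0)=I$. Finiteness of $m$ comes from Yule domination by rate $\lambda_+$. Uniqueness of solutions to this linear ODE system on a finite space then gives $m=e^{tA^\omega}$.

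For the convergence, assume $\Lambda_v\ge0$ for all $v$ and $\Lambda_{v_0}>0$ for some $v_0$. With no deaths the population never goes extinct and is nondecreasing, so the count $N_t(v_0)$ behaves well. I first show that infinitely many particles visit $v_0$ almost surely. Each particle is alive forever and performs an irreducible walk on a finite graph, so it visits $v_0$ infinitely often. At each visit it splits before mutating with probability $\Lambda_{v_0}/(1+\Lambda_{v_0})>0$, and Borel--Cantelli (conditional version) then gives infinitely many splits at $v_0$. Hence the total population tends to infinity.

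Next, for $w^\omega(t,x)=\Pw_x(N_t(\mathbf0)>0)$, I fix $K$ and a time $s$ large enough that with probability at least $1-\varepsilon$ there are at least $K$ particles at time $s$. Given $\mathcal F_s$, each particle independently occupies $\mathbf0$ at time $s+h$ with probability at least $\pi(h):=\min_y\Pp_y(X_h=\mathbf0)e^{0}$. Splits only help, so I follow one lineage. Since the walk is irreducible and aperiodic in continuous time, $\pi(h)\ge c_d>0$ for $h\in[1,2]$. Therefore $\Pw_x(N_{s+h}(\mathbf0)=0)\le\varepsilon+(1-c_d)^K$. Because $N_t$ is nondecreasing in total, I can take $s$ arbitrary large and then $h$ fixed, which gives $\liminf_t w^\omega(t,x)\ge1-\varepsilon-(1-c_d)^K$; letting $K\to\infty$ and $\varepsilon\to0$ gives the limit $1$. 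This step is the main obstacle: the delicate point is making "at least $K$ particles at time $s$" hold for \emph{all} large $s$. That follows because the total population is nondecreasing without deaths and tends to infinity almost surely.

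Finally, finiteness of the level times follows at once. $\mathcal T_\alpha[w^\omega](x)<\infty$ because $w^\omega\to1>\alpha$. Since $w^\omega\le v^\omega$, we get $\mathcal T_\alpha[v^\omega](x)\le\mathcal T_\alpha[w^\omega](x)<\infty$.
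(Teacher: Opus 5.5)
Your proposal is correct. For the matrix identity and for finiteness of the level times, it follows essentially the paper's route: first-event conditioning gives a linear backward system on a finite space, uniqueness identifies the solution, and $w^\omega\le v^\omega$ handles $v^\omega$. The paper identifies $e^{tA^\omega}$ with $K_t^\omega$ by conditioning the Feynman--Kac expectation on the first mutation rather than through the jump-count expansion, but this is immaterial.

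For $w^\omega\to1$ you take a genuinely different route.

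The paper argues entirely on the deterministic side:
\begin{itemize}
\item Comparison gives $w^\omega(1,\cdot)\ge e^{P-I}(\cdot,\mathbf0)>0$, so $w^\omega(t+1,\cdot)$ dominates the solution $w_c$ started from a small constant $c$.
\item Since the constant $c$ is a subsolution, $w_c$ is nondecreasing in time and converges to an equilibrium $w_\infty\in[c,1]$.
\item Summing the equilibrium equations with $P$ doubly stochastic gives $\sum_x\Lambda_xw_\infty(x)(1-w_\infty(x))=0$. Hence $w_\infty=1$ wherever $\Lambda_x>0$, and connectivity propagates this to $w_\infty\equiv1$.
\end{itemize}

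You instead work with the branching representation $w^\omega(t,x)=\Pw_x(N_t(\mathbf0)>0)$:
\begin{itemize}
\item Without deaths the total population is nondecreasing.
\item It diverges, because an immortal lineage revisits $v_0$ infinitely often and splits there with probability $\Lambda_{v_0}/(1+\Lambda_{v_0})$ on each visit.
\item Once there are $K$ particles, their conditionally independent families each have a particle at $\mathbf0$ a unit time later with probability at least $\min_y\Pp_y(X_1=\mathbf0)>0$, since a followed lineage performs the mutation walk.
\end{itemize}
Monotonicity of the population handles the uniformity in $s$ correctly.

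Your argument is elementary and is the same amplification mechanism the paper later uses in the proof of Corollary~\ref{cor:kpp-arrival}. The paper's argument needs the process only for the initial positivity at time one. It identifies the limit as the unique equilibrium of the deterministic system bounded below by a positive constant, so it extends directly to any nonzero initial datum in $[0,1]$.

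One wording fix: particles are replaced at splits, so ``each particle is alive forever'' should read ``following one child at every split gives an immortal lineage that performs the mutation walk.''
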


\begin{proof}
First-event conditioning gives
$\partial_t\Ew_x[N_t(y)]=(A^\omega\Ew_{\cdot}[N_t(y)])(x)$,
with initial value $\one_{\{x=y\}}$.  The finitely many rates in a fixed
environment and domination by a Yule process with splitting rate
$\lambda_+$ justify this calculation.  Conditioning the Feynman--Kac
expectation on the first mutation gives the same finite linear system,
so uniqueness proves \eqref{eq:FKmatrix}.

Now suppose that all rates are nonnegative and at least one is positive.
Comparison gives $0\le w^\omega\le1$ and
$w^\omega(1,x)\ge(e^{P-I})(x,\mathbf0)>0$, by connectivity.
Choose $0<c<\min_x w^\omega(1,x)$ and let $w_c$ solve the same
Fisher--KPP system with constant initial value $c$.  Since
$\Lambda_xc(1-c)\ge0$, the constant $c$ is a subsolution; comparison
and the autonomous solution map give
$c\le w_c(t,x)\le w_c(t+s,x)\le1$ for $s,t\ge0$.
Hence, $w_c(t,\cdot)$ converges to $w_\infty\in[c,1]^{\Hc_d^{(b)}}$.
Continuity of the solution map makes this limit an equilibrium.
Since $P$ is doubly stochastic, summing its equilibrium equations gives
$$
 0=\sum_x\Lambda_xw_\infty(x)(1-w_\infty(x)).
$$
Every summand is nonnegative, so $w_\infty(x)=1$ wherever
$\Lambda_x>0$.  At a vertex with value one, the equilibrium equation
gives $\sum_yP(x,y)w_\infty(y)=1$, forcing all its neighbours to have
value one.  Connectivity therefore gives $w_\infty\equiv1$.
Finally, $w^\omega(t+1,x)\ge w_c(t,x)$ proves convergence to one,
and $w^\omega\le v^\omega$ proves finiteness of both level times.
\end{proof}
\endgroup

\section{Auxiliary analytic and combinatorial estimates}
\label{sec:auxiliary}

The following inequalities identify a strictly positive spatial term in
the maximizing exponent.  In Lemma~\ref{lem:variational-regularity},
this forces the clock tilt to be positive whenever the full exponent is
nonnegative, allowing the local clock estimates to apply near the root.

\begin{lemma}
\label{lem:positive-log-derivatives}
For every $b\ge2$ and $z>0$,
$$
 \frac{zA_b'(z)}{A_b(z)}-\log A_b(z)>0,
 \qquad
 \frac{zC_b'(z)}{C_b(z)}-\log C_b(z)>0.
$$
\end{lemma}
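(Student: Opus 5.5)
Both inequalities concern a function of the form $\varphi_f(z)=zL_f'(z)-L_f(z)$ with $L_f=\log f$ and $f\in\{A_b,C_b\}$. The plan rests on the identity
\[
 \varphi_f'(z)=L_f'(z)+zL_f''(z)-L_f'(z)=zL_f''(z).
\]
So the sign of $\varphi_f$ on $(0,\infty)$ is governed by the convexity or concavity of $L_f$, together with one boundary value. The two functions sit at opposite ends: $C_b$ is log-convex and is controlled from $z=0$, while $A_b$ is log-concave and is controlled from $z=\infty$.

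\emph{The function $C_b$.} Here $C_b(z)=b^{-1}\bigl(e^{z}+(b-1)e^{-z/(b-1)}\bigr)$ is a positive combination of two exponentials with distinct exponents.
\begin{itemize}
\item Its logarithm is strictly convex. By Cauchy--Schwarz, $L''=\Var$ of the exponent under the tilted two-point weights, which is strictly positive.
\item Hence $\varphi_{C}'(z)=zL_C''(z)>0$ for $z>0$.
\item At the left endpoint, $C_b(0)=1$ and $C_b'(0)=0$, so $\varphi_C(0)=0$.
\end{itemize}
Monotonicity then gives $\varphi_C(z)>0$ for every $z>0$.

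\emph{The function $A_b$.} Put $c=b/(b-1)$, so that
\[
 A_b(z)=b^{-1}e^{-z/(b-1)}\bigl(e^{cz}-1\bigr),
 \qquad
 L_A(z)=-\frac{z}{b-1}+\log\bigl(e^{cz}-1\bigr)-\log b .
\]
For $b=2$ this is $\sinh z$, and the computation below covers that case too.
\begin{itemize}
\item A direct computation gives $L_A''(z)=-c^2e^{cz}/(e^{cz}-1)^2<0$. Therefore $\varphi_A$ is strictly decreasing on $(0,\infty)$.
\item It suffices to show $\lim_{z\to\infty}\varphi_A(z)\ge0$, since a strictly decreasing function with a nonnegative limit at infinity is strictly positive.
\item As $z\to\infty$, $L_A(z)=z-\log b+O(e^{-cz})$ and $L_A'(z)=1+O(e^{-cz})$. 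Hence $zL_A'(z)=z+O(ze^{-cz})$ and $\varphi_A(z)\to\log b>0$.
\end{itemize}
This gives $\varphi_A>0$. As a consistency check, near $0$ we have $A_b(z)\sim z/(b-1)$, so $\varphi_A\to+\infty$, in line with $\varphi_A$ decreasing.

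The only step needing care is the $A_b$ case. The natural guess that both functions are log-convex fails there, and one must instead argue from $z=\infty$. This makes the exact asymptotic constant $\log b$ the key point, so the expansion of $L_A$ at infinity must be checked carefully, including for $b=2$.
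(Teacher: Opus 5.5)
Your proof is correct. For $C_b$ it is the paper's argument in differential form. The paper uses $C_b(0)=1$, $C_b'=A_b$ and the strict increase of $A_b/C_b=(\log C_b)'$ to get $\log C_b(z)=\int_0^z (A_b/C_b)(s)\,ds<zA_b(z)/C_b(z)$. After an integration by parts, this is the same inequality as your $\varphi_C(z)=\int_0^z sL_C''(s)\,ds>0$.

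For $A_b$ the routes differ. You use log-concavity to make $\varphi_A$ decreasing and then compute its limit $\log b$ at infinity. The paper instead writes out
\[
 \frac{zA_b'(z)}{A_b(z)}-\log A_b(z)=\frac{\beta z}{e^{\beta z}-1}-\log\bigl(1-e^{-\beta z}\bigr)+\log b,\qquad \beta=\frac b{b-1},
\]
and notes that all three terms are positive.

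Your own expansion already gives this identity exactly. The formulas $L_A(z)=z-\log b+\log(1-e^{-cz})$ and $L_A'(z)=1+c/(e^{cz}-1)$ hold with no error terms. So the monotonicity step and the limiting argument are redundant, and there is no delicacy about the constant $\log b$: positivity holds termwise for every $z>0$.

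What your framing buys is a uniform explanation, via $\varphi_f'=zL_f''$, of why $C_b$ is controlled from $z=0$ and $A_b$ from $z=\infty$. The sharper bound $\varphi_A>\log b$ comes out of either route. The paper's version is shorter and fully explicit.
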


\begin{proof}
From
$A_b(z)=b^{-1}e^z\left(1-e^{-\frac{b}{b-1}z}\right)$,
$$
 \frac{zA_b'(z)}{A_b(z)}-\log A_b(z)
 =\frac{\frac{b}{b-1}z}{e^{\frac{b}{b-1}z}-1}
  -\log\left(1-e^{-\frac{b}{b-1}z}\right)+\log b>0.
$$
For the second inequality, $C_b(0)=1$, $C_b'=A_b$, and
$$
 \frac{A_b(z)}{C_b(z)}
 =\frac{1-e^{-\frac{b}{b-1}z}}{1+(b-1)e^{-\frac{b}{b-1}z}}
$$
is strictly increasing.  Therefore,
$$
 \log C_b(z)=\int_0^z\frac{A_b(s)}{C_b(s)}\,\dd s
 <z\frac{A_b(z)}{C_b(z)}
 =\frac{zC_b'(z)}{C_b(z)}.
$$
This proves both inequalities.
\end{proof}

The next lemma gives the strict entropy bound used in the midpoint
analysis.

\begin{lemma}
\label{lem:bridge-entropy-convexity}
Let $X$ be the continuous-time walk on one $b$-ary coordinate, with
transition probabilities
$$
 p_s(x,y)=
 \begin{cases}
 \dfrac{1+(b-1)e^{-\beta s}}{b},&x=y,\\[6pt]
 \dfrac{1-e^{-\beta s}}{b},&x\ne y,
 \end{cases}
 \qquad \beta=\frac b{b-1}.
$$
For $0\le s\le z$, let $D_\sigma(s;z)$ be the relative entropy of
$\mathcal L(X_s\mid X_0=x,X_z=y)$ with respect to
$\mathcal L(X_s\mid X_0=x)$, where $\sigma=b-1$ if $x=y$ and
$\sigma=-1$ if $x\ne y$.  For every compact interval
$J\Subset(0,\infty)$ there is $c_J>0$ such that
$$
 D_\sigma(\alpha z;z)-\alpha D_\sigma(z;z)
 \le-c_J\alpha(1-\alpha)
$$
for $z\in J$, $0\le\alpha\le1$, and both values of $\sigma$.
\end{lemma}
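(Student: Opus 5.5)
The plan is to compute $f(\alpha):=D_\sigma(\alpha z;z)$ explicitly, reduce the claim to strict negativity of the continuous function $g(\alpha)=f(\alpha)-\alpha f(1)$ divided by $\alpha(1-\alpha)$, and then get the uniform constant $c_J$ by compactness.

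First I identify $f$. Write $s=\alpha z$. The bridge marginal at time $s$ is
\[
 q_s(w)=\frac{p_s(x,w)\,p_{z-s}(w,y)}{p_z(x,y)} .
\]
Hence
\[
 f(\alpha)=\sum_w q_s(w)\log\frac{q_s(w)}{p_s(x,w)}
 =\sum_w q_s(w)\log p_{z-s}(w,y)-\log p_z(x,y).
\]
In particular $f(0)=0$, since both laws are the point mass at $x$. At $\alpha=1$ the bridge law is the point mass at $y$, so $f(1)=-\log p_z(x,y)>0$. Thus $g(0)=g(1)=0$.

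By the symmetry of the one-coordinate walk, $p_{z-s}(w,y)$ takes only two values, according to whether $w=y$. So $f$ depends only on the single bridge probability $\pi(s):=q_s(\{y\})$. Put $e=e^{-\beta s}$, $e'=e^{-\beta(z-s)}$ and $E=e^{-\beta z}$. Then
\[
 f(\alpha)=\pi\log\frac{1+(b-1)e'}{b}+(1-\pi)\log\frac{1-e'}{b}-\log p_z(x,y).
\]
Here $\pi$ is an explicit rational function of $e$, $e'$ and $E$, given separately for $\sigma=b-1$ and for $\sigma=-1$. For $b=2$ with $\sigma=-1$ the ``third value'' class is empty, and the same formula applies.

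Next I show that $g'(0)<0$, that $g'(1)>0$, and that $g<0$ on $(0,1)$; this is the main step. My first attempt is to prove that $\alpha\mapsto f(\alpha)$ is strictly convex, by differentiating the closed form twice in $s$. The two derivatives in $s$ fall on $\pi$ and on the logarithm of $p_{z-s}$. If $f''>0$ holds for all $z>0$ and $\alpha\in[0,1]$, then the uniform bound $f''\ge 2c_J z^{-2}$ on the compact set $J\times[0,1]$ gives directly
\[
 g(\alpha)\le -c_J\,\alpha(1-\alpha),
\]
since $g$ is then convex in $\alpha$ with $g(0)=g(1)=0$ and $g''\ge 2c_J$. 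Should global convexity fail, or prove too messy, I would fall back on a data-processing argument. The bridge law at time $s$ is the Doob $h$-transform, and $f(\alpha)$ equals the expected log-likelihood ratio $\E[\log h_{z-s}(X_s)]$ up to the constant $-\log p_z(x,y)$. Here $h_{z-s}(w)=p_{z-s}(w,y)/p_z(x,y)$ is a martingale under the bridge, in reversed orientation. Via Jensen, this gives the sign of $g$ strictly in the interior and at the two endpoint derivatives. I expect this sign verification to be the main obstacle, especially its strictness for $\sigma=-1$ near $\alpha=0$, where $\pi(s)$ is small.

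Finally comes compactness. The function
\[
 \Phi(\alpha,z)=\frac{g(\alpha)}{\alpha(1-\alpha)},
\]
extended to $\alpha=0$ by $g'(0)$ and to $\alpha=1$ by $-g'(1)$, is continuous on $[0,1]\times J$ for each of the two values of $\sigma$. Continuity holds because all the transition probabilities are smooth and strictly positive for $z>0$. Since $\Phi$ is strictly negative, its maximum over this compact set is some $-c_J<0$. This gives the claimed bound uniformly in $z\in J$, in $\alpha\in[0,1]$, and in both values of $\sigma$.
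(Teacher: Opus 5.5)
Your preliminary computations are correct: the formula $f(\alpha)=\sum_w q_s(w)\log p_{z-s}(w,y)-\log p_z(x,y)$, the values $f(0)=0$ and $f(1)=-\log p_z(x,y)$, and the reduction to $\pi(s)$. But the proposal never proves the step that carries all the content of the lemma, namely $g<0$ with a quantitative margin.

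\textbf{The main step is missing.} Your first route (differentiate the closed form twice and hope $f''>0$) is stated as a hope, with no computation and an explicit hedge. Your fallback does not work as described. The Doob-transform/martingale picture plus Jensen identifies $f(\alpha)$ as the relative entropy of the bridge path law restricted to $\mathcal F_{\alpha z}$. This shows only that $f$ is nondecreasing in $\alpha$. Monotonicity says nothing about the chord inequality $f(\alpha)\le\alpha f(1)$, which is a convexity statement.

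\textbf{The missing idea} is a mechanism giving convexity of $s\mapsto D_\sigma(s;z)$. The paper obtains it from entropy, in four steps.
\begin{enumerate}
\item Because the walk lives on $\mathbb Z/b\mathbb Z$, the bridge law of an increment $X_{a+t}-X_a$ does not depend on $a$. Hence $\E[\log p_t(X_a,X_{a+t})]=D_\sigma(z-t;z)+\log p_z(x,y)$ for every $a$.
\item Expanding the bridge transition probabilities then gives $H(X_v\mid X_a)=D_\sigma(a;z)-D_\sigma(z-v+a;z)-D_\sigma(v;z)-\log p_z(x,y)$.
\item By the Markov property, conditioning on a later time can only lower the conditional entropy of $X_v$. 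With $v=z-h$, this yields $D_\sigma(s-h;z)+D_\sigma(s+h;z)-2D_\sigma(s;z)\ge0$.
\item The same identity at $a=0$, $v=z/2$ gives $D_\sigma(z/2;z)-\tfrac12D_\sigma(z;z)=-\tfrac12H(X_{z/2})$, which is bounded away from zero for $z\in J$.
\end{enumerate}
Convexity and the two chords through the midpoint then give $g(\alpha)\le-c_J\min\{\alpha,1-\alpha\}$ directly. This needs no strict convexity, no second derivatives, and no endpoint-derivative analysis.

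\textbf{Your compactness step is also wrong as written.} You claim that $\Phi(\alpha,z)=g(\alpha)/(\alpha(1-\alpha))$ extends continuously to $\alpha=1$ by $-g'(1)$, because ``all transition probabilities are smooth and strictly positive for $z>0$.'' This overlooks that the relevant time argument $z-s=(1-\alpha)z$ degenerates at $\alpha=1$. As $s\uparrow z$, $p_{z-s}(w,y)\to0$ for $w\ne y$, while $q_s(w)\asymp z-s$. So the terms $q_s(w)\log p_{z-s}(w,y)$ behave like $(z-s)\log(z-s)$. Consequently $f$ has left derivative $+\infty$ at $\alpha=1$, $g'(1)=+\infty$, and $\Phi(\alpha,z)\to-\infty$. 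The sign is favorable, so this is repairable: treat $\Phi$ as upper semicontinuous with values in $[-\infty,0)$, or bound a neighbourhood of $\alpha=1$ separately. The same blow-up means $f''$ is not continuous on $J\times[0,1]$, so your uniform lower bound on $f''$ would also need an extra argument. The paper's chord argument avoids both issues.
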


\begin{proof}
We adapt the entropy argument in the proof of Proposition~1.5 and
Remark~3.2 of~\cite{MartinssonCartesian} to an arbitrary
duration $z>0$, allowing equal endpoints.  All expectations and entropies
in this proof are under the bridge from $x$ to $y$ in time $z$.

View the coordinate walk on $\mathbb Z/b\mathbb Z$.  Summing over its
increments before and after an interval of length $t$ shows that the
conditional law of the increment over that interval is independent of
its location.  More explicitly, for $0\le a\le z-t$ it assigns mass
\[
 \frac{p_t(0,w)p_{z-t}(0,y-x-w)}{p_z(x,y)}
\]
to $X_{a+t}-X_a=w$.  The definition of relative entropy therefore gives
\[
 \E\bigl[\log p_t(X_a,X_{a+t})\bigr]
 =\E\bigl[\log p_t(X_{z-t},y)\bigr]
 =D_\sigma(z-t;z)+\log p_z(x,y).
\]
For $0\le a\le v\le z$, expand the bridge conditional probability
$p_{v-a}(X_a,X_v)p_{z-v}(X_v,y)/p_{z-a}(X_a,y)$.
The conditional entropy of $X_v$ given $X_a$ is consequently
\[
 D_\sigma(a;z)-D_\sigma(z-v+a;z)-D_\sigma(v;z)-\log p_z(x,y).
\]
The bridge is Markov, so this conditional entropy decreases as $a$
increases.  Take $v=z-h$ and compare $a=s-h$ with $a=s$, where
$0\le s-h\le s+h\le z$.  Their difference is
\[
 D_\sigma(s-h;z)+D_\sigma(s+h;z)-2D_\sigma(s;z)\ge0.
\]
Thus $D_\sigma(\,\cdot\,;z)$ is convex by continuity.

Its endpoint values are $D_\sigma(0;z)=0$ and
$D_\sigma(z;z)=-\log p_z(x,y)$.  The same conditional-entropy identity
with $a=0$ and $v=z/2$ shows that
$D_\sigma(z/2;z)-\tfrac12D_\sigma(z;z)$ is minus half the Shannon
entropy of $X_{z/2}$.  This entropy has a positive minimum over $z\in J$
and both endpoint types, since all $b$ midpoint probabilities are
positive and continuous in $z$.  Convexity and the two chords joining
the midpoint to the endpoints now give
\[
 D_\sigma(\alpha z;z)-\alpha D_\sigma(z;z)
 \le-c_J\min\{\alpha,1-\alpha\}
 \le-c_J\alpha(1-\alpha),
\]
as required.
\end{proof}

The next estimate gives continuity of $\chi$ at zero mutation-count
density, including when the clock infimum occurs at the transform boundary.

\begin{lemma}\label{lem:smallnuexpansions}
Let $J\Subset(0,\infty)$ be compact and write
$\kappa_\partial=\lambda_+-1$.  As $\nu\downarrow0$, uniformly for $u\in J$,
\begin{equation}
 \chi(\nu,u)=\kappa_\partial u+
 O_J\bigl(\nu(1+|\log\nu|)\bigr).                   \label{eq:smallnuClock}
\end{equation}
\end{lemma}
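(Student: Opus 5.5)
To prove \eqref{eq:smallnuClock}, I would sandwich $\chi(\nu,u)$ between two explicit quantities, using only the definition \eqref{eq:chi} and elementary bounds on $H$ near the boundary $\kappa_\partial$.

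\emph{Upper bound.} The bound $H(\kappa)\le(\kappa-\kappa_\partial)^{-1}$ was already used for \eqref{eq:clock-homogeneous-sandwich}. Inserting it into \eqref{eq:chi} gives
\[
 \chi(\nu,u)\le\kappa_\partial u+\nu\Bigl(1+\log\frac u\nu\Bigr),
\]
with the optimum at $\kappa=\kappa_\partial+\nu/u$. For $u\in J$ the correction is $O_J(\nu(1+|\log\nu|))$. This also covers $\nu=0$, where the infimum is $\kappa_\partial u$ in the limit $\kappa\downarrow\kappa_\partial$.

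\emph{Lower bound.} For every $\kappa>\kappa_\partial$ I need
\[
 \kappa u+\nu\log H(\kappa)\ge\kappa_\partial u-C_J\nu(1+|\log\nu|).
\]
Two ranges of $\kappa$ are handled separately.
\begin{itemize}
\item[(a)] \emph{Large tilts.} Here $\log H(\kappa)$ can be very negative, but $H(\kappa)\ge\int_{\lambda\ge\lambda_+-1}P(d\lambda)/(\kappa+2)\ge c/(\kappa+2)$. Hence $\nu\log H(\kappa)\ge\nu\log c-\nu\log(\kappa+2)$. Since $(\kappa-\kappa_\partial)u$ grows linearly in $\kappa$ and $u\ge\min J>0$, the gain $(\kappa-\kappa_\partial)u$ dominates $\nu\log(\kappa+2)$ for small $\nu$. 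Minimizing $(\kappa-\kappa_\partial)u-\nu\log(\kappa+2)$ over $\kappa$ gives a value of at least $-C\nu(1+|\log\nu|)$, the minimum being near $\kappa+2\approx\nu/u$ or at the boundary.
\item[(b)] \emph{Tilts near $\kappa_\partial$.} Here $H(\kappa)$ is bounded below by a positive constant, since $H$ is decreasing and $H(\kappa)\ge H(\kappa_\partial+1)>0$ for $\kappa\le\kappa_\partial+1$. So $\nu\log H\ge-C\nu$ and $\kappa u\ge\kappa_\partial u$.
\end{itemize}
Combining (a) and (b) gives the lower bound uniformly in $u\in J$. When $H_\partial<\infty$, the boundary value at $\kappa_\partial$ is included by continuity and is likewise at least $\kappa_\partial u-C\nu$. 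Since $\log H$ may be positive, no issue arises from the upper side of $H$ in this direction.

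The only mildly delicate step is the large-$\kappa$ lower bound, where one must check that $-\nu\log(\kappa+2)$ cannot outweigh the linear term. The one-variable optimization above settles this, and all constants depend only on $J$ and the rate law.
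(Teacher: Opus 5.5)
Your proposal is correct and takes essentially the same route as the paper. The paper's upper bound is the same one: $H(\kappa)\le(\kappa-\kappa_\partial)^{-1}$ evaluated at $\kappa=\kappa_\partial+\nu/u$. Its lower bound is a single estimate $H(\kappa_\partial+s)\ge P([-L,\lambda_+])/(s+D)$, valid for all $s>0$, followed by the observation that for small $\nu$ the infimum of $su-\nu\log(s+D)$ is taken as $s\downarrow0$.

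Your split into ranges (a) and (b) is unnecessary. The bound $H(\kappa)\ge c/(\kappa+2)$ already holds for every $\kappa>\kappa_\partial$. Since $\kappa+2>1+\lambda_+$, the derivative $u-\nu/(\kappa+2)$ is positive once $\nu<(1+\lambda_+)\min J$. So the minimum always sits at $\kappa\downarrow\kappa_\partial$, never near $\kappa+2\approx\nu/u$. This gives directly $\chi(\nu,u)\ge\kappa_\partial u+\nu\log c-\nu\log(1+\lambda_+)$, exactly as in the paper.
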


\begin{proof}
Choose a finite $L>1+|\lambda_+|$ such that
$P([-L,\lambda_+])>0$, and put $D=\lambda_++L>0$.
For $s=\kappa-\kappa_\partial>0$,
$$
 \frac{P([-L,\lambda_+])}{s+D}
 \le H(\kappa_\partial+s)\le\frac1s.
$$
For all sufficiently small $\nu$, uniformly over $u\in J$,
the infimum of $su-\nu\log(s+D)$ over $s>0$ occurs as $s\downarrow0$.
For the upper bound choose $s=\nu/u$.  Hence,
$$
 \nu\log\frac{P([-L,\lambda_+])}{D}
 \le\chi(\nu,u)-\kappa_\partial u
 \le\nu\left(1+\log\frac u\nu\right),
$$
which proves \eqref{eq:smallnuClock}.
\end{proof}

\begin{lemma}\label{lem:onevariableassociation}
Let $\Lambda$ have any real-valued law.  If the bounded functions
$f_1,\ldots,f_k\ge0$ are all nondecreasing or are all nonincreasing, then
$$
 \E\!\left[\prod_{j=1}^kf_j(\Lambda)\right]
 \ge\prod_{j=1}^k\E\bigl[f_j(\Lambda)\bigr].
$$
\end{lemma}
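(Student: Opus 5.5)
This is the one-variable Harris (Chebyshev) association inequality. I would prove it by induction on $k$, with the two-function case as the core step. The case $k=1$ is trivial. For $k=2$, let $\Lambda'$ be an independent copy of $\Lambda$. Since $f_1,f_2$ are monotone in the same direction, we have pointwise
\[
 \bigl(f_1(\Lambda)-f_1(\Lambda')\bigr)\bigl(f_2(\Lambda)-f_2(\Lambda')\bigr)\ge0 .
\]
Taking expectations, which is legitimate because the functions are bounded, and expanding gives
\[
 2\E[f_1(\Lambda)f_2(\Lambda)]-2\E[f_1(\Lambda)]\E[f_2(\Lambda)]\ge0 .
\]
This is exactly the symmetrization identity already used in \eqref{eq:rate-association}.

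For the inductive step, set $g=\prod_{j=1}^{k-1}f_j$. Because each $f_j$ is nonnegative, $g$ is a product of nonnegative functions monotone in a common direction, so $g$ is bounded, nonnegative, and monotone in that same direction. This is the one point where nonnegativity is needed: a product of, say, nondecreasing functions of mixed sign need not be monotone.

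Apply the $k=2$ case to the pair $g$ and $f_k$ to get
\[
 \E[g f_k]\ge\E[g]\,\E[f_k].
\]
Then apply the inductive hypothesis to $g$, giving $\E[g]\ge\prod_{j<k}\E[f_j]$, and multiply by $\E[f_k]\ge0$. Combining the two inequalities completes the induction.

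No step is a genuine obstacle. The only care required is checking that monotonicity is preserved under products, which uses $f_j\ge0$, and that integrability holds, which follows from boundedness. The nonincreasing case can be handled either by the same argument or by noting that the pointwise product inequality is symmetric in the direction of monotonicity.
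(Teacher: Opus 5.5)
Your proof is correct and is essentially the paper's argument: you prove the two-function case with the symmetrization identity using an independent copy $\Lambda'$, then induct on $k$ by applying it to $\prod_{j<k}f_j$ and $f_k$, using nonnegativity so that the product stays monotone. You also correctly flag the second use of nonnegativity, namely $\E[f_k]\ge0$, when you chain the inductive hypothesis.
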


\begin{proof}
For an independent copy $\Lambda'$ and two bounded functions $f,g$ with the
same monotonicity,
\begin{equation}
 \operatorname{Cov}\bigl(f(\Lambda),g(\Lambda)\bigr)
 =\frac12\E\!\Bigl[\bigl(f(\Lambda)-f(\Lambda')\bigr)
                   \bigl(g(\Lambda)-g(\Lambda')\bigr)\Bigr]\ge0.             \label{eq:associationidentity}
\end{equation}
The product of nonnegative functions with the same monotonicity again has
that monotonicity.  Applying \eqref{eq:associationidentity} to
$\prod_{j=1}^{k-1}f_j$ and $f_k$, and then inducting on $k$, proves the
claim.
\end{proof}

\noindent\textbf{Acknowledgement.}
The material in this paper is partly supported by the Air Force Office of
Scientific Research under award
number FA9550-20-1-0397 and ONR N000142412655. Support from NSF 2229012,
2312204, 2403007 is also gratefully acknowledged. ZZ gratefully acknowledges
support from a Jump Trading Fellowship. 
The authors used OpenAI GPT-5.6 and GPT-6 for assistance with
proof exploration, exposition, and editing.

\end{document}